\newtheorem{theorem}{Theorem}[section]
\newtheorem{corollary}[theorem]{Corollary}
\newtheorem{proposition}[theorem]{Proposition}
\newtheorem{lemma}[theorem]{Lemma}
\theoremstyle{definition}
\newtheorem{definition}[theorem]{Definition}
\theoremstyle{remark}
\newtheorem{remark}[theorem]{Remark}
\newtheorem{example}[theorem]{Example}
\newtheorem{examples}[theorem]{Examples}
\newcommand\A{\mathcal{A}}
\newcommand{\Ger}[1]      {[\![#1]\!]}
\newcommand{\Lie}[1]      {[\!\![#1]\!\!]}
\newcommand\M{\mathcal{M}}
\newcommand{\W}{\mathcal{W}}
\renewcommand{\L}{\mathcal{L}}
\newcommand{\ca}{\mathcal}
\newcommand{\R}{\mathbb{R}}
\newcommand{\Z}{\mathbb{Z}}
\renewcommand{\S}{\mathcal{S}}
\newcommand\lie[1]{\mathfrak{#1}}
\newcommand{\g}{\lie{g}}
\renewcommand{\a}{\mathsf{a}}
\newcommand{\on}{\operatorname}
\newcommand{\Aut}{ \on{Aut} }
\newcommand{\Hom}{ \on{Hom}}
\renewcommand{\subset}{\subseteq}
\renewcommand{\ker}{ \on{ker}}
\newcommand\qu{/\kern-.7ex/} 
\newcommand{\lra}{\longrightarrow}
\newcommand{\hra}{\hookrightarrow}
\newcommand{\ra}{\rightarrow}
\renewcommand{\d}{{\mathrm{d}}}
\newcommand{\f}{\frac}
\newcommand{\p}{\partial}
\renewcommand{\l}{\langle}
\renewcommand{\r}{\rangle}
\newcommand{\eeq}{\end{eqnarray*}}
\newcommand{\beq}{\begin{eqnarray*}}
\newcommand{\wh}{\widehat}
\newcommand{\wt}{\widetilde}
\newcommand{\mf}{\mathfrak}
\newcommand{\rra}{\rightrightarrows}
\newcommand{\da}{\dasharrow}
\newcommand{\GL}{\on{GL}}
\newcommand{\LA}{{\mathcal{LA}}}
\newcommand{\VB}{{\mathcal{VB}}}
\newcommand{\DVB}{{\mathcal{DVB}}}
\newcommand{\Ra}{\Rightarrow}
\newcommand{\IM}{{\on{IM}}}
\newcommand{\lin}{{\on{lin}}}
\newcommand{\core}{{\on{core}}}
\newcommand{\CC}{{E}}
\newcommand{\cc}{{e}}
\newcommand{\ggamma}{{\varepsilon}}
\newcommand{\duer}{\mathbin{\raisebox{3pt}{\varhexstar}\kern-3.85pt{\rule{0.24pt}{5pt}}}\,}
\begin{document}
\sloppy
\title{The Weil algebra of a double Lie algebroid}

\author{Eckhard Meinrenken}
\address{University of Toronto, Department of Mathematics,
40 St George Street, Toronto, Ontario M4S2E4, Canada }
\email{mein@math.toronto.edu}

\author{Jeffrey Pike}
\address{University of Toronto, Department of Mathematics,
40 St George Street, Toronto, Ontario M4S2E4, Canada }
\email{jpike@math.toronto.edu}

\begin{abstract}
Given a double vector bundle $D\to M$, we define a bigraded bundle of algebras
$W(D)\to M$ called the `Weil algebra bundle'. The space $\W(D)$ of sections of this algebra bundle `realizes' the algebra of functions on the supermanifold $D[1,1]$. We describe in detail the relations between the Weil algebra bundles of $D$ and those of the double vector bundles $D',\ D''$ obtained from $D$ by duality operations. We show that $\VB$-algebroid  structures on $D$ are equivalent to horizontal or vertical differentials on two of the Weil algebras and a Gerstenhaber bracket on the third. Furthermore, Mackenzie's definition of a double Lie algebroid is equivalent to compatibilities between two such structures on any one of the three Weil algebras. In particular, we obtain a `classical' version of Voronov's result characterizing double Lie algebroid structures.  In the case 
that $D=TA$ is the tangent prolongation of a Lie algebroid, we find that $\W(D)$ is the  Weil algebra of the Lie algebroid, as defined by Mehta and Abad-Crainic. We show that the deformation complex of Lie algebroids, the theory of IM forms and IM multivector fields, and 2-term representations up to homotopy, all have natural interpretations in terms of our Weil algebras. 
\end{abstract}
\maketitle
\tableofcontents
\section{Introduction}\label{sec:intro}
A well-known result of Vaintrob \cite{vai:lie} states that the Lie algebroid structures on a given vector bundle $A\to M$ are equivalent to homological vector fields $Q$ on the graded supermanifold $A[1]$. In classical language, the smooth functions on $A[1]$ are the sections of the exterior bundle $\wedge A^*$, and Vaintrob's observation says that Lie algebroids $A$ are completely determined by the associated \emph{Chevalley-Eilenberg complex} 
\[ (\Gamma(\wedge A^*),\d_{CE}).\]
If $A$ is obtained by applying the Lie functor to a Lie groupoid $G\rra M$, then the Chevalley-Eilenberg complex is the infinitesimal counterpart to the complex of groupoid cochains on $G$; the \emph{van Est map} of Weinstein-Xu \cite{wei:ext} gives a cochain map from the groupoid complex to the Lie algebroid complex. 

We are interested in generalizations of this theory to \emph{double Lie algebroids}, as introduced by Mackenzie \cite{mac:dou2,mac:not,mac:ehr}. A double Lie algebroid is a double vector bundle 
\[ 
\xymatrix{ {D} \ar[r] \ar[d] & B \ar[d]\\
	A \ar[r] & M}
\]
for which all the sides are equipped with Lie algebroid structures, and with a certain compatibility 
condition between the  horizontal and vertical Lie algebroid structures.  As a first example, the tangent bundle of any Lie algebroid is a double Lie algebroid \cite[Example 4.6]{mac:not}. Double Lie algebroids are of  importance in second order differential geometry \cite{mac:dou,mac:dou1}. They arise as the infinitesimal counterparts of double Lie groupoids, and as such appear in Poisson geometry and related areas of mathematics.  Voronov \cite{vor:q}  proved that  a double Lie algebroid structure on $D$ is equivalent to two commuting homological  vector fields $Q_h,Q_v$, of bidegrees $(1,0)$ and $(0,1)$, on the bigraded supermanifold $D[1,1]$ obtained from $D$ by a parity shift in both vector bundle directions. Put differently, the algebra of functions 
on $D[1,1]$ is a double complex, generalizing the Chevalley-Eilenberg complex of a Lie algebroid. One of the aims of this paper to give a
coordinate-free, `classical' description of this double complex, avoiding the use of super-geometry.  If the double Lie algebroid is obtained by applying the Lie functor to a double Lie groupoid or $\LA$-groupoid, then this double complex will serve as the codomain of the corresponding van Est map. This application to van Est theory will be developed in a forthcoming paper; for the tangent prolongation of a Lie groupoid, one recovers the van Est map of Abad-Crainic \cite{aba:wei}.

To explain our construction, let $D$ be any double vector bundle with side bundles $A,B$. 
By results of Grabowski-Rotkiewicz \cite{gra:hig}, the compatibility of the two vector bundle structures reduces to the condition that the horizontal  and vertical scalar multiplications commute. The submanifold on which the 
two scalar multiplications coincide is itself a vector bundle over $M$, called the \emph{core} of $D$. 
We denote by 
\[ \CC=\on{core}(D)^*\] 
its dual bundle. 
There is a vector bundle $\wh{\CC}\to M$ whose space of sections consists of the smooth functions on $D$ that are \emph{double-linear}, i.e., linear both horizontally and vertically. It fits into an exact sequence 
\[ 0\to A^*\otimes B^* \stackrel{i_{\wh{\CC}}}{\lra} \wh{\CC}\to \CC\to 0,\]
where the map $\wh{\CC}\to \CC$ is given by 
the restriction of double-linear functions to the core, while
the map $i_{\wh{\CC}}$ is given by the multiplication of linear functions on $A,B$. 
Our definition of the Weil algebra bundle is as follows:
 \medskip

{\bf Definition.}
The \emph{Weil algebra bundle} of the double vector bundle $D$ is the bundle (over $M$) of  bigraded super-commutative algebras
\[ W(D)=(\wedge A^*\otimes \wedge B^*\otimes \vee \wh{\CC})/\sim\ ,\]
taking the quotient by the (fiberwise) ideal generated by elements of the form 
\[
\alpha \beta- i_{\wh{\CC}}(\alpha\otimes \beta)\]
for $(\alpha,\beta)\in A^*\times_M B^*$.  Here, generators $\alpha\in A^*$ have bidegree $(1,0)$, generators $\beta\in B^*$ have bidegree $(0,1)$, and generators $\wh{\cc}\in \wh{\CC}$ have bidegree $(1,1)$.  The bigraded super-commutative algebra $\W(D)=\Gamma(W(D))$ will be called the \emph{Weil algebra} of $D$. 

\bigskip

Double vector bundles come in triples, with cyclic permutation of the roles of the vector bundles $A,B,\CC$ over $M$: 
\[ 
\xymatrix{ {D} \ar[r] \ar[d] & B \ar[d]\\
	A \ar[r] & M}\ \ \ \ \ \ \ \ 
	\xymatrix{ {D'} \ar[r] \ar[d] & \CC \ar[d]\\
		B \ar[r] & M}\ \ \ \ \ \ \ 
\xymatrix{ {D''} \ar[r] \ar[d] & A \ar[d]\\
		\CC \ar[r] & M}
\]
The double vector bundle $D'$ is (essentially) obtained by taking the dual of $D$ as a vector bundle over $B$ and  interchanging the roles of horizontal and vertical structures;  similarly $D''\cong (D')'$. (One has $(D'')'=D$.) See Section \ref{sec:dvb} for a more precise description. 
Accordingly, we have three Weil algebras
\[ \W(D),\ \  \W(D'),\ \ \W(D'').\] 

A linear Lie algebroid structure of $D$ as a vector bundle over $A$, also known as a \emph{$\VB$-algebroid} structure of $D$ over $A$,  is equivalent to a double-linear Poisson structure on  $D''$, and also to a $\VB$-algebroid structure of $D'$ over $\CC$.
(See \cite{mac:dou}.) Section \ref{sec:PDVB} explains in detail how these structures are expressed in terms of the Weil algebras. In particular, one finds:
\bigskip

\noindent{\bf Theorem I.} Let $D$ be a double vector bundle. Then the following are equivalent: 
\begin{enumerate}
\item a $\VB$-algebroid structure of $D$ over $A$,
\item a vertical differential $\d_v$ on $\W(D)$,
\item a horizontal differential $\d_h'$ on $\W(D')$, 
\item a Gerstenhaber bracket (of bidegree $(-1,-1)$) on $\W(D'')$. 
\end{enumerate}
\bigskip
There is a wealth of identities relating the differentials and brackets on the Weil algebras, through duality pairings and contraction operators. Aspects of this `Cartan calculus'  are explored in Section \ref{sec:relations}. \medskip

Using cyclic permutations of $D,D',D''$, one has similar results when starting out with a $\VB$-algebroid structure of $D$ over $B$ or with a double-linear Poisson structure on $D$. Section \ref{sec:DLA} deals with 
the situation that $D$ has any \emph{two} of these structures; in particular we prove:
\bigskip

\noindent{\bf Theorem II.} Let $D$ be a double vector bundle, with $\VB$-algebroid structures over 
$B$ as well as over $A$. Then the following are equivalent: 
\begin{enumerate}
\item $D$ is a double Lie algebroid, 
\item the horizontal and vertical differentials $\d_h,\d_v$ on $\W(D)$ commute,
\item the horizontal differential $\d_h'$ on $\W(D')$ is a derivation of the Gerstenhaber bracket, 
\item the vertical differential $\d_v''$ on  $\W(D'')$ is a derivation of the Gerstenhaber bracket. 
\end{enumerate}
\bigskip

If one uses the identification of $\W(D)$ with functions on the supermanifold $D[1,1]$, 
the equivalence (a) $\Leftrightarrow$ (b) translates into Voronov's result \cite{vor:q} mentioned above; however, we will give a direct proof of this result, not using any super-geometry. More precisely, given vertical and horizontal $\VB$-algebroid structures, the proof will give an explicit relationship between their compatibility (or lack thereof) and the super-commutator of the two differentials. 
\bigskip

If $D=T\g$ is the tangent bundle of a Lie algebra, 
viewed as a double Lie algebroid with $A=\g,\ B=0,\ \CC=\g^*$, then the three Weil algebras are 
\[ \W(T\g)=\wedge\g^*\otimes \vee\g^*,\ \ \W((T\g)')=\wedge \g\otimes \vee \g,\ \ 
\W((T\g)'')=\wedge \g^*\otimes  \wedge \g.\]
Here $\W(T\g)$ is the standard Weil algebra \cite{we:oe1}, with $\d_h$ the Chevalley-Eilenberg differential for the $\g$-module $\vee\g^*$ and $\d_v$ the Koszul differential. The differential $\d_h'$ on $\W((T\g)')$ is the Koszul differential, and the Gerstenhaber bracket is a natural extension of the Lie bracket of the semi-direct product $\g\ltimes \g$ for the adjoint action. The differential $\d_v''$ on $\W((T\g)'')$ is the Chevalley-Eilenberg differential for the $\g$-module $\wedge\g$, and the Gerstenhaber bracket extends the pairing  between $\g^*$ and $\g$ (this is a special case of Kosmann-Schwarzbach's \emph{big bracket} \cite{kos:jac,kos:poi1}).
More generally, if $D=TA$ is the tangent bundle of a Lie algebroid $A$, the double complex 
$\W(TA)$ coincides with the \emph{Weil algebra of the Lie algebroid $A$}, as defined by Mehta \cite{meh:sup} using  super-geometry, and by Abad-Crainic \cite{aba:wei} in a classical framework. 
\medskip

Returning to a general double vector bundle, let $\wedge_AD$ and $\wedge_B D$ be the exterior bundles of $D$ viewed as vector bundles over $A$ and $B$, respectively. By considering the homogeneity of sections in the $A$-direction, one can define  distinguished subspaces of  of \emph{linear sections} of these bundles.  In Section \ref{sec:wedge}, we show that these have descriptions in terms of the Weil algebras:
\[  \Gamma_\lin(\wedge^\bullet_A D,A)=\W^{\bullet,1}(D''),\ \ \ 
\Gamma_\lin(\wedge^\bullet_B D,B)=\W^{1,\bullet}(D').\]

%
As a consequence of Theorem I above, a double-linear Poisson structure on $D$ determines a degree $1$ differential on these spaces, while a $\VB$-algebroid structure on $D$ over $A$ 
(respectively over $B$) determines a Gerstenhaber bracket. 
For the cotangent and tangent bundles of a vector bundle $V\to M$ (with the $\DVB$ structures as in Section \ref{subsec:examples}), some of these spaces 
have well-known interpretations:
\[ \W^{1,\bullet}(T^*V)=\mf{X}_\lin^\bullet(V),\  \ \W^{1,\bullet}(TV)=\Omega^\bullet_\lin(V). 
\]
Here $\mf{X}_\lin(V)$ are linear multi-vector fields with  the Schouten bracket, while 
$\Omega_\lin(V)$ are linear differential forms with the de Rham 
differential.
If $V$ is a Lie algebroid over $M$, one also has  horizontal differentials on $\W(T^*V)$ and on $\W(TV)$, 
coming from the $\VB$-algebroid structures of $T^*V$ over $V^*$ and $TV$ over $TM$, respectively. In Section \ref{sec:Applications}, we will see that
\[ \W^{1,\bullet}(T^*V)\cap \ker(\d_h)=\mf{X}_{\on{IM}}^\bullet(V),\ \ \ \W^{1,\bullet}(TV)\cap \ker(\d_h)=\Omega_{\on{IM}}^\bullet(V),\]
the space of \emph{infinitesimally multiplicative} multi-vector fields  \cite{igl:uni}
and  infinitesimally multiplicative differential forms \cite{bur:mul}, respectively. On the other hand, the Lie algebroid structure of $V$ over $M$ also induces a $\VB$-algebroid structure on 
$T^*V^*$ over $V^*$, and the corresponding 
 differential $\d_v$ on $\W^{1,\bullet}(T^*V^*)=\mf{X}_\lin(V^*)$ is the Poisson differential for the resulting Poisson structure on $V^*$,  identifying this space with the  \emph{deformation  complex} of Crainic-Moerdijk \cite{cra:def}. (This may also be seen as a consequence of a result of Cabrera-Drummond \cite{cab:hom} for the $\VB$-algebroid $T^*V^*$.)  Further applications relate the Weil algebra to the Fr\"olicher-Nijenhuis \cite{fro:th} and Nijenhuis-Richardson \cite{nij:def} brackets, to \emph{matched pairs of Lie algebroids} 
\cite{mok:mat}, and to the notion of \emph{representations up to homotopy} \cite{aba:rep,gra:dla,gra:vba}. 

Let us finally remark that there is no principal difficulty in generalizing the constructions of this paper to $n$-fold vector bundles, however, the theory requires careful 
bookkeeping due to the presence of multiple cores and a more complicated structure group 
(see e.g. \cite{gra:dua}).\medskip

After completion of this work, Madeleine Jotz Lean informed us that she had independently obtained 
a geometric construction of the Weil algebra for `split' double Lie algebroids, using the ideas from  \cite{gra:dla,jot:lie2}. 

\bigskip
\noindent{\bf Acknowledgments.} It is a pleasure to thank Francis Bischoff, Henrique Bursztyn, Madeleine Jotz Lean, Yvette Kosmann-Schwarzbach, Kirill Mackenzie, and Luca Vitagliano for fruitful discussions and comments related to this paper. We also thank the referees for numerous helpful suggestions.
\bigskip

\section{Double vector bundles}\label{sec:dvb}
\subsection{Definitions}
The concept of a double vector bundle ($\DVB$) was introduced by Pradines \cite{pra:rep,pra:fib}
in terms of local charts, and later reformulated as manifolds with `commuting' vector bundle structures \cite{mac:dou}.  We shall work with an elegant approach due to Grabowski-Rotkiewicz \cite{gra:hig}, who observed that vector bundle structures on manifolds $V$ are completely determined by their scalar multiplications $\kappa_t\colon V\to V,\ t\in\R$, and vector bundle morphisms $V\to V'$ are exactly the smooth maps intertwining scalar multiplications. Similarly, consider a manifold $D$ with two vector bundle structures, 
referred to as vertical and horizontal, respectively. The corresponding scalar multiplications are denoted $\kappa_t^v,\,\kappa_s^h$. Then $D$ is called a \emph{double vector bundle} if 
\begin{equation}\label{eq:kappacommute}
\kappa_t^v\,\kappa_s^h=\kappa_s^h\kappa_t^v
\end{equation}
for all $t,s\in \R$. A  \emph{morphism of double vector bundles} ($\DVB$ morphism) $\varphi\colon D\to D'$
 is a smooth map  intertwining both the horizontal and the vertical scalar multiplications. The condition \eqref{eq:kappacommute} implies a list of compatibilities with the horizontal and vertical addition operations, such as the \emph{interchange property} \cite{mac:dou}, however, we will not need these in this paper. 

Let $A=\kappa_0^v(D)$ and $B=\kappa_0^h(D)$ be the base submanifolds for the two vector bundle structures; each of these is a vector bundle 
over the submanifold $M=\kappa_0^h\kappa_0^v(D)$. The situation is depicted 
by a diagram
\begin{equation}\label{eq:dvb}
\xymatrix{ {D} \ar[r] \ar[d] & B \ar[d]\\
	A \ar[r] & M}
\end{equation}
One calls $A,B$ the \emph{side bundles}, and $M$ the \emph{base} of the double vector bundle. 
 The projections to the side bundles combine into a $\DVB$ morphism 
 \begin{equation}\label{eq:varphi}
  \varphi\colon D\to A\times_M B,\end{equation}
 where $A\times_M B$ has the double vector structure given by the two obvious scalar multiplications. (See also Section \ref{subsec:examples} below.)  As stated in \cite[Section 4]{gra:hig}, 
 this map is a surjective submersion. (Proof: At points 
 $m\in M\subset D$, the tangent map $T_m\varphi\colon T_mD\to  T_m(A\times_M B)=T_mA\oplus T_mB$ is surjective, with right inverse given by the tangent maps to the inclusions $A\hra D,\ B\hra D$.  Hence $\varphi$ is a submersion on a neighborhood of $M$ in $D$, and 
 by equivariance with respect to $\kappa_t^h \kappa_s^v$ it is a submersion everywhere.)
 It follows that the preimage of the base under this map  is a submanifold 
 \[ \core(D)=\varphi^{-1}(M)\subset D\]
 called the \emph{core} of the double vector bundle. It admits an alternative characterization as the submanifold on which the two scalar multiplications coincide \cite{gra:hig}:
 \begin{equation}\label{eq:coincide}
 \core(D)=\{d\in D|\ \forall t\in\R\colon \kappa_t^h(d)=\kappa_t^v(d)\}.
\end{equation}
The restriction of $\kappa_t^h$ (or, equivalently, of $\kappa_t^v$) is the scalar multiplication 
 for a vector bundle structure on $\core(D)\to M$. The core may be regarded as a subbundle of $D$, for each of the two vector bundle structures.  
  From now on,  we will reserve the notation  
  \[ \CC=\core(D)^*\]  
  for the \emph{dual bundle} of the core. 
  
 \begin{remark}
 We would prefer the letter $C$, since we will make extensive use of a cyclic symmetry interchanging the bundles $A, B$, and $\on{core}(D)^*$; see  Section \ref{subsec:triality} below. However, since $C$ is commonly used to denote the core itself, 
this might cause confusion with the existing literature.  
 \end{remark}

  
\subsection{Examples}\label{subsec:examples}
Here are some examples of double vector bundles:
\begin{enumerate}
\item 
\label{it:ex1}
If $A,B,\CC$ are vector bundles over $M$, then $A\times_M B\times_M \CC^*$ is a double vector bundle
\[
\xymatrix{ {A\times_M B\times_M \CC^*} \ar[r] \ar[d] & B \ar[d]\\
	A \ar[r] & M}
	\]
with core given by $\CC^*$. The horizontal and vertical scalar multiplications are given by 
$\kappa_t^h(a,b,\ggamma)=(ta,b,t\ggamma)$ and $\kappa_t^v(a,b,\ggamma)=(a,tb,t\ggamma)$, respectively. In particular, any vector bundle $V\to M$ can be regarded as a double vector bundle in three ways, 
by playing the role of $A,B$ or $\CC^*$. 
\item\label{it:ex2}
 If $V\to M$ is any vector bundle, then its tangent bundle and cotangent bundle are double vector bundles 
\[ \xymatrix{ {TV} \ar[r] \ar[d] & TM \ar[d]\\
	V \ar[r] & M }         \ \ \ \            \ \ \ \     \xymatrix{ {T^*V} \ar[r] \ar[d] & V^* \ar[d]\\
		V \ar[r] & M }\]  
with  $\core(TV)=V$ (thought of as the vertical bundle of $TV|_M$) and 
$\core(T^*V)=T^*M$. 
The $\DVB$ structure on $TV$ appeared in \cite{pra:fib}; the $\DVB$ structure on $T^*V$ was first discussed in \cite{mac:lie}. 
\item\label{it:ex3}
Suppose $V\to M$ is a subbundle of a vector bundle 
$W\to Q$. Then the normal and conormal bundle of $V$ in $W$ are double vector bundles 
\[ \xymatrix{ \nu(W,V) \ar[r] \ar[d] & \nu(Q,M)\ar[d]\\
V	\ar[r] & M}\ \ \ \ \  
	\xymatrix{ \nu^*(W,V) \ar[r] \ar[d] & 	(W|_M/V)^* \ar[d]\\
V	\ar[r] & M }
	\] 
with 
$\core(\nu(W,V))=W|_M/V$ and 
$ \core(\nu^*(W,V))=\nu^*(Q,M)$. 
%
%
\item\label{it:ex4}
Let $M_1,M_2$ be submanifolds of a manifold $Q$, 
with clean intersection \cite{ho:an3}. Then there is a \emph{double normal bundle}
with base $M=M_1\cap M_2$, 
\[  \xymatrix{ \nu(Q,M_1,M_2) \ar[r] \ar[d] & \nu(M_1,M) \ar[d]\\
	\nu(M_2, M)\ar[r] & M}\]  
with core $ TQ|_M/(TM_1|_M+TM_2|_M)$. (Note that the core is trivial if and only if the intersection is transverse.) The double normal bundle can be defined as an iterated normal bundle $\nu\big(\nu(Q,M_2),\nu(M_1,M) \big)$, or more symmetrically, as follows: Give $C^\infty(Q)$ the bi-filtration by the order of vanishing on the two submanifolds, and let $\S$ be the associated bi-graded algebra. Then the double normal bundle is recovered as the character spectrum $\nu(Q,M_1,M_2)=\Hom_{\on{alg}}(\S,\R)$. In other words, $\S$ is the algebra $\S(D)$ of double-polynomial functions on $D=\nu(Q,M_1,M_2)$. 
%
More details on this example will be given in a forthcoming work. 
\end{enumerate}
Given a double vector bundle $D$ as in \eqref{eq:dvb}, one can switch the roles of the horizontal and vertical scalar multiplications. This defines the \emph{flip}
 \begin{equation}\label{eq:dvb_flip}
 \xymatrix{ \on{flip}(D) \ar[r] \ar[d] & A \ar[d]\\
 	B \ar[r] & M}
 \end{equation}
A more interesting way of obtaining new double vector bundles is by taking dual bundles,  horizontally or vertically. We will denote these by $D^h$ and $D^v$, respectively: 
\[
\xymatrix{ {D}^h \ar[r] \ar[d] & B \ar[d]\\
	\CC \ar[r] & M}\ \ \ \ \ \ \ \ \ \ \ \ \ \ \xymatrix{ {D}^v \ar[r] \ar[d] & \CC \ar[d]\\
	A \ar[r] & M}\]
Also common are the notations $D\duer B,\ D\duer A$ (e.g., \cite{mac:gen}) or 
$D^*_B,\ D^*_A$ (e.g., \cite{gra:vba}).
The horizontal and vertical scalar multiplication on $D^h$ 
are characterized by the property 
\[ \l \kappa_t^h(\phi),d \r=t\l\phi,d\r=
\l \kappa_t^v(\phi),\kappa_t^v(d)\r,\]
for $\phi\in D^h,\ d\in D$ in the same fiber over $B$. 
Similarly, for $\psi\in D^v,\ d\in D$ in the same fiber over $A$, 
\[ \l \kappa_t^h(\psi),\kappa_t^h(d)\r=t\l\psi,d\r=
 \l \kappa_t^v(\psi), d\r.\]
One finds $\core(D^h)=A^*,\ \core(D^v)=B^*$. Clearly, $\on{flip}(D^v)=\on{flip}(D)^h$. In Example \ref{subsec:examples}\eqref{it:ex2}, $T^*V$ is the vertical dual of $TV$, while in Example \ref{subsec:examples}\eqref{it:ex3}, $\nu^*(W,V)$ is the vertical dual of $\nu(W,V)$.

\subsection{Splittings}
Let $D$ be a double vector bundle over $M$, with side bundles $A,B$ and with $\core(D)=\CC^*$.
A \emph{splitting} (or \emph{decomposition}) of $D$ is a $\DVB$ isomorphism 
\[ D\to A\times_M B\times_M \CC^*,\]
inducing the identity on $A,B,\CC^*$. 
Here the $\DVB$ structure on the right hand side is as in Example \ref{subsec:examples}\eqref{it:ex1}. 

\begin{example}\label{ex:TV1}
Let $V\to M$ be a vector bundle, and $TV$ its tangent bundle regarded as a $\DVB$. 
A splitting  of $TV$  is equivalent to a linear connection $\nabla$ on $V$. 
(Cf. \cite[Example 2.12]{gra:vba}.) 
\end{example}

\begin{theorem}\label{th:splittings}
	Every double vector bundle admits a splitting. 
	\end{theorem}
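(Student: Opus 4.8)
The plan is to build a splitting in two stages, first choosing auxiliary linear connections to produce \emph{some} $\DVB$ isomorphism $D \to A \times_M B \times_M \CC^*$, and then correcting it by a $\DVB$ automorphism of the model so that it induces the identity on the three constituent bundles $A$, $B$, $\CC^*$. For the first stage I would work locally over $M$: trivialize $A$, $B$, and $\core(D)$ over an open cover, and use the surjective submersion $\varphi\colon D \to A\times_M B$ from \eqref{eq:varphi} together with the fact that $\core(D) = \varphi^{-1}(M)$ is a subbundle in each direction. Over a contractible chart one can pick a horizontal complement (a section of $\varphi$ compatible with, say, the horizontal vector bundle structure), which identifies $D$ with $A\times_M B\times_M \CC^*$ over that chart; the content is then to patch these local identifications, which is where connections enter. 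Concretely, a splitting of $D$ over $A$ (as a vector bundle over the base $A$, with $\core(D)$ as a sub) is the same as a choice of linear splitting of the short exact sequence of vector bundles over $A$, and such splittings always exist globally once one averages local ones with a partition of unity on $M$ — the space of splittings is an affine space modeled on sections of a vector bundle over $M$.

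The cleanest way to organize this is to reduce to Example \ref{ex:TV1}, or rather to imitate it. A $\DVB$ isomorphism $D \cong A\times_M B\times_M \CC^*$ is equivalent to a pair of compatible "linear connection"-type data: using the projection $D \to A$ (a vector bundle over $M$ in the horizontal direction after restriction, but more usefully, $D\to B$ as a vector bundle whose total space is $D$), one first splits $D$ as a vector bundle over $B$ into the pullback of $A$ plus the pullback of $\core(D)=\CC^*$; this uses an ordinary vector bundle splitting over $B$, which exists by a partition of unity on $B$. This gives a non-canonical identification of the manifold $D$ with $A\times_M B \times_M \CC^*$ as vector bundles over $B$. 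One must then check that after this identification the \emph{horizontal} scalar multiplication $\kappa^h_t$ on $D$ need not be the model one, but differs from it by a term valued in $\Hom(A, \CC^*)$ lifted along $B$; exploiting $\kappa^h_t\kappa^v_s = \kappa^v_s\kappa^h_t$ one shows this correction term is actually pulled back from $M$ (i.e. it is linear in the $B$-direction in a controlled way), and one absorbs it by a further fiberwise-linear change of coordinates. After these two adjustments both scalar multiplications match the model's, so we have a $\DVB$ isomorphism.

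Finally, the resulting $\DVB$ isomorphism $\Psi\colon D \to A\times_M B\times_M\CC^*$ induces isomorphisms $\psi_A\colon A\to A$, $\psi_B\colon B\to B$ on the side bundles and $\psi_{\CC^*}\colon\CC^*\to\CC^*$ on the core, each of which is a vector bundle automorphism over $M$. The key point is that the group of $\DVB$ automorphisms of $A\times_M B\times_M\CC^*$ surjects onto $\Aut(A)\times\Aut(B)\times\Aut(\CC^*)$: given $(g_A,g_B,g_{\CC^*})$ the map $(a,b,\ggamma)\mapsto (g_A a, g_B b, g_{\CC^*}\ggamma)$ intertwines $\kappa^h_t$ and $\kappa^v_t$ trivially. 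Composing $\Psi$ with the inverse of such an automorphism corrects the induced maps to the identity, yielding a genuine splitting. I expect the main obstacle to be the middle step: verifying that the discrepancy between $\kappa^h_t$ on $D$ and on the model, after the first vector bundle splitting over $B$, really is of the restricted form that can be killed by a linear coordinate change — this is exactly where the interchange/commutation property \eqref{eq:kappacommute} does the work, and it must be unpacked carefully (or replaced by a clean global averaging argument over $M$, at the cost of a slightly less explicit construction). Everything else is standard partition-of-unity and affine-space-of-splittings reasoning.
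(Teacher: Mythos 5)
Your overall architecture is close to the paper's proof in Appendix \ref{app:technical}: choose an ordinary vector-bundle splitting of $D$ in one of the two directions, then repair equivariance with respect to the other scalar multiplication. The gap is in your middle step, and the mechanism you propose for it does not work. After splitting the canonical exact sequence $0\to B\times_M\CC^*\to D\to A\times_M B\to 0$ of vector bundles over $B$ by an arbitrary section $s$ (this already intertwines $\kappa_t^h$, so the discrepancy sits in $\kappa_t^v$, not $\kappa_t^h$ as you wrote), the transported second scalar multiplication has the form $(a,b,\ggamma)\mapsto(a,tb,t\ggamma+\delta_t(a,b))$, where $\delta_t(a,b)\in\CC^*$ measures the failure of $s$ to be $\kappa_t^v$-equivariant. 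The commutation relation \eqref{eq:kappacommute} does force $\delta_t$ to be linear in $a$, and the group law forces the cocycle identity $\delta_{tt'}(a,b)=t\,\delta_{t'}(a,b)+\delta_t(a,t'b)$; but it does \emph{not} force $\delta_t$ to be pulled back from $M$, nor linear in the $B$-direction. For instance, if $s$ differs from a genuine $\DVB$ splitting by $\sigma(a,b)=q(b)\,\omega(a)$ with $q$ a function on $B$ homogeneous of degree $2$ and $\omega\in\Gamma(A^*\otimes \CC^*)$, then $\delta_t(a,b)=(t-t^2)\,q(b)\,\omega(a)$, which is genuinely quadratic in $b$. What is true, and what actually has to be proved, is that every such cocycle is a coboundary, $\delta_t(a,b)=\sigma(a,tb)-t\,\sigma(a,b)$ for some $\sigma$ linear in $a$ but depending smoothly on $b$; replacing $s$ by $s+\sigma$ then kills the discrepancy. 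Establishing this requires a genuine linearization argument (a first-order Taylor expansion along $B$, in the spirit of the homogeneity-structure techniques of \cite{gra:hig}), and this is exactly the point you flag as "to be unpacked carefully" but do not carry out; the heuristic you offer in its place is false.

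The paper disposes of this point in one stroke: it chooses the vector-bundle splitting (over $A$, in its conventions) so as to extend the canonical splitting $B\to B\oplus\CC^*$ along $M$, and then applies the normal bundle functor $\nu(\cdot\,,B)$; the canonical isomorphism $\nu(D,B)\cong D$ turns the resulting map into one that is automatically linear in the $B$-direction as well, hence a $\DVB$ morphism. Your first and third steps are fine: the vector-bundle splitting exists by a partition of unity, and the induced automorphisms of $A$, $B$, $\CC^*$ can be corrected because $\Aut(D_0)$ surjects onto $\GL(A_0)\times\GL(B_0)\times\GL(\CC_0)$ (Lemma \ref{lem:gaugegroup}); moreover the third step becomes unnecessary if, as in the paper, the initial splitting is chosen to restrict to the canonical one over $M$. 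One further caveat: a partition of unity lets you average \emph{local} $\DVB$ splittings into a global one (the space of splittings being affine over $\Gamma(A^*\otimes B^*\otimes\CC^*)$) only once local splittings are known to exist, which from the Grabowski--Rotkiewicz definition is the same problem locally; so the linearization step cannot be avoided by patching.
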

This result was stated in \cite{gra:vba} with a reference to \cite{gra:hig}; a detailed proof was given in the Ph.D. thesis of del Carpio-Marek \cite{del:geo}. (The recent paper \cite{jot:mult} by Heuer and Jotz Lean generalizes this result to $n$-fold vector bundles.) 
In the appendix, we will present a somewhat shorter argument. 

Combining the existence of splittings with local trivializations of $A,B,\CC^*$, we see in particular that every double vector bundle $D$ is a  fiber bundle over its base manifold $M$, with bundle projection $q=\kappa_0^h\kappa_0^v\colon D\to M$. Its fibers $D_m=q^{-1}(m)$ are \emph{double vector spaces} (i.e., double vector bundles over a point). 


\subsection{The associated principal bundle}\label{subsec:associated}
Given non-negative integers $n_1,n_2,n_3\in \Z_{\ge 0}$, put 
$A_0=\R^{n_1},\ B_0=\R^{n_2},\ \CC_0=\R^{n_3}$, 
and let 
$D_0$ be the double vector space 
\begin{equation}\label{eq:d0}
 D_0=A_0\times B_0\times \CC_0^*\end{equation}
with  $ \kappa_t^h(a,b,\ggamma)=(a,tb,t\ggamma)$ and 
$\kappa_t^v(a,b,\ggamma)=(ta,b,t\ggamma)$. (Cf. Example \ref{subsec:examples}\eqref{it:ex1}.) 
For any vector space $V_0$, we denote by $\GL(V_0)$ its general linear group; it comes with standard representations on $V_0$ and on the dual space $V_0^*$. Thus, $\GL(A_0)\times \GL(B_0)\times \GL(\CC_0)$ has a standard action on the double vector space \eqref{eq:d0}.

 \begin{lemma}\label{lem:gaugegroup} \cite{gra:dua}
 The group of $\DVB$ automorphisms of $D_0=A_0\times B_0\times \CC_0^*$ is 
 a semi-direct product
 \begin{equation}\label{eq:semidirect}
  \Aut(D_0)=\big(\GL(A_0)\times \GL(B_0)\times \GL(\CC_0)\big)\ltimes (A_0^*\otimes B_0^*\otimes \CC_0^*).\end{equation}
 with the standard action of $\GL(A_0)\times \GL(B_0)\times \GL(\CC_0)$, 
 and with $\omega\in A_0^*\otimes B_0^*\otimes \CC_0^*$ acting as
 \begin{equation}\label{eq:omegaaction} (a,b,\ggamma)\mapsto (a,b,\ggamma+\omega(a,b)).\end{equation}
 \end{lemma}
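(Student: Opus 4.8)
The plan is to first describe \emph{every} $\DVB$ morphism $D_0\to D_0$ in closed form, and only afterwards single out the invertible ones. Write a candidate morphism in components as $\varphi=(\varphi_A,\varphi_B,\varphi_C)$, with $\varphi_A,\varphi_B,\varphi_C$ valued in $A_0,B_0,\CC_0^*$ and depending smoothly on $(a,b,\ggamma)\in D_0$. Imposing $\varphi\circ\kappa^h_t=\kappa^h_t\circ\varphi$ for all $t\in\R$ unwinds to $\varphi_A(a,tb,t\ggamma)=\varphi_A(a,b,\ggamma)$, $\varphi_B(a,tb,t\ggamma)=t\,\varphi_B(a,b,\ggamma)$ and $\varphi_C(a,tb,t\ggamma)=t\,\varphi_C(a,b,\ggamma)$, while $\varphi\circ\kappa^v_t=\kappa^v_t\circ\varphi$ gives the companion relations $\varphi_A(ta,b,t\ggamma)=t\,\varphi_A(a,b,\ggamma)$, $\varphi_B(ta,b,t\ggamma)=\varphi_B(a,b,\ggamma)$ and $\varphi_C(ta,b,t\ggamma)=t\,\varphi_C(a,b,\ggamma)$.

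The only piece of calculus I would use is: a smooth map $f$ between vector spaces with $f(tx)=t\,f(x)$ for all $t\in\R$ is linear (differentiate in $t$ and put $t=0$, which gives $f=Df|_0$), and one with $f(tx)=f(x)$ for all $t$ is constant. Setting $t=0$ in the degree-zero relations shows that $\varphi_A$ depends on $a$ alone and $\varphi_B$ on $b$ alone; the surviving degree-one relations then force $\varphi_A(a,b,\ggamma)=g_A(a)$ and $\varphi_B(a,b,\ggamma)=g_B(b)$ with $g_A\in\End(A_0)$, $g_B\in\End(B_0)$. For the core component I would fix $a$, observe that $(b,\ggamma)\mapsto\varphi_C(a,b,\ggamma)$ is homogeneous of degree one, hence linear, say $\varphi_C(a,b,\ggamma)=\omega_a(b)+M_a(\ggamma)$ with smooth dependence on $a$, and then re-insert this into the relations coming from $\kappa^v_t$. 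A second application of the homogeneity lemma shows that $a\mapsto\omega_a$ is linear, i.e.\ is governed by a bilinear map $\omega\in A_0^*\otimes B_0^*\otimes\CC_0^*$, and that $M_a$ is independent of $a$, a fixed endomorphism of $\CC_0^*$. A short converse check shows every map of the resulting shape is a $\DVB$ morphism, so $\DVB$ morphisms $D_0\to D_0$ are parametrized by quadruples $(g_A,g_B,g_C,\omega)$; here, once we pass to the invertible case below, the endomorphism of $\CC_0^*$ is recorded as the standard (contragredient) action $\ggamma\mapsto g_C\cdot\ggamma$ of the corresponding element $g_C\in\GL(\CC_0)$.

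Next I would work out the composition law in these coordinates, namely $(g_A',g_B',g_C',\omega')\circ(g_A,g_B,g_C,\omega)=\big(g_A'g_A,\ g_B'g_B,\ g_C'g_C,\ \omega''\big)$ with $\omega''(a,b)=g_C'\cdot\omega(a,b)+\omega'(g_A a,g_B b)$. Three consequences drop out immediately: $\varphi$ is invertible exactly when $g_A,g_B,g_C$ are all invertible (and an inverse is then written down directly); the map $(g_A,g_B,g_C,\omega)\mapsto(g_A,g_B,g_C)$ is a surjective homomorphism $\Aut(D_0)\to\GL(A_0)\times\GL(B_0)\times\GL(\CC_0)$ admitting the homomorphic section $(g_A,g_B,g_C)\mapsto(g_A,g_B,g_C,0)$; and its kernel $\{(\on{id},\on{id},\on{id},\omega)\}$ is the additive group $A_0^*\otimes B_0^*\otimes\CC_0^*$, whose element $(\on{id},\on{id},\on{id},\omega)$ acts on $D_0$ precisely as in \eqref{eq:omegaaction}. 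Together these exhibit $\Aut(D_0)$ as the asserted semidirect product.

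The one step that calls for care — though even here the issue is bookkeeping rather than substance — is checking that the conjugation action of the section's image on the kernel is the \emph{standard} action of $\GL(A_0)\times\GL(B_0)\times\GL(\CC_0)$ on $A_0^*\otimes B_0^*\otimes\CC_0^*$. Conjugating $(\on{id},\on{id},\on{id},\omega)$ by $(g_A,g_B,g_C,0)$ sends $\omega$ to the bilinear map $(a,b)\mapsto g_C\cdot\omega\big(g_A^{-1}a,\ g_B^{-1}b\big)$, and one verifies that for a decomposable $\omega=\alpha\otimes\beta\otimes\xi$ this equals $(g_A\cdot\alpha)\otimes(g_B\cdot\beta)\otimes(g_C\cdot\xi)$, i.e.\ the tensor product of standard $\GL$-representations (contragredient in the $A_0^*$- and $B_0^*$-factors, standard in the $\CC_0^*$-factor); this is what fixes the identification $\GL(\CC_0^*)\cong\GL(\CC_0)$ and finishes the proof. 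One could equally run the argument on the bigraded algebra $\S(D_0)$ of double-polynomial functions, where $\DVB$ automorphisms correspond to bigrading-preserving algebra automorphisms and the same bookkeeping reappears as the description of how such an automorphism acts on the generators of bidegrees $(1,0)$, $(0,1)$ and $(1,1)$.
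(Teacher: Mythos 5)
Your argument is correct and complete. Note that the paper itself offers no proof of this lemma --- it is quoted from Gracia-Saz--Mackenzie \cite{gra:dua} --- so there is no in-text argument to compare against; but what you write is exactly the natural proof in the Grabowski--Rotkiewicz framework the paper adopts: a $\DVB$ morphism is by definition a smooth map equivariant for $\kappa^h_t$ and $\kappa^v_t$, and the Euler-type homogeneity lemma (smooth + degree-one homogeneous $\Rightarrow$ linear, degree-zero homogeneous $\Rightarrow$ constant) forces the components into the stated polynomial normal form. All the subsequent steps check out: the composition law, the criterion for invertibility, the split exact sequence exhibiting the semidirect product, and the conjugation computation identifying the action on the kernel as the standard (contragredient-on-duals) action of $\GL(A_0)\times\GL(B_0)\times\GL(\CC_0)$ on $A_0^*\otimes B_0^*\otimes\CC_0^*$, consistent with the paper's convention that $\GL(V_0)$ carries ``standard representations on $V_0$ and on $V_0^*$.'' Your closing remark that one could run the same bookkeeping on the bigraded algebra $\S(D_0)$ of double-polynomial functions is also apt, since that is precisely the dual picture the paper develops in Section~3.
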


 Given a double vector bundle $D$, take $n_1,n_2,n_3$ to be the ranks of the bundles $A,B,E$. An isomorphism  of double vector spaces $D_m\to D_0$ will be called a
 \emph{frame of $D$ at $m\in M$}.  Clearly, any two frames are related by the action of $\Aut(D_0)$.  We define the \emph{frame bundle} of $D$  to be the principal $\Aut(D_0)$-bundle $ P\to M$ whose fibers $P_m$ are the set of frames at $m$. 

\begin{remark}
In \cite{lan:dou}, the semi-direct product \eqref{eq:semidirect} 
is regarded as a \emph{double Lie group}, and a general  theory of double principal bundles for double Lie groups is developed. 
\end{remark}
Many constructions with double vector bundles may be expressed 
in terms of bundles associated to $P$. In particular, $D$ is itself an associated bundle for the action \eqref{eq:omegaaction},
\begin{equation}\label{eq:associated} D=(P\times  D_0)/\Aut(D_0).\end{equation}
(Pradines' original definition \cite{pra:rep} of double vector bundles was in terms of local trivializations with $\on{Aut}(D_0)$-valued transition functions.)  A splitting of $D$ amounts to a reduction of the structure group  to $\GL(A_0)\times \GL(B_0)\times \GL(\CC_0)\subset \Aut(D_0)$; the fibers $Q_m$ of the reduction $Q\subset P$ are all those $\DVB$ isomorphisms $D_m\to D_0$ that also preserve the splittings. 

Let $D_0^-$ be equal to $D_0$ as a double vector space, but with the new action of $\Aut(D_0)$, 
where  $\omega\in A_0^*\otimes B_0^*\otimes \CC_0^*$
acts by $(a,b,\ggamma)\mapsto (a,b,\ggamma-\omega(a,b))$, while 
$\GL(A_0)\times \GL(B_0)\times \GL(\CC_0)$ acts in the standard way. The resulting double vector bundle
\[ D^-=(P\times D_0^-)/\Aut(D_0).\]
will feature in some of the constructions below. 
\begin{lemma}\label{lem:minus}
There is a canonical $\DVB$ isomorphism $ D^-\to D$ that is the identity on the side bundles  but minus the identity on the core.   
\end{lemma}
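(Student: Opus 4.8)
The statement asserts the existence of a canonical $\DVB$ isomorphism $D^- \to D$ which is the identity on the side bundles $A, B$ but $-\id$ on the core $\CC^*$. The plan is to produce this isomorphism from an equivariant map on the model fibers, using the associated-bundle descriptions $D = (P \times D_0)/\Aut(D_0)$ and $D^- = (P \times D_0^-)/\Aut(D_0)$ from \eqref{eq:associated}. First I would define the map $\phi_0 \colon D_0^- \to D_0$ on the model double vector space by $\phi_0(a,b,\ggamma) = (a,b,-\ggamma)$. This is visibly a $\DVB$ isomorphism of the underlying double vector spaces (it intertwines $\kappa_t^h$ and $\kappa_t^v$, since these scale $\ggamma$ linearly and fix the sign issue trivially), it is the identity on $A_0, B_0$, and it is $-\id$ on the core $\CC_0^*$.

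The key point is that $\phi_0$ is equivariant for the $\Aut(D_0)$-actions: it intertwines the $D_0^-$-action on the source with the $D_0$-action on the target. For the subgroup $\GL(A_0)\times\GL(B_0)\times\GL(\CC_0)$ this is immediate, since both actions are the standard one and $\phi_0$ is built from identity and $-\id$ maps, which are $\GL(\CC_0)$-equivariant. For $\omega \in A_0^*\otimes B_0^*\otimes\CC_0^*$, the source action (on $D_0^-$) is $(a,b,\ggamma)\mapsto(a,b,\ggamma-\omega(a,b))$, while the target action (on $D_0$) is $(a,b,\ggamma)\mapsto(a,b,\ggamma+\omega(a,b))$, by Lemma \ref{lem:gaugegroup} and the definition of $D_0^-$. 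One checks $\phi_0\big((a,b,\ggamma-\omega(a,b))\big) = (a,b,-\ggamma+\omega(a,b))$, which equals the $\omega$-action on $D_0$ applied to $\phi_0(a,b,\ggamma) = (a,b,-\ggamma)$, namely $(a,b,-\ggamma+\omega(a,b))$. So the two agree, and $\phi_0$ is $\Aut(D_0)$-equivariant. Hence $\id_P \times \phi_0$ descends to a well-defined $\DVB$ isomorphism $D^- = (P\times D_0^-)/\Aut(D_0) \to (P\times D_0)/\Aut(D_0) = D$, and since it is induced by a fiberwise map that is the identity on $A_0,B_0$ and $-\id$ on $\CC_0^*$, the resulting bundle map has the stated properties on side bundles and core.

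Finally I would remark on canonicity: the map does not depend on the chosen frame bundle data or on any splitting, precisely because $\phi_0$ is $\Aut(D_0)$-equivariant and so the induced map on associated bundles is intrinsic; alternatively one can characterize $D^- \to D$ directly as the unique $\DVB$ morphism restricting to $\id_A, \id_B$ and $-\id_{\CC^*}$, but I would phrase the construction via associated bundles since that is cleanest and matches the setup of Section \ref{subsec:associated}. The only mild subtlety — and the one place to be careful — is the bookkeeping of which action ($+\omega$ or $-\omega$) lives on which side; the definition of $D_0^-$ was rigged exactly so that composing $\phi_0$ with the sign flip on the core cancels the sign flip in the $\omega$-action, and this cancellation is the entire content of the equivariance check. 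There is no serious obstacle here; the statement is essentially a formal consequence of Lemma \ref{lem:gaugegroup}.
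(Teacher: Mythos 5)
Your proposal is correct and follows exactly the paper's argument: the paper's one-line proof likewise defines the isomorphism as the one induced by the $\Aut(D_0)$-equivariant map $D_0^-\to D_0$, $(a,b,\ggamma)\mapsto(a,b,-\ggamma)$ on associated bundles. Your equivariance check for the $\omega$-action is the (correct) verification that the paper leaves implicit.
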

\begin{proof}
The isomorphism is induced by the $\Aut(D_0)$-equivariant isomorphism 
of double vector spaces $D_0^-\to D_0,\ (a,b,\ggamma)\mapsto (a,b,-\ggamma)$.
\end{proof}
\subsection{Triality of double vector bundles}\label{subsec:triality}
By cyclic permutation of the roles of $A_0,B_0,\CC_0$, the action \eqref{eq:omegaaction}  of $\Aut(D_0)$ on $D_0=A_0\times B_0\times \CC_0^*$ gives rise to similar actions on $D_0'=B_0\times \CC_0\times A_0^*$ and 
$D_0''=\CC_0\times A_0\times B_0^*$. The bilinear pairings 
\begin{equation}\label{eq:dvbpair1}
D_0\times_{B_0} D_0'\to \R,\ \big((a,b,\ggamma),(b,\cc,\alpha)\big)\mapsto \alpha(a)-\ggamma(\cc),\end{equation}
and similar maps given by cyclic permutation, are $\on{Aut}(D_0)$-equivariant. 
Taking associated bundles, we obtain three double vector bundles 
\[
\xymatrix{ D \ar[r] \ar[d] & B \ar[d]\\
	A \ar[r] & M}\ \ \ \ \ \ \ \xymatrix{ D' \ar[r] \ar[d] & \CC \ar[d]\\
	B \ar[r] & M}\ \ \ \ \ \ \ 
	\xymatrix{ D'' \ar[r] \ar[d] & A \ar[d]\\
		\CC \ar[r] & M}\]
with bilinear pairings 
\begin{equation}\label{eq:dvbpairings}
D\times_B D'\to \R,\ \ \ \ D'\times_\CC D''\to \R,\ \ \ \ D''\times_A D\to \R
\end{equation}
The bundles $D',D''$ are closely related to the horizontal and vertical duals:

\begin{proposition}\label{prop:dual}
There are canonical $\DVB$ isomorphisms 
\[ D^h\cong \on{flip}(D')^-,\ \ \ D^v\cong \on{flip}(D'')^-\]
that are the identity on 
the side bundles  and on the core. 
\end{proposition}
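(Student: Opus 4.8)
The plan is to verify the two isomorphisms at the level of the associated principal bundle description, since all objects involved are associated to the same frame bundle $P\to M$ and everything reduces to an $\Aut(D_0)$-equivariant statement about the model double vector spaces. Recall $D_0=A_0\times B_0\times \CC_0^*$, $D_0'=B_0\times \CC_0\times A_0^*$, $D_0''=\CC_0\times A_0\times B_0^*$, with the $\Aut(D_0)$-actions obtained from \eqref{eq:omegaaction} by cyclic permutation. It therefore suffices to exhibit, for the first claim, an $\Aut(D_0)$-equivariant isomorphism of double vector spaces $\wt D_0^h \to \on{flip}(D_0')^-$, where $\wt D_0^h$ denotes the horizontal dual of $D_0$ (dual over the side $B_0$, with horizontal and vertical structures as specified before Proposition~\ref{prop:dual}); taking associated bundles then yields $D^h\cong \on{flip}(D')^-$, and the second claim follows from the first by the cyclic symmetry (or directly, in the same way).

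The key computation is to identify $\wt D_0^h$ explicitly as a double vector space together with its $\Aut(D_0)$-action. Since $D_0\to B_0$ has fibers $A_0\times \CC_0^*$, its dual over $B_0$ has underlying set $B_0\times A_0^*\times \CC_0$; I would record the two scalar multiplications on $\wt D_0^h$ from the defining relations $\l\kappa_t^h(\phi),d\r=t\l\phi,d\r=\l\kappa_t^v(\phi),\kappa_t^v(d)\r$ for $\phi\in D_0^h,\ d\in D_0$ over the same point of $B_0$, and then compute how $\omega\in A_0^*\otimes B_0^*\otimes\CC_0^*$ acts by dualizing the shear \eqref{eq:omegaaction}. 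The expected outcome is that, after the obvious reshuffling of the three factors, $\wt D_0^h$ becomes $D_0'$ with horizontal and vertical roles interchanged --- this is the $\on{flip}$ --- and that the dualized shear action carries a sign relative to the standard $\Aut(D_0)$-action on $D_0'$, which is precisely the passage from $D_0'$ to $(D_0')^-$ in the sense of Section~\ref{subsec:associated}, Lemma~\ref{lem:minus}. Equivariance under the $\GL(A_0)\times\GL(B_0)\times\GL(\CC_0)$ part is automatic from functoriality of dualization. One then checks the maps on side bundles and core are the identity: the side bundles $B_0$ and $\CC_0=(\CC_0^*)^*=\on{core}(D_0)^*$ (respectively $B_0^*=\on{core}(D_0^h)$, matched against $A_0$) are untouched, and the core of $D_0^h$ is $A_0^*$, which matches the core of $\on{flip}(D_0')$ under the natural identification, with the $-$ twist being invisible on the core since Lemma~\ref{lem:minus} makes $(D_0')^-\cong D_0'$ via negation on the core only --- so the composite is the identity on side bundles and core as claimed.

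The main obstacle is bookkeeping: keeping straight which of the three slots of $D_0'$ and $D_0''$ is the core versus a side bundle, and tracking the sign that the dualized shear produces, so that it lands exactly on the $(-)$-twist rather than, say, being absorbable into a relabeling. Concretely, the pairing \eqref{eq:dvbpair1}, $\big((a,b,\ggamma),(b,\cc,\alpha)\big)\mapsto \alpha(a)-\ggamma(\cc)$, already contains the relevant sign, and I expect the cleanest route is to define the isomorphism $\wt D_0^h\to \on{flip}(D_0')^-$ directly via this pairing: an element of $\wt D_0^h$ is a linear functional on a $D_0$-fiber over $B_0$, which by \eqref{eq:dvbpair1} corresponds to an element of the $D_0'$-fiber, and one checks this correspondence intertwines all scalar multiplications once $D_0'$ is flipped and twisted. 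That makes equivariance immediate from equivariance of the pairing, and the whole proof collapses to (i) matching scalar multiplications, a short direct check, and (ii) confirming the identity on side bundles and core, which is visible from the formula for the pairing. I would present it in roughly that order: set up the model, define the map by the pairing, check it is a $\DVB$ isomorphism onto $\on{flip}(D_0')^-$, verify $\Aut(D_0)$-equivariance, pass to associated bundles, and finally read off the behavior on the side bundles and the core; then invoke the cyclic symmetry for $D^v\cong\on{flip}(D'')^-$.
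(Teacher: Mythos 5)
Your proposal is correct and follows essentially the same route as the paper: reduce to the model double vector space $D_0$ via the associated bundle construction, determine the $\Aut(D_0)$-action on $D_0^h$ from invariance of the duality pairing, and observe that the resulting identification with $\on{flip}(D_0')$ is the identity on the side bundles but minus the identity on the core, which Lemma~\ref{lem:minus} converts into the claimed isomorphism with $\on{flip}(D')^-$. Defining the map directly through the pairing \eqref{eq:dvbpair1} rather than through the unsigned pairing followed by an explicit sign flip is only a cosmetic difference.
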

\begin{proof}
We give the proof for $D^h$ (the argument for $D^v$ is similar). 	It suffices to consider the double vector space $D_0$. Write $D$ as an associated bundle \eqref{eq:associated}. Then 
\[ D^h=(P\times D_0^h)/\on{Aut}(D_0)\] 
where the $\on{Aut}(D_0)$-action on $D_0^h=\CC_0\times B_0\times A_0^*$ is given by 
the standard action of $\GL(A_0)\times \GL(B_0)\times \GL(\CC_0)$, while  $\omega\in A_0^*\otimes B_0^*\otimes \CC_0^*$ acts as 
\[  (\cc,b,\alpha)\mapsto (\cc,b,\alpha-\omega(b,\cc)).\]
This action is dictated by invariance of the duality 
pairing 
\[
 D_0\times_{B_0} D_0^h\to \R,\ \big((a,b,\ggamma),(\cc,b,\alpha)\big)\mapsto \alpha(a)+\ggamma(\cc).
 \]
The $\DVB$-isomorphism  $D_0^h\to \on{flip}(D_0'),\ (\cc,b,\alpha)\mapsto (\cc,b,-\alpha)$ is  $\on{Aut}(D_0)$-equivariant, and induces a $\DVB$-isomorphism $D^h\to \on{flip}(D')$ that is the identity on the sides but minus the identity on the core. Now use
Lemma \ref{lem:minus}. 
\end{proof}


\begin{remark}
Using the isomorphisms from Proposition \ref{prop:dual}, the second pairing in \eqref{eq:dvbpairings} translates 
into Mackenzie's pairing $D^v\times_\CC D^h\to \R$ \cite[Theorem 3.1]{mac:sym}. 		
We also recover the result of Mackenzie \cite{mac:sym} and Konieczna and Urba\'nski \cite{kon:dou}, giving a canonical $\DVB$ isomorphism 
\[ ((D^h)^v)^h\cong ((D^v)^h)^v\]
that is the identity on the side bundles and on the core; indeed, by iteration of Proposition
\ref{prop:dual} we see that both are identified with $D^-$. As a special case, if $D=T^*V$ we 
have that $D^v=TV$, $(D^v)^h=TV^*$, $((D^v)^h)^v=T^* V^*$, 
and we recover the canonical $\DVB$ isomorphism 
\begin{equation}\label{eq:mackenziexu} T^*(V^*)\cong \on{flip}(T^* V)^-\end{equation}
of Mackenzie-Xu \cite{mac:lie}.
\end{remark}

\begin{example} 
Consider  $D=TV$ as a double vector bundle with sides $A=V$ and $B=TM$. The natural pairing between tangent and cotangent vectors identifies $D^v=T^*V$, while the tangent prolongation of the pairing $V\times_M V^*\to \R$ identifies 
$D^h=TV^*$. The three double vector bundles $D,D',D''$ are
therefore, 
\[
\xymatrix{ TV \ar[r] \ar[d] & TM\ar[d]\\
	V \ar[r] & M}\ \ \ \ \ \ \ \xymatrix{\on{flip}(T(V^*))^- \ar[r] \ar[d] & V^* \ar[d]\\
	TM \ar[r] & M}\ \ \ \ \ \ \ 
	\xymatrix{ \on{flip}(T^*V)^- \ar[r] \ar[d] & V \ar[d]\\
		V^* \ar[r] & M}\]
Put differently, there is a canonical $\DVB$ isomorphism $(TV)'\to \on{flip}(T(V^*))$ that is the identity 
on the side bundles and minus identity on the core $V^*$, 
and a canonical $\DVB$ isomorphism 
$(TV)''\to \on{flip}(T^*V)$ that is the identity on the side bundles and minus the identity on the 
core $T^*M$. 
\end{example}


\section{Double-linear functions}\label{sec:symmetric}
For any vector bundle $V\to M$, the fiberwise linear 
functions on $V$ are identified with the sections of the dual bundle  $V^*$. We will similarly associate to any double vector bundle $D$ a vector bundle whose space of 
sections are the functions on $D$ that are 
\emph{double-linear}, i.e.,  linear for both scalar multiplications. Throughout this discussion, we will find it convenient to present
$D$ as an associated bundle $D=(P\times D_0)/\Aut(D_0)$ with 
$D_0=A_0\times_M B_0\times_M \CC_0^*$.

\subsection{Double-linear functions}\label{subsec:double_linear}

Consider the $\Aut(D_0)$-action on 
\[ \wh{\CC}_0=(A_0^*\otimes B_0^*)\oplus \CC_0,\]
 where $\GL(A_0)\times \GL(B_0)\times \GL(\CC_0)$ acts in the standard way, while elements 
$\omega\in A_0^*\otimes B_0^*\otimes \CC_0^*\cong \Hom(\CC_0,A_0^*\otimes B_0^*)$ act
as 
\[ (\nu,\cc)\mapsto (\nu-\omega(\cc),\cc).\]
The projection $\wh{\CC}_0\to \CC_0$ is $\Aut(D_0)$-equivariant, with kernel $A_0^*\otimes B_0^*$. 
Taking associated bundles, we obtain a vector bundle  
\[ \wh{\CC}=(P\times \wh{\CC}_0)/\Aut(D_0)\]
with an exact sequence of vector bundles over $M$,  
\begin{equation}\label{eq:hatCexactsequence}
 0\lra A^*\otimes B^*\stackrel{i_{\wh{\CC}}}{\lra} \wh{\CC}\lra \CC\lra 0.
\end{equation}
Using cyclic permutation of $A_0,B_0,\CC_0$, we obtain three such bundles $\wh{A},\ \wh{B},\ \wh{\CC}$, 
with inclusion maps 
\begin{equation}\label{eq:3inclusions}
i_{\wh{\CC}}\colon A^*\otimes B^*\to \wh{\CC},\ \ \ \ 
i_{\wh{A}}\colon B^*\otimes \CC^*\to \wh{A},\ \ \ \  
i_{\wh{B}}\colon \CC^*\otimes A^*\to \wh{B},\end{equation}
and exact sequences similar to  \eqref{eq:hatCexactsequence}. In Section \ref{subsec:geometricinterpretation} below we will identify the bundles $\wh{A},\wh{B}$ with those introduced by Gracia-Saz and Mehta \cite{gra:vba}; the corresponding exact sequences appear as Equation (26) in that reference.

\begin{proposition}
The space of sections of $\wh{\CC}$ is canonically isomorphic to the space of double-linear functions on $D$. Under this identification, the quotient map to $\CC$ is given by restriction of double-linear functions to $\core(D)=\CC^*$, and the inclusion map $i_{\wh{\CC}}$ is given by the multiplication of pull-backs of 
linear functions on $A$ and on $B$. 
\end{proposition}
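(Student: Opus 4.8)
The plan is to prove the proposition by working with the associated-bundle description, matching generators and relations, and then checking that the three claimed geometric operations (restriction to the core, multiplication of pulled-back linear functions) correspond to the algebraic maps $\wh{\CC}\to \CC$ and $i_{\wh{\CC}}$ under the chosen identification.

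First I would fix the associated-bundle pictures $D=(P\times D_0)/\Aut(D_0)$ and $\wh{\CC}=(P\times \wh{\CC}_0)/\Aut(D_0)$, and recall that sections of an associated bundle are exactly the $\Aut(D_0)$-equivariant maps $P\to \wh{\CC}_0$, while functions on $D$ are the $\Aut(D_0)$-invariant functions on $P\times D_0$, i.e. equivariant maps $P\to C^\infty(D_0)$. So it suffices to produce an $\Aut(D_0)$-equivariant linear map $\wh{\CC}_0\to C^\infty(D_0)$ landing in the subspace of double-linear functions, and to show it is an isomorphism onto that subspace. On $D_0=A_0\times B_0\times \CC_0^*$ a function that is linear in each of $\kappa_t^h$ and $\kappa_t^v$ must be a linear combination of (i) the evaluation pairings $\ggamma\mapsto \l \cc,\ggamma\r$ for $\cc\in \CC_0$ (bihomogeneous of degree $(1,1)$ since $\ggamma$ scales under both), and (ii) products $a\mapsto \l\alpha,a\r$ times $b\mapsto \l\beta,b\r$ for $\alpha\in A_0^*,\ \beta\in B_0^*$ (degree $(1,0)$ times $(0,1)$). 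Thus the space of double-linear functions on $D_0$ is precisely $\CC_0\oplus (A_0^*\otimes B_0^*)$ as a vector space, and I would define the map $\wh{\CC}_0=(A_0^*\otimes B_0^*)\oplus \CC_0\to C^\infty(D_0)$ by sending $(\nu,\cc)$ to the function $(a,b,\ggamma)\mapsto \nu(a,b)+\l \cc,\ggamma\r$; this is visibly a linear bijection onto the double-linear functions.

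The key step is equivariance. The $\GL$-part of $\Aut(D_0)$ acts on $\wh{\CC}_0$ and on functions in the obvious compatible way, so the only content is the action of $\omega\in A_0^*\otimes B_0^*\otimes \CC_0^*$. On $D_0$ this $\omega$ acts by $(a,b,\ggamma)\mapsto (a,b,\ggamma+\omega(a,b))$, hence it sends the function $(a,b,\ggamma)\mapsto \nu(a,b)+\l\cc,\ggamma\r$ to $(a,b,\ggamma)\mapsto \nu(a,b)+\l\cc,\ggamma+\omega(a,b)\r = (\nu + \omega(\cc))(a,b) + \l\cc,\ggamma\r$ — wait, I need to be careful about which convention for pullback vs pushforward is used; the precise sign is pinned down so that this matches the $\Aut(D_0)$-action on $\wh{\CC}_0$, namely $(\nu,\cc)\mapsto(\nu-\omega(\cc),\cc)$, defined in the excerpt right before the proposition. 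Checking that these two match (after chasing the inverse that appears in the action on functions) is the one genuinely fiddly bookkeeping point, and it is where I expect the main obstacle to lie: getting every inverse and sign consistent between the action on points of $D_0$, the contragredient action on functions, and the stated action on $\wh{\CC}_0$. Once equivariance holds, the map descends to a vector bundle isomorphism $\wh{\CC}\xrightarrow{\sim}\{\text{double-linear functions on }D\}$, and hence to an isomorphism of section spaces.

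Finally I would read off the two geometric descriptions directly from the explicit formula. The core of $D$ is $\core(D)=\CC^*$, corresponding to the sub--double vector space $\{0\}\times\{0\}\times \CC_0^*\subset D_0$; restricting the function $(a,b,\ggamma)\mapsto \nu(a,b)+\l\cc,\ggamma\r$ to this locus kills the $\nu$ term and leaves $\ggamma\mapsto \l\cc,\ggamma\r$, i.e. the element $\cc\in \CC_0$, which is exactly the quotient map $\wh{\CC}_0\to \CC_0$ from the exact sequence \eqref{eq:hatCexactsequence}; equivariance promotes this to the statement that $\wh{\CC}\to \CC$ is restriction of double-linear functions to the core. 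Likewise, for $\alpha\in A^*$ and $\beta\in B^*$, the pullback of the linear function $\l\alpha,\cdot\r$ on $A$ along $D\to A$ is $(a,b,\ggamma)\mapsto \l\alpha,a\r$ and similarly for $\beta$ along $D\to B$; their product is $(a,b,\ggamma)\mapsto (\alpha\otimes\beta)(a,b)$, which under the identification is the image of $\alpha\otimes\beta$ under $i_{\wh{\CC}}\colon A^*\otimes B^*\to \wh{\CC}$. This establishes all three assertions of the proposition.
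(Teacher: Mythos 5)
Your proof is correct and follows essentially the same route as the paper: reduce to the model double vector space $D_0$ via the associated bundle construction and identify the double-linear functions on $D_0$ with $\wh{\CC}_0=(A_0^*\otimes B_0^*)\oplus \CC_0$ (the paper justifies the classification of double-linear functions on $D_0$ by a Taylor expansion, which you assert directly). The equivariance check you flag as the fiddly point does close up exactly as you suspect — the inverse in the contragredient action on functions converts your $+\omega(\cc)$ into the $-\omega(\cc)$ of the defining action on $\wh{\CC}_0$ — and the paper in fact leaves this step entirely implicit.
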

\begin{proof}
It suffices to prove these claims for the double vector space  $D_0$. Using a Taylor expansion, 
we see  that the double-linear functions on $D_0=A_0\times B_0\times \CC_0^*$ are 
$\wh{\CC}_0=(A_0^*\otimes B_0^*)\oplus \CC_0$, where $\CC_0$ is interpreted as linear functions on $\CC_0^*$ and $A_0^*\otimes B_0^*$ as linear combinations of products of linear functions on $A_0,B_0$. 
\end{proof}
In terms of this interpretation through double-linear functions, the exact sequence  \eqref{eq:hatCexactsequence} was discussed by Chen-Liu-Sheng \cite{che:dou}
as the dual of the \emph{$\DVB$ sequence}. A central result of their paper is 
that the double vector bundle may be recovered from this sequence: 
\begin{proposition}\cite{che:dou}\label{prop:chen}
	The double vector bundle $D$ is the sub-double vector bundle of 
	\[ \wh{D}=A\times_M B\times_M\wh{\CC}^*\] 
	consisting of all $(a,b,\wh{\ggamma})\in A\times_M B\times_M\wh{\CC}^*$ such that $i_{\wh{\CC}}^*(\wh{\ggamma})=a\otimes b$.  A splitting of $D$ is equivalent to a splitting of 
	the exact sequence \eqref{eq:hatCexactsequence}.
\end{proposition}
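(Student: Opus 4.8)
The plan is to prove both assertions first for the model double vector space $D_0$ of \eqref{eq:d0} and then globalize using the associated-bundle presentation $D=(P\times D_0)/\Aut(D_0)$. On $D_0=A_0\times B_0\times\CC_0^*$ one has $\wh{\CC}_0=(A_0^*\otimes B_0^*)\oplus\CC_0$, so $\wh{\CC}_0^*=(A_0\otimes B_0)\oplus\CC_0^*$ and the dual of $i_{\wh{\CC}}\colon A_0^*\otimes B_0^*\to\wh{\CC}_0$ is the projection $i_{\wh{\CC}}^*\colon\wh{\CC}_0^*\to A_0\otimes B_0$ onto the first summand. I would then write down the candidate embedding
\[
D_0\lra\wh{D}_0=A_0\times B_0\times\wh{\CC}_0^*,\qquad (a,b,\ggamma)\longmapsto\bigl(a,b,(a\otimes b,\ggamma)\bigr),
\]
note that its image is exactly the locus $\{(a,b,\wh{\ggamma})\mid i_{\wh{\CC}}^*(\wh{\ggamma})=a\otimes b\}$, and verify that it is a $\DVB$ morphism. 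The point is that although $(a,b)\mapsto a\otimes b$ is quadratic, it is \emph{bilinear}, hence linear in each of the two scalar multiplications separately, which is precisely what makes the map intertwine $\kappa_t^h$ and $\kappa_t^v$; on the core it restricts to the inclusion $\CC_0^*\hra\wh{\CC}_0^*$ dual to $\wh{\CC}_0\to\CC_0$.

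Next I would verify $\Aut(D_0)$-equivariance of this embedding. For the subgroup $\GL(A_0)\times\GL(B_0)\times\GL(\CC_0)$ this is immediate from $(g_Aa)\otimes(g_Bb)=(g_A\otimes g_B)(a\otimes b)$. For $\omega\in A_0^*\otimes B_0^*\otimes\CC_0^*$ one takes the action $(\nu,\cc)\mapsto(\nu-\omega(\cc),\cc)$ on $\wh{\CC}_0$ from Section~\ref{subsec:double_linear}, dualizes it to obtain the action $(\mu,\ggamma)\mapsto(\mu,\ggamma+\omega(\mu))$ on $\wh{\CC}_0^*$ (where $\omega(a\otimes b)=\omega(a,b)\in\CC_0^*$), and compares with the action $(a,b,\ggamma)\mapsto(a,b,\ggamma+\omega(a,b))$ on $D_0$ from Lemma~\ref{lem:gaugegroup}; the two match. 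Taking associated bundles then produces an injective $\DVB$ morphism $D\to\wh{D}=A\times_M B\times_M\wh{\CC}^*$ with image the claimed sub-double vector bundle --- being the associated bundle of the $\Aut(D_0)$-stable graph, this locus is automatically a smooth sub-$\DVB$. I expect the dualization of the $\omega$-action --- in particular keeping straight which pairing and sign are in play --- to be the only genuinely delicate point; the rest is bilinear algebra.

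For the last assertion I would work inside this identification $D\subset A\times_M B\times_M\wh{\CC}^*$. Dualizing \eqref{eq:hatCexactsequence} gives $0\to\CC^*\to\wh{\CC}^*\stackrel{i_{\wh{\CC}}^*}{\lra}A\otimes B\to 0$, and a splitting of \eqref{eq:hatCexactsequence} amounts to a retraction $r\colon\wh{\CC}\to A^*\otimes B^*$, equivalently a section $r^*\colon A\otimes B\to\wh{\CC}^*$ of $i_{\wh{\CC}}^*$. Given such an $r$, I would set
\[
\phi\colon D\lra A\times_M B\times_M\CC^*,\qquad \phi(a,b,\wh{\ggamma})=\bigl(a,b,\wh{\ggamma}-r^*(a\otimes b)\bigr).
\]
Since $i_{\wh{\CC}}^*(\wh{\ggamma})=a\otimes b$ on $D$, the last entry lies in $\ker(i_{\wh{\CC}}^*)=\CC^*$; bilinearity of $a\otimes b$ shows $\phi$ intertwines the scalar multiplications; and $\phi$ is visibly the identity on $A$, $B$ and on $\core(D)=\CC^*$, with inverse $(a,b,\ggamma)\mapsto(a,b,\ggamma+r^*(a\otimes b))$. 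Hence $\phi$ is a splitting of $D$. Conversely, a splitting $\phi$ of $D$ pulls back double-linear functions and so identifies $\wh{\CC}\cong(A^*\otimes B^*)\oplus\CC$ compatibly with \eqref{eq:hatCexactsequence} (using that $\phi$ is the identity on the side bundles), the projection onto $A^*\otimes B^*$ being the desired retraction. A routine check shows these two constructions are mutually inverse; alternatively, both sets of splittings are torsors over $\Hom(\CC,A^*\otimes B^*)$ and $r\mapsto\phi$ is affine for these structures, so bijectivity follows once well-definedness is known.
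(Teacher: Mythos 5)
Your proposal is correct and follows exactly the route the paper indicates: verify the statement for the model double vector space $D_0$ and transfer it to $D$ via $\Aut(D_0)$-equivariance and the associated bundle construction (the paper itself only cites \cite{che:dou} and remarks that this reduction to $D_0$ gives an alternative proof, without writing out the details). Your equivariance check for the $\omega$-action on $\wh{\CC}_0^*$ and the torsor argument for the splitting correspondence fill in that sketch accurately.
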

An alternative way of proving this result is to verify the analogous statement for the double vector space $D_0=A_0\times B_0\times \CC_0^*$.

\begin{remark}
A direct consequence is that every double vector bundle $D$ comes with a map 
\begin{equation}\label{eq:chatstarprojection} D\to \wh{\CC}^*\end{equation}
given by the inclusion $D\hookrightarrow \hat D$ followed by projection to $\wh{\CC}^*$.  This map is a $\DVB$-morphism if the vector bundle $\wh{\CC}^*$ is regarded as a double vector bundle (with zero sides). 
In terms of the associated bundle construction, this is induced by the map 
\[ D_0=A_0\times B_0\times \CC_0^*\to \wh{\CC}_0^*=
(A_0\otimes B_0)\oplus \CC_0^*,\ \ (a,b,\ggamma)\mapsto a\otimes b+\ggamma.\]
\end{remark}

\begin{remark}\label{rem:quotient}
The inclusion $D\hra \wh{D}$ dualizes to a surjective $\DVB$ morphism
\[ \wh{D}'=B\times_M \wh{\CC}\times_M A^*\to D'.\] 
Replacing $D'$ with $D$, this shows that every double vector bundle also arises as a \emph{quotient} of a split double vector bundle.
\end{remark}

\begin{remark}\label{rem:splittings}
Since a splitting of $D$ is equivalent to a splitting of $D',D''$, we see that a splitting of 
$D$ is equivalent to a splitting of any one of the three vector bundle maps $\wh{A}\to A,\ \wh{B}\to B$ or $\wh{\CC}\to \CC$. 
\end{remark}

\subsection{The three pairings}
In what follows, we will denote elements of the bundles  $\wh{A},\wh{B},\wh{\CC}$ by 
 $\wh{a},\, \wh{b},\,  \wh{\cc}$, and their images in $A,B,\CC$ by $a,\, b,\, e$. 
%
%
\begin{proposition}
	There are canonical bilinear pairings 
	\begin{align}
	\l\cdot,\cdot\r_{\CC^*}\colon& \wh{B}\times_M \wh{A}\to \CC^*,\nonumber\\  
	\l\cdot,\cdot\r_{A^*}\colon& \wh{\CC}\times_M \wh{B}\to A^*,\label{eq:three_pairings}\\ 
	\l\cdot,\cdot\r_{B^*}\colon& \wh{A}\times_M \wh{\CC}\to B^*, \nonumber 
	\end{align}
	with the properties
	\begin{align}\label{eq:properties1}	
	\big\l  \wh{b},\ i_{\wh{A}}(\mu) \big\r_{\CC^*}&=\mu(b),\ \ \ 
	\mu\in B^*\otimes \CC^*,\ \wh{b}\in \wh{B},\\ 
	\big\l i_{\wh{B}}(\nu),\ \wh{a}  \big\r_{\CC^*}&=-\nu(a),\label{eq:properties2}	\ \ \ \ \ 
	\nu\in \CC^*\otimes A^*,\ \ \wh{a}\in\wh{A},
	\end{align}
	and similar properties obtained by cyclic permutations of $A,B,\CC$. 
	The pairings are related by the identity
		\begin{equation}\label{eq:identityc}\big\l  \wh{b}, \wh{a} \big\r_{\CC^*}(\cc)+ \big\l\wh{\cc},\wh{b}\big\r_{A^*}(a)+ \big\l \wh{a}, \wh{\cc}\big\r_{B^*}(b)=0.\end{equation}
\end{proposition}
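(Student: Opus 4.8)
The plan is to reduce everything to the model double vector space $D_0=A_0\times B_0\times\CC_0^*$. Recall that $\wh A,\wh B,\wh\CC$ are the bundles associated with the frame bundle $P\to M$ for the $\Aut(D_0)$-modules
\[ \wh A_0=(B_0^*\otimes\CC_0^*)\oplus A_0,\qquad \wh B_0=(\CC_0^*\otimes A_0^*)\oplus B_0,\qquad \wh\CC_0=(A_0^*\otimes B_0^*)\oplus\CC_0,\]
so that a canonical bilinear pairing $\wh B\times_M\wh A\to\CC^*$ is precisely the datum of an $\Aut(D_0)$-equivariant bilinear map $\wh B_0\times\wh A_0\to\CC_0^*$; the other two pairings are then obtained from it by the cyclic permutation $A_0\to B_0\to\CC_0\to A_0$. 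Likewise, the relations \eqref{eq:properties1}, \eqref{eq:properties2} and \eqref{eq:identityc} are fibrewise, hence reduce to the corresponding statements for $D_0$.

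Next I would write down the model pairing. For $\wh a_0=(\mu,a)\in\wh A_0$ with $\mu\in B_0^*\otimes\CC_0^*$, and $\wh b_0=(\nu,b)\in\wh B_0$ with $\nu\in\CC_0^*\otimes A_0^*$, set
\[ \l\wh b_0,\wh a_0\r_{\CC_0^*}:=\mu(b)-\nu(a)\ \in\ \CC_0^*,\]
where $\mu(b)\in\CC_0^*$ denotes the contraction of the $B_0^*$-leg of $\mu$ against $b$, and $\nu(a)\in\CC_0^*$ the contraction of the $A_0^*$-leg of $\nu$ against $a$. Properties \eqref{eq:properties1} and \eqref{eq:properties2} are then immediate from the definition: setting $a=0$ gives $\l\wh b_0,i_{\wh A}(\mu)\r_{\CC_0^*}=\mu(b)$, and setting $b=0$ gives $\l i_{\wh B}(\nu),\wh a_0\r_{\CC_0^*}=-\nu(a)$, and the cyclic analogues follow by applying the rotation.

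The one genuine computation is $\Aut(D_0)$-equivariance. On the subgroup $\GL(A_0)\times\GL(B_0)\times\GL(\CC_0)$ it is clear, since contraction of tensors is natural. For the abelian part, an element $\omega\in A_0^*\otimes B_0^*\otimes\CC_0^*$ acts on $\wh A_0$ by $(\mu,a)\mapsto(\mu-\omega(a),a)$ and on $\wh B_0$ by $(\nu,b)\mapsto(\nu-\omega(b),b)$, where $\omega(a)\in B_0^*\otimes\CC_0^*$ and $\omega(b)\in\CC_0^*\otimes A_0^*$ arise from $\omega$ via the cyclic copies of the identification $A_0^*\otimes B_0^*\otimes\CC_0^*\cong\Hom(\CC_0,A_0^*\otimes B_0^*)$ that defines the $\Aut(D_0)$-action. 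Expanding,
\[ \l\omega\cdot\wh b_0,\omega\cdot\wh a_0\r_{\CC_0^*}=\mu(b)-\nu(a)-\big(\omega(a)\big)(b)+\big(\omega(b)\big)(a),\]
and the last two terms cancel, since on a decomposable $\omega=\alpha\otimes\beta\otimes\ggamma$ both equal $\alpha(a)\,\beta(b)\,\ggamma\in\CC_0^*$. Hence $\l\cdot,\cdot\r_{\CC_0^*}$ is $\Aut(D_0)$-equivariant, and by the cyclic symmetry so are the other two pairings. I expect the main obstacle to be purely organisational: one must fix the two contraction conventions together with the three cyclic copies of the identification $A_0^*\otimes B_0^*\otimes\CC_0^*\cong\Hom(-,-\otimes-)$ coherently, so that the signs in \eqref{eq:properties1}--\eqref{eq:properties2} come out exactly as stated; these signs are, in turn, forced by \eqref{eq:identityc}.

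Finally, \eqref{eq:identityc} is a telescoping identity, verified on $D_0$. Writing $\wh a_0=(\mu_a,a)$, $\wh b_0=(\nu_b,b)$, $\wh c_0=(\eta_c,\cc)$ and regarding $\mu_a\in B_0^*\otimes\CC_0^*$, $\nu_b\in\CC_0^*\otimes A_0^*$, $\eta_c\in A_0^*\otimes B_0^*$ as bilinear forms on $B_0\times\CC_0$, $\CC_0\times A_0$ and $A_0\times B_0$ respectively, the three summands of \eqref{eq:identityc} evaluate to $\mu_a(b,\cc)-\nu_b(\cc,a)$, $\nu_b(\cc,a)-\eta_c(a,b)$ and $\eta_c(a,b)-\mu_a(b,\cc)$, which sum to zero. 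This establishes \eqref{eq:identityc}, with no residual freedom once \eqref{eq:properties1}--\eqref{eq:properties2} are imposed.
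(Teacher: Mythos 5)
Your proposal is correct and follows essentially the same route as the paper: reduce to the model double vector space via the associated-bundle description, define the pairing on $\wh B_0\times\wh A_0$ by $(\nu,b),(\mu,a)\mapsto\mu(b)-\nu(a)$, check $\Aut(D_0)$-equivariance by noting the $\omega$-terms cancel, and verify \eqref{eq:identityc} by the telescoping sum. The only difference is cosmetic: you spell out the cancellation on decomposable $\omega$ and write the three summands of \eqref{eq:identityc} with the arguments in the order that typechecks, which is if anything slightly cleaner than the paper's phrasing.
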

\begin{proof}
Using the associated bundle construction, it suffices to define the corresponding pairings for the double vector space $D_0$, and check that they are $\Aut(D_0)$-equivariant.  We have 
\[ \wh{A}_0=(B_0^*\otimes \CC_0^*)\oplus A_0,\ \  \ \wh{B}_0= ( \CC_0^*\otimes A_0^*)\oplus B_0, \ \ \ \wh{\CC}_0= ( A_0^*\otimes B_0^*)\oplus \CC_0.\] 
Put
\begin{equation}\label{eq:zeropairings}
\l\cdot,\cdot\r_{\CC_0^*}\colon \wh{B}_0\times \wh{A}_0\to \CC_0^*,\ \ 
	 \l (\nu,b)  ,\,  (\mu,a)\r_{\CC_0^*}=\mu(b)-\nu(a);\end{equation}
This is clearly equivariant for the actions of $\GL(A_0)\times \GL(B_0)\times \GL(\CC_0)$. 
For the action of $\omega\in A_0^*\otimes B_0^*\otimes \CC_0^*$, observe that 
in the pairing between
\[ \omega.(\mu,a)=(\mu-\omega(a),a),\ \ \omega.(\nu,b)=(\nu-\omega(b),b),\]
 the terms involving $\omega$ cancel. The properties \eqref{eq:properties1} and \eqref{eq:properties2}	hold by definition. Furthermore, given 
$\wh{a}=(\mu,a)\in\wh{A}_0,\ \wh{b}=(\nu,b)\in\wh{B}_0,\ \wh{\cc}=(\rho,\cc)\in\wh{\CC}_0$ the three terms in \eqref{eq:identityc} are 
$\mu(\cc,a)-\nu(b,\cc) $,\ $\rho(a,b)-\mu(\cc,a)  $ and $\nu(b,\cc)-\rho(a,b) $, hence their sum is zero.  
\end{proof}
As we shall explain in Remark \ref{rem:warp} below, the pairing $ \l\cdot,\cdot\r_{\CC^*}$ is equivalent to Mackenzie's notion of 
`warp' \cite{mac:jac}.\medskip

Replacing $D$ with $\on{flip}(D)$ reverses the role of $A$ and $B$. Hence, the three inclusion maps 
\eqref{eq:3inclusions} are unchanged, but the three pairings \eqref{eq:three_pairings} all change sign. 
On the other hand, for $D^-$ we have: 
\begin{proposition}\label{prop:d-}
The bundles $\wh{A},\wh{B},\wh{C}$ for $D$ are canonically isomorphic to those for $D^-$. 
Under this identification, replacing $D$ with $D^-$ changes the signs of 
the inclusion maps \eqref{eq:3inclusions} and also of the pairings \eqref{eq:three_pairings}. 
\end{proposition}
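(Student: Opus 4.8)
The plan is to reduce the claim to a statement about the double vector space $D_0$, exactly as in the proof of the preceding proposition, and then unwind the definitions of the relevant $\on{Aut}(D_0)$-modules for $D_0$ versus $D_0^-$. Recall from Section \ref{subsec:associated} that $D_0^-$ is $D_0$ as a double vector space, but with the twisted $\on{Aut}(D_0)$-action in which $\omega\in A_0^*\otimes B_0^*\otimes \CC_0^*$ acts by $(a,b,\ggamma)\mapsto(a,b,\ggamma-\omega(a,b))$ instead of $(a,b,\ggamma)\mapsto(a,b,\ggamma+\omega(a,b))$, while $\GL(A_0)\times\GL(B_0)\times\GL(\CC_0)$ acts in the standard way. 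The first step is therefore to compute $\wh{\CC}_0$ for $D_0^-$: by the Taylor-expansion description of double-linear functions, $\wh{\CC}_0^- = (A_0^*\otimes B_0^*)\oplus\CC_0$ as a vector space, but one must track how the twisted action on $D_0^-$ induces an action on its double-linear functions. A double-linear function written as $(\nu,\cc)$ (with $\nu\in A_0^*\otimes B_0^*$ and $\cc\in\CC_0$, the latter viewed as a linear function on $\CC_0^*$) transforms under $\omega$ on $D_0^-$ by $(\nu,\cc)\mapsto(\nu+\omega(\cc),\cc)$, i.e.\ with the opposite sign from the action on $\wh{\CC}_0$ for $D_0$.

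Next I would exhibit the canonical identification $\wh{\CC}_0\cong\wh{\CC}_0^-$ of $\on{Aut}(D_0)$-modules. The natural candidate is the map $(\nu,\cc)\mapsto(-\nu,\cc)$ on $(A_0^*\otimes B_0^*)\oplus\CC_0$ (and cyclically $(\mu,a)\mapsto(-\mu,a)$ for $\wh{A}_0$, $(\rho,b)\mapsto(-\rho,b)$ for $\wh{B}_0$): one checks it intertwines the $\omega$-action $(\nu,\cc)\mapsto(\nu-\omega(\cc),\cc)$ on $\wh{\CC}_0$ with $(\nu,\cc)\mapsto(\nu+\omega(\cc),\cc)$ on $\wh{\CC}_0^-$, and is obviously $\GL(A_0)\times\GL(B_0)\times\GL(\CC_0)$-equivariant. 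Passing to associated bundles via the frame bundle $P$ of $D$ (which is canonically the frame bundle of $D^-$ as well, since the underlying double vector spaces agree) yields the asserted canonical isomorphisms $\wh{A}\cong\wh{A}^-$, $\wh{B}\cong\wh{B}^-$, $\wh{\CC}\cong\wh{\CC}^-$.

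Finally I would read off the effect on the structure maps. The inclusion $i_{\wh{\CC}}\colon A_0^*\otimes B_0^*\to\wh{\CC}_0$, $\nu\mapsto(\nu,0)$, when transported through the sign isomorphism $(\nu,\cc)\mapsto(-\nu,\cc)$, becomes $\nu\mapsto(-\nu,0)$; hence $i_{\wh{\CC}}$ changes sign, and likewise $i_{\wh{A}},i_{\wh{B}}$ by the cyclic versions. For the pairings, it suffices to look at the model formula \eqref{eq:zeropairings}, $\l(\nu,b),(\mu,a)\r_{\CC_0^*}=\mu(b)-\nu(a)$: applying the sign isomorphism to both arguments replaces $(\nu,b)$ by $(-\nu,b)$ and $(\mu,a)$ by $(-\mu,a)$, giving $-\mu(b)+\nu(a)=-\l(\nu,b),(\mu,a)\r_{\CC_0^*}$, so the pairing changes sign; the cyclic versions $\l\cdot,\cdot\r_{A^*},\l\cdot,\cdot\r_{B^*}$ behave the same way. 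Since everything has been phrased $\on{Aut}(D_0)$-equivariantly, these computations on $D_0$ descend to the associated bundles over $M$, which proves the proposition. I do not anticipate a genuine obstacle here; the only point requiring care is bookkeeping the sign conventions consistently across the three cyclic copies, and double-checking that the isomorphism $D^-\cong D$ of Lemma \ref{lem:minus} (identity on sides, minus identity on core) is compatible with the sign isomorphisms chosen on $\wh{A},\wh{B},\wh{\CC}$, so that the statement is genuinely canonical rather than merely existing.
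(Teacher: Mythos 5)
Your proposal is correct and takes essentially the same route as the paper: the paper's proof consists precisely of exhibiting the $\Aut(D_0)$-equivariant isomorphism $\wh{\CC}_0^-\to\wh{\CC}_0$, $(\nu,\cc)\mapsto(-\nu,\cc)$ (and its cyclic analogues), passing to associated bundles, and noting that the sign changes for the inclusions and pairings follow by a direct check. You have simply written out the equivariance verification and the sign computations that the paper leaves to the reader, and they are all correct.
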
 
\begin{proof}
Let $\wh{A}^-,\wh{B}^-,\wh{\CC}^-$ be the corresponding bundles for $D^-$. 
The $\Aut(D_0)$-equivariant isomorphism 
\[ \wh{\CC}_0^-\to \wh{\CC}_0,\  
(\nu,c)\mapsto (-\nu,c)\] 
gives the desired isomorphism $\wh{\CC}^-\to \wh{\CC}$, and similar for $\wh{A}^-,\ \wh{B}^-$. One readily checks that these isomorphisms give sign changes for the inclusions and pairings. 
\end{proof}

\subsection{Geometric interpretations}\label{subsec:geometricinterpretation}
The bundles $\wh{A},\wh{B},\wh{\CC}$ and the pairings between them have various geometric interpretations, in terms of functions and vector fields on $D$. 

We begin by recalling analogous interpretations for vector bundles $V\to M$. The space $\mf{X}(V)_{[r]}$ of vector fields on $V$ that are homogeneous of degree $r$ for the scalar multiplication (i.e., $\kappa_t^*X=t^r X$ for $t\neq 0$) is  trivial if $r<-1$, while the \emph{core} and \emph{linear} vector fields 
\begin{equation}\label{eq:2ident}
 \mf{X}(V)_{[-1]}:=\mf{X}_\core(V),\ \ \ \mf{X}(V)_{[0]}=:\mf{X}_\lin(V)
\end{equation}
are identified with sections of $V$ (via the \emph{vertical lift}, taking a section  $\sigma\in \Gamma(V)$ to the corresponding fiberwise constant vector field $\sigma^\sharp$),  
and infinitesimal automorphisms of $V$, respectively.   On the other hand, $C^\infty(V)_{[0]}=C^\infty(M)$ and $C^\infty(V)_{[1]}=\Gamma(V^*)$. 

The pairing $V\times_M V^*\to \R$ is realized as the map $\mf{X}(V)_{[-1]}\otimes 
C^\infty(V)_{[1]}\to C^\infty(V)_{[0]}$ given by Lie derivative, $X\otimes f\mapsto \L_X f$. 
We can also take a dual  viewpoint (`Fourier transform'), 
using the identifications
$ C^\infty(V^*)_{[0]}=C^\infty(M),\ C^\infty(V^*)_{[1]}=\Gamma(V),\ 
\mf{X}(V^*)_{[-1]}=\Gamma(V^*)$. Here, we realize the pairing  $V\times_M V^*\to \R$ as 
\emph{minus} the Lie derivative, $h\otimes Z\mapsto -\L_Z h$. (Working with multi-vector fields, it is convenient to think of these pairings as Schouten brackets\footnote{Recall that the Schouten bracket on multi-vector fields on a manifold $Q$ makes $\mf{X}^\bullet(Q)[1]$ into a graded super-Lie algebra, in such a way that the bracket extends the usual Lie bracket of vector fields and satisfies 
	$\Lie{X,f}=\L_X(f)$ for vector fields $X$ and functions $f$.  In particular we have 
	\[ \Lie{X,Y}=-(-1)^{(k-1)(l-1)}\Lie{Y,X},\ \ \ \ \  \Lie{X,\Lie{Y,Z}}=\Lie{\Lie{X,Y},Z}+(-1)^{(k-1)(l-1)}\Lie{Y,\Lie{X,Z}}\]
	for $X\in \mf{X}^k(D),\ Y\in \mf{X}^l(D),\ Z\in \mf{X}^m(D)$. The map $X\mapsto \Lie{X,\cdot}$ is by graded derivation of the wedge product: 
	\[ \Lie{X,Y\wedge Z}=\Lie{X,Y}\wedge Z+(-1)^{(k-1)l} Y\wedge \Lie{X,Z}.\]
}  between 1-vector fields and 
0-vector fields.)


For a double vector bundle, let $\mf{X}(D)_{[k,l]}$ be the space of vector fields on $D$ that are homogeneous of degree $k$ horizontally and of degree $l$ vertically. Similar notation will be used for smooth functions, differential forms, and so on.  Let $\Gamma(D,A)$ be the sections of $D$ as a vector bundle over $A$, and $\Gamma_\lin(D,A)$ the subspace of sections that are homogeneous of degree $0$ horizontally, i.e., such that the corresponding map $A\to D$ is $\kappa_t^h$-equivariant. Linear sections of $D$ over $B$ are defined similarly. We have 
\begin{equation}\label{eq:dadb}
 \Gamma_\lin(D,A)\cong \mf{X}(D)_{[0,-1]},\ \ \ \ 
\Gamma_\lin(D,B)\cong \mf{X}(D)_{[-1,0]},\ \ \ 
\Gamma(\CC^*)\cong \mf{X}(D)_{[-1,-1]}.
\end{equation}
To verify \eqref{eq:dadb}, note that vector fields  $X\in \mf{X}(D)_{[k,l]}$ with $k=-1$ or $l=-1$ annihilate  
$C^\infty(D)_{[0,0]}=C^\infty(M)$, and are thus vertical for the bundle projection 
$D\to M$. Hence, it suffices to check for the double vector space  $D_0=A_0\times B_0\times \CC_0^*$. But 
\[\mf{X}(D_0)_{[0,-1]}\cong B_0\oplus (A_0^*\otimes \CC_0^*),\ \  \mf{X}(D_0)_{[-1,0]}= A_0\oplus (B_0^*\otimes \CC_0^*),\ \ \ 
\mf{X}(D_0)_{[-1,-1]}\cong  \CC_0^*,\]
where elements of a vector space  are seen as constant vector fields on the vector space, and elements of the dual space as linear functions.  Indeed, with this interpretation the elements of $A_0,B_0,\CC_0^*$ have homogeneity bidegrees $(-1,0),(0,-1),(-1,-1)$ respectively, while  elements of $A_0^*,B_0^*,\CC_0$ have homogeneity bidegrees $(1,0),(0,1),(1,1)$.

\begin{proposition}\label{prop:alternatives}
 The space of sections of $\wh{\CC}$ is canonically isomorphic to 
\begin{enumerate}
\item the space $C^\infty(D)_{[1,1]}$ of double-linear functions on $D$,
\item the space $\Gamma_\lin(D',B)$ of linear sections of $D'$ over $B$, 
\item the space $\Gamma_\lin(D'',A)$ of linear sections of $D''$ over $A$,
\item the space $\mf{X}(D')_{[0,-1]}$ 
of vector fields on $D'$ of homogeneity $(0,-1)$, 
\item the space $\mf{X}(D'')_{[-1,0]}$ of vector fields on $D''$ of homogeneity $(-1,0)$. 
\end{enumerate}
Similar descriptions hold for sections of $\wh{A},\wh{B}$. 
\end{proposition}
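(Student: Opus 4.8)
The plan is to deduce (b)--(e) from (a), which restates the identification of $\Gamma(\wh{\CC})$ with the space $C^\infty(D)_{[1,1]}$ of double-linear functions on $D$ established earlier in this section.

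The two ``vector field'' descriptions reduce immediately to the two ``linear section'' descriptions. Applying the first identification in \eqref{eq:dadb} to the double vector bundle $D'$, with $B$ playing the role of the down-side $A$, gives $\mf{X}(D')_{[0,-1]}\cong\Gamma_\lin(D',B)$, so (d)$\Leftrightarrow$(b); applying the second identification in \eqref{eq:dadb} to $D''$, with $A$ playing the role of the right-side $B$, gives $\mf{X}(D'')_{[-1,0]}\cong\Gamma_\lin(D'',A)$, so (e)$\Leftrightarrow$(c). Thus it only remains to produce canonical isomorphisms $\Gamma(\wh{\CC})\cong\Gamma_\lin(D',B)$ and $\Gamma(\wh{\CC})\cong\Gamma_\lin(D'',A)$.

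For the first of these I would use the triality pairing $D\times_B D'\to\R$ of \eqref{eq:dvbpairings}. Over each point $b\in B$ this is a nondegenerate pairing between the fibre of $D\to B$ and the fibre of $D'\to B$, so it identifies the fibres of $D'\to B$ with the duals of the fibres of $D\to B$. A double-linear function $f$ on $D$ is in particular fibrewise linear over $B$, hence restricts on each such fibre to a linear functional and so determines an element $\sigma_f(b)$ of the corresponding fibre of $D'$; as $b$ varies this is a section $\sigma_f$ of $D'\to B$, and the full double-linearity of $f$ is exactly what forces $\sigma_f$ to be a linear section, i.e.\ to lie in $\Gamma_\lin(D',B)$. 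The assignment $f\mapsto\sigma_f$ is evidently invertible. The isomorphism $\Gamma(\wh{\CC})\cong\Gamma_\lin(D'',A)$ is obtained in the same way from the pairing $D''\times_A D\to\R$.

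As is standard throughout the paper, the verification that these maps are well defined, independent of choices, and carry the stated homogeneity bidegrees is cleanest to run on the double vector space $D_0=A_0\times B_0\times\CC_0^*$, the general case then following by forming bundles associated to the frame bundle $P$. On $D_0$ a Taylor expansion identifies each of the five spaces with $\wh{\CC}_0=(A_0^*\otimes B_0^*)\oplus\CC_0$ as a $\GL(A_0)\times\GL(B_0)\times\GL(\CC_0)$-module, and the only substantive point is that the action of the shear part $A_0^*\otimes B_0^*\otimes\CC_0^*$ of $\Aut(D_0)$ agrees, on each of the five models, with the action $(\nu,\cc)\mapsto(\nu-\omega(\cc),\cc)$ on $\wh{\CC}_0$; this is a short computation with the explicit shear transformations of $D_0'$ and $D_0''$, whose pushforward on vector fields and pullback on functions produce precisely the required cross-term. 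The analogous descriptions of $\wh{A}$ and $\wh{B}$ then follow by the cyclic symmetry of Section~\ref{subsec:triality}. I expect the main difficulty to be purely bookkeeping: the pairings \eqref{eq:dvbpairings}, the sign twists on cores in Propositions~\ref{prop:dual} and~\ref{prop:d-}, and the cyclic relabelling all contribute signs that must be kept consistent across $D$, $D'$ and $D''$ in order for the five isomorphisms to have compatible normalizations.
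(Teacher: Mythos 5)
Your proof is correct, and it shares the paper's overall skeleton -- reduce (d),(e) to (b),(c) via the analogues of \eqref{eq:dadb} for $D'$, $D''$, and verify everything on the model $D_0$ via the associated-bundle construction -- but it obtains (b) and (c) by a genuinely different mechanism. The paper simply computes $\Gamma_\lin(D_0',B_0)$ from scratch: a section $B_0\to \CC_0\times A_0^*$ is linear iff its first component is constant and its second is linear, which yields $(A_0^*\otimes B_0^*)\oplus \CC_0=\wh{\CC}_0$ on the nose, and then (c) is "similar". You instead deduce (b) from (a) by using the nondegenerate pairing $D\times_B D'\to\R$ of \eqref{eq:dvbpairings} (equivalently, $D^h\cong \on{flip}(D')^-$ from Proposition \ref{prop:dual}) to convert a double-linear function on $D$ into a section $\sigma_f$ of $D'\to B$, with the vertical homogeneity of $f$ translating into linearity of $\sigma_f$ via $\l\sigma_f(\kappa_t^v b),\kappa_t^v d\r=f(\kappa_t^v d)=tf(d)$ and nondegeneracy. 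Your route makes the isomorphism manifestly canonical and independent of the model (the equivariance under the shear part of $\Aut(D_0)$ is automatic once the pairing is known to be $\Aut(D_0)$-invariant, which the paper established when defining \eqref{eq:dvbpair1}), at the cost of routing through the duality results and their core sign twists; the paper's direct computation is shorter and fixes the normalization explicitly, which matters later when matching against \eqref{eq:3is} and \eqref{eq:3pair}. Both arguments are sound.
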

\begin{proof}
It suffices to prove these descriptions for the double vector spaces $D_0=A_0\times B_0\times \CC_0^*$. 
We have already remarked that $\wh{\CC}_0$ is the space of double-linear functions on $D_0$. 
For (b), note that sections of  $D_0'=B_0\times \CC_0\times A_0^*$ over $B_0$ are  smooth functions $B_0\to \CC_0\times A_0^*$;  such a function defines a linear section if and only if its first component is a constant map $B_0\to \CC_0$, while its second component is a linear map $B_0\to A_0^*$. Hence, we obtain $\Gamma_\lin(D_0',B_0)=(A_0^*\otimes B_0^*)\oplus \CC_0=\wh{\CC}_0$. The proof of (c) is similar, and  by the counterparts of \eqref{eq:dadb} for $D_0',D_0''$ the properties (b),(c) are equivalent to (d),(e). 
\end{proof}

\begin{remark}\label{rem:warp}
	 In the work of Gracia-Saz and Mehta  \cite[Section 2.4]{gra:vba}, 
the isomorphisms $\Gamma(\wh{A})\cong \Gamma_\lin(D,B)$ and $\Gamma(\wh{B})\cong \Gamma_\lin(D,A)$ are used as the definition of $\wh{A}, \wh{B}$. With these identifications, the pairing $\l\cdot,\cdot\r_{\CC^*}\colon \Gamma_\lin(D,A)\times \Gamma_\lin(D,B)\to \Gamma(\CC^*)$ becomes Mackenzie's \emph{warp} of two linear sections \cite{mac:jac}. 
\end{remark}
Using these geometric interpretations, the three inclusion maps \eqref{eq:3inclusions}
are realized as the bilinear maps
\begin{align}i_{\wh{\CC}}\colon\ \ 
C^\infty(D)_{[1,0]}\times C^\infty(D)_{[0,1]}\to C^\infty(D)_{[1,1]},&\ \ \ \ (f,g)\mapsto fg\nonumber \\
i_{\wh{A}}\colon \ \ C^\infty(D)_{[0,1]}\times \mf{X}(D)_{[-1,-1]}\to \mf{X}(D)_{[-1,0]},&\ \  \ \ (g,Z)\mapsto gZ\label{eq:3is} \\i_{\wh{B}}\colon \ \ 
\mf{X}(D)_{[-1,-1]} \times 
C^\infty(D)_{[1,0]}\to \mf{X}(D)_{[0,-1]},&\ \ \ \ (Z,f)\mapsto fZ \nonumber 
\end{align}
while the three pairings 
\eqref{eq:three_pairings} are 
\begin{align}\l\cdot,\cdot\r_{\CC^*}\ \ \mf{X}(D)_{[0,-1]}\times
\mf{X}(D)_{[-1,0]} \to \mf{X}(D)_{[-1,-1]},&\ \ \ \  (X, Y)\mapsto \Lie{X,Y}\nonumber \\ \l\cdot,\cdot\r_{A^*}\ \
C^\infty(D)_{[1,1]}\times 
\mf{X}(D)_{[0,-1]}\to C^\infty(D)_{[1,0]},&\ \ \ \ 
(h, X)\mapsto -\L_X h\label{eq:3pair}\\ \l\cdot,\cdot\r_{B^*}\ \ 
\mf{X}(D)_{[-1,0]} \times C^\infty(D)_{[1,1]}    \to C^\infty(D)_{[0,1]},&\ \ \ \
(Y, h)\mapsto \L_Y h.\nonumber 
\end{align}
The identity \eqref{eq:identityc} just amounts to $\L_{[X,Y]}= [\L_X,\L_Y]$. \smallskip
To verify \eqref{eq:3is} and \eqref{eq:3pair}, it is enough to consider the double vector space $D_0$, but there it follows by a routine check from the definitions.

\subsection{Applications to vector bundles I}\label{subsec:TV1}
We illustrate the concepts above with  various double vector bundles associated to a vector bundle $V\to M$. In this context, we will encounter the \emph{jet bundle} $J^1(V)$ and the \emph{Atiyah algebroid} $\on{At}(V)$ (often denoted by $\mf{D}(V)$, or similar). For $\sigma\in \Gamma(V)$, we denote by $j^1(\sigma)\in \Gamma(J^1(V))$ its jet prolongation.
The jet bundle comes with a quotient map 
$J^1(V)\to V$ taking sections of the form $f j^1(\sigma)$ to $f\sigma$; this defines a
short exact sequence 
\begin{equation}\label{eq:jet} 
0\to T^*M\otimes V\stackrel{i_{J^1(V)}}{\lra}  J^1(V)\to V\to 0
\end{equation}
with $i_{J^1(V)}(\d f\otimes \sigma)= j^1(f\sigma)-f j_1(\sigma)$. On the other hand,
the Atiyah algebroid comes with a short exact sequence 
\begin{equation}\label{eq:atiyah} 
0\to V\otimes V^*\stackrel{i_{\on{At}(V)}}{\lra} \on{At}(V)\stackrel{\a}{\lra} TM\to 0
\end{equation}
where $\a$ is the anchor. We shall find it convenient to use the identification 
$\Gamma(\on{At}(V))\cong \mf{X}_\lin(V)$ (cf.~ \eqref{eq:2ident}) to interpret sections $\delta$ of the Atiyah algebroid in terms of the corresponding linear vector field $\wt{\a}(\delta)$ on $V$; its restriction to the zero section is $\a(\delta)$. From this perspective, 
$\wt{\a}\big(i_{\on{At}(V)}(\sigma\otimes \tau)\big)=\phi_\tau\sigma^\sharp$, where 
$\phi_\tau\in C^\infty(V)$ is the linear function defined by 
$\tau\in \Gamma(V^*)$, and $\sigma^\sharp\in\mf{X}(V)_{[-1]}$ denotes the vertical lift of $\sigma\in \Gamma(V)$. 
The representation of $\on{At}(V)$ on $V$ is given by the Lie bracket, 
$(\nabla_\delta \sigma)^\sharp=\Lie{\wt{\a}(\delta),\sigma^\sharp}$, and the dual 
 representation $\nabla^*$ on $V^*$, defined as
\begin{equation}\label{eq:star}\L_{\a(\delta)}\l\tau,\sigma\r=\l\tau,\nabla_\delta \sigma\r+
\l \nabla^*_\delta \tau,\,\sigma\r,\end{equation}
is realized by the Lie derivative of $\wt{\a}(\delta)$ on linear functions, $\phi_\tau\mapsto 
\L_{\wt{\a}(\delta)}\phi_\tau$.

\subsubsection{Tangent bundle of $V$}
For $D=TV$ we have  $ A=V,\ B=TM,\ \CC=V^*$. One finds that 
\begin{equation}\label{eq:aha}
\wh{A}=J^1(V),\ \ \wh{B}=\on{At}(V),\ \ \wh{\CC}=J^1(V^*).
\end{equation}
In terms of $\Gamma(\wh{A})\cong \mf{X}(D)_{[-1,0]},\ \Gamma(\wh{B})\cong \mf{X}(D)_{[0,-1]},\ 
\Gamma(\wh{\CC})\cong C^\infty(D)_{[1,1]}$, these identifications  are given by 
\[ j^1(\sigma)\mapsto (\sigma^\sharp)_T,\ \ 
\delta\mapsto \wt{\a}(\delta)^\sharp,\ \ 
j^1(\tau)\mapsto 
(\phi_\tau)_T,\]
where 
$X\mapsto X_T$ is the \emph{tangent lift} of a (multi-)vector field.\footnote{The vertical lift 
of vector fields extends to an algebra morphism on multi-vector fields. On the other hand, the usual tangent lift $X\mapsto X_T$ of vector fields extends uniquely to a linear map on multivector fields, in such a way that $(X\wedge Y)_T=X_T\wedge Y^\sharp+X^\sharp 
\wedge Y_T$. One has the Schouten bracket relations 
$\Lie{X_T,Y_T}=\Lie{X,Y}_T,\ \Lie{X_T,Y^\sharp}=\Lie{X,Y}^\sharp,\ \Lie{X^\sharp,Y^\sharp}=0$. See e.g. \cite{cou:lie,gra:tan}.}
Using  \eqref{eq:3is} and \eqref{eq:3pair}, we obtain the three inclusions
\[ i_{\wh{\CC}}(\tau\otimes \d f)=i_{J^1(V^*)}(\d f\otimes \tau),\ \ \ 
i_{\wh{A}}(\d f\otimes \sigma)=i_{J^1(V)}(\d f\otimes \sigma),\ \ \ 
i_{\wh{B}}(\sigma\otimes \tau)=i_{\on{At}(V)}(\sigma\otimes \tau)
\]
and the three pairings 
\[ \l \delta,\, j^1(\sigma)\r_{V}=\nabla_\delta \sigma,\ \ \ 
\l j^1(\tau),\, \delta\r_{V^*}=-\nabla^*_\delta \tau,\ \ \ 
\l j^1(\sigma),\ j^1(\tau)\r_{T^*M}=\d \l\tau,\sigma\r
\]
for $\sigma\in \Gamma(V),\ \tau\in \Gamma(V^*),\ \delta\in \Gamma(\on{At}(V))$. 
As a sample computation, note that \eqref{eq:3is} gives $i_{\wh{\CC}}(\tau\otimes \d f)=(\phi_\tau)^\sharp f_T =(f\phi_\tau)_T-f^\sharp (\phi_\tau)_T\in C^\infty(TV)_{[1,1]}$ (here the vertical lift $f^\sharp$ of a function is simply the pullback, while the vertical lift $f_T$ is the exterior differential, regarded as a function on the tangent bundle). This coincides with the image of 
$i_{J^1(V^*)}(\d f\otimes \tau)=j^1(f\tau)-f j^1(\tau)$. 
The  exact sequences \eqref{eq:hatCexactsequence} are just the standard exact sequences for the jet bundles and the Atiyah algebroid.

\begin{remark} The $\CC^*=V$-valued pairing between $\wh{A}=J^1(V)$ and 
$\wh{B}=\on{At}(V)$ was observed by Chen-Liu in \cite[Section 2]{che:omn}. \end{remark}


Let us also note that by Remark \ref{rem:splittings}, a splitting of $D=TV$ is equivalent to a splitting of 
any one of the exact sequences for $J^1(V^*),\ J^1(V)$ or $\on{At}(V)$; in turn, these are equivalent to a 
linear connection on the vector bundle $V$.

%

\subsubsection{Cotangent bundle of $V$}
We will use the following notations for 
cotangent bundles $T^*Q$. Given $X\in \mf{X}(Q)$, let $\phi_X\in C^\infty(T^*Q)$ be the corresponding linear function, defined by the pairing with covectors. The \emph{standard Poisson structure} on $T^*Q$ is described by the condition 
$\{\phi_{X},\phi_{Y}\}=\phi_{[X,Y]}$ for any two such vector fields. For any $H\in C^\infty(T^*Q)$, 
the derivation $\{H,\cdot\}$ is its \emph{Hamiltonian vector field}; in particular,  
$X_{T^*}=\{\phi_X,\cdot\}$ is  the \emph{cotangent lift} of the vector field $X$. One has the identity  
$(fX)_{T^*}=f X_{T^*}-\phi_X (\d f)^\sharp$ for $f\in C^\infty(Q)$. 


For $D=T^*V$, we have that $A=V,\ B=V^*,\ \CC=TM$ with
\[  \wh{A}=J^1(V),\ \ \wh{B}=J^1(V^*),\ \  \wh{\CC}=\on{At}(V).\]   
In terms of the identifications of their spaces of sections with 
$\mf{X}(D)_{[-1,0]},\ \mf{X}(D)_{[0,-1]},\ C^\infty(D)_{[1,1]}$, these isomorphisms  are given by 
\[ j^1(\sigma)\mapsto \{\phi_{\sigma^\sharp},\cdot\},\ \ 
j^1(\tau)\mapsto \{(\phi_\tau)^\sharp,\cdot\},\ \ 
\delta\mapsto \phi_\delta.\]
Using \eqref{eq:3is} and \eqref{eq:3pair}, we find that the three pairings are the same 
as for $TV$ (with the order of the two entries interchanged), while each of the three inclusion maps 
changes sign. This is consistent with  $T^*V=\on{flip}(TV)^-$, see Proposition \ref{prop:d-}. The three exact sequences \eqref{eq:hatCexactsequence} are the standard exact sequences for the jet bundles and the Atiyah algebroid, up to a sign change of the three inclusion maps.

\section{The Weil algebra $\W(D)$}\label{sec:weilalgebra}
Throughout this section, we consider a fixed double vector bundle $D$ over $M$, with side bundles $A,B$ and with $\CC=\core(D)^*$. It will be convenient to regard $D$ as an associated bundle $(P\times D_0)/\Aut(D_0)$ with $D_0=A_0\times B_0\times \CC_0^*$. 
\subsection{Overview}
Recall (e.g., \cite{cat:sup})
that a graded supermanifold $\M$ is given by a base manifold $M$ together with a \emph{structure sheaf} of algebras, admitting local trivializations $C^\infty(U)\times \vee \mathsf{E}$ for $U\subset M$. Here $\mathsf{E}$ is a $\Z$-graded vector space, and the symmetric algebra is defined using the super-sign conventions. 
One formally thinks of global sections of the structure sheaf as the algebra of functions $C^\infty(\M)$  
on the `space' $\M$, even though the latter is not an actual topological space. If $V\to M$ is a vector bundle, the algebra 
$\Gamma(\wedge V^*)$ is regarded as the algebra of functions on the graded manifold $V[1]$, where the 
$[1]$ signifies a degree shift: Here $\mathsf{E}=(\R^k[1])^*$ with $k=\on{rank}(V)$. 

The definition of graded supermanifold has a straightforward generalization to \emph{bigraded} supermanifold.  Given a 
double vector bundle $D$, there is a bigraded supermanifold $D[1,1]$ (with a parity shift for both vector bundle directions), defined in terms of a structure sheaf.  In Voronov's work  \cite{vor:q}, the structure sheaf was obtained from local coordinates on $D$, compatible with a given splitting, by declaring coordinates for the two side bundle directions  to be odd. The corresponding bigraded vector space is the direct sum  $\mathsf{E}=A_0^*\oplus B_0^*\oplus \CC_0$, where the three summands reside in bidegrees $(1,0),\ (0,1),\ (1,1)$ respectively. In particular, one obtains an algebra of functions $C^\infty(D[1,1])$ as the global sections of the structure sheaf. One goal of this section is to give a  coordinate-free description of this algebra, independent of a choice of splittings.  We will call it a \emph{Weil algebra} since it generalizes the Weil algebra \cite{we:oe1} from equivariant cohomology. 


As pointed out to us by a referee, the work of Grabowski-J\'{o}\'{z}wikowski-Rotkievicz \cite{gra:dua2} suggests
analogous constructions in more general contexts.\footnote{Note however that the `Weil algebra bundles' in \cite{gra:dua2} involve a different notion of Weil algebra, as in \cite[Chapter 8]{kol:nat}.} 
\medskip

\subsection{The algebra of double-polynomial functions}
A smooth function on a double vector bundle $D$ will be called a (homogeneous) \emph{double-polynomial of bidegree $(r,s)$} if it is homogeneous of degree $r$ for the horizontal scalar multiplication, and of degree $s$ for the vertical scalar multiplication. The space of such functions is 
denoted $\S^{r,s}(D)=C^\infty(D)_{[r,s]}$; their direct sum over all $r,s\ge 0$ is denoted $\S(D)$.  

\begin{lemma}
The space $\S^{r,s}(D)$ of  double-polynomial functions on $D$ of bidegree $(r,s)$ is the space of sections of a vector bundle 
\[ S^{r,s}(D)\to M.\]
\end{lemma}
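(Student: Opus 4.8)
The plan is to reduce the claim to the model double vector space $D_0 = A_0 \times B_0 \times \CC_0^*$, exactly as in the proofs of the preceding propositions in this section, and then to verify that the resulting fibrewise spaces of double-polynomial functions assemble into an associated bundle. Concretely, I would first compute, by a Taylor expansion argument, the space of double-polynomial functions on $D_0$ of bidegree $(r,s)$: a function that is polynomial and homogeneous of degree $r$ in the $A_0$-direction, degree $s$ in the $B_0$-direction, and (automatically, once one fixes those two bidegrees) of some degree in the $\CC_0^*$-direction. Here one must remember that a linear function on $\CC_0^*$ is an element of $\CC_0$ and carries bidegree $(1,1)$, while linear functions on $A_0$ and $B_0$ carry bidegrees $(1,0)$ and $(0,1)$. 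So a monomial of bidegree $(r,s)$ uses $k$ factors from $\CC_0$ (bidegree $(k,k)$), $r-k$ factors from $A_0^*$, and $s-k$ factors from $B_0^*$, with $0 \le k \le \min(r,s)$. This gives an explicit finite-dimensional vector space
\[
S_0^{r,s} \;=\; \bigoplus_{k=0}^{\min(r,s)} \vee^k \CC_0 \otimes \vee^{r-k} A_0^* \otimes \vee^{s-k} B_0^*,
\]
and I would identify $\S^{r,s}(D_0) = C^\infty(D_0)_{[r,s]}$ with $S_0^{r,s}$.

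Next I would observe that the construction $D_0 \mapsto S_0^{r,s}$ is functorial for $\DVB$-isomorphisms, hence $\Aut(D_0)$ acts linearly on $S_0^{r,s}$ by pullback of functions; this action is visibly smooth (even algebraic) since $\Aut(D_0)$ is the semidirect product of Lemma \ref{lem:gaugegroup} and the pullback action is polynomial in the matrix entries and in $\omega$. Writing $D = (P \times D_0)/\Aut(D_0)$ via the frame bundle $P \to M$ of Section \ref{subsec:associated}, I then set
\[
S^{r,s}(D) \;=\; (P \times S_0^{r,s})/\Aut(D_0),
\]
an honest vector bundle over $M$. The final step is to exhibit the canonical isomorphism between $\Gamma(S^{r,s}(D))$ and $\S^{r,s}(D)$: a section of $S^{r,s}(D)$ is an $\Aut(D_0)$-equivariant map $P \to S_0^{r,s}$, and under the identification $S_0^{r,s} \cong \S^{r,s}(D_0)$ this is precisely the data of a family of double-polynomial functions on the fibres $D_m = q^{-1}(m)$ that glues to a globally smooth function on $D$, i.e.\ an element of $\S^{r,s}(D)$. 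Smoothness in the $M$-direction follows from the local triviality of $P$ together with smoothness of the $\Aut(D_0)$-action, exactly as for ordinary associated bundles.

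The only mild subtlety — and the step I would be most careful about — is the reduction to $D_0$: one needs to know that the property ``double-polynomial of bidegree $(r,s)$'' is local over $M$ and is detected fibrewise, so that a function on $D$ which restricts to a double-polynomial of bidegree $(r,s)$ on every fibre is automatically smooth on $D$ and conversely. This uses that $D \to M$ is a locally trivial fibre bundle with fibre the double vector space $D_0$ (a consequence of Theorem \ref{th:splittings} combined with local trivialisations of $A, B, \CC^*$, as noted just after that theorem), so that locally $D \cong U \times D_0$ and a double-polynomial of bidegree $(r,s)$ on $U \times D_0$ is exactly a smooth $C^\infty(U)$-linear combination of the basis monomials of $S_0^{r,s}$; homogeneity in the two scalar multiplications pins down the bidegree fibrewise and forces polynomial dependence on the fibre coordinates. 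Everything else is a routine unwinding of the associated-bundle formalism, and I would not belabour it.
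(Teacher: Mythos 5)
Your proposal is correct and follows essentially the same route as the paper: reduce to the model double vector space $D_0$, observe that $S^{r,s}(D_0)$ is a finite-dimensional $\Aut(D_0)$-invariant space of polynomials, and realize $S^{r,s}(D)$ as the associated bundle $(P\times S^{r,s}(D_0))/\Aut(D_0)$. The paper's proof is terser (it does not write out the explicit decomposition of $S^{r,s}(D_0)$ or belabour the locality of the identification of sections with double-polynomial functions), but the substance is identical.
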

\begin{proof}
The space $S^{r,s}(D_0)$ of double-polynomial functions of bidegree $(r,s)$ on the double vector space $D_0$ is a subspace of the space of ordinary polynomials of degree at most $r+s$, and in particular is finite-dimensional.  Clearly, this space is $\Aut(D_0)$-invariant, and the  sections of the vector bundle $S^{r,s}(D)=(P\times S^{r,s}(D_0))/\Aut(D_0)$ are the double-polynomial functions of bidegree $r,s$. 
\end{proof}
We hence obtain a bigraded algebra bundle $S(D)=\bigoplus_{r,s} S^{r,s}(D)$. 
By applying a similar construction to the double vector bundles $D'$ and $D''$, we also have the bigraded algebra bundles $S(D')\to M$ and $S(D'')\to M$. 

\begin{proposition}\label{prop:sym}
The algebra bundle $S(D)$ is the bundle of bigraded commutative algebras 
\[ S(D)=(\vee A^*\otimes\vee B^*\otimes \vee \wh{\CC})/\sim\]
where the generators $\alpha\in A^*,\, \beta \in B^*,\, \wh{\cc}\in \wh{\CC}$ have bidegrees $(1,0),\ (0,1),\ (1,1)$, respectively. Here the kernel of the quotient map is the ideal generated by elements of the form $\alpha\beta-i_{\wh{\CC}}(\alpha\otimes\beta)$ for $(\alpha,\beta)\in A^*\times_M B^*$.
\end{proposition}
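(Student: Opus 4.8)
The plan is to reduce everything to the double vector space $D_0 = A_0 \times B_0 \times \CC_0^*$ via the associated bundle construction, exactly as in the proof of the preceding Lemma: since $S^{r,s}(D) = (P \times S^{r,s}(D_0))/\Aut(D_0)$, and the proposed right-hand side is likewise manufactured from an $\Aut(D_0)$-representation (the fiber being $(\vee A_0^* \otimes \vee B_0^* \otimes \vee \wh{\CC}_0)/\!\sim$, with $\wh{\CC}_0 = (A_0^*\otimes B_0^*)\oplus \CC_0$), it suffices to produce an $\Aut(D_0)$-equivariant algebra isomorphism at the level of fibers. First I would observe that both sides are bigraded algebras and that the generators $\alpha, \beta, \wh{\cc}$ sit in the stated bidegrees $(1,0),(0,1),(1,1)$, matching the homogeneity bidegrees of linear functions on $A_0, B_0$ and double-linear functions on $D_0$ recorded earlier in Section~\ref{sec:symmetric} (where it was noted that elements of $A_0^*, B_0^*, \CC_0$ have homogeneity bidegrees $(1,0),(0,1),(1,1)$).

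The map I would build goes from the free bigraded commutative algebra $\vee A_0^* \otimes \vee B_0^* \otimes \vee \wh{\CC}_0$ into $S(D_0)$: send $\alpha \in A_0^*$ to its pullback under $D_0 \to A_0$ (a bidegree-$(1,0)$ polynomial), send $\beta \in B_0^*$ to its pullback under $D_0 \to B_0$, and send $\wh{\cc} \in \wh{\CC}_0$ to the corresponding double-linear function on $D_0$ (using the identification $\wh{\CC}_0 = C^\infty(D_0)_{[1,1]}$ established in the Proposition of Section~\ref{subsec:double_linear}). This is an algebra homomorphism by the universal property of the symmetric algebra, and it is $\Aut(D_0)$-equivariant because each of the three defining maps is. The relations $\alpha\beta - i_{\wh{\CC}}(\alpha\otimes\beta)$ lie in the kernel precisely because, as recorded in the same Proposition, the inclusion $i_{\wh{\CC}}\colon A_0^*\otimes B_0^* \to \wh{\CC}_0$ is exactly the multiplication of pulled-back linear functions. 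Hence the homomorphism descends to the quotient $(\vee A_0^* \otimes \vee B_0^* \otimes \vee \wh{\CC}_0)/\!\sim\ \to S(D_0)$.

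It remains to check this descended map is bijective in each bidegree $(r,s)$, and this is where the real work lies. Surjectivity: a Taylor expansion on $D_0 = A_0 \times B_0 \times \CC_0^*$ shows that every double-polynomial of bidegree $(r,s)$ is a sum of monomials which are products of linear functions pulled back from $A_0$, linear functions pulled back from $B_0$, and linear functions on $\CC_0^*$ (i.e.\ elements of $\CC_0$), with the $\CC_0$-part and the $A_0^*$-part together contributing a total horizontal degree and the $\CC_0$-part and $B_0^*$-part together a total vertical degree — such monomials are manifestly in the image. For injectivity it is cleanest to use a splitting: choosing a splitting of the exact sequence $0 \to A_0^*\otimes B_0^* \to \wh{\CC}_0 \to \CC_0 \to 0$ identifies $\wh{\CC}_0 \cong (A_0^*\otimes B_0^*)\oplus \CC_0$ so that, modulo the ideal, every element has a canonical normal form as a polynomial in the $\alpha$'s, $\beta$'s, and the "pure" $\CC_0$-generators alone, and one checks the $\CC_0$-generators are algebraically independent over $\vee A_0^* \otimes \vee B_0^*$; comparing dimensions in each bidegree $(r,s)$ on both sides then forces bijectivity. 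The main obstacle is the bookkeeping in this dimension count: one must verify that the relations $\alpha\beta = i_{\wh{\CC}}(\alpha\otimes\beta)$ are exactly enough — no more, no less — to cut the free algebra down to the right size in every bidegree, and the slickest route is to note that a splitting identifies the quotient algebra with $\vee A_0^* \otimes \vee B_0^* \otimes \vee \CC_0$ (the split model $S(D_0)$ for $D_0 = A_0\times B_0\times\CC_0^*$) and that this identification is compatible with the evident isomorphism $S(A_0 \times B_0 \times \CC_0^*) \cong \vee A_0^* \otimes \vee B_0^* \otimes \vee \CC_0$.
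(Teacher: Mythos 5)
Your proposal is correct and follows essentially the same route as the paper: reduce to the double vector space $D_0=A_0\times B_0\times \CC_0^*$ and invoke the associated bundle construction. The paper simply declares the fiber-level claim ``immediate''; your surjectivity argument via Taylor expansion and your injectivity argument via the splitting $\wh{\CC}_0=(A_0^*\otimes B_0^*)\oplus\CC_0$ are a correct filling-in of that step.
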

\begin{proof}
The claim is immediate for the double vector space 
$D_0=A_0\times B_0\times \CC_0^*$; the general case  follows 
 by the associated bundle construction. 	
\end{proof}
The construction of $S(D)$ is functorial: a $\DVB$ morphism  $D_1\to D_2$,  with base map $F\colon M_1\to M_2$, defines algebra morphisms $S(D_2)_{F(m)}\to S(D_1)_m$, hence a
comorphism of bigraded algebra bundles \[ S(D_1)\da S(D_2).\] The induced pullback on sections is simply the  pullback of double-polynomial functions. If $M_1=M_2=M$, with base map the identity, then the comorphism of algebra bundles may be seen as an ordinary morphism of algebra bundles $S(D_2)\to S(D_1)$. 
For example, the presentation of $S(D)$ as a quotient of $S(\wh{D})=\vee A^*\otimes \vee B^*\otimes \vee \wh{\CC}$ is functorially  
 induced by the inclusion $D\hra \wh{D}$.

\begin{remark}Just as vector bundles may be recovered from their algebra of polynomial functions as a character spectrum $V\cong \Hom_{\on{alg}}(\S(V),\R)$, double vector bundles are recovered as  $D\cong \Hom_{\on{alg}}(\S(D),\R)$. 
\end{remark}

\subsection{Definition and basic properties of $W(D)$}
The Weil algebra bundle is obtained from the description of 
$S(D)$, given in Proposition \ref{prop:sym}, by replacing commutativity with super-commutativity:
\begin{definition}
The \emph{Weil algebra bundle} $W(D)$ is the bundle of bigraded super-commutative algebras 
given as 
\[ W(D)=(\wedge A^*\otimes \wedge B^*\otimes \vee \wh{\CC})/\sim,\]
 where the generators  $\alpha\in A^*,\ \beta\in B^*,\ \wh{\cc}\in \wh{\CC}$ have bidegrees $(1,0),\ (0,1),\ (1,1)$ respectively. Here  the kernel of  
the  quotient map is the ideal generated by elements of the form \[ \alpha\beta-i_{\wh{\CC}}(\alpha\otimes \beta)\] 
with $(\alpha,\beta)\in A^*\times_MB^*$. 
\end{definition}
In the definition above, $\otimes$ denotes the usual tensor product of superalgebras.  
For $x\in W^{p,q}(D)$ we write 
\[ |x|=p+q\] 
for the total degree; thus super-commutativity means 
$x_1 x_2=(-1)^{|x_1||x_2|} x_2 x_1$. The bigraded algebra of sections $\W(D)=\Gamma(W(D))$ is called the \emph{Weil algebra} of $D$. In super-geometric terms, it is the algebra of smooth functions on the supermanifold $D[1,1]$. Similar to the construction of $S(D)$, a $\DVB$ morphism $D_1\to D_2$
with base map $F\colon M_1\to M_2$  induces a comorphism of bigraded superalgebra bundles $W(D_1)\da W(D_2)$, hence a morphism of bigraded superalgebras $\W(D_2)\to \W(D_1)$. 
\medskip

The definition gives a number of straightforward properties of $W(D)$:
\begin{enumerate}

\item In degree $p\le 1,\, q\le 1$, $W^{p,q}(D)$ coincides with $S^{p,q}(D)$:
\[ W^{0,0,}(D)=M,\ \ \ \ W^{1,0}(D)=A^*,\ \ \ \ W^{0,1}(D)=B^*,\ \ \ \ W^{1,1}(D)=\wh{\CC}.\]
\item 
A choice of splitting $D\cong A\times_M B\times_M \CC^*$  gives an algebra bundle isomorphism 
\[ W(D)\cong \wedge A^*\otimes \wedge B^*\otimes \vee \CC.\]

\item 
$W(D)=(P\times W(D_0))/\Aut(D_0)$. Since $W(D_0)=\wedge A_0^*\otimes \wedge B_0^*\otimes 
\vee \CC_0$, this may be used as an alternative definition of $W(D)$. 

\end{enumerate}
Replacing $D$ with $D'$ and $D''$, we have three bigraded algebra bundles $W(D),W(D'),W(D'')$ over $M$, where the roles of $A,B$, and $\CC$ are cyclically permuted. In particular, 
\[ W^{1,1}(D)=\wh{\CC},\ \ \ \ W^{1,1}(D')=\wh{A},\ \ \ \ W^{1,1}(D'')=\wh{B}.\]
The pairings \eqref{eq:three_pairings} between these bundles extend to 
\begin{align}
\l\cdot,\cdot\r_{\CC^*}&\colon 
W^{p,1}(D'')\times_M W^{1,q}(D')\to  \wedge^{p+q-1} \CC^*, \nonumber \\ 
\l\cdot,\cdot\r_{A^*}&\colon W^{p,1}(D)\times_M
W^{1,q}(D'') \to  \wedge^{p+q-1} A^*, \label{eq:three_parings2}\\
\l\cdot,\cdot\r_{B^*}&\colon 
W^{p,1}(D')\times_M W^{1,q}(D)\to\  \wedge^{p+q-1} B^*. \nonumber
\end{align}
Here $ \l\cdot,\cdot\r_{\CC^*}$ is the unique extension of the given pairing such that 
\[ \l \alpha,\,\wh{a}\r_{\CC^*}= -\alpha(a),\ \ \l\wh{b},\,\beta\r_{\CC^*}=\beta(b) \]
for the cases $p=0,q=1$ and $p=1,q=0$, and such that the following bilinearity property holds:   
\[ \l\lambda x,\,y\r_{\CC^*}=\lambda \l x,y\r_{\CC^*},\ \  
\l x,\,y\lambda\r_{\CC^*}=\l x,y\r_{\CC^*}\lambda\]
for $\lambda\in \wedge \CC^*
,\ x\in W^{\bullet,1}(D'')
,\ y\in  W^{1,\bullet}(D')$ (with the same base points). 
 The discussion for the pairings $\l\cdot,\cdot\r_{A^*},\ \l\cdot,\cdot\r_{B^*}$ is similar. 
In Section \ref{sec:wedge}, we will give geometric interpretations of these pairings.  

\begin{remark}
The description of the Weil algebra bundle for $D^-$ is obtained from that for $D$ 
by replacing the sign of the inclusion map $i_{\wh{\CC}}$. That is, $W(D^-)$ has the same generators, 
but the defining relation becomes 
$\alpha\beta=-i_{\wh{\CC}}(\alpha\otimes\beta)$. The map on generators $\alpha\mapsto \alpha,\ 
\beta\mapsto \beta,\ \wh{\cc}\mapsto -\wh{\cc}$ extends to an isomorphism of algebra bundles 
$W(D^-)\to W(D)$. 
\end{remark}

\subsection{Derivations}\label{subsec:contractions}
For a vector bundle $V\to M$, the graded bundle $\on{Der}(\wedge V^*)$ of \emph{fiberwise} superderivations of $\wedge V^*$ is the free $\wedge V^*$-module generated by contractions. Thus  
\[ \on{Der}(\wedge V^*)=\wedge V^*\otimes V\]
as a bundle of graded super-Lie algebras, where the elements $1\otimes v$ have degree $-1$. 

Given a double vector bundle $D$, we are interested in the structure of  the bigraded bundle 
 \[ \on{Der}(W(D))=\bigoplus_{r,s} \on{Der}^{r,s}(W(D)).\]
Here  $\on{Der}^{r,s}(W(D))\to M$ is the bundle of fiberwise superderivations of bidegree $(r,s)$ 
of the algebra bundle $W(D)\to M$: its space of sections 
consists of bundle maps $\delta\colon 
W(D)\to W(D)$ of bidegree $(r,s)$ with the superderivation property 
\[ \delta(xy)=\delta(x)y+(-1)^{|\delta||x|}x\delta (y)\]
for homogeneous elements $x,y$, where $|\delta|=r+s$ and $|x|$ are the total degrees of $\delta$ and $x$.  
The following result describes the structure of $\on{Der}(W(D))$ as a $W(D)$-module and as a bundle of graded Lie algebras. 

\begin{theorem}
Let $m\in M$ and $\wh{a}\in \wh{A}_m$,  $\wh{b}\in \wh{B}_m$, and $\ggamma\in \CC^*_m$.  There are 
unique \emph{contraction operators}
\[ \iota_h(\wh{a})\in \on{Der}^{-1,0}(W(D))_m,\ \ 
\iota_v(\wh{b})\in \on{Der}^{0,-1}(W(D))_m,\ \ \ 
\iota(\ggamma)\in \on{Der}^{-1,-1}(W(D))_m\]
such that 
\begin{equation} \label{eq:contractions4}
\iota_h(\wh{a})v=\l \wh{a},v\r_{B^*},\ \ \ 
\iota_v(\wh{b})u=(-1)^{|u|}\l u,\wh{b}\r_{A^*},\ \ \ 
\iota(\ggamma)\cc=-\ggamma(\cc)\end{equation}
for all $u\in W^{1,\bullet}(D)_m,\ \ v\in W^{\bullet,1}(D)_m,\ \ \cc\in W^{1,1}(D)_m=\wh{\CC}_m$. The contraction operators satisfy the commutation relations
\begin{equation}\label{eq:commrel}
[\iota_v(\wh{b}),\,\iota_h(\wh{a})]=-\iota\big(\l\wh{b},\wh{a}\r_{\CC^*}\big),\ \ \wh{a}\in\wh{A},\ \wh{b}\in \wh{B},
\end{equation}
while all other commutations of contractions are zero.  The $W(D)_m$-module  $\on{Der}(W(D))_m$ is generated by the three types of  contraction operators, subject to the relations  
\begin{equation}
\iota_h\big(i_{\wh{A}}(\beta\otimes \ggamma)\big)=\beta\iota(\ggamma),\ \ \ \ 
\iota_v\big(i_{\wh{B}}(\ggamma\otimes \alpha)\big)=-\alpha\iota(\ggamma),\ \ \ \ \alpha\in A^*_m,\ \beta\in B^*_m,\ \ggamma\in \CC^*_m.
\end{equation}
\end{theorem}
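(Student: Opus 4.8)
The plan is to reduce everything to the model double vector space $D_0=A_0\times B_0\times\CC_0^*$, where $W(D_0)=\wedge A_0^*\otimes\wedge B_0^*\otimes\vee\CC_0$, and then pass to the associated bundle. Since all of the asserted structure (the contraction operators, their commutation relations, and the module presentation) is manifestly $\Aut(D_0)$-equivariant when formulated for $D_0$, establishing it fiberwise over a point and checking equivariance is enough; the bundle statement then follows from $W(D)=(P\times W(D_0))/\Aut(D_0)$. So from now on I work with $D_0$, writing $\wh{\CC}_0=(A_0^*\otimes B_0^*)\oplus\CC_0$, etc.

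\textbf{Existence and uniqueness of the contractions.} First I would treat $\iota(\ggamma)$ for $\ggamma\in\CC_0^*$: on the split model $W(D_0)=\wedge A_0^*\otimes\wedge B_0^*\otimes\vee\CC_0$ this is simply (a sign times) the standard contraction by $\ggamma$ on the symmetric factor $\vee\CC_0$, extended as a derivation of bidegree $(-1,-1)$; uniqueness is clear since the algebra is generated in bidegrees $(1,0),(0,1),(1,1)$ and $\iota(\ggamma)$ kills the first two types of generators. For $\iota_h(\wh a)$ with $\wh a=(\mu,a)\in(B_0^*\otimes\CC_0^*)\oplus A_0$: a derivation of bidegree $(-1,0)$ is determined by its values on $\beta\in B_0^*$ and on $\wh\cc\in\wh{\CC}_0$ (it annihilates $A_0^*$ by degree reasons), and formula \eqref{eq:contractions4} prescribes $\iota_h(\wh a)v=\l\wh a,v\r_{B^*}$ on $v\in W^{\bullet,1}(D_0)$, so it is pinned down on $B_0^*$ and on $\wh{\CC}_0=W^{1,1}(D_0)$; I must then check this is consistent with the defining relation $\alpha\beta=i_{\wh{\CC}}(\alpha\otimes\beta)$, i.e.\ that $\iota_h(\wh a)$ applied to both sides agrees — this is exactly the content of property \eqref{eq:properties1}–\eqref{eq:properties2} together with the definition of the pairings \eqref{eq:three_pairings}, and is a short check in coordinates on $D_0$. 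The operator $\iota_v(\wh b)$ is handled symmetrically, with the sign $(-1)^{|u|}$ in \eqref{eq:contractions4} dictated by the convention that makes $\iota_v(\wh b)$ a genuine right/left contraction; again one must verify compatibility with the defining relation, which is the mirror of the previous check.

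\textbf{Commutation relations.} Having the three operators, I would compute the (super)commutators on generators, which suffices because the (super)commutator of two derivations is again a derivation and derivations are determined by their action on generators. The bracket $[\iota_v(\wh b),\iota_h(\wh a)]$ has bidegree $(-1,-1)$, so it is a $W(D_0)$-combination of $\iota(\ggamma)$'s; evaluating on $\wh\cc\in\wh{\CC}_0=W^{1,1}(D_0)$ and using \eqref{eq:contractions4} reduces the identity \eqref{eq:commrel} to the identity \eqref{eq:identityc} relating the three pairings — this is the conceptual heart of the relation. The vanishing of $[\iota_h(\wh a),\iota_h(\wh a')]$, $[\iota_v(\wh b),\iota_v(\wh b')]$, $[\iota(\ggamma),\iota(\ggamma')]$, $[\iota_h(\wh a),\iota(\ggamma)]$, $[\iota_v(\wh b),\iota(\ggamma)]$ follows by the same method: each such bracket is a derivation of bidegree with a negative entry that is too negative to act nontrivially (e.g.\ bidegree $(-2,0)$ or $(-2,-1)$ operators must vanish since $W(D_0)$ is generated in nonnegative bidegrees with first degree $\le 1$ on odd generators), so the bracket is zero.

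\textbf{The module presentation.} Finally I would show $\on{Der}(W(D_0))$ is generated as a $W(D_0)$-module by the three families of contractions subject only to the stated relations. A derivation $\delta$ is determined by $\delta|_{A_0^*}$, $\delta|_{B_0^*}$, $\delta|_{\wh{\CC}_0}$, subject to compatibility with $\alpha\beta=i_{\wh{\CC}}(\alpha\otimes\beta)$. Using a splitting one sees $\delta$ is a sum of pieces that lower one of the three types of generator, and each such piece is expressible via $\iota_h$, $\iota_v$, $\iota$ with coefficients in $W(D_0)$; the relations $\iota_h(i_{\wh A}(\beta\otimes\ggamma))=\beta\,\iota(\ggamma)$ and $\iota_v(i_{\wh B}(\ggamma\otimes\alpha))=-\alpha\,\iota(\ggamma)$ are checked on generators using \eqref{eq:contractions4} and the definitions of $i_{\wh A},i_{\wh B}$ — they express the fact that the ``$A_0$-part'' of $\iota_h$ and the contraction $\iota$ overlap in a controlled way. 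To see these are the \emph{only} relations I would do a rank count over the split model: the free module on the three contraction bundles has rank $\operatorname{rk}\wh A+\operatorname{rk}\wh B+\operatorname{rk}\CC^*$, and imposing the two relations (which identify a copy of $B^*\otimes\CC^*$ inside $\wh A$ and a copy of $\CC^*\otimes A^*$ inside $\wh B$ each with $\CC^*$) brings the rank down to $\operatorname{rk}(\wedge^\bullet)\cdot(\dim A_0^*+\dim B_0^*+\dim\CC_0)$, matching $\operatorname{rk}\on{Der}(W(D_0))$. I expect the main obstacle to be precisely this last bookkeeping: getting the signs in \eqref{eq:contractions4} and \eqref{eq:commrel} internally consistent across the three cyclically-related pairings, and verifying the module relations are exactly generating — everything else is a direct computation on $D_0$ once the conventions are fixed.
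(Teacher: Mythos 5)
Your proposal is correct and follows essentially the same route as the paper's proof: the contraction operators are pinned down on the generators in bidegrees $(1,0)$, $(0,1)$, $(1,1)$ and checked against the defining relation $\alpha\beta=i_{\wh{\CC}}(\alpha\otimes\beta)$, the commutators are evaluated on generators (with \eqref{eq:commrel} reducing to the cyclic identity \eqref{eq:identityc} and all the others vanishing for bidegree reasons), and the module presentation is established on the split model $D_0$, where $\on{Der}(W(D_0))=W(D_0)\otimes(A_0\oplus B_0\oplus \CC_0^*)$. The only cosmetic difference is in the last step: the paper concludes by noting that the submodule generated by the two relations is a direct complement of $W(D_0)\otimes(A_0\oplus B_0\oplus \CC_0^*)$ — precisely the ``graph'' structure you describe — which is cleaner than a bare rank count (the latter would need a Nakayama-type supplement over a ring with nilpotents, though your own description already contains the complement observation).
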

\begin{proof}
For degree reasons, the proposed expressions for the contractions determine the formulas on generators 
of $W(D)_m$. Specifically,  $\iota_h(\wh{a})$ is given on  generators $\alpha\in A^*_m,\ \beta\in B^*_m,\ \wh{\cc}\in \wh{\CC}_m$ 
by
$\alpha\mapsto \alpha(a),\ \beta\mapsto 0,\ \wh{\cc}\mapsto \l \wh{a},\wh{\cc}\r_{B^*}$, while 
$\iota_v(\wh{b})$ is given by $\alpha\mapsto 0,\ \beta\mapsto \beta(b),\ \wh{\cc}\mapsto \l\wh{\cc},\wh{b}\r_{A^*}$, and $\iota_h(\ggamma)$ is given by $\alpha\mapsto 0,\ \beta\mapsto 0,\ \wh{\cc}\mapsto -\ggamma(\cc)$. One readily checks that these formulas are compatible with the defining relation of the Weil algebra, and hence extend to a derivation on all of $W(D)_m$. Furthermore, the super-commutation relation between these contractions operators are verified by evaluating on generators. 

The three types of contraction operators define a $W(D)_m$-module morphism 
\begin{equation}\label{eq:quotmap}
W(D)_m\otimes (\wh{A}_m\oplus \wh{B}_m\oplus \CC^*_m)\to \on{Der}(W(D))_m\end{equation} 
whose kernel contains elements of the form 
\begin{equation}\label{eq:contains}
1\otimes i_{\wh{A}}(\beta\otimes \ggamma)-\beta\otimes \ggamma,\ \ \ \ 
1\otimes i_{\wh{B}}(\ggamma\otimes \alpha)+\alpha\otimes \ggamma\end{equation}
with $\alpha\in A^*_m,\ \beta\in B^*_m,\ \ggamma\in \CC^*_m$.  We have to show that \eqref{eq:quotmap} is surjective, with kernel the submodule generated by elements of the form
\eqref{eq:contains}. 

It suffices to prove this for the double vector space $D_0=A_0\times B_0\times \CC_0^*$. Here $W(D_0)$ is simply a tensor product $\wedge A_0^*\otimes \wedge B_0^*\otimes \vee \CC_0$, and hence $\on{Der}(W(D_0))=W(D_0)\otimes (A_0\oplus B_0\oplus \CC_0^*)$. 
Since
$\wh{A}_0=A_0\oplus (B_0^*\otimes \CC_0^*)$ and $\wh{B}_0=B_0\oplus (\CC_0^*\otimes A_0^*)$, it is immediate that the module map \eqref{eq:quotmap} (with $D$ replaced by $D_0$) 
is surjective. Its kernel contains elements of the form \eqref{eq:contains}; hence it also contains the 
$W(D_0)$-submodule generated by elements of this form. But this submodule is a 
complement to the submodule $W(D_0)\otimes (A_0\oplus B_0\oplus \CC_0^*)$, and is therefore the entire kernel
of \eqref{eq:quotmap}. 
\end{proof}

In particular, we see that the bundle 
$\on{Der}^{r,s}(W(D))$ is zero if $r<-1$ or $s<-1$,  while
\begin{equation}\label{eq:corcontractions}
 \on{Der}^{-1,0}(W(D))=\wh{A},\ \ \ \ 
\on{Der}^{0,-1}(W(D))=\wh{B},\ \ \ \ 
 \on{Der}^{-1,-1}(W(D))=\CC^*.
\end{equation}

\begin{proposition}\label{prop:contractionpairing}
The horizontal contractions extend to an isomorphism of left $\wedge B^*$-modules 
\begin{equation}\label{eq:iotah} \iota_h\colon W^{\bullet,1}(D')\to \on{Der}^{-1,-1+\bullet}(W(D)),\ x\mapsto \iota_h(x)\end{equation}
such that 
\begin{equation}\label{eq:da1} \iota_h(x)z=\l x,z\r_{B^*},\ \ \ 
x\in  W^{\bullet,1}(D'),\ z\in W^{1,\bullet}(D)
.\end{equation}
The vertical contractions extend to an isomorphism of left $\wedge A^*$-modules
\begin{equation}\label{eq:iotav} \iota_v\colon W^{1,\bullet}(D'')\to \on{Der}^{-1+\bullet,-1}(W(D)),\ y\mapsto \iota_v(y)\end{equation}
given by 
\begin{equation}\label{eq:da2}
\iota_v(y)z=-(-1)^{(|y|+1)(|z|+1)}\,\l z,y\r_{A^*},\ \ \ y\in W^{1,\bullet}(D''),\ \ z\in W^{\bullet,1}(D)\end{equation}
\end{proposition}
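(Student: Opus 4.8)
The strategy is to reduce everything to the split case $D_0 = A_0 \times B_0 \times \CC_0^*$ and then transport the result by the associated bundle construction, exactly as in the proof of the previous theorem. So the plan breaks into three steps: (i) observe that the prescription \eqref{eq:da1} is well-defined and determines a left $\wedge B^*$-module map out of $W^{\bullet,1}(D')$; (ii) check that over $D_0$ this map is an isomorphism onto $\on{Der}^{-1,\bullet-1}(W(D))$, using the explicit tensor-product description; (iii) repeat for the vertical contractions, tracking the signs coming from the Koszul rule that produce the $(-1)^{(|y|+1)(|z|+1)}$ in \eqref{eq:da2}. By the functoriality and equivariance already established, these pointwise statements globalize.

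For step (i): recall that $W^{\bullet,1}(D')$ is the degree-$(\bullet,1)$ part of the Weil algebra of the cyclically permuted double vector bundle, so by property (1) following the definition and the pairing \eqref{eq:three_parings2}, an element $x \in W^{p,1}(D')$ pairs with $z \in W^{1,q}(D)$ to give $\l x, z\r_{B^*} \in \wedge^{p+q-1} B^*$. I would first check that for fixed $x$, the assignment $z \mapsto \l x, z\r_{B^*}$ extends to a superderivation $\iota_h(x)$ of $W(D)$ of bidegree $(-1, p-1)$: it suffices to define it on the generators $\alpha \in A^*$, $\beta \in B^*$, $\wh{\cc} \in \wh{\CC}$, verify compatibility with the defining relation $\alpha\beta = i_{\wh{\CC}}(\alpha\otimes\beta)$, and extend by the Leibniz rule. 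On generators $\wh{\cc} \in W^{1,1}(D)$ the value is $\l x, \wh{\cc}\r_{B^*}$; on $\alpha \in A^* = W^{1,0}(D)$ it is $\l x, \alpha\r_{B^*}$; on $\beta \in B^*$ it must be $0$ for bidegree reasons. Compatibility with the relation amounts to the bilinearity property of $\l\cdot,\cdot\r_{B^*}$ over $\wedge B^*$ together with the normalization $\l \beta', \beta\r_{B^*} = 0$ and $\l \wh a, \alpha\r$-type identities from \eqref{eq:properties1}–\eqref{eq:properties2}; this is the routine verification that I would not spell out in detail. The $\wedge B^*$-linearity of $x \mapsto \iota_h(x)$ is likewise immediate from the bilinearity of the extended pairing.

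For step (ii), the key point: over $D_0$ one has $W(D_0) = \wedge A_0^* \otimes \wedge B_0^* \otimes \vee \CC_0$, so $\on{Der}(W(D_0))$ is the free $W(D_0)$-module on $A_0 \oplus B_0 \oplus \CC_0^*$, and $\on{Der}^{-1,\bullet-1}(W(D_0))$ is the submodule where the generator direction is $A_0 \oplus \CC_0^*$ (degree $(-1,0)$ resp.\ $(-1,-1)$), tensored with the appropriate piece $\wedge B_0^* \otimes \vee^\bullet \CC_0$ wait — more carefully, the $(-1,q-1)$ component is $(\wedge^{q} B_0^* \text{ or } \wedge^{q-1}B_0^* \otimes \CC_0) \otimes(\cdots)$; I would just write out the graded pieces and match dimensions against $W^{\bullet,1}(D_0') = \wedge^\bullet B_0^* \otimes \CC_0 \oplus \wedge^{\bullet-1} B_0^* \otimes (A_0^* \otimes \CC_0^*)\otimes\cdots$ — in fact the clean statement is that both sides are free $\wedge B_0^*$-modules whose fibers at bidegree concentration are $A_0 \oplus (B_0^*\otimes\CC_0^*) = \wh A_0$ in degree $1$ together with the $\CC_0^*$-contractions, and $\iota_h$ matches them by \eqref{eq:corcontractions} and the second relation in \eqref{eq:contractions4}; surjectivity and injectivity then follow by a dimension count in each bidegree. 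Step (iii) for $\iota_v$ is the mirror image; the only subtlety is bookkeeping the sign $-(-1)^{(|y|+1)(|z|+1)}$, which arises because $\iota_v(y)$ has bidegree $(|y|-1,-1)$ so has odd total parity shift relative to the "natural" pairing order, and one fixes the sign by testing on $y = \wh b \in W^{1,1}(D'')$, $z = \wh\cc \in W^{1,1}(D)$ against the normalization $\iota_v(\wh b) u = (-1)^{|u|}\l u, \wh b\r_{A^*}$ from \eqref{eq:contractions4}.

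The main obstacle, such as it is, is purely one of sign and degree bookkeeping in step (iii): making sure the Koszul sign conventions in the extended pairings \eqref{eq:three_parings2}, the superderivation rule, and the module structures are all mutually consistent, so that the stated formulas \eqref{eq:da1}–\eqref{eq:da2} hold on the nose rather than up to an undetermined sign. There is no conceptual difficulty beyond the already-proved structure theorem for $\on{Der}(W(D))$; the proposition is essentially its dual reformulation via the pairings.
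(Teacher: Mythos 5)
Your proposal is correct and follows essentially the same route as the paper: define the maps on generators, note that the isomorphism claim is already contained in the structure theorem for $\on{Der}(W(D))$ (via the relations $\iota_h(i_{\wh{A}}(\beta\otimes\ggamma))=\beta\iota(\ggamma)$, etc.), and verify that the formulas \eqref{eq:da1}--\eqref{eq:da2} are consistent with the left module structure in the first argument and the right module structure in the second. The paper's actual proof consists almost entirely of the sign computation you flag as the ``main obstacle'' (replacing $y$ by $\alpha y$ and $z$ by $z\alpha$ and checking both produce the factor $(-1)^{(|y|+1)(|z|+1)}$), and your proposed test on $y=\wh b$, $z=\wh\cc$ against \eqref{eq:contractions4} does pin down the sign correctly.
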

The sign in \eqref{eq:iotav} comes from the fact that 
we are using the left $\wedge A^*$-module structures, whereas the pairing is 
bilinear for the right $\wedge A^*$-module structure in the second argument. 
Note that $\iota_h(\ggamma)=\iota_v(\ggamma)=\iota(\ggamma)$ for $\ggamma\in \CC^*$.

\begin{proof}
The proposed formulas determine $\iota_h(x),\ \iota_v(y)$ on generators. To show that 
the formula  \eqref{eq:da2} for $\iota_v(y)$ gives a well-defined $\wedge A^*$-module homomorphism,  it suffices to check that the right hand side 
is  linear in the argument $y$ for the left $\wedge A^*$-module structure and linear in the argument $z$ for the right $\wedge A^*$-module structure. 
Indeed, replacing $y$ with $\alpha y$ changes the 
right hand side to 
 \[ (-1)^{|y|(|z|+1)}\l z,\,\alpha y\r_{A^*}
 =(-1)^{|y|\,|z|}\l z,\,y\r_{A^*}\alpha 
 =(-1)^{(|y|+1)(|z|+1)}\alpha\l z,\,y\r_{A^*}.\]
Similarly, 
replacing 
$z$ with $z\alpha$ for $\alpha\in A^*$ changes the right hand side to 
\[ (-1)^{(|y|+1)|z|}\l z\alpha,y\r_{A^*}
=(-1)^{|y|\,|z|}\alpha \l z,\,y\r_{A^*}
=(-1)^{(|y|+1)(|z|+1)}\l z,\,y\r_{A^*}\alpha\]
as required. The argument for $\iota_h(x)$ is similar. 
 \end{proof}

\subsection{Applications to vector bundles II}\label{subsec:TV2}
Continuing the discussion from Section \ref{subsec:TV1}, we have the following description of the Weil algebras and contraction operators for the tangent bundles and cotangent bundles of vector bundles $V\to M$.
\subsubsection{Tangent bundle of $V$}
Consider $D=TV$, so that $A=V,\ B=TM,\ C=V^*$. 
The Weil algebra $\W(TV)$ is generated by functions  $f\in C^\infty(M)$ (bidegree $(0,0)$) and 
their de Rham differentials $\d f$  (bidegree 
$(0,1)$), together with sections $\tau\in \Gamma(V^*)$ (bidegree $(1,0)$) and their 1-jets 
$j^1(\tau)\in \Gamma(J^1(V^*))$ (bidegree $(1,1)$), subject to relations of $C^\infty(M)$-linearity and the relation that 
\[ \tau\ \d f=j^1(f\tau)-f j^1(\tau).\]
Here we used that $i_{\wh{\CC}}(\tau\otimes \d f)=i_{J^1(V^*)}(\tau\otimes \d f)$. 
The contraction operators are computed from the pairings, for example: 
\[ \iota_v(\delta) j^1(\tau)=\l j^1(\tau),\delta\r_{V^*}=-\nabla_\delta^*\tau.\] 

%


\subsubsection{Cotangent bundle of $V$}
Consider $D=T^*V$, so that $A=V,\ B=V^*,\ C=TM$. 
The Weil algebra $\W(T^*V)$ is generated by functions  $f\in C^\infty(M)$ (bidegree $(0,0)$), sections $\sigma\in \Gamma(V)$ (bidegree $(0,1)$), sections $\tau\in \Gamma(V^*)$ (bidegree $(1,0)$),  and infinitesimal automorphisms $\delta\in \Gamma(\on{At}(V))$ (bidegree $(1,1)$), subject to relations of $C^\infty(M)$-linearity  and the relation that  
the product $\tau \sigma$ in the Weil algebra, for $\tau\in \Gamma(V^*)$ and $\sigma\in \Gamma(V)$, equals the section $i_{\wh{\CC}}(\tau\otimes \sigma)=
-i_{\on{At}(V)}(\sigma\otimes \tau)=-\phi_\tau\,\sigma^\sharp
\in \Gamma(\on{At}(V))$. Again, the various contraction operators may be computed from the pairings.


\section{Linear and core sections of $\wedge_A D$}\label{sec:wedge}
We have already encountered the linear and core sections of a double vector bundle $D$ over its side bundles. We shall now consider the generalization to the exterior algebra bundles, and relate it to the Weil algebra bundles. Throughout, $D$ will denote a double vector bundle with sides $A,B$ and with 
$\core(D)=\CC^*$.

\subsection{Linear and core sections of $\wedge^\bullet_A D\to A $}\label{subsec:lincor}
Given a double vector bundle $D$, we denote by 
\[ \wedge^n_A D\to A\] 
its exterior powers as a vector bundle over $A$. The horizontal scalar multiplications $\kappa_t^h\colon D\to D$ are vector bundle endomorphisms of $D\to A$,
hence they extend to algebra bundle endomorphisms $\wedge^\bullet\kappa_t^h$ of 
$\wedge^\bullet_A D\to A$. 
A section $\sigma\colon A\to \wedge^n_A D$ is \emph{homogeneous of degree $k$} if it satisfies 
\[ \sigma(\kappa_t^h(a))=t^k\ (\wedge^n\kappa_t^h)(\sigma(a))\]
for all $t\in\R$; the space of such sections is denoted $\Gamma(\wedge^n_A D, A)_{[k]}$. 


%
\begin{definition}
The spaces of \emph{core sections} and  \emph{linear sections} of $\wedge^n_A D$ over $A$ are defined as 
follows: 
\begin{align*}
\Gamma_\core(\wedge^n_A D, A)&= \Gamma(\wedge^n_A D, A)_{[-n]},\\
\Gamma_\lin(\wedge^n_A D, A)&= \Gamma(\wedge^n_A D, A)_{[-n+1]}.
\end{align*}
The spaces 
$\Gamma_\core(\wedge^n_B D, B)$ and $\Gamma_\lin(\wedge^n_B D, B)$
are defined similarly. 
\end{definition}
The core sections $\Gamma_\core(\wedge^\bullet_A D, A)$ are a super-commutative graded algebra under the wedge product, and $\Gamma_\lin(\wedge^\bullet_A D, A)$ is a graded module over this algebra.
The significance of these spaces is clarified by the following result. 
\begin{proposition}\label{prop:linses}
The space $\Gamma(\wedge^n_A D, A)_{[k]}$ is zero if $k<-n$, and for $k=-n$ is given by  
 \begin{equation}\label{eq:coreiso}
  \Gamma_\core(\wedge^n_A D, A)\cong \Gamma(\wedge^n \CC^*).
  \end{equation}
  The space of linear sections fits into a short exact sequence
\begin{equation}\label{eq:ses}
  0\to \Gamma(\wedge^n \CC^*\otimes A^*)\to \Gamma_\lin(\wedge^n_A D, A)\to \Gamma(\wedge^{n-1} \CC^*\otimes B)\to 0.
 \end{equation}
\end{proposition}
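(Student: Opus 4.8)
The plan is to reduce everything to the double vector space $D_0 = A_0\times B_0\times \CC_0^*$, using the associated-bundle presentation $D=(P\times D_0)/\Aut(D_0)$ and the fact that $\Gamma(\wedge^n_A D, A)_{[k]}$ is the space of sections of a vector bundle over $M$ associated to the principal bundle $P$ (the fibre being the corresponding $\Aut(D_0)$-representation $\Gamma(\wedge^n_{A_0} D_0, A_0)_{[k]}$, which is finite-dimensional for each fixed $k$ since homogeneity of degree $k$ forces polynomial dependence of bounded degree on the fibre coordinates). Once the claims are verified $\Aut(D_0)$-equivariantly on $D_0$, taking associated bundles gives the statement over a general base.

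First I would analyze $\wedge^n_{A_0} D_0$ explicitly. Choosing the canonical splitting, $D_0 \cong A_0\times B_0\times \CC_0^*$ as a vector bundle over $A_0$ is the trivial bundle with fibre $B_0\oplus \CC_0^*$, where (in the notation of the paragraph following \eqref{eq:dadb}) $B_0$ sits in horizontal homogeneity degree $0$ and $\CC_0^*$ in horizontal homogeneity degree $-1$. Hence $\wedge^n_{A_0}D_0 = \wedge^n(B_0\oplus\CC_0^*)$, and a section over $A_0$ is a polynomial map $A_0 \to \wedge^n(B_0\oplus\CC_0^*)$. Decomposing $\wedge^n(B_0\oplus\CC_0^*) = \bigoplus_{j=0}^n \wedge^{n-j}B_0\otimes\wedge^j\CC_0^*$, the summand $\wedge^{n-j}B_0\otimes\wedge^j\CC_0^*$ carries horizontal homogeneity degree $-j$ as a constant; multiplying by a homogeneous polynomial of degree $\ell$ in $A_0^*$ raises the degree to $\ell-j$. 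So $\Gamma(\wedge^n_{A_0}D_0,A_0)_{[k]} = \bigoplus_{j} (\vee^{k+j}A_0^*)\otimes\wedge^{n-j}B_0\otimes\wedge^j\CC_0^*$, the sum being over $j$ with $0\le j\le n$ and $k+j\ge 0$. This immediately vanishes when $k<-n$ (then $k+j<0$ for all admissible $j$); for $k=-n$ only $j=n$ survives and the degree-zero symmetric power is the ground field, giving $\wedge^n\CC_0^*$, which is \eqref{eq:coreiso}; and for $k=-n+1$ exactly $j=n$ (contributing $\wedge^n\CC_0^*\otimes A_0^*$) and $j=n-1$ (contributing $\wedge^{n-1}\CC_0^*\otimes B_0$) survive, giving a (split, at the level of $D_0$) short exact sequence $0\to \wedge^n\CC_0^*\otimes A_0^*\to \Gamma_\lin(\wedge^n_{A_0}D_0,A_0)\to \wedge^{n-1}\CC_0^*\otimes B_0\to 0$.

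The remaining work is to identify the maps in this sequence invariantly, so that the splitting present for $D_0$ disappears after passing to a general $D$ (where a splitting need not exist globally, but the two outer terms and the projection are canonical). The injection sends $\rho\otimes\alpha \in \Gamma(\wedge^n\CC^*\otimes A^*)$ to the linear section $a\mapsto \alpha(a)\,\rho^{\,\sharp}$, where $\rho^{\sharp}$ is the wedge of core lifts $\CC^*=\core(D)\hra D$ viewed as a constant section of $\wedge^n_A D\to A$; in the $D_0$-picture this is visibly the $j=n$ summand and is $\Aut(D_0)$-equivariant (note that the $\omega$-action of $A_0^*\otimes B_0^*\otimes\CC_0^*$ fixes the core, so it acts trivially on this piece). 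The surjection sends a linear section $\sigma$ to its "leading symbol": restrict $\sigma$ along the zero section $0_A\colon M\to A$ — but this lands in $\wedge^n_M(D|_M)$, and one projects the result to $\wedge^{n-1}\CC^*\otimes B$ via the identification $D|_M = A\oplus B\oplus\CC^*$ (core plus sides) combined with the homogeneity constraint, which forces the output to lie in the claimed summand. I would check $\Aut(D_0)$-equivariance of this symbol map directly on $D_0$: the $\GL$-part is obvious, and for the $\omega$-action one verifies that the shift $(a,b,\ggamma)\mapsto(a,b,\ggamma+\omega(a,b))$ changes a linear section by a core term, hence does not affect the $\wedge^{n-1}\CC^*\otimes B$ component. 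Exactness is then inherited from the $D_0$ computation.

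\textbf{Main obstacle.} The genuine work is not the dimension count but pinning down the surjection $\Gamma_\lin(\wedge^n_A D,A)\to \Gamma(\wedge^{n-1}\CC^*\otimes B)$ in a splitting-independent way and checking its $\Aut(D_0)$-equivariance — i.e., verifying that the "$b$-component of the leading symbol" is well defined independently of how one trivializes $D$ over $A$, equivalently that the ambiguity is always absorbed into the kernel $\wedge^n\CC^*\otimes A^*$. For $n=1$ this is exactly the map $\Gamma_\lin(D,A)\to\Gamma(B)$ from \eqref{eq:dadb}, and the general case is its exterior-algebra analogue; organizing the bookkeeping of the wedge decomposition together with the $\omega$-action is the one place where some care is needed, though no hard estimate or deep structural input is involved.
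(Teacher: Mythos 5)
Your proposal is correct and follows essentially the same route as the paper: reduce to the double vector space $D_0$ via the associated-bundle presentation, decompose $\wedge^n(B_0\oplus \CC_0^*)$ by horizontal homogeneity to obtain $\bigoplus_j \vee^{k+j}A_0^*\otimes \wedge^{n-j}B_0\otimes\wedge^j\CC_0^*$, and read off the three claims, with the maps in \eqref{eq:ses} realized by restriction to $M\subset A$ and by multiplying core sections with linear functions on $A$. The extra equivariance checks you flag as the "main obstacle" are exactly what the paper delegates to the associated-bundle formalism, so nothing is missing.
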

\begin{proof}
Recall that when $D$ is viewed as a vector bundle over $A$, its restriction to the submanifold $M$ is the direct sum $D=\CC^*\oplus B $. Hence, the restriction of sections to $M\subset A$ gives a map
\[ \Gamma(\wedge_A D,A)\to \Gamma(\wedge(\CC^*\oplus B))=\Gamma(\wedge \CC^*\otimes \wedge B).\]
 We claim that the restriction of core sections gives the isomorphism \eqref{eq:coreiso}, while restriction of linear sections gives a map from $\Gamma_\lin(\wedge^n_A D, A)$ onto $\Gamma(\wedge^{n-1} \CC^*
\otimes B)$, with kernel $\Gamma(\wedge^n \CC^*\otimes A^*)$ spanned by products of core sections with linear functions on the base $A$. Using the associated bundle construction, 
it suffices to prove these claims for the double vector space $D_0=A_0\times B_0\times \CC_0^*$. We have 
\[ \Gamma(\wedge^n_{A_0} D_0,A_0)=C^\infty(A_0,\bigoplus_{i+j=n} \wedge^j \CC_0^*\otimes  \wedge^i B_0).\]
The elements of $\wedge^{j} \CC_0^*\otimes\wedge^{n-j} B_0 \to M$ (regarded as constant sections of 
$\wedge^n_{A_0} D_0$) are homogeneous of degree $-j$. To obtain a section that is homogeneous of  degree $k$, we must multiply by a polynomial on $A_0$ of degree $k+j$. Thus, 
\[ \Gamma(\wedge^n_{A_0} D_0,A_0)_{[k]}=\bigoplus_j
 \wedge^{j} \CC_0^*\otimes 
\wedge^{n-j} B_0\otimes
\vee^{k+j} A_0^*\]
where the sum is over all $j$ with $0\le j\le n$ and $k+j\ge 0$. In particular, this space is zero if $k<-n$, 
and is equal to $\wedge^n \CC_0^*$ for $k=-n$. 
 Specializing to $k=-n+1$ this shows 
\begin{equation}\label{eq:two}
 \Gamma_\lin(\wedge^n_{A_0} D_0,A_0)=(\wedge^n \CC_0^*\otimes A_0^*) \oplus 
(\wedge^{n-1} \CC_0^*\otimes B_0).\end{equation}
Hence, the map $\Gamma_\lin(\wedge^n_{A_0} D_0, A_0)\to 
\wedge^{n-1} \CC_0^*\otimes
B_0 $ is surjective, with kernel  $\wedge^n \CC_0^*\otimes A_0^*$. 
\end{proof}
\medskip

\subsection{Interpretation in terms of Weil algebras}\label{subsec:wedgeweil}
The linear and core sections of $\wedge_A D\to A$ are graded subspaces of Weil algebras, as follows.

\begin{proposition}\label{prop:weilident}
There is a canonical isomorphism $\Gamma_\core(\wedge^\bullet_A D, A)\cong \W^{\bullet,0}(D'')=\Gamma(\wedge^\bullet \CC^*)$ as graded algebras, and an isomorphism of left  modules over this algebra, 
\[ \Gamma_\lin(\wedge^\bullet_A D, A)\cong \W^{\bullet,1}(D'').\]
Similarly, there is a canonical isomorphism of graded algebras, 
$\Gamma_\core(\wedge^\bullet_B D, B)\cong \W^{0,\bullet}(D')=\Gamma(\wedge^\bullet \CC^*)$ 
and an isomorphism of right modules over this algebra, 
\[ \Gamma_\lin(\wedge^\bullet_B D, B)\cong\W^{1,\bullet}(D').\]
\end{proposition}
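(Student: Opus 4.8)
The plan is to prove the claimed isomorphisms by the same device used throughout Section~\ref{sec:symmetric} and Section~\ref{sec:weilalgebra}: reduce to the double vector space $D_0 = A_0\times B_0\times \CC_0^*$ via the associated bundle construction, check everything there by an explicit computation, and verify $\Aut(D_0)$-equivariance of all the maps in sight. Since the Weil algebra bundles, the exterior bundles $\wedge^\bullet_A D$, and the pairings of \eqref{eq:three_pairings} are all associated to the frame bundle $P$, any $\Aut(D_0)$-equivariant isomorphism between the corresponding fibers at $D_0$ globalizes.

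First I would recall from the proof of Proposition~\ref{prop:linses} (specifically \eqref{eq:two}) that
\[ \Gamma_\lin(\wedge^n_{A_0} D_0, A_0) = (\wedge^n \CC_0^* \otimes A_0^*)\oplus (\wedge^{n-1}\CC_0^*\otimes B_0),\]
and from the description of $W(D'')$ that, since $D''$ has side bundles $\CC, A$ and core $B^*$, we have $W^{\bullet,1}(D_0'') = \wedge^\bullet \CC_0^* \otimes \wh{B}_0$ where $\wh{B}_0 = (\CC_0^*\otimes A_0^*)\oplus B_0$ in bidegree $(1,1)$. Comparing these two descriptions term by term gives a tautological identification of vector spaces; for the core case $n\mapsto \wedge^n\CC_0^*$ on both sides. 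The key point is then to argue that this identification is the correct, \emph{canonical} one, i.e. that it is $\Aut(D_0)$-equivariant and compatible with the algebra/module structures. For equivariance, the only nontrivial piece is the action of $\omega\in A_0^*\otimes B_0^*\otimes\CC_0^*$, and one checks that $\omega$ acts on $\Gamma_\lin(\wedge^n_{A_0} D_0, A_0)$ by the twist $(\rho,\beta\otimes b)\mapsto(\rho-\text{(contraction of }\omega\text{ with }b),\,\beta\otimes b)$, matching exactly the twist defining $\wh{B}_0$ inside $W^{1,1}(D_0'')$ from Section~\ref{subsec:double_linear}. The compatibility with the wedge-product algebra structure on core sections is clear since both sides are just $\wedge^\bullet\CC^*$ with its usual product, and the module structure matches because multiplying a linear section by a core section corresponds, under restriction to $M\subset A$, to wedging in the $\wedge\CC^*$-factor. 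One should also identify the natural map $\Gamma_\lin(\wedge^n_A D,A)\to \Gamma(\wedge^{n-1}\CC^*\otimes B)$ from \eqref{eq:ses} with the projection $W^{n,1}(D'')\to W^{n-1,1}(D'')$ induced by $\wh{B}\to B$, so that the short exact sequence \eqref{eq:ses} becomes the exact sequence \eqref{eq:hatCexactsequence} for $D''$ tensored with $\wedge^{n-1}\CC^*$; this pins down the identification uniquely.

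The statement for $\wedge^\bullet_B D$ is obtained by the cyclic permutation $A\leftrightarrow B$, $D''\leftrightarrow D'$, which simply amounts to running the same argument with the roles of the two side bundles interchanged; the only thing to watch is that this interchange is the one that sends left modules to right modules, which accounts for the ``left'' versus ``right'' discrepancy in the statement (compare the sign discussion after Proposition~\ref{prop:contractionpairing} and in Proposition~\ref{prop:d-}). Concretely, $\wh{A}_0 = (B_0^*\otimes\CC_0^*)\oplus A_0 = W^{1,1}(D_0')$, and $\Gamma_\lin(\wedge^n_{B_0} D_0, B_0) = (\wedge^n\CC_0^*\otimes B_0^*)\oplus(\wedge^{n-1}\CC_0^*\otimes A_0)$ by the analogue of \eqref{eq:two}, giving $\Gamma_\lin(\wedge^\bullet_B D, B)\cong \W^{1,\bullet}(D')$ with the $\wedge\CC^*$-action on the right.

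I expect the main obstacle to be purely bookkeeping rather than conceptual: getting the $\Aut(D_0)$-equivariance of the identification exactly right, i.e.\ verifying that the $\omega$-twist on linear sections of $\wedge^n_{A_0}D_0\to A_0$ is literally the same twist (no sign, no transpose) as the one defining $\wh{B}$, and correctly tracking which module structure (left vs.\ right over $\wedge\CC^*$) arises on each side. A clean way to sidestep some of this is to first handle the case $n=1$, where $\Gamma_\lin(\wedge^1_A D, A) = \Gamma_\lin(D,A)\cong \W^{1,1}(D'')=\Gamma(\wh{B})$ is already established in Proposition~\ref{prop:alternatives}(c) and Remark~\ref{rem:warp}, and then build up to general $n$ by observing that both $\Gamma_\lin(\wedge^\bullet_A D,A)$ and $\W^{\bullet,1}(D'')$ are generated over the common algebra $\Gamma(\wedge^\bullet\CC^*)$ by their degree-$1$ parts $\Gamma_\lin(D,A)$ and $\wh{B}$ respectively, subject only to the relations coming from \eqref{eq:hatCexactsequence}; the isomorphism in degree $1$ then extends uniquely.
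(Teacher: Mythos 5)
Your proposal is correct and follows essentially the same route as the paper: reduce to the double vector space $D_0$ via the associated bundle construction and match the explicit decomposition \eqref{eq:two} of $\Gamma_\lin(\wedge^n_{A_0}D_0,A_0)$ against $W^{\bullet,1}(D_0'')=(\wedge^\bullet\CC_0^*\otimes A_0^*)\oplus(\wedge^{\bullet-1}\CC_0^*\otimes B_0)$, with the cyclic permutation handling the $D'$ case. Your extra care about the $\Aut(D_0)$-equivariance of the $\omega$-twist and the left/right module bookkeeping is exactly what the paper leaves implicit in the phrase ``it suffices to prove the claim for $D_0$.''
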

\begin{proof}
It suffices to prove the claim for the 
double vector space $D_0=A_0\times B_0\times \CC_0^*$. But 
$W^{\bullet,0}(D_0'')=\wedge^\bullet \CC_0^*,\ \ \ W^{0,\bullet}(D_0')=\wedge^\bullet \CC_0^*$
as graded algebras. Furthermore, the isomorphism of graded left $\wedge \CC_0^*$-modules
\[ W^{\bullet,1}(D_0'')=(\wedge^\bullet \CC_0^*\otimes A_0^*)\oplus (\wedge^{\bullet-1} \CC_0^*\otimes B_0)\]
is exactly the description of linear sections of $\wedge^n_{A_0}D_0$, see 
\eqref{eq:two}. Similarly for 
\[ W^{1,\bullet}(D_0')=(B_0^*\otimes\wedge^\bullet \CC_0^*) \oplus (A_0\otimes \wedge^{\bullet-1}\CC_0^*).\]
as graded right $\wedge \CC_0^*$-modules. 
\end{proof}
 With these identifications, the pairing 
$\l\cdot,\cdot\r_{\CC^*}\colon 
W^{p,1}(D'')\times_M W^{1,q}(D') \to  \wedge^{p+q-1} \CC^*$ (cf.~ \eqref{eq:three_parings2})
translates into a $\Gamma(\wedge \CC^*)$-bilinear pairing 
\begin{equation}\label{eq:pairingtaketwo}
 		\l\cdot,\cdot\r_{\CC^*}\colon 
 		\Gamma_\lin(\wedge^p_A D,A)\times 
 		\Gamma_\lin(\wedge^q_B D,B)
 \to  \Gamma(\wedge^{p+q-1} \CC^*).\end{equation}

\subsection{Application to vector bundles III}\label{subsec:TV3}
Let $V\to M$ be a vector bundle. 
\subsubsection{Linear Multi-vector fields on vector bundles}\label{ex:corelin}\label{ex:TV}
Consider $TV$ as a double vector bundle as in Example \ref{subsec:examples}\ref{it:ex2}. The \emph{core $n$-vector fields}
$ \mf{X}^n_\core(V)\equiv \Gamma_\core(\wedge^n_V TV,V)$
are the sections of $\wedge^n V$, regarded as vertical `fiberwise constant' multi-vector fields on $V$:
\[ \mf{X}^n_\core(V)=\Gamma(\wedge^n V).\]
The \emph{linear $n$-vector fields}
$\mf{X}^n_\lin(V)= \Gamma_\lin(\wedge^n_V TV,V)$
may be defined by their property that the evaluation on linear 1-forms 
on $V$ is a linear function on $V$ \cite{gra:tan}. The short exact sequence \eqref{eq:ses} specializes to 
\begin{equation}\label{eq:exactsequenceX}
 0\to \Gamma(\wedge^{n}V\otimes V^*)\to \mf{X}^n_\lin(V)\to \Gamma(\wedge^{n-1}V
 \otimes TM)\to 0;
 \end{equation}
here the inclusion of $\Gamma(\wedge^{n}V\otimes V^*)$ is as the subspace of linear $n$-vector fields on $V$ that are tangent  to the fibers of $V\to M$. In local vector bundle coordinates, with $x_i$ the coordinates on the base and $y_j$ the coordinates on the fiber, the linear $n$-vector fields on $V$ are of the form 
\[ \sum a^j_{j_1\cdots j_n}(x)\, y_j\, \f{\p}{\p y_{j_1}}\wedge \cdots \wedge
\f{\p}{\p y_{j_n}}+\sum b_{i\, j_1\cdots j_{n-1}}(x)
\f{\p}{\p y_{j_1}}\wedge \cdots \wedge \f{\p}{\p y_{j_{n-1}}}\wedge \f{\p}{\p x_i} 
.\]
The Schouten bracket of multi-vector fields defines a graded Lie algebra structure on $\mf{X}_\lin^{\bullet}(V)[1]$, with a representation on $\mf{X}_\core^\bullet(V)=\Gamma(\wedge^\bullet V)$.
These are the \emph{multi-differentials} in the work of Iglesias-Ponte, Laurent-Gengoux, and Xu \cite{igl:uni}.

\subsubsection{Linear differential forms on vector bundles}
\label{ex:linforms}
Consider next the double vector bundle $T^*V$ from Example \ref{subsec:examples}\ref{it:ex2}. 
The space $\Omega^n_\core(V)=\Gamma_\core(\wedge^n_V T^*V,V)$ is just $\Omega^n(M)$, viewed as 
the space of basic $n$-forms on $V$ via pullback. The space 
\[ \Gamma_\lin(\wedge^n_V T^*V,V)=\Omega^n_\lin(V)\] 
of \emph{linear $n$-forms} on $V$ consists of n-forms $\alpha$ with $\kappa_t^*\alpha=t\alpha$ where $\kappa_t$ is scalar multiplication by $t$ on $V$. 
(Note that the homogeneity of $n$-forms on $V$ relative to pullback $\kappa_t^*$ 
is not the same as homogeneity as sections of $\wedge^n_V T^*V$ over $V$.) 
In local bundle bundle coordinates, with $x_i$ the coordinates on the base and $y_j$ the coordinates on the fiber, the 1-forms $\d x_i$, seen as local sections of $\wedge^\bullet_V T^*V$, have homogeneity degree $-1$ while the $\d y_j$ have homogeneity $0$.   A general linear $n$-form is locally given by an expression
\[ \sum a_{j\, i_1\cdots i_{n-1}}(x) \,\d x_{i_1}\wedge \cdots \wedge \d x_{i_{n-1}}\wedge \d y_j 
+\sum b_{j\, i_1\cdots i_n}(x) y_j  \d x_{i_1}\wedge \cdots \wedge \d x_{i_n}.\]
The short exact sequence \eqref{eq:ses} becomes 
\begin{equation}\label{eq:sesforms} 
0\to \Gamma(\wedge^n T^*M\otimes V^*) \to \Omega^n_\lin(V) \to \Gamma(
 \wedge^{n-1}T^*M\otimes
V^* ) \to 0;\end{equation}
here the inclusion of $\Gamma(\wedge^n T^*M\otimes V^*)$ is as the space of linear $n$-forms  on $V$ that are horizontal for the projection to $M$, while the projection to $\Gamma(\wedge^{n-1}T^*M
\otimes 
V^* ) $ is given by contraction with sections 
of $V$ (regarded as the space $\mf{X}_\core(V)$ of fiberwise constant vector fields on $V$). 
The exact sequence \eqref{eq:sesforms} has a canonical splitting \cite{bur:mul}: every element of $\Omega^n_\lin(V)$ 
decomposes uniquely as $\nu+\d \mu$ where $\nu\in \Gamma(\wedge^n T^*M\otimes V^*) $ 
and $\mu\in \Gamma(\wedge^{n-1} T^*M\otimes V^*)$.  Using the Mackenzie-Xu isomorphism \eqref{eq:mackenziexu} we obtain a similar interpretation 
\[ \Gamma_\lin(\wedge^q_{V^*}T^*V,V^*)=\Omega_\lin^q(V^*).\]
Equation \eqref{eq:pairingtaketwo} defines an $\Omega(M)$-bilinear
 pairing 
\begin{equation}\label{eq:formpairing} 
\l\cdot,\cdot\r_{T^*M}\colon \Omega_\lin^p(V)\times \Omega_\lin^q(V^*)\to \Omega^{p+q-1}(M),\end{equation}
(using the right $\Omega(M)$-module structure in the second argument), given in low degrees by 
\[\l\phi_\tau,\phi_\sigma\r_{T^*M}=0,\ \ \ 
 \l \phi_\tau,\d\phi_\sigma\r_{T^*M}=-\l\d \phi_\tau,\phi_\sigma\r_{T^*M}=\l\tau,\sigma\r,\ \ \ \ \l\d\phi_\tau,\d\phi_\sigma\r_{T^*M}=
 \d\l\tau,\sigma\r\]
for $\tau\in \Gamma(V^*),\ \sigma\in \Gamma(V)$ (with $\phi_\tau,\phi_\sigma$ the corresponding linear functions). 

\subsection{Multi-vector fields on $D$}\label{subsec:multivector}
The linear and core sections of $\wedge_A D\to A$ and $\wedge_B D\to B$, and their pairings,  have a simple interpretation in terms of the space $\mf{X}^\bullet(D)$ of multi-vector fields on  $D$. 
Using the discussion from Example \ref{ex:TV}, the sections of $\wedge^n_A D$ are identified with $n$-vector fields
on $D$ that are homogeneous of degree $-n$ with respect to the vertical vector bundle structure. A similar description applies to sections of $\wedge^n_B D$. As in Section \ref{subsec:geometricinterpretation}, we let $\mf{X}^n(D)_{[k,l]}$ denote the space 
of $n$-vector fields that are homogeneous of degree $k$ horizontally and of degree $l$ vertically. 
This space is trivial if $k<-n$ or $l<-n$, while $ \mf{X}^n(D)_{[-n,-n]}\cong \Gamma(\wedge^n \CC^*)$ is identified with $\Gamma_\core(\wedge^n_A D,A)$ and also with 
$\Gamma_\core(\wedge^n_B D,B)$. Furthermore, 
we have canonical isomorphisms  
\[ 
\Gamma_\lin(\wedge^p_A D,A)\cong \mf{X}^p(D)_{[1-p,-p]},\ \ \ \ \ \ 
\Gamma_\lin(\wedge^q_B D,B)\cong \mf{X}^q(D)_{[-q,1-q]}.\]
%
The first isomorphism is compatible with the left module structure over $\Gamma(\wedge \CC^*)$, the second  isomorphism with the right module structure, 
realized as wedge product of the corresponding multivector fields from the left or right, respectively.   

\begin{proposition}\label{prop:pairingschouten}
With the above identifications, the pairing 
\eqref{eq:pairingtaketwo} is  given by the Schouten bracket 
\[ \l x,y\r_{\CC^*}=\Lie{x,y}\] 
for all
$x\in \mf{X}^p(D)_{[1-p,-p]}$ and $y\in \mf{X}^q(D)_{[-q,1-q]}$.
%
%
\end{proposition}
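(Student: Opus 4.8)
The plan is to reduce the identity $\langle x,y\rangle_{\CC^*}=\Lie{x,y}$ to the low-degree pairings already expressed as Lie derivatives in \eqref{eq:3pair}, by means of a $\Gamma(\wedge\CC^*)$-bilinearity argument. Throughout I regard $\Gamma(\wedge^a\CC^*)$ as the space of core multi-vector fields $\mf{X}^a(D)_{[-a,-a]}$, acting on $\Gamma_\lin(\wedge^\bullet_A D,A)\cong\mf{X}^\bullet(D)_{[1-\bullet,-\bullet]}$ and on $\Gamma_\lin(\wedge^\bullet_B D,B)\cong\mf{X}^\bullet(D)_{[-\bullet,1-\bullet]}$ by the wedge product of multi-vector fields from the left and from the right, respectively. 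The first step is to check that both sides of the claimed identity are $\Gamma(\wedge\CC^*)$-bilinear for these module structures. For the pairing \eqref{eq:pairingtaketwo} this holds by construction, being the translate under Proposition~\ref{prop:weilident} of the pairing \eqref{eq:three_parings2}, which was defined precisely by its low-degree values together with $\wedge\CC^*$-bilinearity. For the Schouten bracket I would apply the Leibniz rules recalled in the footnote to write $\Lie{\lambda\wedge x,\,y}=\lambda\wedge\Lie{x,y}\pm\Lie{\lambda,y}\wedge x$ and $\Lie{x,\,y\wedge\mu}=\Lie{x,y}\wedge\mu\pm y\wedge\Lie{x,\mu}$ for core multi-vector fields $\lambda,\mu$, and then observe that $\Lie{\lambda,y}$ and $\Lie{x,\mu}$ vanish for degree reasons: a bracket of a core multi-vector field of degree $a$ with a linear multi-vector field of either of the two homogeneity types occurring lands in some $\mf{X}^k(D)_{[r,s]}$ with $r<-k$ or $s<-k$, and such a space is zero by Section~\ref{subsec:multivector}. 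Specializing $\lambda$ or $\mu$ to degree $0$ gives in particular that both sides are $C^\infty(M)$-bilinear, hence tensorial over $M$; this legitimizes the reduction that follows, and, if desired, also allows the entire computation to be carried out fibrewise on the model $D_0=A_0\times B_0\times\CC_0^*$.

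The second step is to reduce to generators. Using the short exact sequence \eqref{eq:ses} and the identification of its sub- and quotient bundle given in the proof of Proposition~\ref{prop:linses}, $\Gamma_\lin(\wedge^\bullet_A D,A)$ is generated as a $\Gamma(\wedge\CC^*)$-module by its degree-$0$ part $\Gamma(A^*)\subset C^\infty(D)_{[1,0]}$ together with any choice of lifts of $\Gamma(B)$ inside the degree-$1$ part $\Gamma_\lin(\wedge^1_A D,A)=\mf{X}(D)_{[0,-1]}$; symmetrically, $\Gamma_\lin(\wedge^\bullet_B D,B)$ is generated by $\Gamma(B^*)\subset C^\infty(D)_{[0,1]}$ and by lifts of $\Gamma(A)$ inside $\mf{X}(D)_{[-1,0]}$. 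By the bilinearity of both sides it therefore suffices to verify $\langle x,y\rangle_{\CC^*}=\Lie{x,y}$ on generators, i.e.\ in four cases according to whether $x$ and $y$ have degree $0$ or $1$. When both have degree $1$ the identity is exactly the first line of \eqref{eq:3pair}, $\langle X,Y\rangle_{\CC^*}=\Lie{X,Y}$ for $X\in\mf{X}(D)_{[0,-1]}$ and $Y\in\mf{X}(D)_{[-1,0]}$. When both have degree $0$ the pairing takes values in $\wedge^{-1}\CC^*=0$ and the Schouten bracket of two functions vanishes. In the two mixed cases $\Lie{x,y}$ is, up to sign, the Lie derivative of a pulled-back linear function along a linear section of $D$, and one compares it with the defining low-degree values $\langle\alpha,\wh a\rangle_{\CC^*}=-\alpha(a)$ and $\langle\wh b,\beta\rangle_{\CC^*}=\beta(b)$ of the extended pairing \eqref{eq:three_parings2}; these last comparisons are routine on $D_0$, just as for \eqref{eq:3pair}.

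The main obstacle is the sign-and-degree bookkeeping in the first step. One has to reconcile the sign conventions in the Leibniz rules for the Schouten bracket with the left- and right-$\wedge\CC^*$-module conventions that are built into the identifications $\Gamma_\lin(\wedge^\bullet_A D,A)\cong\W^{\bullet,1}(D'')$ and $\Gamma_\lin(\wedge^\bullet_B D,B)\cong\W^{1,\bullet}(D')$ of Proposition~\ref{prop:weilident} (cf.\ the extra sign appearing in \eqref{eq:iotav}), and one must be careful that the vanishing $\Lie{\mathrm{core},\mathrm{linear}}=0$ is verified for every combination of the two homogeneity types that actually occurs, so that the module structures used on the two sides genuinely correspond and the reduction to degree-$\le 1$ generators is valid. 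Everything else is a short finite check.
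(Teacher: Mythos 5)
Your proposal is correct and follows essentially the same route as the paper's proof: establish $\Gamma(\wedge\CC^*)$-bilinearity of both sides (using the derivation property of the Schouten bracket together with the degree-reasons vanishing of brackets with core multi-vector fields), reduce to the cases $p,q\le 1$, and verify those via the geometric identifications \eqref{eq:3pair}. The paper's version is merely terser about the module-generation and sign bookkeeping that you spell out.
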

\begin{proof}
The Schouten bracket of elements $\lambda\in \mf{X}^n(D)_{[-n,-n]}$
with 
$x\in \mf{X}^p(D)_{[1-p,-p]}$ or with 
$y\in \mf{X}^q(D)_{[-q,1-q]}$ is zero, 
for degree reasons. Hence, the derivation property of the Schouten bracket shows that  $\Lie{x,y}$  is 
$\Gamma(\wedge \CC^*)$-bilinear, for the left module structure on 
$\mf{X}^p(D)_{[1-p,-p]}$ and the right module structure on 
$\mf{X}^q(D)_{[-q,1-q]}$. The pairing $\l x,y\r_{\CC^*}$ has the same bilinearity property. It therefore suffices to prove the formula  for $p,q\le 1$. 
If $p=q=1$, we are dealing with the pairing of vector fields $X\in \mf{X}(D)_{[0,-1]}=\Gamma(\wh{B})$ and 
$Y\in\mf{X}(D)_{[-1,0]}=\Gamma(\wh{A})$, and the claim was already noted in Section \ref{subsec:geometricinterpretation}. If $p=0,\,q=1$ we have $x=\alpha\in \Gamma(A^*)
,\ y=\wh{a}\in \Gamma(\wh{A})$ with the pairing $\l\alpha,\wh{a}\r_{\CC^*}=-\alpha(a)$. 
After identification of $\wh{a}$ with a vector field $Y\in \mf{X}^1(D)_{[-1,0]}$ and $\alpha$ with a function $f\in C^\infty(D)_{[1,0]}$, this coincides with $\L_Y f=-\Lie{f,Y}$, as required. 
Similarly, for $p=1,\ q=0$ we have $x=\wh{b}\in \Gamma(\wh{B})$ and $y=\beta\in \Gamma(B^*)$, 
with pairing $\l\wh{b},\beta\r_{A^*}=\beta(b)$, which, after identification of $\wh{b}$ with a vector field 
$X\in\mf{X}^1(D)_{[0,-1]}$ and $\beta$ with a function $g\in C^\infty(D)_{[0,1]}$, coincides with 
$-\L_X g=-\Lie{X,g}$.  
\end{proof}


\section{Poisson double vector bundles}\label{sec:PDVB}

\subsection{Reminder on Poisson vector bundles}\label{subsec:reminder}
Given a vector bundle $p\colon V\to M$, one knows that the following structures are equivalent: 
\begin{itemize}
\item[(i)] a linear Poisson structure $\pi$ on $V\to M$, 
\item[(ii)] a degree $-1$ Poisson bracket  $\{\cdot,\cdot\}$ on the algebra of polynomial functions on $V$, 
\item[(iii)] a Lie algebroid structure  on the dual bundle, $V^*\Ra M$,
\item[(iv)] a degree $-1$ Gerstenhaber  bracket  (\emph{Schouten bracket}) on $\Gamma(\wedge V^*)$,
\item[(v)] a degree $1$ differential $\d_{CE}$ on $\Gamma(\wedge V)$.
\end{itemize}
Here (and from now on) we write $A\Ra M$ to indicate a Lie algebroid over $M$; the notation (which we learned from \cite{bur:vec}) suggests the differentiation of a Lie groupoid $G \rra M$ when source and target become `infinitesimally close'.
Let us briefly recall how these equivalences come about.  Given a linear Poisson tensor $\pi$ on $V$, the corresponding Poisson bracket $\{\cdot,\cdot\}$ on $C^\infty(V)$ restricts to a bracket on the space of polynomial functions on $V$, and 
is uniquely determined by this restriction. The Poisson bivector $\pi$ being linear is equivalent to the bracket of linear functions being again linear, thus to $\{\cdot,\cdot\}$ having degree $-1$. Hence,  (i)$\Leftrightarrow$ (ii). The Poisson bracket is in fact already determined by its restriction to linear and basic functions on $V$. Using the identification of  linear functions with sections $\sigma \in \Gamma(V^*)$,\footnote{We shall directly regard $\sigma$ as a linear function, rather than using our earlier notation $\phi_\sigma$.} this gives the 
 equivalence (ii)$\Leftrightarrow$(iii), where the Lie bracket and anchor are expressed by 
\begin{equation}\label{eq:homogeneity}
 \Lie{\sigma_1,\sigma_2}=\{\sigma_1,\sigma_2\},\ \ \ 
p^*(\L_{\a(\sigma)}f)=\{\sigma,p^*(f)\}.\end{equation}
This Lie algebroid bracket extends to a Schouten bracket on the algebra $\Gamma(\wedge V^*)$, with $\Lie{\sigma,p^*f}=p^*(\L_{\a(\sigma)}f)$ as the bracket between generators of degrees $1$ and $0$,  hence (iii)$\Leftrightarrow$(iv). The Chevalley-Eilenberg differential $\d_{CE}$ 
on  $\Gamma(\wedge V)$ is the unique degree $1$ derivation such that
\begin{equation}\label{eq:anchorformula}
 \iota(\sigma)\d_{CE} f=\L_{\a(\sigma)}(f)
 \end{equation}
for $f\in C^\infty(M)$ and $\sigma\in \Gamma(V^*)$, and such that 
\begin{equation}\label{eq:sigmasigma}
[\iota(\sigma_1),[\iota(\sigma_2),\d_{CE}]]=\iota(\Lie{\sigma_1,\sigma_2})
\end{equation}
for all $\sigma_1,\sigma_2\in \Gamma(V^*)$;
%
one can recover the bracket and anchor from these identities, giving the equivalence (iii)$\Leftrightarrow$(v). 

\subsection{Results for Poisson double vector bundles}
We are interested in the counterparts of these correspondences for double vector bundles. 
A Poisson bivector field $\pi\in\mf{X}^2(D)$ on a double vector bundle is called \emph{double-linear} if it is linear for both vector bundle structures, i.e., homogeneous of bidegree $(-1,-1)$. Following Mackenzie \cite{mac:ehr}, a double vector bundle with a double-linear Poisson bivector field $\pi$ is called a \emph{Poisson double vector bundle}. 
\begin{theorem}\label{th:PVB}
Let $D$ be a double vector bundle with sides $A,B$ and with $\on{core}(D)=\CC^*$. The following are equivalent. 
\begin{enumerate}
\item[(i)] a double-linear Poisson structure $\pi$ on $D\to M$, 
\item[(ii)] a bidegree $(-1,-1)$ Poisson bracket $\{\cdot,\cdot\}$ on the algebra $\S(D)$ of double polynomials, 
\item[(iii)] a $\VB$-algebroid structure on $D'$ over $B$,
\item[(iv)] a $\VB$-algebroid structure on $D''$ over $A$,
\item[(v)] 
a Lie algebroid structure on $\wh{\CC}$, together with representations on  $A^*$ and $B^*$, 
and an invariant  bilinear pairing 
$A^*\times_M B^*\to \R$, with certain compatibility conditions (cf.~ Theorem \ref{th:dvbdata} below),
\item[(vi)] a bidegree $(-1,-1)$ Gerstenhaber bracket on the Weil algebra $\W(D)$,
\item[(vii)] a bidegree $(0,1)$ differential $\d'_v$ on the Weil algebra $\W(D')$, 
\item[(viii)] a bidegree $(1,0)$ differential $\d''_h$ on the Weil algebra $\W(D'')$.
\end{enumerate}
\end{theorem}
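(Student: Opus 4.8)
The plan is to split the eight conditions into a ``classical'' block (i)--(v), whose equivalences reduce to known facts about double vector bundles, and the three ``Weil algebra'' conditions (vi)--(viii), which are the new content and which I would deduce from (ii) and (iv) by the algebra of Sections~\ref{sec:wedge} and~\ref{subsec:contractions}. For the classical block, (i)$\Leftrightarrow$(ii) is proved exactly as in Section~\ref{subsec:reminder}: a Poisson bracket on $C^\infty(D)$ is determined by its restriction to $\S(D)$, and the bivector $\pi$ is double-linear precisely when the restricted bracket has bidegree $(-1,-1)$. The equivalences (i)$\Leftrightarrow$(iii)$\Leftrightarrow$(iv) are Mackenzie's duality between $\VB$-algebroids and double-linear Poisson structures \cite{mac:dou}, transported along the triality isomorphisms of Section~\ref{subsec:triality}: a $\VB$-algebroid on $D'$ over $B$ is dual to a double-linear Poisson structure on $(D')^v$, which Proposition~\ref{prop:dual} identifies with $D$ up to a flip and a sign change on the core, and likewise for $D''$ over $A$ via $(D'')^v$.

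For (v), I would choose a splitting $D\cong A\times_M B\times_M\CC^*$ (Theorem~\ref{th:splittings}), hence a splitting of the exact sequence \eqref{eq:hatCexactsequence}, and decompose the double-linear Poisson bivector (equivalently, the $\VB$-algebroid data of (iii) or (iv)) into its homogeneous components; these are exactly a Lie algebroid structure on $\wh\CC$, the two representations on $A^*$ and $B^*$, and an invariant pairing $A^*\times_M B^*\to\R$, while the Jacobi identity unpacks into the compatibility conditions listed in Theorem~\ref{th:dvbdata}. One then checks that this data is independent of the chosen splitting, so that (v) is a genuine invariant equivalent to (i)--(iv).

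The step (ii)$\Leftrightarrow$(vi) is the bigraded analogue of (ii)$\Leftrightarrow$(iv) from Section~\ref{subsec:reminder}. By Proposition~\ref{prop:sym} and the definition of $W(D)$, the algebra bundles $S(D)$ and $W(D)$ are generated over $C^\infty(M)$ by the same bundles $A^*,B^*,\wh\CC$ in bidegrees $(1,0),(0,1),(1,1)$ modulo the same relation $\alpha\beta=i_{\wh\CC}(\alpha\otimes\beta)$; they differ only in that the $A^*$-- and $B^*$--directions are symmetric in $S(D)$ and exterior in $W(D)$. An $\R$-bilinear biderivation of bidegree $(-1,-1)$ is determined by its values on $C^\infty(M),A^*,B^*,\wh\CC$, which lie in bidegrees $\le(1,1)$, and both its compatibility with the defining relation and the graded Jacobi identity are conditions on low-degree elements that only probe bidegrees in which the symmetric and exterior algebras agree. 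Hence the underlying ``bracket data'' --- the Lie algebroid on $\wh\CC$, the representations, and the pairing of (v) --- is the same in both cases, extending to $\S(D)$ by the Leibniz rule and to $\W(D)$ by the super-Leibniz rule, which gives the bijection between bidegree $(-1,-1)$ Poisson brackets on $\S(D)$ and bidegree $(-1,-1)$ Gerstenhaber brackets on $\W(D)$. Alternatively, (iv)$\Leftrightarrow$(vi) follows from Section~\ref{subsec:wedgeweil}: a $\VB$-algebroid on $D''$ over $A$ yields a Schouten bracket on $\wedge_A D''\to A$, which under the identifications $\Gamma_\lin(\wedge^\bullet_A D'',A)\cong\W^{1,\bullet}(D)$ and $\Gamma_\core(\wedge^\bullet_A D'',A)\cong\W^{0,\bullet}(D)$ restricts to a bidegree $(-1,-1)$ Gerstenhaber bracket on the subalgebra of $\W(D)$ generated in horizontal degree $\le1$, hence on all of $\W(D)$.

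Finally, (vi)$\Leftrightarrow$(vii) and (vi)$\Leftrightarrow$(viii) I would obtain via the contraction operators of Proposition~\ref{prop:contractionpairing}, which identify $\W^{\bullet,1}(D')$ with $\on{Der}^{-1,\bullet-1}(W(D))$ through $\iota_h$ and $\W^{1,\bullet}(D'')$ with $\on{Der}^{\bullet-1,-1}(W(D))$ through $\iota_v$. Given a bidegree $(-1,-1)$ Gerstenhaber bracket on $\W(D)$, bracketing with the generators of bidegrees $(1,0)$ and $(1,1)$ and reading the result through $\iota_h$ produces a bidegree $(0,1)$ derivation $\d'_v$ of $\W(D')$, characterized by contraction identities of the form \eqref{eq:anchorformula}--\eqref{eq:sigmasigma}; the graded Jacobi identity for the bracket becomes $(\d'_v)^2=0$, and conversely $\d'_v$ together with these identities reconstructs the bracket. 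The symmetric construction with $\iota_v$ produces the bidegree $(1,0)$ differential $\d''_h$ on $\W(D'')$, and as a geometric cross-check $\d'_v$ may instead be realized as the restriction of the Poisson differential $\Lie{\pi,\cdot}$ on $\mf{X}^\bullet(D)$ to the homogeneous multivector fields identified with $\W^{\le1,\bullet}(D')$ in Section~\ref{subsec:wedgeweil}. The genuinely hard inputs --- Mackenzie's $\VB$-algebroid duality, the existence of splittings, and the decomposition underlying (v) --- are imported rather than reproved here, so the real labour lies in the sign bookkeeping of the last two steps: matching the conventions for the bidegree $(-1,-1)$ Gerstenhaber bracket on $\W(D)$ with those for the bidegree $(0,1)$ and $(1,0)$ differentials on $\W(D')$ and $\W(D'')$ through the contraction calculus, and checking the low-bidegree case analysis that makes the passage $\vee\leftrightarrow\wedge$ in (ii)$\Leftrightarrow$(vi) transparent; pinning down exactly which compatibility conditions appear in (v) is deferred to Theorem~\ref{th:dvbdata}.
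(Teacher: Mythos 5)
Your outline for (i)--(v) tracks the paper's proof closely: (i)$\Leftrightarrow$(ii) is the same restriction argument, (iii),(iv) are Mackenzie's duality, and (v) is deferred to Theorem \ref{th:dvbdata}. Two remarks on route: for (v) the paper never chooses a splitting --- it reads the Lie algebroid structure on $\wh\CC$, the representations, and the pairing directly off the Poisson brackets of the generators $\S^{1,1}(D)=\Gamma(\wh\CC)$, $\S^{1,0}(D)=\Gamma(A^*)$, $\S^{0,1}(D)=\Gamma(B^*)$ (Lemma \ref{lem:reps}), so your splitting-plus-independence check is extra work you can avoid. For (vii) the paper goes through the Chevalley--Eilenberg differential of the Lie algebroid $D'\Ra B$ restricted to core and linear sections of $\wedge_B D$ and then extends uniquely to $\W(D')$ (the proposition in Section \ref{subsec:differentials}); your plan of building $\d_v'$ directly from the Gerstenhaber bracket via contractions is plausible but leaves the well-definedness on the quotient algebra and $(\d_v')^2=0$ to be verified, which the CE route gets for free.

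The genuine gap is in your step (ii)$\Leftrightarrow$(vi). You assert that the underlying bracket data on the generators is \emph{the same} for $\S(D)$ and $\W(D)$, justified by the claim that the compatibility conditions ``only probe bidegrees in which the symmetric and exterior algebras agree.'' This is false at exactly the critical spot: the defining relation $\alpha\beta=i_{\wh\CC}(\alpha\otimes\beta)$ lives in bidegree $(1,1)$, where $\alpha\beta=\beta\alpha$ in $\S(D)$ but $\alpha\beta=-\beta\alpha$ in $\W(D)$. Concretely, if one transfers the Poisson convention $\{\alpha,\beta\}=(\alpha,\beta)$ unchanged to a candidate Gerstenhaber bracket, then for $\alpha_1\in\Gamma(A^*)$ the biderivation rule gives $\Ger{\alpha_1,\alpha\beta}=-\alpha\Ger{\alpha_1,\beta}=-(\alpha_1,\beta)\,\alpha$, whereas $\Ger{\alpha_1,\,i_{\wh\CC}(\alpha\otimes\beta)}=\nabla^{A^*}_{i_{\wh\CC}(\alpha\otimes\beta)}\alpha_1=-(\alpha_1,\beta)\,\alpha$ as well, so the difference is $-2(\alpha_1,\beta)\,\alpha\neq 0$ and the bracket does \emph{not} descend to $\W(D)$. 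The paper's Theorem \ref{th:Gerstenhaber} fixes this by setting $\Ger{\alpha,\beta}=-(\alpha,\beta)$ while keeping $\{\alpha,\beta\}=+(\alpha,\beta)$; with that sign flip the two anomalous signs cancel and the bracket descends. So the correspondence (ii)$\Leftrightarrow$(vi) is not an identity of bracket data but a bijection that twists the pairing by a sign; your sketch as written would produce an ill-defined bracket on $\W(D)$. You do flag ``sign bookkeeping'' as remaining labour, but this particular sign is not bookkeeping --- it is the content of the step.
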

Some of these equivalences are already known: Given 
$\pi$, the corresponding Poisson bracket $\{\cdot,\cdot\}$ on $C^\infty(D)$ restricts to the subalgebra $\S(D)$ of double-polynomial functions, and is uniquely determined by this restriction 
(since the differentials of functions in this subalgebra span the cotangent bundle everywhere). 
The bivector $\pi$ being double-linear means precisely that this Poisson bracket has bidegree $(-1,-1)$, hence  (i)$\Leftrightarrow$(ii). The equivalence with (iii), (iv) is due to Mackenzie  \cite{mac:ehr} (see also \cite{bur:vec}): Regarding $D$ as a vector bundle over $B$, and using the nondegenerate pairing $D\times_B D'\to \R$ from 
\eqref{eq:dvbpairings}, the Poisson structure $\pi$ determines 
a Lie algebroid structure $D'\Ra B$. The bivector field $\pi$ being linear in the vertical direction $D\to A$ implies that the horizontal scalar multiplication on $D'$ is by Lie algebroid morphisms, 
which shows that $D'$ is a $\VB$-algebroid. 
Similarly, from the pairing $D''\times_A D\to \R$ we obtain a $\VB$-algebroid structure on $D''\Ra A$.
We depict these $\VB$-algebroid structures on $D',\,D''$ by 
 \begin{equation}\label{eq:liealg1}
 \xymatrix{ {D'} \ar[r] \ar@{=>}[d] & \CC \ar@{=>}[d]\\
B\ar[r] & M}\ \ \ \ \ \ \ 
 \xymatrix{ {D''} \ar@{=>}[r] \ar[d] & A \ar[d]\\
	\CC\ar@{=>}[r] & M}
 \end{equation}
where the double arrow indicates Lie algebroid directions. 
In particular, we see that the bundle $\CC$ becomes a Lie algebroid $\CC\Ra M$. (The Lie algebroid structures on $\CC$ coming from the $\VB$-algebroid structures on $D',\ D''$  coincide; indeed, we will see below that both are induced from a $\VB$-algebroid structure $\wh{\CC}\ra M$.)  The characterizations (v), (vi), (vii) will be consequences of 
Theorems \ref{th:dvbdata}, \ref{th:Gerstenhaber}, and  \ref{th:uniquedifferential} below, while 
(viii) is obtained by applying (vii) to the flip of $D$.

\subsection{Examples of Poisson double vector bundles}
As a preparation for the general situation, let us consider some special cases.

\begin{example}
Any Poisson vector bundle $V\to M$ can be seen as a Poisson double vector bundle 
with zero side bundles,  thus $A=B=M,\ \CC=V^*$.   In this case, 
\[ \xymatrix{ {D=V} \ar[r] \ar[d] & M\ar[d]\\
M\ar[r] & M}
\ \ \ \ \ \ \ 
 \xymatrix{ {D'=V^*} \ar[r] \ar@{=>}[d] & V^* \ar@{=>}[d]\\
M\ar[r] & M}\ \ \ \ \ \ \ 
 \xymatrix{ {D''=V^*} \ar@{=>}[r] \ar[d] & M \ar[d]\\
  V^*\ar@{=>}[r] & M}\]
\end{example}

\begin{example}
Suppose $D$ is a Poisson double vector bundle for which the side bundle $A$ is zero. 
Then 
\[ \xymatrix{ {D=B\times_M \CC^*} \ar[r] \ar[d] & B \ar[d]\\
M\ar[r] & M}\ \ \ \ \ \ \ 
 \xymatrix{ {D'=B\times_M \CC} \ar[r] \ar@{=>}[d] & \CC \ar@{=>}[d]\\
B\ar[r] & M}\ \ \ \ \ \ \ 
 \xymatrix{ {D''=\CC\times_M B^*} \ar@{=>}[r] \ar[d] & M \ar[d]\\
  \CC\ar@{=>}[r] & M}\]
Here  $D'\Rightarrow B$ is the \emph{action Lie algebroid} for a representation of $\CC\Ra M$ on $B$, and the second diagram describes $D''\Rightarrow M$ as the \emph{semi-direct product Lie algebroid} for the dual $\CC$-representation on $B^*$. 
\end{example}

\begin{example}\label{ex:bilinearform}
Suppose $D$ is a \emph{vacant} Poisson double vector bundle, that is, with a zero core:
\[
\xymatrix{ {D=A\times_M B} \ar[r] \ar[d] & B \ar[d]\\
A\ar[r] & M}\ \ \ \ \ \ \ 
 \xymatrix{ {D'=A^*\times_M B} \ar[r] \ar@{=>}[d] & M \ar@{=>}[d]\\
B\ar[r] & M}\ \ \ \ 
 \xymatrix{ {D''=A\times_M B^*} \ar@{=>}[r] \ar[d] & A \ar[d]\\
  M\ar@{=>}[r] & M}\]
We claim that  double-linear Poisson structures $\pi$ on $D=A\times_M B$ are 
equivalent to bilinear pairings $(\cdot,\cdot)\colon A^*\times_M B^*\to \R$. To see this, 
note that the bigraded algebra $\S(D)$  is generated by 
$\S^{0,0}(D)=C^\infty(M)$,  $\S^{1,0}(D)=\Gamma(A^*)$,\ and $\S^{0,1}(D)=\Gamma(B^*)$. Given $\pi$, it follows for degree reasons that the only non-trivial Poisson bracket  of generators are between $\alpha\in \Gamma(A^*)$ and $\beta\in \Gamma(B^*)$;  the resulting pairing $(\alpha,\beta)=\{\alpha,\beta\}$ is $C^\infty(M)$-linear by the derivation property. 
Conversely, given  the pairing, we define a bi-derivation by letting $\{\alpha,\beta\}=(\alpha,\beta)$, and setting all other brackets between generators equal to zero. This bi-derivation satisfies the Jacobi identity, since triple brackets between generators are always zero.  This proves the claim. The Lie algebroid structure on $D'$ is that of an action Lie algebroid for the translation action of $A^*$ on $B$, given by the map $\Gamma(A^*)\to \Gamma(B)=\mf{X}(B)_{[-1]},\ \alpha\mapsto (\alpha,\cdot)$, and similarly for $D''$. 
Since $\CC=0$, we have $\wh{\CC}=A^*\otimes B^*$. The sections of this bundle have a Lie bracket, coming from its identification with double-linear functions on $D$:\[ [\alpha_1\beta_1,\,\alpha_2\beta_2]\equiv\{\alpha_1\beta_1,\,\alpha_2\beta_2\}= (\alpha_1,\beta_2)\,\alpha_2 \beta_1-(\alpha_2,\beta_1) \alpha_1\beta_2;\]thus $\wh{\CC}$ becomes a Lie algebroid with zero anchor. Likewise, the Poisson bracket of such functions with $\alpha\in\Gamma(A^*)$ or $\beta\in \Gamma(B^*)$ defines representations of this Lie algebroid on $A^*,B^*$, respectively. 
\end{example}

\begin{example}\label{ex:decomposed}
Suppose that $\CC\Ra M$ is a Lie algebroid, together  with representations $\nabla^{A^*},\nabla^{B^*}$ on $A^*,B^*$, and that $(\cdot,\cdot)\colon A^*\times_M B^*\to \R$ is a bilinear pairing that is $\CC$-invariant in the sense that   \[ (\nabla_{\cc}^{A^*}\alpha,\beta)+(\alpha,\nabla_{\cc}^{B^*}\beta)=\L_{\a(\cc)}(\alpha,\beta)\]
 for $\alpha\in \Gamma(A^*),\ \beta\in \Gamma(B^*),\ \cc\in \Gamma(\CC)$. Then $D=A\times_M B\times_M \CC^*$ becomes a Poisson double vector bundle, with the non-zero brackets on generators given as 
 \[ 
 \{\alpha,\beta\}=(\alpha,\beta),\ \ \ 
 \{\cc,\alpha\}=\nabla^{A^*}_\cc\alpha,\ \ \ 
 \{\cc,\beta\}=\nabla^{B^*}_\cc \beta,\ \ \ 
 \{\cc_1,\cc_2\}=\Lie{\cc_1,\cc_2},\ \ \ 
 \{\cc,f\}=\L_{\a(\cc)} f,\]
 for $f\in C^\infty(M)=\S^{0,0}(D)$, $\alpha\in \Gamma(A^*)=\S^{1,0}(D)$, $\beta\in \Gamma(B^*)=\S^{0,1}(D)$, and $\cc,\cc_1,\cc_2\in \Gamma(\CC)\subseteq \S^{1,1}(D)$. The Jacobi identity and the biderivation property of $\{\cdot,\cdot\}$ follow from the definition of Lie algebroids
 and their representations, together with the invariance of the pairing $(\cdot,\cdot)$.
\end{example}


\subsection{The Lie algebroid structure on $\wh{\CC}$}
In this section, we will concentrate on the characterization (v) of Poisson double vector bundles from Theorem \ref{th:PVB}. Suppose $D$ is a Poisson double vector bundle, with corresponding Poisson bracket $\{\cdot,\cdot\}$. Recall that the algebra $\S(D)=\bigoplus \S^{r,s}(D)$ is
generated by 
\[ \S^{1,1}(D)=\Gamma(\wh{\CC}),\ \ 
\S^{1,0}(D)=\Gamma(A^*),\ \ 
\S^{0,1}(D)=\Gamma(B^*),\ \ \ 
\S^{0,0}(D)=C^\infty(M);
\]
we will use these identifications without further comment, and for example think of $\alpha\in \Gamma(A^*)$ as a function on $D$. The Poisson bracket gives bilinear maps 
$\S^{r,s}(D)\times \S^{r',s'}(D)\to 
\S^{r+r'-1,s+s'-1}(D) $, and is uniquely determined by the resulting maps on generators, 
\begin{align}
\Lie{\cdot,\cdot}\colon & \ \ \S^{1,1}(D)\times \S^{1,1}(D)\to \S^{1,1}(D),\ \ \ \Lie{\wh{\cc}_1,\wh{\cc}_2}=\{\wh{\cc}_1,\wh{\cc}_2\},
\label{eq:cc}\\
\a\colon & \ \ \S^{1,1}(D)\times \S^{0,0}(D)\to \S^{0,0}(D),\ \ \ \a(\wh{\cc},f)=\{\wh{\cc},f\},
\label{eq:cf}\\
\nabla^{A^*}\colon & \ \ \S^{1,1}(D)\times \S^{1,0}(D)\to \S^{1,0}(D),\ \ \ \ \nabla_{\wh{\cc}}^{A^*}\alpha=\{\wh{\cc},\alpha\},
\label{eq:ca}\\
\nabla^{B^*}\colon & \ \ \S^{1,1}(D)\times \S^{0,1}(D)\to \S^{0,1}(D),\ \ \ \ \nabla_{\wh{\cc}}^{B^*}\beta=\{\wh{\cc},\beta\},
\label{eq:cb}\\
(\cdot,\cdot)\colon & \ \ 
\S^{1,0}(D)\times \S^{0,1}(D)\to \S^{0,0}(D),\ \ \ \ (\alpha,\beta)=\{\alpha,\beta\}\label{eq:ab}
\end{align}
(the other brackets between generators are zero, for degree reasons).

\begin{lemma}\label{lem:reps}
The formulas \eqref{eq:cc}--\eqref{eq:ab} define a Lie algebroid structure on 
$\wh{\CC}$, with representations on $A^*,B^*$, and with an invariant bilinear pairing $(\cdot,\cdot)\colon A^*\times_M B^*\to \R$. 
\end{lemma}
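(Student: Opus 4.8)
The plan is to verify directly that the five maps \eqref{eq:cc}--\eqref{eq:ab} extracted from the bidegree $(-1,-1)$ Poisson bracket satisfy the axioms of a Lie algebroid together with two compatible representations and an invariant pairing. Since $\S(D)$ is generated in degrees $(1,1),(1,0),(0,1),(0,0)$, and since the Poisson bracket is a biderivation satisfying the graded Jacobi identity, all the required identities will follow by evaluating the Jacobi identity on suitable triples of generators. First I would record that $\a(\wh{\cc},\cdot)=\{\wh{\cc},\cdot\}$ acts on $\S^{0,0}(D)=C^\infty(M)$ as a derivation (biderivation property in the second slot), hence defines a vector field $\a(\wh{\cc})\in\mf{X}(M)$; and that $\nabla^{A^*},\nabla^{B^*}$ are $C^\infty(M)$-linear in $\wh{\cc}$ modulo the anchor, i.e.\ $\nabla^{A^*}_{f\wh{\cc}}\alpha=f\nabla^{A^*}_{\wh{\cc}}\alpha$ and $\nabla^{A^*}_{\wh{\cc}}(f\alpha)=f\nabla^{A^*}_{\wh{\cc}}\alpha+(\L_{\a(\wh{\cc})}f)\alpha$ — again immediate from the biderivation property — so that $\nabla^{A^*}$ and $\nabla^{B^*}$ are genuine $\wh{\cc}$-connections on $A^*$ and $B^*$.

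The Lie bracket axioms come next. Skew-symmetry of $\Lie{\wh{\cc}_1,\wh{\cc}_2}=\{\wh{\cc}_1,\wh{\cc}_2\}$ is the skew-symmetry of the Poisson bracket on the (odd-free, degree $(1,1)$) generators — note the total degree $1+1=2$ is even, so the graded sign is $+$, giving ordinary skew-symmetry. The Jacobi identity for $\Lie{\cdot,\cdot}$ on $\Gamma(\wh{\CC})$ is the restriction of the Poisson-Jacobi identity to triples in $\S^{1,1}(D)$. The Leibniz rule $\Lie{\wh{\cc}_1,f\wh{\cc}_2}=f\Lie{\wh{\cc}_1,\wh{\cc}_2}+(\L_{\a(\wh{\cc}_1)}f)\wh{\cc}_2$ with the correct anchor is the biderivation property combined with the definition \eqref{eq:cf} of $\a$. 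That $\nabla^{A^*}$ is a representation, i.e.\ $\nabla^{A^*}_{\Lie{\wh{\cc}_1,\wh{\cc}_2}}=[\nabla^{A^*}_{\wh{\cc}_1},\nabla^{A^*}_{\wh{\cc}_2}]$, is precisely the Jacobi identity evaluated on the triple $(\wh{\cc}_1,\wh{\cc}_2,\alpha)\in\S^{1,1}\times\S^{1,1}\times\S^{1,0}$; likewise for $\nabla^{B^*}$ using $(\wh{\cc}_1,\wh{\cc}_2,\beta)$. Finally, invariance of the pairing,
\[ (\nabla^{A^*}_{\wh{\cc}}\alpha,\beta)+(\alpha,\nabla^{B^*}_{\wh{\cc}}\beta)=\L_{\a(\wh{\cc})}(\alpha,\beta), \]
is the Jacobi identity on the triple $(\wh{\cc},\alpha,\beta)\in\S^{1,1}\times\S^{1,0}\times\S^{0,1}$, rewritten using \eqref{eq:ca}, \eqref{eq:cb}, \eqref{eq:ab}, \eqref{eq:cf} and the fact that $\{\alpha,\beta\}$ has bidegree $(0,0)$ so that $\{\wh{\cc},\{\alpha,\beta\}\}=\L_{\a(\wh{\cc})}(\alpha,\beta)$. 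One also should check $C^\infty(M)$-bilinearity of $(\cdot,\cdot)$, which follows from the biderivation property since triple brackets into $\S^{-1,0}$ or $\S^{0,-1}$ vanish.

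The only genuine subtlety — and the step I expect to need the most care — is the Leibniz/anchor bookkeeping: one must confirm that the single vector field $\a(\wh{\cc})$ obtained from \eqref{eq:cf} is simultaneously the anchor governing the bracket on $\Gamma(\wh{\CC})$ and the base map controlling both connections $\nabla^{A^*},\nabla^{B^*}$, rather than three a priori different first-order objects. This is guaranteed because every bracket $\{\wh{\cc},\cdot\}$ is a \emph{single} derivation of the algebra $\S(D)$, so its restriction to $C^\infty(M)$ is one and the same vector field no matter which graded piece we differentiate into; the bidegree constraint $(-1,-1)$ is exactly what forces $\{\wh{\cc},\cdot\}$ to preserve each $\S^{r,s}(D)$ for $(r,s)\in\{(0,0),(1,0),(0,1),(1,1)\}$ and hence to restrict sensibly. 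Everything else is a routine transcription of the graded Jacobi identity, entirely parallel to the classical equivalences (iii)$\Leftrightarrow$(iv)$\Leftrightarrow$(v) recalled in Section \ref{subsec:reminder}, and I would present it as such, spelling out only the pairing-invariance computation in detail and leaving the representation identities to the reader.
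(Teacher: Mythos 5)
Your proposal is correct and follows essentially the same route as the paper: all the axioms (anchor, Leibniz rule, Jacobi, flatness of the two connections, invariance of the pairing) are extracted by evaluating the biderivation property and the Jacobi identity of the Poisson bracket on generators of $\S(D)$, which is exactly what the paper does, only more tersely. One cosmetic remark: $\S(D)$ is an ordinary commutative algebra with an ordinary Poisson bracket of bidegree $(-1,-1)$, so no super-signs enter at any point (those appear only for the Weil algebra $\W(D)$); your parenthetical about the total degree being even is harmless but unnecessary.
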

\begin{proof}
The derivation property of the Poisson bracket shows that 
\eqref{eq:cf} is $C^\infty(M)$-linear in the first argument and satisfies a Leibniz rule in the second argument; hence that $\a(\wh{\cc})=\a(\wh{\cc},\cdot)$ comes from a bundle map 
$\a\colon \wh{\CC}\to TM$. The Jacobi identity for $\{\cdot,\cdot\}$  implies that \eqref{eq:cc} is a Lie bracket on $\Gamma(\wh{\CC})$, and  the derivation property for $\{\cdot,\cdot\}$  
shows that $\Lie{\cdot,\cdot}$ satisfies the Leibniz rule for the anchor map $\a$, hence that $\wh{\CC}$ is a Lie algebroid. Further applications of the Jacobi identity and derivation property of $\{\cdot,\cdot\}$ show that $\nabla^{A^*},\nabla^{B^*}$ are representations of the Lie algebroid $\wh{\CC}$ on $A^*,B^*$, and that the pairing $(\cdot,\cdot)$ is $\wh{\CC}$-invariant. 
\end{proof}

\begin{remark}\label{rem:laremarks}
The Lie algebroid structure $\wh{\CC}\Ra M$, and its action on $A^*,B^*$ were first observed by Gracia-Saz and Mehta, \cite[Section 4.3]{gra:vba} in terms of  the $\VB$-algebroid  $D'\Rightarrow B$ and the identification  $\Gamma_\lin(D',B)=\Gamma(\wh{\CC})$. 
In this approach, the Lie algebroid bracket of $\wh{\CC}$ is the restriction of 
the bracket $\Lie{\cdot,\cdot}_{D'}$ to linear sections, the representation on $A^*$ 
is the bracket of linear sections with 
$\Gamma_\core(D',B)=\Gamma(A^*)$. The representation of $\wh{\CC}$ on $B^*$ (respetively, the pairing $(\cdot,\cdot)$ between $A^*$ and $B^*$) are given by the anchor $\a_{D'}$ on linear (respectively, core) sections, applied to $\Gamma(B^*)$ viewed as linear functions on $B$. 

Similarly, we can describe the data \eqref{eq:cc}--\eqref{eq:ab} in terms of the $\VB$-algebroid structure  $D''\Rightarrow A$. 
\end{remark}

The Lie algebroid representations of $\wh{\CC}$ on $A^*,B^*$ and the bilinear form satisfy certain compatibility conditions. Recall from Example \ref{ex:bilinearform} that the pairing $(\cdot,\cdot)\colon A^*\times_M B^*\to \R$ defines a Lie algebroid structure on $A^*\otimes B^*$, with zero anchor, and that this Lie algebroid comes with natural representations on $A^*,B^*$.  The data for $\wh{\CC}$ must `extend' these data for its subbundle $i_{\wh{\CC}}(A^*\otimes B^*)$:

\begin{theorem}\label{th:c-liealgebroid}\label{th:dvbdata}
Let $D$ be a double vector bundle.  A Lie algebroid structure on the bundle $\wh{\CC}\to M$, together with 
Lie algebroid representations  on $A^*$ and $B^*$
and an invariant bilinear pairing $(\cdot,\cdot)\colon A^*\times_M B^*\to \R$,
defines a double-linear Poisson structure on $D$ if and only if the following compatibility conditions are satisfied:
\begin{enumerate}
\item[(i)] The image of $i_{\wh{\CC}}\colon \Gamma(A^*\otimes B^*)\hra \Gamma(\wh{\CC})$ is a Lie algebra ideal
(in particular, $i_{\wh{\CC}}(A^*\otimes B^*)$ is a Lie subalgebroid of $\wh{\CC}$), 
\item[(ii)] the $\wh{\CC}$-representations
on $A^*,B^*$ extend those of its Lie subalgebroid  $i_{\wh{\CC}}(A^*\otimes B^*)$,
\item[(iii)]  
the  $\wh{\CC}$-representation on $i_{\wh{\CC}}(A^*\otimes B^*)$ is the tensor product of those  on $A^*,\, B^*$. 
\end{enumerate}
\end{theorem}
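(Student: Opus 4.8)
The plan is to translate the statement into a question about the algebra $\S(D)$ of double polynomials. By Theorem~\ref{th:PVB} (the equivalence (i)$\Leftrightarrow$(ii)), a double-linear Poisson structure on $D$ is the same thing as a Poisson bracket of bidegree $(-1,-1)$ on $\S(D)$; and since a biderivation is determined by its values on an algebra generating set, while the bracket of two generators of bidegrees $(r,s)$ and $(r',s')$ lies in $\S^{r+r'-1,s+s'-1}(D)$, which vanishes unless $r+r'\ge 1$ and $s+s'\ge 1$, such a bracket is completely encoded by the five maps \eqref{eq:cc}--\eqref{eq:ab} (all other brackets of generators being zero for degree reasons). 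These five maps are exactly the given Lie algebroid, representations and pairing. Thus the assertion to be proved is that \emph{the formulas \eqref{eq:cc}--\eqref{eq:ab} extend to a bidegree $(-1,-1)$ Poisson bracket on $\S(D)$ if and only if (i), (ii), (iii) hold}; by Lemma~\ref{lem:reps} the resulting structure then automatically recovers the original data.

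First I would dispose of the Jacobi identity by passing to the split model. By Proposition~\ref{prop:sym} (and the remarks following it), $\S(D)=\S(\wh{D})/J$, where $\wh{D}=A\times_M B\times_M\wh{\CC}^*$ is the split double vector bundle of Proposition~\ref{prop:chen}, so that $\S(\wh{D})=\vee A^*\otimes\vee B^*\otimes\vee\wh{\CC}$, and $J$ is the ideal generated by the sections $R_{\alpha,\beta}:=\alpha\beta-i_{\wh{\CC}}(\alpha\otimes\beta)$, $(\alpha,\beta)\in A^*\times_M B^*$. The core of $\wh{D}$ is $\wh{\CC}^*$, so a Lie algebroid structure on $\wh{\CC}$ together with representations on $A^*,B^*$ and an invariant pairing $(\cdot,\cdot)$ is precisely the data of Example~\ref{ex:decomposed} for $\wh{D}$; hence these data automatically make $\wh{D}$ a Poisson double vector bundle, that is, they define a Poisson bracket $\{\cdot,\cdot\}$ of bidegree $(-1,-1)$ on $\S(\wh{D})$ whose brackets of generators are the formulas \eqref{eq:cc}--\eqref{eq:ab}. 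Consequently, \eqref{eq:cc}--\eqref{eq:ab} extend to a Poisson bracket on $\S(D)$ if and only if this bracket descends to the quotient $\S(\wh{D})/J$, that is, if and only if $J$ is a Poisson ideal.

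Next I would analyze the ideal condition. Since $\{\cdot,\cdot\}$ is a biderivation and $J$ is generated by the $R_{\alpha,\beta}$, the ideal $J$ is a Poisson ideal if and only if $\{g,R_{\alpha,\beta}\}\in J$ for all $\alpha,\beta$ and every algebra generator $g\in C^\infty(M)\cup\Gamma(A^*)\cup\Gamma(B^*)\cup\Gamma(\wh{\CC})$ of $\S(\wh{D})$. Each of these four brackets is a short explicit expression computed from \eqref{eq:cc}--\eqref{eq:ab} and the Leibniz rule, of bidegree $(0,0)$, $(1,0)$, $(0,1)$ and $(1,1)$ respectively; as $J$ is trivial in the first three bidegrees, one reads off directly: the case $g=f\in C^\infty(M)$ forces $\a\circ i_{\wh{\CC}}=0$; the cases $g\in\Gamma(A^*)$ and $g\in\Gamma(B^*)$, after identifying the right-hand sides with the representations of $A^*\otimes B^*$ on $A^*$ and on $B^*$ from Example~\ref{ex:bilinearform}, give precisely condition (ii) for $A^*$ and for $B^*$; and the case $g=\wh{\cc}\in\Gamma(\wh{\CC})$ (where $J$ in bidegree $(1,1)$ is the image of $\Gamma(A^*\otimes B^*)\to\S^{1,1}(\wh{D})=\Gamma(A^*\otimes B^*)\oplus\Gamma(\wh{\CC})$, $\eta\mapsto\eta-i_{\wh{\CC}}(\eta)$) forces $\Lie{\wh{\cc},i_{\wh{\CC}}(\alpha\otimes\beta)}=i_{\wh{\CC}}\bigl((\nabla^{A^*}_{\wh{\cc}}\alpha)\otimes\beta+\alpha\otimes(\nabla^{B^*}_{\wh{\cc}}\beta)\bigr)$ --- equivalently $\{\wh{\cc},R_{\alpha,\beta}\}=R_{\nabla^{A^*}_{\wh{\cc}}\alpha,\,\beta}+R_{\alpha,\,\nabla^{B^*}_{\wh{\cc}}\beta}$ --- which says exactly that $i_{\wh{\CC}}(A^*\otimes B^*)$ is a Lie ideal (condition (i)) and that the induced $\wh{\CC}$-representation on it is the tensor product of those on $A^*$ and $B^*$ (condition (iii)). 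Here $\a\circ i_{\wh{\CC}}=0$ is already subsumed in (i) (the Lie algebroid $A^*\otimes B^*$ of Example~\ref{ex:bilinearform} has zero anchor), and the remaining clause of (i), that the bracket on $i_{\wh{\CC}}(A^*\otimes B^*)$ agrees with that of Example~\ref{ex:bilinearform}, follows from (ii) and (iii) by specializing $\wh{\cc}=i_{\wh{\CC}}(\alpha_1\otimes\beta_1)$. Assembling the four cases yields: $J$ is a Poisson ideal if and only if (i), (ii) and (iii) hold --- which, combined with the reductions above, is the theorem.

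I expect the main obstacle to be purely organizational rather than conceptual: keeping the various sign conventions in \eqref{eq:cc}--\eqref{eq:ab} and in the three pairings straight through the four computations of $\{g,R_{\alpha,\beta}\}$, and being scrupulous about which Lie algebroid and representation structures on $A^*\otimes B^*$ (those of Example~\ref{ex:bilinearform}) are being extended, so that the computed obstructions line up with conditions (i), (ii), (iii) on the nose. There should be no hard analytic or structural step, since $\S(D)$ is honestly commutative (no Koszul signs appear) and the Jacobi identity has been handled once and for all by the split model $\wh{D}$ via Example~\ref{ex:decomposed}.
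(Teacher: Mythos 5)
Your proposal is correct and follows essentially the same route as the paper: both reduce sufficiency to showing that the ideal generated by $\alpha\beta-i_{\wh{\CC}}(\alpha\otimes\beta)$ in $\S(\wh{D})$ (with the Poisson structure of Example \ref{ex:decomposed} applied to $\wh{\CC}$) is a Poisson ideal, via the same four bracket computations against the generators. The only difference is organizational --- you also route the necessity direction through the Poisson-ideal criterion, whereas the paper reads conditions (i)--(iii) off directly from the Leibniz rule in $\S(D)$ --- but the underlying calculations are identical.
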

Condition (i) determines a Lie algebroid structure on $\CC$, 
in such a way that 
\[ 0\to A^*\otimes B^*\stackrel{i_{\wh{\CC}}}{\lra} \wh{\CC}\to \CC\to 0\]
is an exact sequence of Lie algebroids.  
\begin{proof}
Throughout, we denote by $\alpha,\alpha_1$ sections of $A^*$, by $\beta,\beta_1$ sections of $B^*$, and by $\wh{\cc},\wh{\cc}_1$ sections of $\wh{\CC}$. Suppose first that a double-linear Poisson structure on $D$ is given, determining the Lie algebroid structure on $\wh{\CC}$, representations on $A^*,B^*$, and a pairing $(\cdot,\cdot)$. On the level of sections, 
the inclusion $i_{\wh{\CC}}$ is just the multiplication map $\alpha\otimes \beta\mapsto \alpha\beta$. Thus (i), (iii) amount to the derivation property 
\[ \{\wh{\cc},\alpha\beta\}=\{\wh{\cc},\alpha\}\beta+\alpha\{\wh{\cc},\beta\},\]
while (ii) corresponds to 
$\{\alpha\beta,\alpha_1\}=-(\alpha_1,\beta)\alpha$ and $\{\alpha\beta,\beta_1\}=(\alpha,\beta_1)\beta$. 

Conversely, suppose (i),(ii),(iii) are satisfied.  Recall from Proposition \ref{prop:chen} that $D$ is a sub-double vector bundle of $\wh{D}=A\times_M B\times_M\wh{\CC}^*$. The formulas of Example 
\ref{ex:decomposed} (with $\CC$ replaced by $\wh{\CC}$) define a double-linear Poisson structure 
on $\wh{D}$. 
We will show that $D$ is a Poisson submanifold of $\wh{D}$, and hence is a Poisson double vector bundle. The ideal $\ca{I}\subset \S(\wh{D})$ of double-polynomial functions vanishing on $D$ is generated by functions  of the form 
\begin{equation}\label{eq:alphabeta}
 \alpha \beta-i_{\wh{\CC}}(\alpha\otimes \beta)\end{equation}
 with $\alpha\in \Gamma(A^*),\beta\in \Gamma(B^*)$. 
 To show that $\ca{I}$ is an ideal for the Poisson bracket, it suffices to show that the Poisson bracket of functions \eqref{eq:alphabeta} with any of the generators lies in the ideal. 
For $\wh{\cc}\in \Gamma(\wh{\CC})$ we have 
\begin{equation}\label{eq:lat1}
 \{\wh{\cc},\ \alpha \beta-i_{\wh{\CC}}(\alpha\otimes \beta)\}
=(\nabla_{\wh{\cc}}^{A^*}\alpha)\beta-i_{\wh{\CC}}(\nabla_{\wh{\cc}}^{A^*}\alpha\otimes \beta)
 +\alpha \nabla_{\wh{\cc}}^{B^*}\beta-i_{\wh{\CC}}(\alpha\otimes \nabla_{\wh{\cc}}^{B^*}\beta)\in\ca{I},
 \end{equation}
where we used  (i) and (iii). For $\alpha_1\in \Gamma(A^*)$ 
we compute 
\begin{equation}\label{eq:lat2}  \{\alpha_1,\alpha\beta -i_{\wh{\CC}}(\alpha\otimes \beta)\}
=\alpha(\alpha_1,\beta)+\nabla_{i_{\wh{\CC}}(\alpha\otimes \beta)}^{A^*}\alpha_1=0,\end{equation}
where we used (ii). A similar argument applies to generators $\beta_1\in \Gamma(B^*)$. Finally, for $f\in C^\infty(M)$
\begin{equation}\label{eq:lat3}
 \{f,\alpha\beta -i_{\wh{\CC}}(\alpha\otimes \beta)\}
=\L_{\a(i_{\wh{\CC}}(\alpha\otimes \beta))}(f)=0
\end{equation}
since $\a\circ i_{\wh{\CC}}=0$ by (i). 
\end{proof}

\begin{remark} As pointed out by one of the referees, the necessity of the conditions (i),(ii),(iii) may also be found in Section 4 of the paper \cite{gra:vba}. The sufficiency is not noted there, but can be proved using similar techniques. 	Luca Vitagliano has remarked that Theorem \ref{th:dvbdata} can also be obtained as a consequence of \cite[Theorem 2.33]{esp:inf}. \end{remark}

Let us note the following consequences of the discussion above:
\begin{proposition}\label{prop:observation}
Let $D$ be a Poisson double vector bundle, with side bundles $A,B$ and with 
$\CC=\core(D)^*$. Then 
\begin{enumerate}
\item $A\times_MB$ inherits a double-linear Poisson structure, with $\varphi\colon D\to A\times_M B$ a Poisson map.
\item The subbundle $\core(D)$ is a Poisson-Dirac submanifold of $D$: every smooth function on the core 
extends to a smooth function on $D$ with Hamiltonian vector field tangent to the core.
\item $\wh{D}=A\times_M B\times_M \wh{\CC}^*$ acquires a 
double-linear Poisson structure, 
such that $D\subset \wh{D}$ is a Poisson submanifold.
\end{enumerate}
\end{proposition}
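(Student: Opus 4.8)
The plan is to read off all three items from the analysis already carried out in the proof of Theorem~\ref{th:dvbdata}, with only a little bidegree bookkeeping added. For part~(iii): starting from the given double-linear Poisson structure on $D$, Lemma~\ref{lem:reps} equips $\wh{\CC}$ with a Lie algebroid structure, representations on $A^*$ and $B^*$, and an invariant pairing $(\cdot,\cdot)\colon A^*\times_M B^*\to\R$; as noted at the start of the proof of Theorem~\ref{th:dvbdata}, this data satisfies conditions~(i)--(iii) of that theorem. Example~\ref{ex:decomposed}, applied with $\CC$ replaced by $\wh{\CC}$, then makes $\wh{D}=A\times_M B\times_M\wh{\CC}^*$ into a double-linear Poisson double vector bundle, and the computations~\eqref{eq:lat1}--\eqref{eq:lat3} show that the ideal $\ca{I}\subset\S(\wh{D})$ of double-polynomial functions vanishing on $D\subset\wh{D}$ (Proposition~\ref{prop:chen}) is a Poisson ideal. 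Since $D$ is cut out cleanly in $\wh{D}$ by such functions, the ideal of all smooth functions vanishing on $D$ is generated by $\ca{I}$, so the Hamiltonian vector field of any function vanishing on $D$ is tangent to $D$; hence $D$ is a Poisson submanifold of $\wh{D}$, and comparing brackets of generators identifies the induced structure with the original one.

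For part~(i): I would give $A\times_M B$ the double-linear Poisson structure that Example~\ref{ex:bilinearform} attaches to the pairing $(\cdot,\cdot)$ of~\eqref{eq:ab}, and factor $\varphi$ as the inclusion $D\hra\wh{D}$ followed by the projection $q\colon\wh{D}\to A\times_M B$ forgetting the $\wh{\CC}^*$-factor. The pullback $q^*$ sends the generators $C^\infty(M),\Gamma(A^*),\Gamma(B^*)$ of $\S(A\times_M B)$ to the corresponding generators of $\S(\wh{D})$, and on both sides the only bracket of generators not forced to vanish for degree reasons is $\{\alpha,\beta\}=(\alpha,\beta)$; hence $q$ is a Poisson map. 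Since $D\subset\wh{D}$ is a Poisson submanifold by~(iii) and $\varphi=q|_D$, the map $\varphi$ is Poisson, and the Poisson structure on $A\times_M B$ is double-linear by construction.

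For part~(ii): I would fix a splitting $D\cong A\times_M B\times_M\CC^*$ (Theorem~\ref{th:splittings}), supplying a projection $r\colon D\to\CC^*=\core(D)$ and local bundle coordinates: coordinates $x_l$ on $M$, linear coordinates $\tilde\alpha_i$ along $A$, linear coordinates $\tilde\beta_j$ along $B$, and a local frame $c_k$ of $\Gamma(\CC)$ giving linear coordinates along $\CC^*$; in these coordinates $\core(D)$ is cut out by the $\tilde\alpha_i$ and $\tilde\beta_j$. Given $\psi\in C^\infty(\core(D))$, the extension $\tilde\psi:=r^*\psi$ is locally a function of the $x_l$ and the $c_k$ alone, so by the Leibniz rule $\{\tilde\psi,\tilde\alpha_i\}=\sum_k(\partial_{c_k}\psi)\,\{c_k,\tilde\alpha_i\}$, the base-coordinate terms dropping out because $\{x_l,\tilde\alpha_i\}$ has bidegree $(0,-1)$. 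Each $\{c_k,\tilde\alpha_i\}$ has bidegree $(1,0)$, hence is a section of $A^*$ and vanishes on $\core(D)$; likewise $\{\tilde\psi,\tilde\beta_j\}$ vanishes there. Thus $X_{\tilde\psi}$ is tangent to $\core(D)$, which is exactly the Poisson--Dirac property, and taking $\psi$ linear and using lifts of sections of $\CC$ to $\Gamma(\wh{\CC})\subset C^\infty(D)_{[1,1]}$ as extensions shows that the induced Poisson structure on $\core(D)=\CC^*$ is the linear one dual to the Lie algebroid $\CC\Ra M$. There is no serious obstacle in any of this; the one point needing care is that the Poisson--Dirac property concerns \emph{all} smooth functions on the core, so one really needs the global retraction $r$ coming from a splitting, not merely lifts of polynomial generators.
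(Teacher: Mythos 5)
Your proposal is correct, and for parts (i) and (iii) it follows essentially the same route as the paper: part (iii) is, as you say, exactly the content of the second half of the proof of Theorem~\ref{th:dvbdata} (Example~\ref{ex:decomposed} applied to $\wh{\CC}$, plus the computations \eqref{eq:lat1}--\eqref{eq:lat3}), and part (i) is the same degree-counting observation that the only nonvanishing bracket among the generators $C^\infty(M),\Gamma(A^*),\Gamma(B^*)$ is $\{\alpha,\beta\}=(\alpha,\beta)$ of \eqref{eq:ab} --- the paper phrases this as ``the image of $\varphi^*$ is a Poisson subalgebra of $\S(D)$'' rather than factoring $\varphi$ through $\wh{D}$, but the two formulations are interchangeable. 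Where you genuinely diverge is part (ii). The paper reduces to $\wh{D}$, using that $D\subset\wh{D}$ is a Poisson submanifold with $\core(D)=D\cap\core(\wh{D})$, and then exploits the \emph{canonical} projection $\wh{D}\to\wh{\CC}^*$ to extend functions on the core; the key point there is that the vanishing ideal of $\wh{\CC}^*$ in $C^\infty(\wh{D})$ is generated by $\Gamma(A^*),\Gamma(B^*)$ and is preserved under brackets with such pullbacks. You instead work directly on $D$, invoking Theorem~\ref{th:splittings} to manufacture a retraction $r\colon D\to\CC^*$ and then running the same ideal-preservation argument in split coordinates, with the bidegree count $\{x_l,\tilde\alpha_i\}\in\S^{0,-1}(D)=0$ and $\{c_k,\tilde\alpha_i\}\in\S^{1,0}(D)=\Gamma(A^*)$ doing the work. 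Both arguments are valid; the paper's avoids the (nontrivial, choice-dependent) existence of splittings at the cost of routing through $\wh{D}$ and part (iii), while yours is self-contained on $D$ and makes explicit --- as you rightly flag --- why the Poisson--Dirac property for \emph{all} smooth functions, not just double-polynomials, requires a global retraction onto the core. Your closing remark that the induced Poisson structure on $\core(D)=\CC^*$ is the linear one dual to $\CC\Ra M$ is a correct bonus not asserted in the statement.
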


\begin{proof} 
For (a), observe that the image of the pullback map $\varphi^*\colon \S(A\times_M B)\to \S(D)$ is
the subalgebra generated by $\Gamma(A^*),\Gamma(B^*)$; by the bracket relations 
\eqref{eq:cc}--\eqref{eq:ab} it is a Poisson subalgebra. Part (c) is contained in the proof of 
Theorem \ref{th:dvbdata}. For (b), note that it is enough to prove the analogous statement for $\wh{D}$, since $D$ is a Poisson submanifold and 
$\core(D)=D\cap \core(\wh{D})$. But functions on $\wh{\CC}^*$  
extend canonically to functions on $\wh{D}$, by taking the pullback under $\wh{D}\to \wh{\CC}^*$. The
vanishing ideal of $\wh{\CC}^*$ is generated by $\Gamma(A^*),\Gamma(B^*)\subset C^\infty(\wh{D})$, and is preserved under Poisson brackets with pullbacks of functions on $\wh{\CC}^*$. This means that the Hamiltonian vector fields of the latter are tangent to $\wh{\CC}^*$. 
\end{proof}

\subsection{Gerstenhaber brackets}
Our next aim is to interpret double-linear Poisson structures on $D$ in terms of a `Gerstenhaber' bracket on the Weil algebra $\W(D)$, as in item (vi) of Theorem \ref{th:PVB}. We make the following definitions. Let $\A$ be a bigraded commutative superalgebra. A \emph{bidegree $(-1,-1)$ Gerstenhaber bracket} on $\A$ is a bilinear map $\Ger{\cdot,\cdot}\colon \A\times \A\to \A$ of bidegree $(-1,-1)$, such that $\A[1,1]$ (i.e., the space $\A$ with bidegrees  shifted
down by $(1,1)$) becomes a bigraded super-Lie algebra, and for
$x\in \A^{p,q}$ 
the map $\Ger{x,\cdot}$ is a superderivation of bidegree $(p-1,q-1)$ of the algebra structure on $\A$: In particular
\[ \Ger{x,y}=-(-1)^{|x| |y|}\Ger{y,x},\ \ \ 
\Ger{x,y z}=\Ger{x,y} z+(-1)^{|x||y|}y \Ger{x,z}.\]
From now on, we we will omit the explicit mention of the bidegree $(-1,-1)$, taking the degree shifts to be understood. 
(This deviates from  the work of Huebschmann \cite{hue:dif}, where the degree shift is  taken to be $(1,0)$.) Note also that we will reserve the symbol $\Ger{\cdot,\cdot}$ for Gerstenhaber brackets on bigraded superalgebras, to avoid confusion with various other Lie brackets and commutators.  

\begin{theorem}\label{th:Gerstenhaber}
A double-linear Poisson structure $\pi$ on a double vector bundle $D$ is equivalent to a Gerstenhaber bracket $\Ger{\cdot,\cdot}$ on the Weil algebra $\W(D)$. 
\end{theorem}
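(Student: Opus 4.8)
The plan is to go back and forth between the Poisson bracket on double-polynomials, which we already understand by Theorem \ref{th:PVB} (i)$\Leftrightarrow$(ii), and a Gerstenhaber bracket on $\W(D)$, using that both $\S(D)$ and $W(D)$ have the same generators $A^*,B^*,\wh{\CC}$ — the only difference being commutativity versus super-commutativity — and the same defining relations $\alpha\beta=i_{\wh{\CC}}(\alpha\otimes\beta)$. First I would reduce to the double vector space $D_0=A_0\times B_0\times \CC_0^*$: since $W(D)=(P\times W(D_0))/\Aut(D_0)$ and likewise for $S(D)$, a Gerstenhaber bracket on $\W(D)$ is the same as an $\Aut(D_0)$-equivariant fiberwise Gerstenhaber bracket on $W(D_0)$, and similarly a double-linear Poisson structure corresponds to an $\Aut(D_0)$-equivariant degree $(-1,-1)$ Poisson bracket on $S(D_0)$. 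So everything is reduced to comparing brackets on the two bigraded algebras attached to a double vector \emph{space}, and the equivariance will be automatic since the constructions below are natural.

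Next, fixing a splitting, I would use that $W(D_0)=\wedge A_0^*\otimes\wedge B_0^*\otimes\vee\CC_0$ and $S(D_0)=\vee A_0^*\otimes\vee B_0^*\otimes\vee\CC_0$ as free (super)commutative algebras on the \emph{same} space of generators $\mathsf{E}=A_0^*\oplus B_0^*\oplus\CC_0$ placed in bidegrees $(1,0),(0,1),(1,1)$. The key point is that a degree $(-1,-1)$ (Gerstenhaber, resp.\ Poisson) bracket on a free (super)commutative bigraded algebra is uniquely determined by its values on pairs of generators, subject to the graded Jacobi identity; and because the only generators in bidegree $\le(1,1)$ that can pair to a nonzero element of bidegree $(-1,-1)+(\text{total})\ge(0,0)$ are as listed in \eqref{eq:cc}--\eqref{eq:ab}, the \emph{data} of such a bracket is identical in the symmetric and the super-symmetric world: a Lie algebroid structure on $\wh{\CC}$, two representations on $A^*,B^*$, and an invariant pairing $A^*\times_M B^*\to\R$ (Lemma \ref{lem:reps}, Theorem \ref{th:dvbdata}). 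The correspondence $\pi\leftrightarrow\Ger{\cdot,\cdot}$ is then defined by: take the data extracted from $\pi$ via \eqref{eq:cc}--\eqref{eq:ab}, and declare $\Ger{\cdot,\cdot}$ to be the unique bidegree $(-1,-1)$ superderivation bracket on $W(D)$ with those values on generators. One must check (i) that this extends consistently past the relation $\alpha\beta=i_{\wh{\CC}}(\alpha\otimes\beta)$ — this is exactly the computation already done in \eqref{eq:lat1}--\eqref{eq:lat3}, with the commutator replaced by the super-commutator, which only reshuffles signs — and (ii) that the graded Jacobi identity holds. For (ii), since triple brackets land in total degree one less than the sum, and the generators sit in degrees $0$ and $1$, every triple bracket of generators is forced into a low degree where it is controlled by the Lie algebroid axioms, the representation axioms, and $\wh{\CC}$-invariance of the pairing — the very same identities that make $\pi$ Poisson; the super-signs appear symmetrically on both sides of the Jacobi identity and cancel.

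The main obstacle, and the one I would spend the most care on, is bookkeeping the signs: verifying that the passage $\S(D)\rightsquigarrow\W(D)$ replacing commutators with super-commutators genuinely preserves the Jacobi identity rather than producing spurious sign discrepancies. Concretely, one checks that for homogeneous $x\in W^{p,q}(D)$ the map $\Ger{x,\cdot}$ has the correct sign $(-1)^{|x||y|}$ in the Leibniz rule, and that antisymmetry $\Ger{x,y}=-(-1)^{|x||y|}\Ger{y,x}$ propagates from generators to products. The cleanest way is to observe that both $\Ger{\cdot,\cdot}$ on $W(D)$ and $\{\cdot,\cdot\}$ on $S(D)$ arise by the same universal construction from the category of bigraded (super)commutative algebras: one has a surjection of free algebras $W(\wh{D})\to W(D)$ (resp.\ $S(\wh{D})\to S(D)$), the bracket on the free algebra is tautologically well-defined once prescribed on generators, and the ideal $\sim$ is a (Gerstenhaber, resp.\ Poisson) ideal precisely under conditions (i)--(iii) of Theorem \ref{th:dvbdata}, which do not involve commutativity. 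This gives a bijection between degree $(-1,-1)$ Gerstenhaber brackets on $\W(D)$ and degree $(-1,-1)$ Poisson brackets on $\S(D)$, hence, via Theorem \ref{th:PVB} (i)$\Leftrightarrow$(ii), with double-linear Poisson structures $\pi$ on $D$. Finally I would record that this bijection is natural under $\DVB$-morphisms, so it is indeed canonical and independent of the splitting used to set it up.
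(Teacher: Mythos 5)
Your overall architecture is the same as the paper's: extract from the bracket on generators the data of a Lie algebroid structure on $\wh{\CC}$, representations on $A^*,B^*$ and a pairing, build the bracket on the free algebra $\W(\wh{D})$, and check that the ideal generated by the elements $\alpha\beta-i_{\wh{\CC}}(\alpha\otimes\beta)$ is a bracket ideal using conditions (i)--(iii) of Theorem \ref{th:dvbdata}. The genuine gap sits exactly at the point you wave at, namely the claim that passing from commutators to super-commutators ``only reshuffles signs'' which ``cancel''. They do not. If you transfer the values on generators verbatim, setting $\Ger{\alpha,\beta}=\{\alpha,\beta\}=(\alpha,\beta)$, then the super-Leibniz rule gives $\Ger{\alpha_1,\alpha\beta}=(-1)^{|\alpha_1||\alpha|}\alpha\,\Ger{\alpha_1,\beta}=-(\alpha_1,\beta)\,\alpha$, while super-antisymmetry and condition (ii) give $\Ger{\alpha_1,i_{\wh{\CC}}(\alpha\otimes\beta)}=-\nabla^{A^*}_{i_{\wh{\CC}}(\alpha\otimes\beta)}\alpha_1=+(\alpha_1,\beta)\,\alpha$; hence $\Ger{\alpha_1,\,\alpha\beta-i_{\wh{\CC}}(\alpha\otimes\beta)}=-2(\alpha_1,\beta)\,\alpha$, and the ideal is \emph{not} preserved. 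The correspondence between the Poisson data and the Gerstenhaber data is therefore not the identity on generators: one must put $\Ger{\alpha,\beta}=-(\alpha,\beta)$ (keeping the Lie algebroid structure, anchor and representations unchanged), and it is this compensating sign that makes the super-analogue of \eqref{eq:lat2} vanish. This single sign is the one nontrivial verification in the proof, and your proposal asserts rather than performs it, with the wrong outcome if read literally. (Note also that $\Ger{\alpha,\beta}=+\Ger{\beta,\alpha}$ while $\{\alpha,\beta\}=-\{\beta,\alpha\}$, so the pairing is encoded with opposite symmetry in the two worlds; the slogan that the ``data are identical'' needs this qualification.)

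A secondary problem is your opening reduction to the double vector space $D_0$. A bidegree $(-1,-1)$ Poisson or Gerstenhaber bracket is not $C^\infty(M)$-bilinear --- the anchor term $\Ger{\wh{\cc},f}=\L_{\a(\wh{\cc})}f$ differentiates along $M$ --- so it is not a fiberwise structure on the algebra bundle $W(D)$ and cannot be presented as an $\Aut(D_0)$-equivariant bracket on the fiber $W(D_0)$; the associated-bundle formalism in the paper is used only for $C^\infty(M)$-linear objects (the algebra structure, contractions, pairings). Your closing argument via the surjection $\W(\wh{D})\to\W(D)$ does not rely on that reduction, so only the sign issue above needs actual repair, but the first paragraph as written should be dropped.
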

\begin{proof}
The argument is similar to that for Theorem \ref{th:dvbdata}, hence we will be brief. 
Note that for 
$r,s\le 1$,
the spaces  $\W^{r,s}(D)$ coincide with $\S^{r,s}(D)$:
\[ \W^{1,1}(D)=\Gamma(\wh{\CC}),\ \ 
\W^{1,0}(D)=\Gamma(A^*),\ \ 
\W^{0,1}(D)=\Gamma(B^*),\ \ \ 
\W^{0,0}(D)=C^\infty(M).
\]  
A Gerstenhaber bracket on $\W(D)$ gives bilinear maps 
$\W^{r,s}(D)\times \W^{r',s'}(D)\to \W^{r+r'-1,s+s'-1}(D)$. 
The following formulas define a Lie algebroid structure on $\wh{\CC}$, together with representations of this Lie algebroid on $A^*,B^*$, and a bilinear pairing $(\cdot,\cdot)$ between $A^*$ and $B^*$:
\begin{equation}\label{eq:lotsofbrackets1}
\Ger{\wh{\cc}_1,\wh{\cc}_2}=\Lie{\wh{\cc}_1,\wh{\cc}_2},\ \ \ \ \Ger{ \wh{\cc},f}=\L_{\a(\wh{\cc})}(f),
\end{equation}
\begin{equation}\label{eq:lotsofbrackets2}
 \Ger{ \wh{\cc},\alpha}=\nabla_{\wh{\cc}}^{A^*}\alpha,\ \ \ \ 
\Ger{ \wh{\cc},\beta}=\nabla_{\wh{\cc}}^{B^*}\beta,
\ \  \ \ \Ger{ \alpha,\beta}=-(\alpha,\beta).
\end{equation}
Note that these formulas are obtained from 
\eqref{eq:cc}--\eqref{eq:ab} by replacing Poisson brackets with Gerstenhaber brackets, except for an extra minus sign in the last formula.   The various compatibilities in Theorem \ref{th:c-liealgebroid} follow from the Jacobi identity and derivation property of  $\Ger{\cdot,\cdot}$.  

 Conversely, given a double-linear Poisson structure $\pi$ and the associated 
data from Theorem \ref{th:c-liealgebroid}, we obtain a Gerstenhaber bracket as 
 follows. Consider the Poisson double vector bundle $\wh{D}=A\times_M B\times_M \wh{\CC}^*$ (cf.~ Example \ref{ex:decomposed}).  
%
%
On the super-commutative algebra $\W(\wh{D})$, we define a super-biderivation $\Ger{\cdot,\cdot}$  of bidegree $(-1,-1)$, by taking \eqref{eq:lotsofbrackets1} and \eqref{eq:lotsofbrackets2} to be the nontrivial  brackets between generators. This super-biderivation satisfies the super-Jacobi identity, as one checks on generators. Finally, by essentially the same argument as for the Poisson bracket on $\S(D)$, this Gerstenhaber bracket descends to $\W(D)=\W(\wh{D})/\sim$. In repeating the 
calculations \eqref{eq:lat1} -- \eqref{eq:lat3}, the second equation encounters a minus sign 
since $\Ger{\alpha_1,\alpha\beta}=- (\alpha_1,\beta)\,\alpha$ in contrast to 
$\{\alpha_1,\alpha\beta\}=(\alpha_1,\beta)\,\alpha$; this is compensated by the extra sign in the last equation of 
\eqref{eq:lotsofbrackets2}.  
\end{proof}
\subsection{Differentials}\label{subsec:differentials}
Suppose $D$ is a Poisson double vector bundle. The corresponding $\VB$-algebroid structure $D'\Ra B$ (dual to the Poisson vector bundle $D\to B$) gives a Chevalley-Eilenberg differential $\d_{D'}$ on $\Gamma(\wedge^\bullet_B D,B)$. Since $\d_{D'}$ commutes with the scalar multiplication $\kappa_t^v$ on $D$ and on $B$, it restricts to a differential on core and linear sections of $\wedge_B D$ over $B$. Since $\d_{D'}$ is a derivation with respect to the wedge product, we see that 
the core sections become a differential graded algebra, 
and the linear sections a differential graded module over this differential graded algebra. 

On the other hand, recall  from Proposition \ref{prop:weilident} that the linear and core sections of 
$\wedge_B D$ over $B$ are identified with $\W^{1,\bullet}(D')$ and $\W^{0,\bullet}(D')$, respectively. 
Hence, a bidegree $(0,1)$ differential on $\W(D')$ again restricts to differentials on the core and linear 
sections, making them into differential graded algebras and differential graded modules, respectively. Hence, to prove the characterization of double-linear Poisson structures on $D$ in terms of differentials 
on $\W(D')$, it suffices to show that we can reverse these constructions. 

\begin{proposition}
Suppose $\Gamma_{\on{core}}(\wedge^\bullet_B D,B)$ and $\Gamma_\lin(\wedge^\bullet_B D,B)$ are equipped with differentials $\d$, for which they are a
differential graded algebra and  differential graded module respectively. Then there are unique extensions of 
$\d$ to 
\begin{enumerate}
	\item a bidegree $(0,1)$ differential $\d_v'$ on the algebra $\W(D')$, 
	\item a degree $1$ differential $\d_{D'}$ on the algebra $\Gamma(\wedge_B D,B)$,
\end{enumerate}
as superderivations for the algebra structures. 
\end{proposition}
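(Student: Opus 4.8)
\emph{Overview of the plan.} The plan is to prove both statements by generators and relations. Both $\W(D')$ and $\Gamma(\wedge_B D,B)$ are generated, as superalgebras over $C^\infty(M)$, by elements of low degree on which $\d$ has already been prescribed through the given differential graded algebra and module structures; this yields uniqueness immediately, and existence follows by exhibiting the algebras as free once a splitting of $D$ is fixed, and then checking compatibility with the defining relations.

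\emph{Uniqueness.} By Proposition \ref{prop:weilident}, the differential graded algebra $\Gamma_\core(\wedge^\bullet_B D,B)$ is $\W^{0,\bullet}(D')$ and the differential graded module $\Gamma_\lin(\wedge^\bullet_B D,B)$ is $\W^{1,\bullet}(D')$, so $\d$ is prescribed on $\W^{0,0}(D')=C^\infty(M)$, $\W^{0,1}(D')=\Gamma(\CC^*)$, $\W^{1,0}(D')=\Gamma(B^*)$ and $\W^{1,1}(D')=\Gamma(\wh{A})$. Since $\W(D')$ is generated over $C^\infty(M)$ by $\Gamma(B^*)$, $\Gamma(\CC^*)$ and $\Gamma(\wh{A})$ --- immediate from its presentation as a quotient of $\wedge B^*\otimes\wedge\CC^*\otimes\vee\wh{A}$ --- a bidegree $(0,1)$ superderivation extending $\d$ is unique. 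Similarly $\Gamma(\wedge_B D,B)$ is generated over $C^\infty(B)$ by $\Gamma(\wedge^1_B D,B)=\Gamma(D,B)$, which (after a splitting) is generated over $C^\infty(B)$ by its core and linear sections $\Gamma(\CC^*)$ and $\Gamma(\wh{A})$, while $C^\infty(B)$ is generated over $C^\infty(M)$ by the linear functions $\Gamma(B^*)$; hence a degree $1$ superderivation extending $\d$ is unique as well.

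\emph{Existence.} Fix a splitting of $D$ (Theorem \ref{th:splittings}). By the properties of the Weil algebra noted after its definition, this identifies $\W(D')$ with the free bigraded super-commutative $C^\infty(M)$-algebra $\Gamma(\wedge B^*\otimes\wedge\CC^*\otimes\vee A)$ on the odd generators $\Gamma(B^*)$, $\Gamma(\CC^*)$ (of bidegrees $(1,0)$ and $(0,1)$) and the even generators $\Gamma(A)$ (of bidegree $(1,1)$), and presents $\Gamma(\wedge_B D,B)$ as $C^\infty(B)\otimes_{C^\infty(M)}\Gamma(\wedge(A\oplus\CC^*))$. On such a free algebra any assignment of values of the correct degree to the generators, together with the derivation $\d\colon C^\infty(M)\to\Gamma(\CC^*)$, extends to a unique superderivation; taking the values dictated by $\d$ (using that the splitting realizes $\Gamma(A)$ inside $\W^{1,1}(D')=\Gamma(\wh{A})$, so that $\d$ is prescribed there too) produces the superderivations $\d_v'$ and $\d_{D'}$. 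They square to zero because $(\d_v')^2$, resp.\ $(\d_{D'})^2$, is again a superderivation --- of bidegree $(0,2)$, resp.\ degree $2$ --- vanishing on all generators (as $\d^2=0$ on the differential graded algebra and module), and by the uniqueness above they do not depend on the chosen splitting.

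\emph{Compatibility with the given structures.} It remains to see that $\d_v'$ and $\d_{D'}$ restrict on core and linear sections to the given differential. On $\W^{0,\bullet}(D')$ this is immediate: $\d_v'$ restricted there is a superderivation of $\Gamma_\core(\wedge^\bullet_B D,B)$ agreeing with $\d$ on the generators $C^\infty(M),\Gamma(\CC^*)$. On $\W^{1,\bullet}(D')$ one uses that this space is generated over $\W^{0,\bullet}(D')$ by $\Gamma(B^*)$ and $\Gamma(\wh{A})$, together with the Leibniz rule. \textbf{The one delicate point, which I expect to be the main obstacle, is the sign bookkeeping in this last step} --- equivalently, the verification that the Leibniz extension is consistent with the defining relation $\beta\gamma=i_{\wh{A}}(\beta\otimes\gamma)$ of $\W(D')$ for $(\beta,\gamma)\in B^*\times_M\CC^*$. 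One must reconcile the super-sign convention of the exterior grading of $\Gamma(\wedge_B D,B)$, in which $\d_{D'}$ has degree $1$, with that of the bigrading of $\W(D')$, in which the linear sections sit in bidegrees $(1,\bullet)$ and hence in total degrees shifted up by one; accordingly $\d_{D'}$ and $\d_v'$ will correspond under the identification only up to a bidegree-dependent sign. Concretely, with the splitting in hand one checks that the Leibniz value $\d_v'(\beta)\,\gamma-\beta\,\d_v'(\gamma)$ agrees with the value of the module differential on $i_{\wh{A}}(\beta\otimes\gamma)\in\Gamma_\lin(\wedge^1_B D,B)$, using that $i_{\wh{A}}$ is realized by the right-module multiplication $\Gamma_\lin(\wedge^0_B D,B)\times\Gamma_\core(\wedge^1_B D,B)\to\Gamma_\lin(\wedge^1_B D,B)$ (cf.\ \eqref{eq:3is}); the right-module convention recorded in Proposition \ref{prop:weilident} is precisely what makes these signs match. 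Once this is settled, the statement for $\d_{D'}$ follows by forgetting the first grading of $\W(D')$.
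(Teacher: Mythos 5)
Your treatment of part (a) is essentially the paper's argument: both proofs reduce existence to checking that the prescription on generators is compatible with the single defining relation $\beta\ggamma=i_{\wh{A}}(\beta\otimes\ggamma)$ of $\W(D')$, and both observe that this compatibility is exactly the hypothesis that the linear sections form a differential graded module over the core sections. Your detour through a splitting is unnecessary (the paper extends directly from generators and verifies the relation), but it is harmless, and you correctly isolate the sign/module-convention check as the point requiring care.

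Part (b), however, has a genuine gap. You assert that $C^\infty(B)$ is generated over $C^\infty(M)$ by the linear functions $\Gamma(B^*)$; this is false --- only the fiberwise \emph{polynomial} functions on $B$ are so generated. Consequently your generators-and-relations argument produces a derivation only on the subalgebra $\Gamma_{\on{pol}}(\wedge_B D,B)=\bigoplus_{m,n}\Gamma(\wedge^n_B D,B)_{[-n+m]}$ of polynomial sections, and neither your uniqueness claim nor your existence claim for a degree~$1$ differential on all of $\Gamma(\wedge_B D,B)$ is established: the presentation $C^\infty(B)\otimes_{C^\infty(M)}\Gamma(\wedge(A\oplus\CC^*))$ is not free over $C^\infty(M)$ on the listed generators, because of the $C^\infty(B)$ factor. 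The missing step is precisely how the paper finishes: having extended $\d$ to a superderivation of the polynomial subalgebra, it invokes \cite[Theorem~3.15]{gra:vba} to conclude that such a differential on homogeneous cochains determines an actual Lie algebroid structure on $D'\Ra B$ (in effect, an anchor and a bracket), whose Chevalley--Eilenberg differential then canonically extends $\d_{D'}$ to arbitrary smooth sections of $\wedge_B D$ over $B$. Some argument of this kind --- recovering a bundle-level anchor from the action of the derivation on linear functions, or citing the Gracia-Saz--Mehta result --- is needed to pass from polynomial to smooth coefficients, and without it the proof of (b) is incomplete.
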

\begin{proof}
	\begin{enumerate}
		\item By definition, $\W(D)$ is generated by 
		elements in bidegree $(i,j)$ with $0\le i,j\le 1$. 
		Put $\d_v'x=\d x$ whenever $x$ is one of these generators. To show that this definition extends as a superderivation , we have to verify that 
		it is compatible with the relations between generators.  The defining relation of $\W(D')$ 
		(aside from super-commutativity and $C^\infty(M)$-linearity) is that for $\beta\in \Gamma(B^*)=\Gamma_\lin(\wedge^0_B D,B)
		$ and $\ggamma\in\CC^*=\Gamma_\core(\wedge^1 B D,B)$, the linear section $i_{\wh{A}}(\beta\otimes \ggamma)$ of 
		$\wh{A}=\Gamma_\lin(\wedge^1_B D,B)$ coincides with the product $\beta\ggamma$. Thus, we need that 
\[ \d(i_{\wh{A}}(\beta\otimes \ggamma))
=(\d\beta)\ \ggamma-\beta\,(\d\ggamma).\]
		But the linear section $i_{\wh{A}}(\beta\otimes \ggamma)$ is simply the product of the linear function $\beta$ with the core section $\ggamma$; hence the desired identity follows from the assumption that linear sections are a differential graded module over the core sections. 
		 
		\item The algebra of sections of $\wedge_B D$ over $B$ has a subalgebra 
		\[ \Gamma_{\on{pol}}(\wedge_BD,B)=\bigoplus_{m,n\ge 0}\Gamma(\wedge^n_B D,B)_{[-n+m]}\]
		of \emph{polynomial sections},  in the notation of Section \ref{subsec:lincor}. 
		It is a super-commutative bigraded algebra, with bigrading given by $m,n$, and with the 
		for the $\Z_2$-grading given as the mod 2 reduction of $n$. 
		It is generated by its components in degree $m,n\le 1$, which coincide with those for $\W(D')$, 
		and the relations between these generators are the same as for $\W(D')$, with the exception that 
		the relation $\beta_1\beta_2=-\beta_2\beta_1$ for 
		$\beta_1,\beta_2\in \Gamma(B^*)$ gets replaced with $\beta_1\beta_2=\beta_2\beta_1$. The same argument as for $\W(D')$ shows that $\d$ extends to a superderivation $\d_{D'}$ of $\Gamma_{\on{pol}}(\wedge_B D,B)$.
		By \cite[Theorem 3.15]{gra:vba}, the latter determines a Lie algebroid structure on $D'$ over $B$, which, in turn, extends the differential to all sections of $\wedge_B D$ over $B$. 
	\end{enumerate}	
\end{proof}

\begin{remark}\label{rem:cabdru}
In \cite{gra:vba}, the double complex $ \Gamma_{\on{pol}}(\wedge_BD,B)$ is denoted $\Omega^{\bullet,\bullet}(D')$. 
As explained there, it may be regarded as the space of double-polynomial functions on the supermanifold $D'[1,0]$, using a parity change only in the vertical vector bundle direction.  Note also that in the notation of Cabrera-Drummond \cite{cab:hom} $ \Gamma_{\on{pol}}(\wedge^\bullet_BD,B)_{[k-\bullet]}$ is the space of $k$-homogeneous cochains for the
$\VB$-algebroid $D'\Ra B$. 
\end{remark}

\section{Relationships between brackets, differentials, and pairings}\label{sec:relations}
Throughout, $D$ will denote a Poisson double vector bundle, with side bundles $A,B$ and with $\core(D)^*=\CC$. Equivalently, the double vector bundles $D',D''$   are $\VB$-algebroids $D'\Ra B$, $D''\Ra A$, respectively. In the last section, we saw how 
these structures are equivalent to a Gerstenhaber bracket $\Ger{\cdot,\cdot}$ on $\W(D)$, a vertical differential $\d_v'$ on $\W(D')$, and a horizontal differential $\d_h''$ on $\W(D'')$. 
These data  interact in many ways,
via  the contraction operators and the various pairings between the Weil algebras.

\subsection{Differential and contractions on $\W(D')$}
Thinking of $D$ as the dual bundle of $D'$ over $B$, using once again the duality pairing \eqref{eq:dvbpair1},  we have the usual contraction operators $\iota_{D'}(\sigma)$ on $\Gamma(\wedge_B D,B)$, for 
any $\sigma\in \Gamma(D',B)$. On the other hand,  sections $\wh{\cc}\in \Gamma(\wh{\CC})\cong \Gamma_\lin(D',B)$ define contraction operators  $\iota_v'(\wh{\cc})$ of bidegree $(0,-1)$ of $\W(D')$, while  sections $\alpha\in \Gamma(A^*)$ define contraction operators $\iota_v'(\alpha)$ of bidegree $(-1,-1)$ 
on $\W(D')$. Recall again the identifications 
$\W^{1,\bullet}(D')\cong \Gamma_\lin(\wedge_B D,B)$ and $\W^{0,\bullet}(D')\cong \Gamma_\core (\wedge_B D,B)$. Checking on generators, one verifies:
\begin{lemma}
\begin{enumerate}
	\item The isomorphism $\Gamma(\wh{\CC})\cong \Gamma_\lin(D',B)$ identifies the contraction operators 
	$\iota_v'(\wh{\cc})\colon \W^{p,\bullet}(D')\to \W^{p,\bullet}(D')$ for $p=0,1$ 
	with the operator $\iota_{D'}(\wh{\cc})$ on core sections and linear sections of $\wedge_B D$. 
	\item The isomorphism  
	$\Gamma(A^*)\cong \Gamma_\core(D',B)$ identifies
	$\iota_v'(\alpha)\colon \W^{1,\bullet}(D')\to \W^{0,\bullet}(D')$ with the operator 
	$\iota_{D'}(\alpha)\colon \Gamma_\lin(\wedge_B D,B)\to \Gamma_\core(\wedge_B D,B)$.
\end{enumerate}	
\end{lemma}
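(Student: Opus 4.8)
The plan is to verify both identities by evaluating on algebra generators and then reducing everything to the split model $D_0=A_0\times B_0\times\CC_0^*$, where every operator in sight becomes an explicit contraction of multilinear algebra.

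First I would note that a generator-by-generator check is legitimate. As a superderivation of $\W(D')$, the operator $\iota_v'(\wh{\cc})$ is determined, on the subalgebra $\W^{0,\bullet}(D')$ and on the $\W^{0,\bullet}(D')$-module $\W^{1,\bullet}(D')$, by its values on the four generator types $\W^{0,0}(D')=C^\infty(M)$, $\W^{1,0}(D')=\Gamma(B^*)$, $\W^{0,1}(D')=\Gamma(\CC^*)$, $\W^{1,1}(D')=\Gamma(\wh{A})$; likewise for $\iota_v'(\alpha)$, which moreover is $\W^{0,\bullet}(D')$-linear as a map $\W^{1,\bullet}(D')\to\W^{0,\bullet}(D')$. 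On the other side, $\iota_{D'}(\sigma)$ on $\Gamma(\wedge_B D,B)$ is a derivation for the wedge product; and a short homogeneity computation in the split model (in the spirit of the proof of Proposition \ref{prop:linses}, using that the pairing \eqref{eq:dvbpair1} satisfies $\l\kappa_t^v d,\,\kappa_t^h d'\r=t\,\l d,d'\r$) shows that $\iota_{D'}(\sigma)$ raises the $\kappa^v$-homogeneity degree by $1$ when $\sigma$ is a linear section of $D'\to B$ — so it then preserves $\Gamma_\core(\wedge_B D,B)$ and $\Gamma_\lin(\wedge_B D,B)$ — and leaves that degree unchanged when $\sigma$ is a core section — so it then sends $\Gamma_\lin(\wedge_B D,B)$ into $\Gamma_\core(\wedge_B D,B)$. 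Hence in each case the two operators to be compared are determined by their effect on the generators above.

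Next I would write both sides out explicitly. Transporting the Theorem of Section \ref{subsec:contractions} from $D$ to $D'$ along the cyclic relabelling that replaces $(A,B,\CC)$ by $(B,\CC,A)$, one obtains that $\iota_v'(\wh{\cc})$ acts on generators of $\W(D')$ by $\beta\mapsto 0$, $\ggamma\mapsto\ggamma(\cc)$ (with $\cc\in\CC$ the image of $\wh{\cc}$) and $\wh{a}\mapsto\l\wh{a},\wh{\cc}\r_{B^*}$, and that $\iota_v'(\alpha)$ kills $\beta$ and $\ggamma$ and sends $\wh{a}\mapsto-\alpha(a)$. On the other side, passing to $D_0$: the bundle $\wedge^\bullet_{B_0}D_0$ is $\wedge^\bullet(A_0\oplus\CC_0^*)$ over $B_0$, the pairing \eqref{eq:dvbpair1} identifies a section $\sigma=(\mu,\cc)$ of $D_0'\to B_0$ (with $\mu\in A_0^*\otimes B_0^*$, $\cc\in\CC_0$) with the covector $b\mapsto(\mu(b),-\cc)\in A_0^*\oplus\CC_0$ it determines, and $\iota_{D_0'}(\sigma)$ is the ordinary contraction by that covector. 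Matching these under the canonical isomorphisms of Propositions \ref{prop:alternatives}, \ref{prop:linses} and \ref{prop:weilident} — so that the summand $A_0^*\otimes B_0^*$ of $\wh{\CC}_0$, as well as the subbundle $A_0^*=\core(D_0')$, act by contraction in the $A_0$-direction, while the summand $\CC_0$ acts by contraction in the $\CC_0^*$-direction — yields the claimed equalities on generators, hence on all of $\W^{0,\bullet}(D')$ and $\W^{1,\bullet}(D')$.

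I expect the only real difficulty to be bookkeeping. One has to keep careful track of the cyclic relabelling that transports the contraction Theorem of Section \ref{subsec:contractions} from $D$ to $D'$ (so that the pairing $\l\cdot,\cdot\r_{B^*}$, rather than one of its cyclic siblings, is the one that appears), of the Koszul signs in the two derivation rules, and of the sign in \eqref{eq:dvbpair1}. These conspire precisely so that the canonical normalizations fixed in Propositions \ref{prop:weilident} and \ref{prop:alternatives} are the ones making the two operators coincide on the nose.
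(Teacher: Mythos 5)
Your proposal is correct and follows exactly the route the paper takes: the paper's entire proof of this lemma is the phrase ``Checking on generators, one verifies,'' and your argument is precisely that check, reduced to the split model $D_0$ and organized around the fact that both operators are derivations (respectively module maps) determined by their values on the generators in bidegrees $(p,q)$ with $p,q\le 1$. The only caveat is that the sign bookkeeping you defer to the ``canonical normalizations'' of Propositions \ref{prop:weilident} and \ref{prop:alternatives} is genuinely delicate (e.g.\ the minus sign in the pairing \eqref{eq:dvbpair1} versus the sign in $\iota(\ggamma)\cc=-\ggamma(\cc)$), but since you correctly identify where those signs live and the paper itself suppresses this entirely, this is a matter of care in execution rather than a gap in the argument.
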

Using these facts, we obtain: 
\begin{proposition}\label{th:uniquedifferential}
	The derivation $\d'_v$ satisfies 
	\begin{align}
	[\iota'_v(\wh{\cc}_1),[\iota'_v(\wh{\cc}_2),\d'_v]]&=\iota'_v(\Ger{ \wh{\cc}_1,\wh{\cc}_2}),  \label{eq:w1}\\
	[\iota'_v(\wh{\cc}),[\iota'_v(\alpha),\d'_v]]&=-\iota'_v(\nabla_{\wh{\cc}}^{A^*}\alpha),\label{eq:w2}\\
	[\iota'_v(\alpha_1),[\iota'_v(\alpha_2),\d'_v]]&=0,\label{eq:w3}
	\end{align}
	for $\wh{\cc},\wh{\cc}_1,\wh{\cc}_2\in \Gamma(\wh{\CC}),\ \alpha,\alpha_1,\alpha_2\in \Gamma(A^*)$. Furthermore, 
	\begin{equation}\label{eq:beta1}
	\iota'_v(\wh{\cc})\d'_v f=\L_{\a(\wh{\cc})}f,\ \ \ \
	\iota'_v(\wh{\cc})\d'_v\beta=
	\nabla_{\wh{\cc}}^{B^*}\beta,\ \ \ \ 
	\end{equation}
	\begin{equation}\label{eq:beta2}
	\iota'_v(\alpha)\d'_v f=0,\ \ \ \ 
	\iota'_v(\alpha)\d'_v \beta=-(\alpha,\beta),
	\end{equation}
	for all all $\wh{\cc}\in \Gamma(\wh{\CC}),\ \alpha\in \Gamma(A^*),\ 
	f\in C^\infty(M),\ \beta\in \Gamma(B^*)$. 
\end{proposition}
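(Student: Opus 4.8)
The plan is to reduce all of \eqref{eq:w1}--\eqref{eq:beta2} to the ordinary Cartan calculus of the $\VB$-algebroid $D'\Ra B$ and then translate via the dictionary of Remark~\ref{rem:laremarks} together with the bracket formulas \eqref{eq:lotsofbrackets1}--\eqref{eq:lotsofbrackets2}. The key observation is that $\W(D')$ is generated by its components in bidegrees $(i,j)$ with $0\le i,j\le 1$, i.e. by $\Gamma(B^*)=\W^{1,0}(D')$, $\Gamma(\CC^*)=\W^{0,1}(D')$, $\Gamma(\wh A)=\W^{1,1}(D')$ and $C^\infty(M)=\W^{0,0}(D')$, all of which sit inside $\W^{0,\bullet}(D')\oplus\W^{1,\bullet}(D')\cong\Gamma_\core(\wedge_B D,B)\oplus\Gamma_\lin(\wedge_B D,B)$; moreover $\d'_v$ preserves the first grading, so applied to such generators it never leaves this range. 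On this range, $\d'_v$ coincides with the Chevalley--Eilenberg differential $\d_{D'}$ of $D'\Ra B$ (by the construction of $\d'_v$ in the previous subsection), and by the Lemma immediately preceding the Proposition the operators $\iota'_v(\wh\cc)$ and $\iota'_v(\alpha)$ coincide with the standard contractions $\iota_{D'}(\wh\cc)$ and $\iota_{D'}(\alpha)$, where $\wh\cc\in\Gamma(\wh\CC)$ is viewed as a linear section and $\alpha\in\Gamma(A^*)$ as a core section of $D'$ over $B$. Since both sides of \eqref{eq:w1}--\eqref{eq:w3} are superderivations of $\W(D')$ of the appropriate bidegree (commutators of superderivations on the left, contractions on the right), and both sides of \eqref{eq:beta1}--\eqref{eq:beta2} are fixed by their values on the generators above, it suffices to evaluate everything on those generators, where the computation takes place entirely inside the Lie algebroid $D'\Ra B$.

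For \eqref{eq:beta1}--\eqref{eq:beta2} the plan is to invoke the anchor identity $\iota_{D'}(\sigma)\,\d_{D'}g=\L_{\a_{D'}(\sigma)}(g)$ for $g\in C^\infty(B)$, specialized to $g$ the pull-back of $f\in C^\infty(M)$ or to $g=\beta$ regarded as a fibrewise-linear function on $B$, and $\sigma\in\{\wh\cc,\alpha\}$. The right-hand sides are then read off from Remark~\ref{rem:laremarks}: the anchor of a linear section $\wh\cc$ restricts on $M\subset B$ to $\a(\wh\cc)$, giving $\iota'_v(\wh\cc)\d'_v f=\L_{\a(\wh\cc)}f$, and evaluated on $\beta$ it gives $\nabla^{B^*}_{\wh\cc}\beta$; the anchor of a core section vanishes, so $\iota'_v(\alpha)\d'_v f=0$ (which is in any case automatic for degree reasons, $\iota'_v(\alpha)$ having bidegree $(-1,-1)$), while its value on $\beta$ equals, up to sign, the pairing $(\alpha,\beta)$, the sign being the one fixed in \eqref{eq:lotsofbrackets2}.

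For \eqref{eq:w1}--\eqref{eq:w3} the plan is to use the second-order Cartan identity $[\iota_{D'}(\sigma_1),[\iota_{D'}(\sigma_2),\d_{D'}]]=\iota_{D'}([\sigma_1,\sigma_2]_{D'})$ for sections of $D'\Ra B$, with $\sigma_1,\sigma_2$ ranging over linear and core sections. The bracket of two linear sections of $D'\Ra B$ restricts to the Lie algebroid bracket of $\wh\CC$, which by \eqref{eq:lotsofbrackets1} is $\Ger{\wh\cc_1,\wh\cc_2}$, yielding \eqref{eq:w1}; the bracket of a linear section with a core section is, up to sign, the representation $\nabla^{A^*}$, yielding \eqref{eq:w2}; and the bracket of two core sections of any $\VB$-algebroid vanishes, yielding \eqref{eq:w3} (alternatively, \eqref{eq:w3} is automatic, since its left-hand side is a superderivation of bidegree $(-2,-1)$ and $\on{Der}^{-2,-1}(W(D'))=0$). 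The needed structure facts about $D'\Ra B$ are exactly those recorded in Remark~\ref{rem:laremarks}; if one prefers, all of \eqref{eq:w1}--\eqref{eq:beta2} can instead be checked directly on the split model $D\cong A\times_M B\times_M\CC^*$, where $\W(D')\cong\wedge B^*\otimes\wedge\CC^*\otimes\vee A$ and both the differential and the contractions are given by explicit formulas.

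The hard part will be the bookkeeping of signs, which has three interacting sources: the normalizations of the contractions in \eqref{eq:contractions4} and \eqref{eq:iotah}--\eqref{eq:iotav} and their cyclic analogues for $D'$ (these feed through the duality pairing $D\times_B D'\to\R$); the extra minus sign distinguishing $\Ger{\cdot,\cdot}$ from the Poisson bracket in the last formula of \eqref{eq:lotsofbrackets2}; and the graded-commutator conventions --- note in particular that $\iota'_v(\alpha)$ has total degree $-2$ and is therefore an \emph{even} operator, so the nested brackets in \eqref{eq:w2}--\eqref{eq:w3} are not of the naive antisymmetric form. I would pin all of these down by evaluating each identity on a single generator of the appropriate bidegree (for instance \eqref{eq:w2} on $\beta\in\Gamma(B^*)$ and on $f\in C^\infty(M)$) and matching against the values dictated by \eqref{eq:lotsofbrackets1}--\eqref{eq:lotsofbrackets2}; this is where the bulk of the otherwise elementary computation lies. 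Once \eqref{eq:w1}--\eqref{eq:beta2} are in place, they determine $\d'_v$ uniquely, since $\W(D')$ is generated in low bidegree and these relations fix $\d'_v$ on the generators $\Gamma(\CC^*)$ and $\Gamma(\wh A)$ by pairing against sufficiently many $\wh\cc$ and $\alpha$.
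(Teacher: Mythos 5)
Your proposal follows essentially the same route as the paper's proof: since both sides of \eqref{eq:w1}--\eqref{eq:w3} are (iterated commutators of) derivations, one reduces to $\W^{0,\bullet}(D')$ and $\W^{1,\bullet}(D')$, i.e.\ to the core and linear sections of $\wedge_B D$ over $B$, where the preceding Lemma identifies $\iota'_v$ and $\d'_v$ with the standard contractions and Chevalley--Eilenberg differential of $D'\Ra B$, and the identities become the Cartan formulas \eqref{eq:anchorformula} and \eqref{eq:sigmasigma} combined with the dictionary of Remark \ref{rem:laremarks}. The one imprecise phrase is ``the anchor of a core section vanishes'': that anchor is a vertical vector field on $B$, so it kills pullbacks of functions on $M$ but acts on a fibrewise-linear function $\beta$ through the pairing $(\alpha,\beta)$, exactly as your next clause (and the degree argument you give in parentheses) requires.
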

\begin{proof}
Equations \eqref{eq:w1}--\eqref{eq:w3} are equalities of derivations; hence it suffices to check that both sides agree 
on $ \W^{0,\bullet}(D'), \W^{1,\bullet}(D')$. Since the identifications of these spaces with core and linear sections of $\wedge_B D$ takes $\iota_v',\d'_v$ to the contractions and Lie algebroid differential of $\Gamma(\wedge_B D,B)$, and since the right hand sides can be expressed in terms of Lie algebroid brackets (see Remark \ref{rem:laremarks}), the three equalities  \eqref{eq:w1}--\eqref{eq:w3} correspond to the formula \eqref{eq:sigmasigma} for the bracket of Lie algebroids. Similarly, \eqref{eq:beta1}, \eqref{eq:beta2} correspond to the formula \eqref{eq:anchorformula} for the anchor of a Lie algebroid. 
\end{proof}

\subsection{Relationship between Gerstenhaber bracket and differential}
\label{subsec:relations1}
The  Gerstenhaber bracket on $\W(D)$ restricts to a bracket on 
$\W^{1,1+\bullet}(D)$, making the latter into a graded Lie algebra with a representation 
$x\mapsto \Ger{x,\cdot}$ on $\W^{0,\bullet}(D)=\Gamma(\wedge B^*)$. Likewise, 
$\W^{1+\bullet,1}(D)$ is a graded Lie algebra with a representation on $\W^{\bullet,0}(D)=\Gamma(\wedge A^*)$ by graded superderivations $x\mapsto \Ger{x,\cdot}$. 

\begin{proposition}\label{prop:differential-gerstenhabera}
For $x\in \W^{1,\bullet}(D)$ and $y\in \W^{0,\bullet}(D)=\Gamma(\wedge B^*)=\W^{\bullet,0}(D')$, 
\begin{equation}\label{eq:13a}
\iota'_v(x)\d'_v y=\Ger{ x,y}.
\end{equation}
Similarly, for $x\in \W^{1,\bullet}(D)$ and $y\in \W^{0,\bullet}(D)=\Gamma(\wedge B^*)=\W^{\bullet,0}(D')$, 
\begin{equation}\label{eq:12a}
\iota''_h(x)\d''_h y=\Ger{ x,y}.
\end{equation}
\end{proposition}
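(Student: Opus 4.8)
The plan is to prove \eqref{eq:13a} by the same reduction-to-generators technique as in the proofs of Theorems \ref{th:dvbdata} and \ref{th:Gerstenhaber}, and then to obtain \eqref{eq:12a} by the verbatim argument with the $\VB$-algebroid $D''\Ra A$ in place of $D'\Ra B$; equivalently, \eqref{eq:12a} follows from \eqref{eq:13a} applied to $\on{flip}(D)$, under which $D'$ and $D''$, together with the differentials $\d'_v$ and $\d''_h$, get interchanged.

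First I would fix $x\in \W^{1,\bullet}(D)$ and regard $y\mapsto \iota'_v(x)\d'_v y$ and $y\mapsto \Ger{x,y}$ as operators on $\W^{0,\bullet}(D)=\Gamma(\wedge B^*)$, noting that this subalgebra of $\W(D)$ is canonically identified with the subalgebra $\W^{\bullet,0}(D')$ of $\W(D')$ (the dual bundle $B^*$ of the common side bundle $B$). On $\W^{\bullet,0}(D')$ the contraction $\iota'_v(x)$ vanishes for degree reasons — it strictly lowers the vertical degree, which there is already $0$ — so on this subalgebra $\iota'_v(x)\d'_v$ agrees with the graded commutator $[\iota'_v(x),\d'_v]$, a superderivation of $\W(D')$, which therefore restricts to a superderivation of $\Gamma(\wedge B^*)$. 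Since $\Ger{x,\cdot}$ is a superderivation by the definition of a Gerstenhaber bracket, both sides of \eqref{eq:13a} are superderivations in the $y$-slot, and it suffices to test $y$ on algebra generators of $\Gamma(\wedge B^*)$, that is, $y\in C^\infty(M)=\W^{0,0}(D)$ and $y\in\Gamma(B^*)=\W^{0,1}(D)$.

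Next I would reduce the $x$-variable, observing that both sides are left $\Gamma(\wedge B^*)$-linear in $x$ with no intervening sign. For the right-hand side this is the biderivation property together with $\Ger{\lambda,y}=0$ for $\lambda,y\in\W^{0,\bullet}(D)$ (the bracket lands in $\W^{-1,\bullet}(D)=0$), which forces $\Ger{\lambda x,y}=\lambda\,\Ger{x,y}$. For the left-hand side it is Proposition \ref{prop:contractionpairing} applied to $D'$: the map $\iota'_v$ is an isomorphism of left $\wedge B^*$-modules, so $\iota'_v(\lambda x)=\lambda\,\iota'_v(x)$ and hence $\iota'_v(\lambda x)\d'_v y=\lambda\,\bigl(\iota'_v(x)\d'_v y\bigr)$. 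Since $\W^{1,\bullet}(D)$ is generated over $\W^{0,\bullet}(D)=\Gamma(\wedge B^*)$ by $\W^{1,0}(D)=\Gamma(A^*)$ and $\W^{1,1}(D)=\Gamma(\wh\CC)$ — clear from the defining presentation of $W(D)$ — it now suffices to verify \eqref{eq:13a} in the four cases $x\in\{\Gamma(A^*),\Gamma(\wh\CC)\}$ and $y\in\{C^\infty(M),\Gamma(B^*)\}$.

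Each of these four cases is then read off directly. The left-hand sides are the quantities $\iota'_v(\alpha)\d'_v f$, $\iota'_v(\alpha)\d'_v\beta$, $\iota'_v(\wh\cc)\d'_v f$, $\iota'_v(\wh\cc)\d'_v\beta$ evaluated in \eqref{eq:beta1}--\eqref{eq:beta2}, namely $0$, $-(\alpha,\beta)$, $\L_{\a(\wh\cc)}f$, $\nabla^{B^*}_{\wh\cc}\beta$; the right-hand sides are $\Ger{\alpha,f}=0$, $\Ger{\alpha,\beta}=-(\alpha,\beta)$, $\Ger{\wh\cc,f}=\L_{\a(\wh\cc)}(f)$, $\Ger{\wh\cc,\beta}=\nabla^{B^*}_{\wh\cc}\beta$ from \eqref{eq:lotsofbrackets1}--\eqref{eq:lotsofbrackets2}, so they agree (the extra minus sign in \eqref{eq:lotsofbrackets2} relative to the Poisson-bracket conventions is exactly the one already built into the contraction operators). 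There is no hard computation here; the step requiring care is the bookkeeping in the two reductions — in particular that the $\Gamma(\wedge B^*)$-module structures entering the two sides in the $x$-slot match with no sign, which is why I isolate the vanishing of $\iota'_v(x)$ on $\W^{\bullet,0}(D')$ and of $\Ger{\cdot,\cdot}$ on $\W^{0,\bullet}(D)\times\W^{0,\bullet}(D)$. The proof of \eqref{eq:12a} is identical after transposing bidegrees and replacing $(D',\d'_v,\iota'_v)$ by $(D'',\d''_h,\iota''_h)$.
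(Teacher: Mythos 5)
Your proposal is correct and follows essentially the same route as the paper's own proof: both reduce \eqref{eq:13a} to an equality of superderivations on $\Gamma(\wedge B^*)$ using the vanishing of $\iota'_v(x)$ on $\W^{\bullet,0}(D')$, reduce $x$ to generators via the left $\wedge B^*$-module property of $\iota'_v$, and verify the same four cases from \eqref{eq:beta1}--\eqref{eq:beta2} and \eqref{eq:lotsofbrackets1}--\eqref{eq:lotsofbrackets2}, with \eqref{eq:12a} obtained by the flip. The only difference is that you spell out the $\Gamma(\wedge B^*)$-linearity of $\Ger{x,y}$ in $x$, which the paper leaves implicit.
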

\begin{proof}
For $x\in 	\W^{1,q}(D)$, both $\Ger{x,\cdot}$ and $[\iota_v'(x),\d_v']$ define superderivations 
of degree $q-1$ on $\Gamma(\wedge B^*)$. Since $\iota_v'(x)$ vanishes on 
$ \W^{\bullet,0}(D')$, Equation \eqref{eq:13a} amounts to the equality of these two superderivations. It suffices to verify this on generators $y$ of $\Gamma(\wedge B^*)$, 
given by $f\in C^\infty(M)$ or $\beta\in \Gamma(B^*)$. Furthermore, since 
$\iota_v'$ is a left $\Gamma(\wedge B^*)$-module morphism, we only need to consider the cases that $x$ is a generator $\W^{1,\bullet}(D)$, given by $\alpha\in \Gamma(A^*)$ or 
$\wh{\cc}\in \Gamma(\wh{\CC})$. These verifications are as follows, using 
 \eqref{eq:beta2}:
\begin{align*}
\Ger{\alpha,f}&=0=\iota'_v(\alpha)\d'_v f,\ \ \ \ \  \Ger{ \alpha,\beta}=-(\alpha,\beta) =\iota'_v(\alpha)\d'_v\beta,\\
\Ger{ \wh{\cc},f}&=\L_{\a(\wh{\cc})}f =\iota'_v(\wh{\cc})\d'_v f,\ \ \ \Ger{ \wh{\cc},\beta}=\nabla^{B^*}_{\wh{\cc}}\beta=\iota'_v(\wh{\cc})\d'_v\beta.
\end{align*}
Equation \eqref{eq:12a} may be proved along similar lines, or obtained from 
\eqref{eq:13a} by using the flip operation. 
\end{proof}
Note that \eqref{eq:13a} 
can be written in various other ways:
\begin{equation}\label{eq:theotherway}
\Ger{x,y}=-(-1)^{(|x|+1)|y|} \l \d_v'y,\,
x\r_{B^*}=-(-1)^{(|x|+1)|y|}
\iota_h(\d_v' y)x.
\end{equation}

\begin{proposition}\label{prop:differential-gerstenhaber}
For $x_1,x_2\in \W^{1,\bullet}(D)$, 
\begin{equation}\label{eq:13}  [\iota'_v(x_1),[\iota'_v(x_2),\d'_v]]=(-1)^{|x_2|}\iota'_v(\Ger{ x_1,x_2})\end{equation}
For  $x_1,x_2\in \W^{\bullet,1}(D)$, 
\begin{equation}\label{eq:12}
  [\iota''_h(x_1),[\iota''_h(x_2),\d''_h]]=(-1)^{|x_2|}\iota''_h(\Ger{ x_1,x_2}).\end{equation}
\end{proposition}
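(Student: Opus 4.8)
The plan is to reduce \eqref{eq:13} to a statement about derivations and verify it on a set of generators, exactly as in the proof of Proposition~\ref{th:uniquedifferential}. First I would note that both sides of \eqref{eq:13} are $W(D')$-valued operators: by Proposition~\ref{prop:contractionpairing}, $\iota'_v$ extends to an isomorphism $W^{1,\bullet}(D'')\to \on{Der}^{\bullet-1,-1}(W(D'))$ of left $\wedge A^*$-modules, and since the Gerstenhaber bracket lands in $W^{1,\bullet}(D)$ (here one uses the identification $W^{1,1+\bullet}(D)$ being a graded Lie algebra, noted in the paragraph before Proposition~\ref{prop:differential-gerstenhabera}), both sides take values in contraction operators of the same bidegree. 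Using that $\iota'_v$ is a left $\wedge A^*$-module morphism, together with the Leibniz-type compatibility of $\Ger{\cdot,\cdot}$ with multiplication by generators in $A^*$, both sides are determined by their values when $x_1,x_2$ run over generators of $\W^{1,\bullet}(D)$ — namely $\alpha,\alpha_1,\alpha_2\in\Gamma(A^*)$ and $\wh{\cc},\wh{\cc}_1,\wh{\cc}_2\in\Gamma(\wh{\CC})$.

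Next I would run the case check. For $x_1=\wh{\cc}_1,\ x_2=\wh{\cc}_2$ (both of degree $|x_2|$ even, $=0$), the right side is $\iota'_v(\Ger{\wh{\cc}_1,\wh{\cc}_2})=\iota'_v(\Lie{\wh{\cc}_1,\wh{\cc}_2})$ by \eqref{eq:lotsofbrackets1}, and the left side is $[\iota'_v(\wh{\cc}_1),[\iota'_v(\wh{\cc}_2),\d'_v]]$, so this is precisely \eqref{eq:w1}. For $x_1=\wh{\cc},\ x_2=\alpha$ (with $|x_2|=1$ odd, giving the sign $(-1)^{|x_2|}=-1$), the right side is $-\iota'_v(\Ger{\wh{\cc},\alpha})=-\iota'_v(\nabla^{A^*}_{\wh{\cc}}\alpha)$, which matches \eqref{eq:w2}; one must also treat $x_1=\alpha,\ x_2=\wh{\cc}$, which follows from the previous case together with the graded antisymmetry of $\Ger{\cdot,\cdot}$ and the graded Jacobi identity for the commutator bracket of the $\iota'_v$'s with $\d'_v$ (or simply by a direct re-check, swapping the roles). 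For $x_1=\alpha_1,\ x_2=\alpha_2$, the right side is $(-1)^{|x_2|}\iota'_v(\Ger{\alpha_1,\alpha_2})$; since $\Ger{\alpha_1,\alpha_2}$ has bidegree $(1,-1)$, which is zero in $W(D)$, the right side vanishes, matching \eqref{eq:w3}. Thus \eqref{eq:13} follows from Proposition~\ref{th:uniquedifferential} by linearity and the module/derivation properties. Equation \eqref{eq:12} is then obtained from \eqref{eq:13} by applying the flip operation, which interchanges the roles of $A$ and $B$ and hence interchanges $D'$ with $D''$ (and $\d'_v$ with $\d''_h$), exactly as in the last line of the proof of Proposition~\ref{prop:differential-gerstenhabera}.

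The step I expect to require the most care is the bookkeeping of signs, both in reducing the identity of operators to generators (the sign $(-1)^{|x_2|}$ interacts with the left $\wedge A^*$-module conventions and with the super-sign in $\iota'_v$ from Proposition~\ref{prop:contractionpairing}, cf.\ the sign discussion following \eqref{eq:iotav}) and in matching the mixed case $x_1=\wh{\cc},\ x_2=\alpha$ against \eqref{eq:w2}. One should double-check that the factor $(-1)^{|x_2|}$ on the right of \eqref{eq:13}, which is absent from the corresponding formulas \eqref{eq:w1}--\eqref{eq:w3}, is exactly accounted for by the parity shift built into the extension of the pairings to $W^{1,\bullet}(D)$; concretely, \eqref{eq:w1}--\eqref{eq:w3} are the special cases of \eqref{eq:13} in degrees where $|x_2|\in\{0,1\}$ but written with the sign absorbed into the definitions of $\nabla^{A^*}$, $\Lie{\cdot,\cdot}$, and the vanishing bracket, so no actual discrepancy arises — but this must be verified rather than assumed. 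Apart from signs, the argument is a routine consequence of Proposition~\ref{th:uniquedifferential} and the derivation properties already established.
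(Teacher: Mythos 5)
Your overall strategy coincides with the paper's: verify \eqref{eq:13} on the generators $\Gamma(A^*)$ and $\Gamma(\wh{\CC})$ of $\W^{1,\bullet}(D)$ using Proposition \ref{th:uniquedifferential}, use the symmetry of both sides under interchange of $x_1,x_2$ to cover the remaining generator case, and obtain \eqref{eq:12} by a flip. The generator cases themselves are handled correctly (including the observation that $\Ger{\alpha_1,\alpha_2}$ lies in the zero bundle $W^{1,-1}(D)$, matching \eqref{eq:w3}).

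The gap is in the passage from generators to general $x_1,x_2\in\W^{1,\bullet}(D)$. You assert that both sides are "determined by their values on generators" because $\iota'_v$ is a module morphism and $\Ger{\cdot,\cdot}$ satisfies a Leibniz rule, but neither side of \eqref{eq:13} is module-bilinear, so this is not a formal linearity argument. (Also, the relevant module is $\wedge B^*=\W^{0,\bullet}(D)$, not $\wedge A^*$: applying Proposition \ref{prop:contractionpairing} to $D'$ exhibits $\iota'_v\colon \W^{1,\bullet}(D)\to\Gamma(\on{Der}^{-1+\bullet,-1}(W(D')))$ as a morphism of left $\wedge B^*$-modules.) Concretely, replacing $x_2$ by $\beta x_2$ with $\beta\in\Gamma(B^*)$ produces on the left-hand side the correction term coming from $[\iota'_v(\beta x_2),\d'_v]=\beta[\iota'_v(x_2),\d'_v]+(-1)^{|x_2|+1}(\d'_v\beta)\iota'_v(x_2)$, and matching $(\iota'_v(x_1)\d'_v\beta)\,\iota'_v(x_2)$ against the term $\iota'_v(\Ger{x_1,\beta}\,x_2)$ produced by the Leibniz rule on the right-hand side requires the identity $\iota'_v(x_1)\d'_v\beta=\Ger{x_1,\beta}$ from Proposition \ref{prop:differential-gerstenhabera} (equivalently \eqref{eq:beta2}), together with a sign check. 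This short induction is the entire computational content of the paper's proof; it is precisely the step you flag as delicate but do not carry out, so as written the argument is incomplete.
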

\begin{proof}
 Equation \eqref{eq:13} holds for $x_i=\wh{\cc}_i\in \Gamma(\wh{\CC})$ by \eqref{eq:w1} and for $x_1=\wh{\cc}\in \Gamma(\wh{\CC}),\ x_2=\alpha\in \Gamma(A^*)$ by \eqref{eq:w2}. Since both sides change sign by $(-1)^{(|x_1|+1)(|x_2|+1)}$ under interchange of $x_1,\,x_2$, this establishes the identity for generators. The general case follows by induction: the statement for  $x_1,x_2$ implies that for $x_1,\beta x_2$ with 
$\beta\in\Gamma(B^*)$, as follows:
\begin{align*} [\iota'_v(x_1),[\iota'_v(\beta\,x_2),\d'_v]]&=
\big[\iota'_v(x_1),\beta  [\iota_v'(x_2),\d'_v]+(-1)^{|x_2|+1} (\d'_v\beta)\, \iota_v'(x_2)\big]\\
&=(-1)^{|x_1|+1}\beta [\iota'_v(x_1),[\iota'_v(\,x_2),\d'_v]]
+(-1)^{|x_2|+1}(\iota_v'(x_1)\d_v'\beta)\ \iota_v'(x_2)\\
&=(-1)^{|x_1|+|x_2|+1}\iota'_v(\beta\Ger{ x_1,x_2})
+(-1)^{|x_2|+1} \iota_v'(\Ger{x_1,\beta} x_2)\\
&=(-1)^{|x_2|+1} \iota_v'\Ger{x_1,\,\beta x_2}.
\end{align*}
The arguments for $\d''_h,\ \iota_h''(x)$ are analogous; alternatively, one can use the flip operation. 
\end{proof}
 For later reference, observe the following consequence of Proposition \ref{prop:differential-gerstenhaber}. 
\begin{corollary}\label{cor:cor}
For $\phi\in \W^{1,\bullet}(D'')$, and $x_i\in  \W^{\bullet,1}(D)$, 
\[ (-1)^{|x_2|}\iota''_h(x_1)\iota_h''(x_2)\d_h'' \phi=
 \iota_h''(\Ger{x_1,x_2})\phi-\Ger{x_1,\iota''_h(x_2)\phi}+(-1)^{|x_1||x_2|}
  \Ger{x_2,\iota_h(x_1)\phi}.
 \]
\end{corollary}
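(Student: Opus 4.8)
The plan is to derive the identity directly from Proposition \ref{prop:differential-gerstenhaber}, specifically from Equation \eqref{eq:12}, by pairing it against $\phi\in\W^{1,\bullet}(D'')$ through the contraction $\iota''_h$ and then reorganizing the iterated commutators. First I would expand the left-hand side of \eqref{eq:12} as a composition of operators on $\W(D'')$: writing out $[\iota''_h(x_1),[\iota''_h(x_2),\d''_h]]$ using the graded commutator rule twice produces a signed sum of four terms of the form $\iota''_h(x_1)\iota''_h(x_2)\d''_h$, $\iota''_h(x_1)\d''_h\iota''_h(x_2)$, $\iota''_h(x_2)\d''_h\iota''_h(x_1)$, and $\d''_h\iota''_h(x_1)\iota''_h(x_2)$, with signs governed by the total degrees $|x_1|+1$ and $|x_2|+1$ (recall $x_i\in\W^{\bullet,1}(D)$, and $\iota''_h$ has bidegree $(-1,\bullet)$ while $\d''_h$ has bidegree $(1,0)$). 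Applying this operator identity to $\phi$ and using the established fact $\iota''_h(x)\d''_h y=\Ger{x,y}$ from Proposition \ref{prop:differential-gerstenhabera} — valid when the second argument lies in $\W^{\bullet,0}(D'')=\Gamma(\wedge A^*)$, and more generally in the form of \eqref{eq:12a} — one recognizes the terms $\iota''_h(x_i)\d''_h(\iota''_h(x_j)\phi)$ as Gerstenhaber brackets $\Ger{x_i,\iota''_h(x_j)\phi}$, since $\iota''_h(x_j)\phi\in\W^{\bullet,0}(D'')$.

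The key bookkeeping step is to carefully track signs. The right-hand side of \eqref{eq:12} is $(-1)^{|x_2|}\iota''_h(\Ger{x_1,x_2})\phi$; but $\Ger{x_1,x_2}\in\W^{\bullet,1}(D)$ again (bidegree $(|x_1|+|x_2|-1,1)$ in $\W(D)$), so $\iota''_h(\Ger{x_1,x_2})\phi$ is well-defined and lies in $\W^{\bullet,0}(D'')$. After substituting the bracket expressions for the mixed terms, the term $\d''_h(\iota''_h(x_1)\iota''_h(x_2)\phi)$ vanishes or is absorbed for degree reasons — here I must check that $\iota''_h(x_1)\iota''_h(x_2)\phi$ lands in bidegree $(\bullet,-1)$ of $\W(D'')$, which is zero, so that term drops out entirely. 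This leaves, after rearrangement, precisely the claimed relation among $\iota''_h(x_1)\iota''_h(x_2)\d''_h\phi$, $\iota''_h(\Ger{x_1,x_2})\phi$, $\Ger{x_1,\iota''_h(x_2)\phi}$, and $\Ger{x_2,\iota''_h(x_1)\phi}$, with the sign $(-1)^{|x_1||x_2|}$ on the last term coming from the graded-commutator symmetry $\Ger{x,y}=-(-1)^{|x||y|}\Ger{y,x}$ used once to normalize the order of entries, combined with the $(-1)^{|x_2|}$ prefactor.

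The main obstacle I anticipate is the sign reconciliation: there are several conventions in play — the $(-1)^{|x_2|}$ in \eqref{eq:12}, the bidegree shift conventions in the definition of a Gerstenhaber bracket (the $\W[1,1]$ shift), the sign in \eqref{eq:da2} relating $\iota''_h$ to the pairing, and the interchange sign for the double commutator. The cleanest route is probably not to expand \eqref{eq:12} blindly but instead to evaluate both sides of \eqref{eq:12}, as operators applied to $\phi$, by first checking the identity on generators of $\W^{1,\bullet}(D'')$ (where $\phi$ is of bidegree $(1,0)$ or $(1,1)$, i.e.\ $\phi\in\Gamma(A^*)$ or $\phi\in\Gamma(\wh B)$) and then extending by the $\Gamma(\wedge A^*)$-module and derivation properties, exactly as in the inductive step in the proof of Proposition \ref{prop:differential-gerstenhaber}. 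On generators every term reduces to the structure data (anchor, representations, pairing, Lie bracket on $\wh B$), so the identity becomes a finite check; the induction over the wedge-degree of $\phi$ then uses that $\d''_h$, $\iota''_h(x_i)$, and $\Ger{x_i,\cdot}$ all interact with multiplication by $\Gamma(A^*)$ in a controlled way. Writing it this way sidesteps having to commit to a global sign convention up front and localizes the delicate computation to a handful of base cases.
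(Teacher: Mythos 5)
Your primary plan is exactly the paper's proof: write $(-1)^{|x_2|}\iota''_h(x_1)\iota''_h(x_2)\d''_h\phi$ as the double commutator $(-1)^{|x_2|}[\iota''_h(x_1),[\iota''_h(x_2),\d''_h]]\phi$ plus the two mixed terms (the term with $\d''_h$ acting last dropping out because two horizontal contractions send $\W^{1,\bullet}(D'')$ into $\W^{-1,\bullet}(D'')=0$ --- note the vanishing degree is the \emph{first} index, not $(\bullet,-1)$ as you wrote, though the conclusion is the same), then apply Proposition \ref{prop:differential-gerstenhaber} to the commutator and the identity $\iota''_h(x)\d''_h y=\Ger{x,y}$ on $\Gamma(\wedge A^*)$ to the mixed terms. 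Your fallback of checking on generators is unnecessary; the direct expansion closes as you describe.
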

\begin{proof}
The left hand side $(-1)^{|x_2|}\iota''_h(x_1)\iota''_h(x_2)\d''_h \phi$ may be written as a sum of three terms, 
\[ (-1)^{|x_2|}[\iota''_h(x_1),[\iota''_h(x_2),\d''_h]] \phi
-\iota''_h(x_1)\,\d''_h\,\iota''_h(x_2)\phi
+(-1)^{|x_1|\,|x_2|}\iota''_h(x_2)\,\d''_h\,\iota''_h(x_1)\phi.\]
By Proposition \ref{prop:differential-gerstenhaber}, the first term is $\iota''_h(\Ger{x_1,x_2})\phi$. 
For the second term,  note that $\iota''_h(x_2)\phi\in \W^{0,\bullet}(D'')=\Gamma(\wedge A^*)$. But on sections of $\wedge A^*$, the composition $\iota''_h(x_1)\circ \d''_h$ acts as 
$\Ger{x_1,\cdot}$, again using Proposition \ref{prop:differential-gerstenhaber}. Hence, the second term is 
$-\Ger{x_1,\iota''_h(x_2)\phi}$. The third term is dealt with similarly. 
\end{proof}

\subsection{Relationships between the differentials} \label{subsec:relations2}
The differential $\d'_v$ on $\W(D')$ restricts to a differential on 
\[ \W^{1,\bullet}(D')=\Gamma_\lin(\wedge^\bullet_B D,B)
\equiv \Gamma_\lin(\wedge^\bullet_B (D')^v,B)
.\]

On the other hand, we also have the horizontal differential 
on 
\[ \W^{\bullet,1}(D'')=\Gamma_\lin(\wedge_A D,A)\equiv \Gamma_\lin(\wedge^\bullet_A (D'')^h,A)
\]
coming from the horizontal $\VB$-algebroid structure $D''\Ra A$
under the identification of $D$ with the flip of the horizontal dual $(D'')^h$, it is again the restriction of the Chevalley-Eilenberg differential. In other words, it is the space 
$\mathsf{CE}_{\VB}^\bullet(D'')$ in the notation of \cite{cab:hom}. The 
Lie algebroid differential $\d$ on $\Gamma(\wedge \CC^*)$ may be interpreted as $\d'_v$ on  
$\W^{0,\bullet}(D')$ or as $\d''_h$ on $\W^{\bullet,0}(D'')$, and both $\W^{1,\bullet}(D')$ and 
$\W^{\bullet,1}(D'')$ are differential graded modules over this algebra. 
\begin{proposition}\label{prop:pairing}
The $\wedge \CC^*$-valued pairing $\l\cdot,\cdot\r_{\CC^*}$ (cf.~ \eqref{eq:three_parings2}) 
satisfies 
\[ \d\l x,y\r_{\CC^*}=\l \d''_h x,y\r_{\CC^*}+(-1)^{|x|+1}\l x,\d'_v y\r_{\CC^*},\ \ \]
for all $x\in \W^{p,1}(D'')$ and $y\in \W^{1,q}(D')
$. Here $|x|=p+1$. 
\end{proposition}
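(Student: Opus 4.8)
The identity should be read as saying that the $\wedge\CC^*$-valued pairing $\l\cdot,\cdot\r_{\CC^*}$ is a morphism of differential graded modules over the differential graded algebra $(\Gamma(\wedge\CC^*),\d)$, the left slot carrying $\d''_h$ and the right slot $\d'_v$. The plan is to reduce this, first to a statement about module generators, and then — both sides having become honest derivations — to an identity on the algebra generators of $\W(D')$, which is checked directly.

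First I would reformulate using contractions. By Proposition \ref{prop:contractionpairing} (applied after the cyclic relabelling $D\rightsquigarrow D'$) the pairing \emph{is} a contraction operator: $\l x,y\r_{\CC^*}=\iota'_h(x)\,y$ for $x\in \W^{\bullet,1}(D'')$ and $y\in \W^{1,\bullet}(D')$, where $\iota'_h\colon \W^{\bullet,1}(D'')\to \on{Der}(\W(D'))$ is a morphism of left $\Gamma(\wedge\CC^*)$-modules. Since $\d$ on $\Gamma(\wedge\CC^*)=\W^{0,\bullet}(D')$ is the restriction of $\d'_v$, a short sign computation turns the asserted formula into the operator identity
\[ [\d'_v,\iota'_h(x)]=\iota'_h(\d''_h x),\qquad x\in \W^{\bullet,1}(D''),\]
an equality of derivations of $\W(D')$ (with $[\cdot,\cdot]$ the graded commutator; both sides are of bidegree $(-1,p)$ when $x\in\W^{p,1}(D'')$). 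This is a Cartan-type relation between $\d'_v$, $\d''_h$ and the contractions, of the same flavour as the other identities of this section.

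Both sides of this operator identity are compatible with the left $\Gamma(\wedge\CC^*)$-module structure in $x$: replacing $x$ by $\lambda x$ with $\lambda\in\Gamma(\wedge\CC^*)$ multiplies the difference of the two sides by $(-1)^{|\lambda|}\lambda$, because $\iota'_h$ is $\Gamma(\wedge\CC^*)$-linear, $\d'_v$ and $\d''_h$ are superderivations, and both restrict to $\d$ on $\Gamma(\wedge\CC^*)$. By the exact sequence \eqref{eq:ses} (equivalently Proposition \ref{prop:weilident}), $\W^{\bullet,1}(D'')$ is generated as a left $\Gamma(\wedge\CC^*)$-module by $\W^{0,1}(D'')=\Gamma(A^*)$ and $\W^{1,1}(D'')=\Gamma(\wh{B})$, so it suffices to treat $x=\alpha\in\Gamma(A^*)$ and $x=\wh{b}\in\Gamma(\wh{B})$. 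For such $x$ both sides are derivations of $\W(D')$, hence are determined by their values on the generators $f\in C^\infty(M)$, $\beta\in\Gamma(B^*)$, $\ggamma\in\Gamma(\CC^*)$, $\wh{a}\in\Gamma(\wh{A})$ of $\W(D')$; on $\Gamma(\CC^*)$ both sides vanish for degree reasons, leaving only finitely many cases.

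Each remaining case is a direct computation, using the explicit description of the contractions in low degree (Equations \eqref{eq:properties1}, \eqref{eq:properties2}, \eqref{eq:three_parings2} and the identity \eqref{eq:identityc}) together with the structure equations for the differentials: the action of $\d'_v$ on the generators of $\W(D')$ is governed by Proposition \ref{th:uniquedifferential}, and that of $\d''_h$ on the generators of $\W(D'')$ by its counterpart for $D''$, everything ultimately being expressed through the Lie algebroid $\wh{\CC}\Ra M$, its representations $\nabla^{A^*},\nabla^{B^*}$, and the pairing $(\cdot,\cdot)$. In each case the two sides collapse to the same expression in these data. I expect the only genuine difficulty to be bookkeeping: keeping the signs consistent through the cyclic relabellings $D\rightsquigarrow D'\rightsquigarrow D''$ and the interplay of the left versus right $\Gamma(\wedge\CC^*)$-module conventions in the three pairings. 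As an independent sanity check one may pass to the multivector-field picture of Section \ref{subsec:multivector}: under the identifications there the pairing becomes the Schouten bracket (Proposition \ref{prop:pairingschouten}) and each of $\d$, $\d'_v$, $\d''_h$ becomes bracketing with the Poisson bivector $\pi$, so that the statement is an instance of the graded Jacobi identity for the Schouten bracket — a viewpoint which also makes plain that no use is made of the condition $\Lie{\pi,\pi}=0$.
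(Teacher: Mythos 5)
Your proposal is correct, but your primary route and the paper's proof are essentially swapped. The paper's entire proof is the argument you relegate to a closing ``sanity check'': it invokes the identifications of Section \ref{subsec:multivector}, the fact that the pairing is the Schouten bracket (Proposition \ref{prop:pairingschouten}), and the realization of the differentials as Schouten brackets with $\pi$, whereupon the statement \emph{is} the graded Jacobi identity $\Lie{\pi,\Lie{x,y}}=\Lie{\Lie{\pi,x},y}+(-1)^{|x|}\Lie{x,\Lie{\pi,y}}$ --- with the caveat, which your phrasing elides, that $\d'_v$ is realized as $-\Lie{\pi,\cdot}$ while $\d$ and $\d''_h$ are $+\Lie{\pi,\cdot}$; this relative sign is exactly what converts the $(-1)^{|x|}$ of the Jacobi identity into the $(-1)^{|x|+1}$ of the proposition, so it cannot be dropped. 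Your observation that $\Lie{\pi,\pi}=0$ is never used is correct and is implicit in this argument. Your primary route --- recasting the statement as the operator identity $[\d'_v,\iota'_h(x)]=\iota'_h(\d''_h x)$, reducing $x$ to the module generators $\Gamma(A^*)$ and $\Gamma(\wh{B})$, and checking on the algebra generators of $\W(D')$ --- is sound (the module-generator reduction and the degree arguments you give all check out), but note that this operator identity is precisely Proposition \ref{prop:contraction_differential}, which the paper \emph{deduces from} Proposition \ref{prop:pairing}; running the logic in your direction is legitimate but shifts all the work into the finitely many generator computations, which you describe but do not carry out, and which is where every sign lives. The Schouten-bracket route buys a genuinely computation-free proof; your route buys independence from the multivector-field dictionary at the cost of the bookkeeping you yourself flag as the real difficulty.
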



Before proving the proposition, recall that for any Poisson manifold $(Q,\pi)$, the Schouten bracket 
  defines a degree $1$ differential on the graded algebra 
  $\mf{X}^\bullet(Q)$ of multi-vector fields on $Q$:
  \[ \Lie{ \pi,\cdot}\colon \mf{X}^p(Q)\to \mf{X}^{p+1}(Q).\]
If $Q=V$ is a Poisson vector bundle, thus  $\pi$ is homogeneous of degree $-1$, then this differential preserves the graded subalgebra 
$\mf{X}^\bullet_\core(V)$ of core multi-vector fields, as well as the module $\mf{X}^\bullet_\lin(V)$ of linear multi-vector fields. It is well-known that 
the identification 
$\mf{X}^\bullet_\core(V)=\Gamma(\wedge^\bullet V)$
intertwines the differential $\Lie{ \pi,\cdot}$ with the Lie algebroid differential for $V^*\Ra M$. 

\begin{proof}[Proof of Proposition \ref{prop:pairing}]
As explained in Section \ref{subsec:multivector}, 
\[  \mf{X}^q(D)_{[-q,1-q]}\cong\W^{1,q}(D') ,\ \
\mf{X}^p(D)_{[1-p,-p]}\cong\W^{p,1}(D''),\ \
\mf{X}^n(D)_{[-n,-n]}\cong\Gamma(\wedge^n \CC^*).
\]
%
By a check on generators, one finds that 
the differential $\d'_v$ is realized as $-\Lie{\pi,\cdot}$, while 
 $\d''_h,\ \d$ are realized as $\Lie{ \pi,\cdot}$.  Furthermore, according to 
Proposition \ref{prop:pairingschouten} the pairing between $x\in \mf{X}^p(D)_{[1-p,-p]}$ and $y\in\mf{X}^q(D)_{[-q,1-q]} $ is expressed in terms of the Schouten bracket as 
$\l x,y\r_{\CC^*}=\Lie{ x,y}$. 
The proposition thus translates into the Jacobi identity
\[ \Lie{ \pi,\Lie{ x,y}}=\Lie{ \Lie{ \pi,x},y}+(-1)^{|x|}\Lie{ x,\Lie{ \pi,y}}. \qedhere\]
\end{proof}
A consequence of Proposition \ref{prop:pairing} is the following result about contraction 
operators. 


\begin{proposition}\label{prop:contraction_differential}
For  $x\in  \W^{p,1}(D'')$, we have the following 
equality of superderivations of $\W(D')$,  
\begin{equation}\label{eq:dprime}
 [\d'_v,\iota'_h(x)]=\iota'_h(\d''_h x).\end{equation}
Similarly, for $y\in  \W^{1,q}(D')$ we have the equality of superderivations of $\W(D'')$,  
\begin{equation}\label{eq:dprimeprime}
 [\d''_h,\iota''_v(y)]=\iota''_v(\d'_v y).
 \end{equation}
\end{proposition}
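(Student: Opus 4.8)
The plan is to verify \eqref{eq:dprime} by the standard strategy for identities between (super)derivations: both sides of \eqref{eq:dprime} are superderivations of the bigraded superalgebra $\W(D')$ of the same bidegree, so it suffices to check that they agree on a generating set. Here $\iota'_h(x)$ is the horizontal contraction of bidegree $(-1,-1+\bullet)$ from Proposition \ref{prop:contractionpairing}, $\d'_v$ has bidegree $(0,1)$, so $[\d'_v,\iota'_h(x)]$ has bidegree $(-1,|x|)$; and $\iota'_h(\d''_h x)$ has the same bidegree since $\d''_h x\in \W^{p+1,1}(D'')$. The algebra $\W(D')$ is generated (over $C^\infty(M)$) by $\W^{1,0}(D')=\Gamma(B^*)$, $\W^{0,1}(D')=\Gamma(\CC)$, and $\W^{1,1}(D')=\Gamma(\wh{A})$, together with functions; but since $\iota'_h$ is a morphism of left $\wedge B^*$-modules and everything is $C^\infty(M)$-linear, it is enough to test the identity by applying both sides to elements $z\in \W^{1,\bullet}(D)$ — more precisely, to check the resulting $B^*$-valued (or $\wedge B^*$-valued) identities obtained by pairing with $z$ via $\iota'_h(\cdot)z=\l\cdot,z\r_{B^*}$.

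First I would reduce \eqref{eq:dprime} to a statement about the $\wedge\CC^*$-valued pairing $\l\cdot,\cdot\r_{\CC^*}$ of \eqref{eq:three_parings2}. Apply both sides of the proposed identity to $z\in\W^{1,\bullet}(D)$ and use \eqref{eq:da1}, $\iota'_h(x)z=\l x,z\r_{B^*}$, together with the analogous formula relating $\iota''_v$, $\iota_h$, and the pairings. Expanding $[\d'_v,\iota'_h(x)]z = \d'_v(\iota'_h(x)z) - (-1)^{|x|+1}\iota'_h(x)(\d'_v z)$ and rewriting each term through the pairings, the desired identity becomes exactly the Leibniz-type relation for $\l\cdot,\cdot\r_{\CC^*}$ proved in Proposition \ref{prop:pairing}, namely $\d\l x,z\r_{\CC^*}=\l\d''_h x,z\r_{\CC^*}+(-1)^{|x|+1}\l x,\d'_v z\r_{\CC^*}$, applied with the second slot filled by the element of $\W^{1,\bullet}(D')$ that is dual to $z$ (or directly by $z\in\W^{1,\bullet}(D)$ after identifying the two pairings via the contractions). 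One must also handle the generator $y=f\in C^\infty(M)$ and $y=\beta\in\Gamma(B^*)$ separately, but for those $\iota'_h(x)$ acts on low-degree elements where the formulas of Proposition \ref{th:uniquedifferential} and Proposition \ref{prop:differential-gerstenhaber} apply directly; these reduce to the definition of the anchor and bracket and are immediate. The second identity \eqref{eq:dprimeprime} follows by the symmetric argument (cyclically permuting the roles of $D,D',D''$, or applying the flip operation), using the $\wedge A^*$-valued pairing instead.

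The main obstacle will be bookkeeping the signs. There are several competing sign conventions in play — the $(-1)^{|x|+1}$ in Proposition \ref{prop:pairing}, the sign in the definition \eqref{eq:da2} of $\iota_v$ coming from the left-versus-right $\wedge A^*$-module structure, the $(-1)^{|u|}$ in the second contraction formula of \eqref{eq:contractions4}, and the superderivation signs when $\d'_v$ is moved past $\iota'_h(x)$. The cleanest route is probably to fix one convention early (work consistently with the pairing $\l\cdot,\cdot\r_{\CC^*}$ as in \eqref{eq:three_parings2} and the contraction as in \eqref{eq:da1}), verify the identity on the three types of generators of $\W(D')$ with $x$ itself a generator of $\W^{\bullet,1}(D'')$, then extend to general $x$ by the left $\wedge B^*$-linearity of $\iota'_h$ and an induction identical in structure to the one in the proof of Proposition \ref{prop:differential-gerstenhaber}. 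Once the sign conventions are pinned down, every individual verification is a one-line consequence of \eqref{eq:w1}--\eqref{eq:beta2} or of the Jacobi identity for the Schouten bracket as used in Proposition \ref{prop:pairing}.
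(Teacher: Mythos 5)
Your proposal is correct and takes essentially the same route as the paper: both sides are superderivations of $\W(D')$ of the same bidegree, they are determined by their values on generators (and vanish on $\W^{0,\bullet}(D')$ for degree reasons), and on $\W^{1,\bullet}(D')$ the identity reduces, via $\iota'_h(x)y=\l x,y\r_{\CC^*}$, to exactly the Leibniz rule of Proposition \ref{prop:pairing}. The only blemish is a notational slip where you test against $z\in\W^{1,\bullet}(D)$ with the $B^*$-valued pairing rather than against $y\in\W^{1,\bullet}(D')$ with the $\CC^*$-valued pairing (the correct cyclic permutation of Proposition \ref{prop:contractionpairing} for derivations of $\W(D')$), but you land on the correct reduction in the end.
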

\begin{proof}
Both sides of \eqref{eq:dprime} are superderivations of bidegree $(-1,p)$. Hence, they both vanish on 
$W^{0,\bullet}(D')$. On sections $y\in \W^{1,q}(D')$,  
the identity becomes 
\[\d'_v  \iota'_h(x)y+(-1)^{|x|}\iota'_h(x) \d'_v y=\iota'_h(\d''_h x) y.\]
After expressing the horizontal contractions in terms of the pairing $\l\cdot,\cdot\r_{\CC^*}$, this identity reads as
\[\d \l x,y\r_{\CC^*}+(-1)^{|x|}\l x,\,\d_v' y\r_{\CC^*}=\l \d_h''x,y\r_{\CC^*}.\]
which is just the statement of Proposition \ref{prop:pairing}. Similarly, the two sides of \eqref{eq:dprimeprime} are superderivations of bidegree $(q,-1)$. Applying 
both sides to $x\in \W^{p,1}(D'')$, the identity becomes 
\[ \d_h'' \iota_v''(y) x+(-1)^{|y|}\iota_v''(y)\d_h''x=\iota_v''(\d_v'y)x,\] 
which may be written
\[ (-1)^{(|x|+1)(|y|+1)}\d \l x,y\r_{\CC^*}
+(-1)^{|y|} (-1)^{|x|(|y|+1)}\l \d_h'' x,y\r_{\CC^*}=(-1)^{(|x|+1)|y|}\l x,\,\d_v' y\r_{\CC^*}.\]
After multiplying by the sign $ (-1)^{(|x|+1)(|y|+1)}$, this becomes 
\[ \d \l x,y\r_{\CC^*}-\l \d_h'' x,y\r_{\CC^*}=(-1)^{|x|+1}\l x,\,\d_v' y\r_{\CC^*}\]
which again is a reformulation of Proposition \ref{prop:pairing}.
\end{proof}

\subsection{Application to vector bundles IV}\label{subsec:TV4}
We continue the discussion from Sections \ref{subsec:TV1}, \ref{subsec:TV2} and \ref{subsec:TV3}. 

\subsubsection{Tangent bundle $TV$}\label{subsubsec:TV4a}
Recall from \ref{subsec:TV2} that $\W(TV)$ is $C^\infty(M)$-linearly generated by differentials $\d f$, 
together with sections $\tau\in \Gamma(V^*)$ and their jets $j^1(\tau)$, subject to the relation 
$j^1(f\tau)-f j^1(\tau)=\tau\ \d f$.  
The $\VB$-algebroid structure $TV\Ra V$ determines a vertical differential $\d_v$ on the Weil algebra. This differential is given on the generators by 
\[ \d_v(f)=\d f,\ \ \ \d_v(\d f)=0,\ \ \ \d_v(\tau)=-j^1(\tau),\ \ \ \d_v(j^1(\tau))=0.\]
On $\W^{0,\bullet}(TV)=\Gamma(\wedge T^*M)$, it agrees with the de Rham differential; on $\W^{1,\bullet}(TV)=
\Gamma_{\lin}(\wedge_V T^*V,V)$ it is the restriction of the de Rham differential to linear forms. 
For example, we may verify that 
\[ \iota_v(\delta)\d_v\tau=-\nabla_\delta^* \tau=-\l j^1(\tau),\delta\r_{V^*}=-\iota_v(\delta)j^1(\tau)
\]
as required; similarly 
$ \iota_v(\delta)\d_v f=\L_{\a(\delta)}f=-\l\d f,\delta\r_{V^*}=\iota_v(\delta) \d f$. 

\subsubsection{Cotangent bundle  $T^*V$}\label{subsubsec:TV4b}
The Weil algebra $\W(T^*V)$ is $C^\infty(M)$-linearly 
generated by sections $\tau\in \Gamma(V^*),\ \sigma\in \Gamma(V),\  \delta\in \Gamma(\on{At}(V))$,
subject to the relation that the product $\tau\sigma$ in the Weil algebra equals 
$-i_{\on{At}(V)}(\tau\otimes \sigma)$. Using the Poisson brackets between these generators (viewed as functions $\tau,\,\phi_{\sigma^\sharp},\,\phi_{\wt{\a}(\delta)}$ 
on $T^*V$, as in \ref{subsec:TV1}), we obtain the formulas for the 
Gerstenhaber bracket,  
\[ \Ger{\tau,\sigma}=\l\tau,\sigma\r,\ \ 
\Ger{\delta,\sigma}=\nabla_\delta \sigma,\ \ 
\Ger{\delta,\tau}=\nabla^*_\delta \tau,\ \ 
\Ger{\delta,f}=\L_{\a(\delta)}f,\ \ 
\Ger{\delta_1,\delta_2}=\Lie{\delta_1,\delta_2}.\] 
The first equality follows from $\{\tau,\phi_{\sigma^\sharp}\}=-\L_{\sigma^\sharp}\tau=-\l\tau,\sigma\r$, the second from 
$\{\phi_{\wt{a}(\delta)},\phi_{\sigma^\sharp}\}=\phi_{[\wt{\a}(\delta),\sigma^\sharp]}=\phi_{(\nabla_\delta\sigma)^\sharp}$, and so on.

\section{Double Lie algebroids}\label{sec:DLA}
\subsection{Definition, basic properties}
The concept of a \emph{double Lie algebroid} was introduced by Mackenzie in \cite{mac:not,mac:dou,mac:dou1}, as the infinitesimal counterpart to double Lie groupoids. 
It is given by a  double vector bundle with \emph{compatible} horizontal and vertical $\VB$-algebroid structures
 \begin{equation}\label{eq:doubleliealg}
 \xymatrix{ {D} \ar@{=>}[r] \ar@{=>}[d] & B \ar@{=>}[d]\\
A\ar@{=>}[r] & M}
 \end{equation}
To formulate the compatibility condition, 
recall that a vertical $\VB$-algebroid structure makes $D''$ into a Poisson double  vector bundle;
hence $D'$ becomes a $\VB$-algebroid horizontally, 
\[
 \xymatrix{ {D'} \ar@{=>}[r]\ar[d] & \CC\ar[d]\\
B\ar@{=>}[r] & M}
\]
Similarly, a horizontal $\VB$-algebroid structure on $D$ makes $D'$ into a double Poisson vector bundle, and hence $D''$ becomes a $\VB$-algebroid vertically, 
\[
 \xymatrix{ {D''} \ar[r]\ar@{=>}[d] & A\ar@{=>}[d]\\
\CC\ar[r] & M}
\]
The compatibility condition is that the two Lie algebroids $D'\Ra \CC$ and $D''\Ra \CC$, with their natural duality pairing, form a  \emph{Lie bialgebroid}, as defined by Mackenzie-Xu \cite{mac:lie}. 
In the formulation of Kosmann-Schwarzbach \cite{kos:exa}, this means that the Chevalley-Eilenberg differential $\d_{CE}$ on $\Gamma(\wedge_\CC D',\CC)$, defined by the identification of $D'\to \CC$ with the dual of the Lie algebroid $D''\Ra \CC$, is a derivation of the Schouten bracket $\Lie{\cdot,\cdot}$ for the Lie algebroid
$D'\Ra \CC$:
\begin{equation}\label{eq:matchedpair}
 \d_{CE} \Lie{ \lambda_1,\lambda_2}=\Lie{\d_{CE}\lambda_1,\lambda_2}+(-1)^{n_1-1}\Lie{ \lambda_1,\d_{CE}\lambda_2},
 \end{equation}
for $\lambda_i\in \Gamma(\wedge^{n_i}_\CC D',\CC),\ i=1,2$. 

\begin{examples}\label{ex:doublelie}
\begin{enumerate}
\item Mackenzie  arrived at the definition of a double Lie algebroid by applying the Lie functor 
to an $\LA$-groupoid. For instance, any Poisson Lie groupoid $G\rra M$ \cite{mac:lie}, with Lie algebroid $A\Ra M$,  gives rise to a double Lie algebroid structure on $T^*A$, by applying the Lie functor to its cotangent Lie algebroid $T^*G\rra A^*$. Similarly, for a \emph{double Lie groupoid} \cite{bro:det,mac:dou,mac:dou1}, applying the Lie functor twice produces a double Lie algebroid.
\item The tangent bundle of a Lie algebroid $V\Ra M$ is a double Lie algebroid \cite[Example 4.6]{mac:not}
\[
 \xymatrix{ {TV} \ar@{=>}[r]\ar@{=>}[d] & TM \ar@{=>}[d]\\
V \ar@{=>}[r] & M}
\]
One may regard $TV$ as being obtained by applying the Lie functor to the $\LA$-groupoid 
$V\times V\rra V$ (the pair groupoid of $V$).   
\item\label{it:matchepair}
\emph{Matched pairs of Lie algebroids}, due to Lu \cite{lu:poi} and Mokri \cite{mok:mat}, are a generalization of 
a matched pair of Lie algebras \cite{maj:mat} (also known as double Lie algebras \cite{lu:po}
or twilled extensions \cite{kos:poi}). Two Lie algebroids $A\Ra M,\ B\Ra M$, with actions of $A$ on $B$ and of $B$ on $A$,
are a matched pair if the brackets and actions define a  Lie algebroid structure on the direct sum $A\oplus B\Ra M$. Mackenzie \cite{mac:not} proved that matched pairs of Lie algebroids are equivalent to \emph{vacant} double Lie algebroids 
\[ \xymatrix{ D=A\times_M B \ar@{=>}[r] \ar@{=>}[d] & B\ar@{=>}[d]\\
A	\ar@{=>}[r] & M}
	\]	
i.e. such that $\core(D)=0$. 	
\end{enumerate}
\end{examples}

\subsection{Weil algebra of a double Lie algebroid}
The following theorem gives equivalent formulations of Mackenzie's definition of a double Lie algebroid 
in terms of the Weil algebras of the three double vector bundles $D,D',D''$. 

\begin{theorem}\label{th:doubleliealgebroid}
Let $D$ be a double vector bundle. The following are equivalent: 
\begin{enumerate}
\item A double Lie algebroid structure on $D$;
\item a Gerstenhaber bracket  on the bigraded superalgebra $\W(D')$, together with a 
differential $\d'_h$ of bidegree $(1,0)$ that is a derivation of the Gerstenhaber bracket;
\item a Gerstenhaber bracket  on the bigraded superalgebra $\W(D'')$, together with a 
differential $\d''_v$ of bidegree $(0,1)$ that is a derivation of the Gerstenhaber bracket; 
\item commuting differentials $\d_h,\d_v$ on $\W(D)$, of bidegrees $(1,0)$ and $(0,1)$, respectively,
\end{enumerate}
\end{theorem}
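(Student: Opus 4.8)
The plan is to reduce the theorem to the statements already established in the excerpt --- most notably Theorem~\ref{th:PVB}, Theorem~\ref{th:Gerstenhaber}, Proposition~\ref{th:uniquedifferential}, and the family of identities in Section~\ref{sec:relations} (Propositions~\ref{prop:differential-gerstenhabera}, \ref{prop:differential-gerstenhaber}, \ref{prop:pairing} and \ref{prop:contraction_differential}) --- and to translate Mackenzie's bialgebroid compatibility \eqref{eq:matchedpair} into each of the three Weil-algebra formulations. First I would fix the standing data: a double vector bundle $D$ with $\VB$-algebroid structures over $A$ and over $B$. By Theorem~\ref{th:PVB} applied to the vertical structure, $D''$ is Poisson, so we obtain a Gerstenhaber bracket $\Ger{\cdot,\cdot}$ on $\W(D'')$, a differential $\d_v'$ on $\W(D')$, a differential $\d_h''$ on $\W(D'')$, and dually the vertical differential $\d_v$ on $\W(D)$; applying Theorem~\ref{th:PVB} (via the flip) to the horizontal structure gives a Gerstenhaber bracket on $\W(D')$, the horizontal differential $\d_h'$ on $\W(D')$, the horizontal differential $\d_h$ on $\W(D)$, and the vertical differential $\d_v''$ on $\W(D'')$. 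Thus all of the operators named in (b), (c), (d) are automatically present as soon as the two $\VB$-algebroid structures are given; the content of the theorem is the \emph{single} extra compatibility, stated four equivalent ways.

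The core of the argument is then a chain of equivalences (a) $\Leftrightarrow$ (b) $\Leftrightarrow$ (c) $\Leftrightarrow$ (d). For (a) $\Leftrightarrow$ (b): Mackenzie's condition is precisely that the Chevalley--Eilenberg differential $\d_{CE}$ on $\Gamma(\wedge_\CC D',\CC)$ (coming from the Lie algebroid $D''\Ra \CC$) is a derivation of the Schouten bracket for $D'\Ra \CC$. I would identify $\d_{CE}$ with the horizontal differential $\d_h'$ on $\W(D')$ and the Schouten bracket with the Gerstenhaber bracket on $\W(D')$ --- the first identification uses the interpretation of $\W^{\bullet,\bullet}(D')$ as polynomial sections of $\wedge_\CC D'$ over $\CC$ (as in Remark~\ref{rem:cabdru} and Proposition~\ref{prop:weilident}, with the roles of the bundles cyclically permuted), and the second uses Theorem~\ref{th:Gerstenhaber} together with Proposition~\ref{prop:pairingschouten}. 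Since both $\d_h'$ and $\Ger{\cdot,\cdot}$ are determined by their action on generators of $\W(D')$, and the derivation property \eqref{eq:matchedpair} is itself a derivation identity in each slot, it suffices to check that $\d_h'\Ger{x,y}=\Ger{\d_h'x,y}+(-1)^{|x|-1}\Ger{x,\d_h'y}$ holds on the finitely many generator pairs; this is the same computation as verifying \eqref{eq:matchedpair} for $\lambda_i$ of low degree, which is exactly how one checks the Lie bialgebroid condition in practice. Statement (c) is obtained from (b) by the cyclic permutation interchanging the roles of $D'$ and $D''$ (equivalently, by applying the same reasoning to the other half of the bialgebroid, using that the Lie bialgebroid condition is symmetric --- this symmetry is part of the Mackenzie--Xu theory and I would invoke it).

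For (b) $\Leftrightarrow$ (d) --- which is the coordinate-free version of Voronov's theorem --- I would argue as follows. The differentials $\d_h$ on $\W(D)$ and $\d_h'$ on $\W(D')$ are linked by the contraction-intertwining identity of Proposition~\ref{prop:contraction_differential}, and the Gerstenhaber bracket on $\W(D')$ is linked to $\d_v$ on $\W(D)$ through Propositions~\ref{prop:differential-gerstenhabera} and \ref{prop:differential-gerstenhaber}: the bracket $\Ger{x,y}$ on $\W(D')$ is recovered by contracting $\d_v$ against elements of $\W(D)$. Concretely, $[\d_h,\d_v]$ is a bidegree $(1,1)$ superderivation of $\W(D)$, hence determined by its values on generators; I would show that contracting the identity ``$\d_h'$ is a derivation of $\Ger{\cdot,\cdot}$'' against the appropriate elements of $\W^{\bullet,1}(D)$ and $\W^{1,\bullet}(D)$, using Propositions~\ref{prop:contraction_differential} and \ref{prop:differential-gerstenhaber} and Corollary~\ref{cor:cor} to move contractions past $\d_h'$ and $\d_v'$, produces exactly the condition $[\d_h,\d_v]=0$ on generators, and conversely. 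This is the step I expect to be the main obstacle: it is not conceptually hard, but it requires assembling several of the Section~\ref{sec:relations} identities in the right order and being scrupulous with the Koszul signs, since $[\d_h,\d_v]$ is a commutator of odd derivations of total degree $2$ and the sign bookkeeping between the left- and right-module structures (already flagged after Proposition~\ref{prop:contractionpairing} and in the proof of Proposition~\ref{prop:contraction_differential}) is delicate. I would organize this final step as a lemma asserting that for generators $z$ of $\W(D)$ one has $[\d_h,\d_v]z = 0$ if and only if the relevant instance of the derivation property of $\d_h'$ over $\Ger{\cdot,\cdot}$ holds, and then note that since $[\d_h,\d_v]$ and the failure of the derivation property are both tensorial (biderivation-valued) in an appropriate sense, the generator-wise statement upgrades to the global one. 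The remaining implication (d) $\Rightarrow$ (a) then follows by running (d) $\Rightarrow$ (b) $\Rightarrow$ (a), and a final remark records that, as promised in the introduction, the proof in fact yields an explicit formula relating the super-commutator $[\d_h,\d_v]$ to the deviation of $\d_h'$ from being a derivation of the Gerstenhaber bracket, recovering Voronov's result without super-geometry.
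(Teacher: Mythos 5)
Your plan follows the paper's own strategy in all essentials: translate Mackenzie's Lie bialgebroid condition \eqref{eq:matchedpair} into a derivation property on one of the Weil algebras via the identification of linear and core sections of the exterior bundle over $\CC$ with the degree-$(\bullet,1)$ and degree-$(\bullet,0)$ parts of a Weil algebra; prove the equivalence with $[\d_h,\d_v]=0$ by expressing the super-commutator through contraction operators and the identities of Section~\ref{sec:relations}; and dispose of the remaining item by a flip. The paper organizes this as (a)$\Leftrightarrow$(c) directly, then (b)$\Leftrightarrow$(d) via an explicit lemma computing $\iota_h(x)[\d_h,\d_v]y$ and $[\iota_h(x_1),[\iota_h(x_2),[\d_h,\d_v]]]$ in terms of the failure of $\d'_h$ to be a derivation of $\Ger{\cdot,\cdot}$ on $\W(D')$, and finally (c)$\Leftrightarrow$(d) by the flip; your commutator step is exactly that lemma, correctly identified as the technical crux, though left to be carried out.

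One identification in your (a)$\Leftrightarrow$(b) step is off by a prime and should be corrected. The linear and core sections of $\wedge^\bullet_\CC D'$ over $\CC$ are $\W^{1,\bullet}(D'')$ and $\W^{0,\bullet}(D'')$ (Proposition~\ref{prop:weilident}, cyclically permuted), not subspaces of $\W(D')$; consequently the verbatim translation of \eqref{eq:matchedpair} --- the Chevalley--Eilenberg differential coming from $D''\Ra\CC$ being a derivation of the Schouten bracket of $D'\Ra\CC$ --- is statement (c), not (b). To reach (b) directly from (a) as you propose, you must first invoke the symmetry of the Lie bialgebroid condition (Mackenzie--Xu, Kosmann-Schwarzbach) to restate it on $\Gamma(\wedge_\CC D'',\CC)$, whose linear and core sections do lie in $\W(D')$; you do flag this symmetry, but it is an extra external input that the paper avoids by proving (a)$\Leftrightarrow$(c) directly and reaching (b) only through (d). Either repair works; just be aware that the route you wrote down, taken literally, lands in $\W(D'')$.
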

We will break up the proof into several steps. Consider first  equivalence (a) $\Leftrightarrow$ (c). 
\begin{lemma}
A double Lie algebroid structure on $D$ is equivalent to a Gerstenhaber bracket  on $\W(D'')$, together with a  differential $\d''_v$ of bidegree $(0,1)$ that is a derivation of the Gerstenhaber bracket.
\end{lemma}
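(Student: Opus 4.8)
The plan is to unpack what a double Lie algebroid structure on $D$ amounts to, in terms of the data already extracted by Theorem \ref{th:PVB}, and then to match this against the structure on $\W(D'')$. Recall from the definition that a double Lie algebroid is a double vector bundle with horizontal and vertical $\VB$-algebroid structures subject to the Lie bialgebroid compatibility of $D'\Ra \CC$ and $D''\Ra \CC$. By Theorem \ref{th:PVB}, the vertical $\VB$-algebroid structure on $D$ is equivalent to a Gerstenhaber bracket $\Ger{\cdot,\cdot}$ on $\W(D'')$ (this is item (iv)$\Leftrightarrow$(vi) of that theorem, applied with the cyclic relabelling that turns $D$ into the ``$D''$-slot''). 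Similarly, the horizontal $\VB$-algebroid structure on $D$, which is the same as a double-linear Poisson structure on $D'$, is equivalent via Theorem \ref{th:PVB} (applied to $D'$, whose cyclic partner is $D''$) to a bidegree $(0,1)$ differential $\d''_v$ on $\W(D'')$. So both structures on $D$ are already encoded on the single Weil algebra $\W(D'')$; what remains is to show that the \emph{compatibility} between them—the Lie bialgebroid condition—is precisely the statement that $\d''_v$ is a derivation of $\Ger{\cdot,\cdot}$.

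The key step is therefore to translate the Mackenzie--Xu/Kosmann-Schwarzbach compatibility \eqref{eq:matchedpair} into the language of $\W(D'')$. The differential $\d_{CE}$ on $\Gamma(\wedge_\CC D',\CC)$ dual to $D''\Ra \CC$ restricts to the linear and core sections of $\wedge_\CC D'$; by the cyclic version of Proposition \ref{prop:weilident} these linear/core sections are exactly $\W^{1,\bullet}$ and $\W^{0,\bullet}$ of the appropriate double vector bundle, and the restricted differential is $\d''_v$. Likewise the Schouten bracket of the Lie algebroid $D'\Ra \CC$ restricts to a Gerstenhaber-type bracket on linear sections, which under the identification of Proposition \ref{prop:weilident} becomes the Gerstenhaber bracket $\Ger{\cdot,\cdot}$ on $\W(D'')$ (cf. the discussion in Section \ref{subsec:differentials} and Proposition \ref{prop:pairingschouten}). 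Thus \eqref{eq:matchedpair}, which says $\d_{CE}$ is a derivation of the Schouten bracket, restricts to the statement that $\d''_v$ is a derivation of $\Ger{\cdot,\cdot}$ on $\W(D'')$. For the converse one checks that a derivation-of-the-bracket identity on all of $\W(D'')$ implies it on the polynomial sections $\Gamma_{\on{pol}}(\wedge_\CC D',\CC)$—both sides of \eqref{eq:matchedpair} are bilinear over the appropriate module and can be tested on generators, exactly as in the proofs of Propositions in Section \ref{subsec:differentials}—hence on all of $\Gamma(\wedge_\CC D',\CC)$, recovering the Lie bialgebroid condition.

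The main obstacle I expect is \emph{bookkeeping of the cyclic relabellings and signs}: one must be careful that the two applications of Theorem \ref{th:PVB} (one producing the bracket, one producing the differential) are consistent, i.e. that the Lie algebroid structures induced on $\CC$ from the two sides genuinely coincide, and that the bigradings and Koszul signs line up so that ``derivation of the Gerstenhaber bracket'' on $\W(D'')$ is literally \eqref{eq:matchedpair} and not some sign-twisted variant. This is the kind of issue flagged in the introduction (``careful bookkeeping due to the presence of multiple cores''), and it is handled most safely by reducing to the double vector space $D_0=A_0\times B_0\times \CC_0^*$, where $\W(D_0'')$ is an honest tensor product $\wedge \CC_0^*\otimes \wedge A_0^*\otimes \vee B_0$ and all identifications are explicit. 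Once the degree-$(0,1)$ differential and the degree-$(-1,-1)$ bracket are pinned down on generators, the equivalence ``$\d''_v$ is a derivation of $\Ger{\cdot,\cdot}$'' $\Leftrightarrow$ \eqref{eq:matchedpair} is a routine check on generators, since both conditions are derivations in each slot and triple brackets of generators vanish for degree reasons.
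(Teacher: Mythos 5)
Your proposal is correct and follows essentially the same route as the paper: identify $\W^{0,\bullet}(D'')$ and $\W^{1,\bullet}(D'')$ with the core and linear sections of $\wedge_\CC D'$ over $\CC$, observe that the Gerstenhaber bracket and $\d''_v$ restrict there to the Schouten bracket of $D'\Ra\CC$ and the Chevalley--Eilenberg differential of $D''\Ra\CC$, and use that these spaces generate both $\W(D'')$ and $\Gamma(\wedge_\CC D',\CC)$ to pass between the derivation identity on $\W(D'')$ and the Lie bialgebroid condition \eqref{eq:matchedpair}. The paper's own proof is exactly this (stated more tersely), so no further comparison is needed.
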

\begin{proof}
As discussed in Section \ref{subsec:wedgeweil}, there are canonical identifications
\[ \W^{0,\bullet}(D'')\cong \Gamma_\core(\wedge^\bullet_\CC D',\CC),\ \ \ 
\W^{1,\bullet}(D'')\cong \Gamma_\lin(\wedge^\bullet_\CC D',\CC).\] 
These spaces generate $\W(D'')$ as an algebra, and also $\Gamma(\wedge_\CC D',\CC)$ as a 
module over $C^\infty(M)$. Furthermore, by definition, the restriction of the Gerstenhaber bracket on $\W(D'')$ to these spaces agrees with the Lie algebroid bracket for $D'\Ra \CC$, while the differential 
 $\d''_v$ coincides with the Chevalley-Eilenberg differential for $D''\Ra \CC$. Hence, $\d''_v$ being a 
 derivation of the Gerstenhaber bracket amounts to the defining compatibility condition of a double Lie algebroid. 
\end{proof}

Theorem \ref{th:PVB} shows that for a horizontal $\VB$-algebroid structure $D\Ra B$
on a double vector bundle $D$ is equivalent to a vertical differential $\d_v$ on $\W(D)$, and also to a horizontal differential $\d'_h$ on $\W(D')$. 
By Proposition \ref{prop:contraction_differential}, these are related by 
\begin{equation}\label{eq:dh} \d_v y=\d'_h y,\ \ \  \ \ [\d_v,\iota_h(x)]=\iota_h(\d'_h x),\end{equation}
for all $y\in \Gamma(\wedge^\bullet B^*)$ and all $x\in \W^{\bullet,1}(D')$. (The first identity uses 
that both $\W^{0,\bullet}(D)$ and $W^{\bullet,0}(D')$ are identified with sections of $\wedge B^*$.)
On the other hand, using Theorem \ref{th:PVB} again, a 
vertical $\VB$-algebroid structure $D\Ra A$ is 
equivalent to a horizontal differential $\d_h$ on $\W(D)$, and also to a Gerstenhaber bracket $\Ger{\cdot,\cdot}$ on $\W(D')$. 
According to Propositions \ref{prop:differential-gerstenhabera} and \ref{prop:differential-gerstenhaber}, these are related by 
\begin{equation}\label{eq:dv}
  \iota_h(x)\,  \d_h y=\Ger{x,y},\ \ \  [\iota_h(x_1),[\iota_h(x_2),\d_h]]=(-1)^{|x_2|}\iota_h(\Ger{ x_1,x_2}),\end{equation}
for all $y\in \Gamma(\wedge^\bullet B^*)$ and all $x,x_1,x_2\in \W^{\bullet,1}(D')$. 
Consider now the situation that both a horizontal and a vertical $\VB$-algebroid structure are given. 
\begin{lemma}\label{lem:ac}
Given a horizontal $\VB$-algebroid structure $D\Ra B$ and a vertical $\VB$-algebroid structure $D\Ra A$, the super-commutator $[\d_h,\d_v]$ satisfies 
\begin{equation}\label{eq:dhdv1}
\iota_h(x) [\d_h,\,\d_v] y= (-1)^{|x|+1}\big(\d'_h\Ger{ x,y}-\Ger{ \d'_h x,y}-(-1)^{|x|}\Ger{ x,\d'_h y}\big)
\end{equation}  
for $x\in \W^{\bullet,1}(D')$ and 
$y\in \Gamma(\wedge^\bullet B^*)$, as well as 
\begin{equation}\label{eq:dhdv2}
[\iota_h(x_1),[\iota_h(x_2),[\d_h,\d_v]]]=(-1)^{|x_1|}
 \iota_h\big(\d'_h \Ger{ x_1,x_2}-\Ger{ \d'_h  x_1,x_2}-(-1)^{|x_1|}\,\Ger{ x_1,\d'_h x_2}\big)
\end{equation}
for  $x_1,x_2\in \W^{\bullet,1}(D')$. 
\end{lemma}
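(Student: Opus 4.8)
The plan is to prove both identities \eqref{eq:dhdv1} and \eqref{eq:dhdv2} by reducing everything to the ``Cartan calculus'' identities already established in Section \ref{sec:relations}, together with the two pairs of relations \eqref{eq:dh} and \eqref{eq:dv} that translate between the differentials $\d_h,\d_v$ on $\W(D)$ and the structures $\d'_h$ and $\Ger{\cdot,\cdot}$ on $\W(D')$. The key point is that $[\d_h,\d_v]$ is a derivation of bidegree $(1,1)$ on $\W(D)$, so to pin it down it suffices to evaluate it on a generating set; contracting with enough $\iota_h(x)$'s and using that contractions separate degrees is exactly the mechanism by which \eqref{eq:dhdv1} and \eqref{eq:dhdv2} encode the action of $[\d_h,\d_v]$.

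For \eqref{eq:dhdv1}, I would start from $\iota_h(x)[\d_h,\d_v]y = \iota_h(x)\d_h\d_v y + \iota_h(x)\d_v\d_h y$, noting that $\d_v y = \d'_h y \in \Gamma(\wedge^\bullet B^*)$ by the first identity in \eqref{eq:dh}, while $\d_h y \in \W^{1,\bullet}(D)$. For the first term, I apply the first identity in \eqref{eq:dv} with $y$ replaced by $\d'_h y$, getting $\iota_h(x)\d_h(\d'_h y) = \Ger{x,\d'_h y}$. For the second term, I move $\iota_h(x)$ past $\d_v$ using the second identity in \eqref{eq:dh}, namely $\iota_h(x)\d_v = (-1)^{|x|}\d_v\iota_h(x) + (-1)^{|x|}\iota_h(\d'_h x)$ (being careful with the sign, since $\iota_h(x)$ has total degree $|x|-2 \equiv |x|$ mod $2$); then $\iota_h(x)\d_h y = \Ger{x,y}$ and $\iota_h(\d'_h x)\d_h y = \Ger{\d'_h x, y}$ by \eqref{eq:dv} again, while $\d_v$ acting on $\Ger{x,y} \in \Gamma(\wedge^\bullet B^*)$ is $\d'_h\Ger{x,y}$ by \eqref{eq:dh}. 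Collecting terms and keeping track of the signs should yield exactly the right-hand side of \eqref{eq:dhdv1}; this is the routine-but-delicate bookkeeping step.

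For \eqref{eq:dhdv2}, the cleanest route is to iterate: one computes $[\iota_h(x_1),[\iota_h(x_2),[\d_h,\d_v]]]$ by using that $[\iota_h(x),\d_v] = (-1)^{|x|}\iota_h(\d'_h x)$ (a restatement of \eqref{eq:dh}, now as an operator identity on all of $\W(D)$, which follows from Proposition \ref{prop:contraction_differential} applied to $D$ and $D'$) together with $[\iota_h(x_1),[\iota_h(x_2),\d_h]] = (-1)^{|x_2|}\iota_h(\Ger{x_1,x_2})$ from \eqref{eq:dv}, and the Jacobi identity for the (super-)commutator of derivations of $\W(D)$. Expanding $[\d_h,\d_v] = \d_h\d_v + \d_v\d_h$ and repeatedly commuting the two contractions through, all terms either produce a single contraction $\iota_h(\text{something})$ or cancel; the surviving combination is precisely $(-1)^{|x_1|}\iota_h\big(\d'_h\Ger{x_1,x_2} - \Ger{\d'_h x_1, x_2} - (-1)^{|x_1|}\Ger{x_1,\d'_h x_2}\big)$. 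Alternatively, \eqref{eq:dhdv2} can be derived from \eqref{eq:dhdv1} by a second round of contraction, using that $\iota_h$ is a left $\wedge B^*$-module map and that the objects $\Ger{x_1,x_2}$, $\d'_h x_i$ again lie in $\W^{\bullet,1}(D')$, much as Proposition \ref{prop:differential-gerstenhaber} was deduced from Proposition \ref{prop:differential-gerstenhabera}.

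The main obstacle I anticipate is purely the sign bookkeeping: $[\d_h,\d_v]$ is the \emph{super}-commutator, the contraction operators $\iota_h(x)$ carry degree $|x|-2$ (so their parity is that of $|x|$) but the pairing $\l\cdot,\cdot\r_{\CC^*}$ used to define them is bilinear for the \emph{right} $\wedge A^*$-module structure, and the Gerstenhaber bracket has the shifted-degree sign conventions of $\W(D')[1,1]$. Each time one commutes a contraction past $\d_h$, $\d_v$, or another contraction, a sign of the form $(-1)^{|x|}$ or $(-1)^{|x_1||x_2|}$ appears, and these must be tracked consistently to land exactly on the stated right-hand sides. I would handle this by first checking both identities on generators of $\W(D)$ (elements of bidegrees $(i,j)$ with $i,j\le 1$), where $x$ and $y$ range over the generators of $\W(D')$ and $\wedge B^*$ respectively, so that all signs are explicit and small, and then invoke the derivation property to extend to all of $\W(D)$. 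No genuinely new idea beyond the identities already in hand is needed; the content is organizational.
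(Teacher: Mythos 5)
Your strategy is the same as the paper's: expand the super-commutator $[\d_h,\d_v]=\d_h\d_v+\d_v\d_h$, convert each piece using \eqref{eq:dh} and \eqref{eq:dv}, and obtain \eqref{eq:dhdv2} by applying the graded Jacobi identity to $[\d_v,[\iota_h(x_1),[\iota_h(x_2),\d_h]]]$, starting from $\iota_h(\d'_h \Ger{x_1,x_2})=[\d_v,\iota_h(\Ger{x_1,x_2})]$. No idea is missing, and your primary route for both identities is the one the paper takes.

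There is, however, one concrete error that would make the first computation fail as written. You take the parity of $\iota_h(x)$ to be $|x|$, but for $x\in\W^{p,1}(D')$ one has $|x|=p+1$ while $\iota_h(x)\in\on{Der}^{-1,p-1}(W(D))$ has total degree $p-2$; its parity is therefore $|x|+1$, not $|x|$. Hence
\[
[\d_v,\iota_h(x)]=\d_v\,\iota_h(x)+(-1)^{|x|}\iota_h(x)\,\d_v,
\qquad
\iota_h(x)\,\d_v=(-1)^{|x|}\iota_h(\d'_h x)-(-1)^{|x|}\d_v\,\iota_h(x),
\]
with a \emph{minus} sign on the $\d_v\,\iota_h(x)$ term where you wrote a plus. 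Following your version, the contribution of $\iota_h(x)\d_v\d_h y$ to \eqref{eq:dhdv1} becomes $(-1)^{|x|}\d'_h\Ger{x,y}+(-1)^{|x|}\Ger{\d'_h x,y}$, so the $\d'_h\Ger{x,y}$ term carries the wrong sign and the stated right-hand side is not reached. (The identity $[\iota_h(x),\d_v]=(-1)^{|x|}\iota_h(\d'_h x)$ that you quote for the second part is correct, but precisely because the parity of $\iota_h(x)$ is $|x|+1$; it is inconsistent with the expansion you use in the first part.) Once the parity is corrected, your computation reproduces the paper's proof verbatim; the fallback of checking on generators of $\W(D)$ is then unnecessary, since the operator computation is valid for arbitrary $x,x_1,x_2\in\W^{\bullet,1}(D')$ and $y\in\Gamma(\wedge^\bullet B^*)$.
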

\begin{proof}
The two identities follow from the calculations, using \eqref{eq:dh} and \eqref{eq:dv}, 
\begin{align*}  \iota_h(x) [\d_h,\d_v] y
&=\iota_h(x) \d_h\d_v\,y +\iota_h(x) \d_v\d_h y\\
&=\Ger{ x,\d_v y}+(-1)^{|x|}[\d_v,\iota_h(x)]\d_h y
-(-1)^{|x|} \d_v \iota_h(x)\d_h y\\
&=\Ger{ x,\d'_h  y}+(-1)^{|x|}
\iota_h(\d'_h  x) \d_h y- (-1)^{|x|}\d'_h \Ger{ x,y}\\
&=(-1)^{|x|+1}\big(\d'_h \Ger{ x,y}-\Ger{ \d'_h x,y}-(-1)^{|x|}\Ger{ x,\d'_h y}\big)
\end{align*}
and 
\begin{align*}
 \iota_h(\d'_h \Ger{ x_1,x_2})&=
 [\d_v,\iota_h(\Ger{ x_1,x_2})]\\
                   &=(-1)^{|x_2|} [\d_v,[\iota_h(x_1),[\iota_h(x_2),\d_h]]]\\
                   &=(-1)^{|x_2|}[[\d_v,\iota_h(x_1)],[\iota_h(x_2),\d_h]]
                   -(-1)^{|x_1|+|x_2|} [\iota_h(x_1),[[\d_v,\iota_h(x_2)],\d_h]]\\
                   &\ \ +(-1)^{|x_1|} [\iota_h(x_1),[\iota_h(x_2),[\d_v,\d_h]]]\\
                   &=(-1)^{|x_2|} [\iota_h(\d'_h  x_1),[\iota_h(x_2),\d_h]]
                    -(-1)^{|x_1|+|x_2|} [\iota_h(x_1),[\iota_h(\d'_h  x_2),\d_h]]\\
                   &\ \  +(-1)^{|x_1|} [\iota_h(x_1),[\iota_h(x_2),[\d_v,\d_h]]]\\
                   &= \iota_h(\Ger{ \d'_h  x_1,x_2})+(-1)^{|x_1|} 
                   \iota_h(\Ger{ x_1,\d'_h  x_2} ) +(-1)^{|x_1|} [\iota_h(x_1),[\iota_h(x_2),[\d_v,\d_h]]].
                    \end{align*}
\end{proof}

We now have all the tools we need to establish Theorem \ref{th:doubleliealgebroid}:

\begin{proof}[Proof of Theorem \ref{th:doubleliealgebroid}]
The equivalence (a) $\Leftrightarrow$ (c) was already established in Lemma \ref{lem:ac}. 
Consider now the implication (b) $\Ra$ (d). Using \eqref{eq:dhdv1} and \eqref{eq:dhdv2}, we see that if $\d'_h $ is a derivation of the Gerstenhaber bracket, then 
\[ \iota_h(x)[\d_h,\d_v]y=0,\ \ \ [\iota_h(x_1),[\iota_h(x_2),[\d_h,\d_v]]]=0\]
for all $x,x_1,x_2\in \W^{\bullet,1}(D')$ and 
$y\in \Gamma(\wedge^\bullet B^*)=\W^{\bullet,0}(D')$. The first equation shows that $[\d_h,\d_v]y=0$, so that 
$[\d_h,\d_v]$ vanishes on $\W^{\bullet,0}(D')$. Using the second equation, and induction on $q$, it then follows that $[\d_h,\d_v]$ vanishes on all $\W^{\bullet,q}(D')$, hence that $\d_h,\d_v$ super-commute. For the reverse implication  (d) $\Ra$ (b), suppose 
$[\d_h,\d_v]=0$.  Equations \eqref{eq:dhdv1} and \eqref{eq:dhdv2} tell us that 
\[ \d'_h \Ger{ x,y}-\Ger{ \d'_h x,y}-(-1)^{|x|}\Ger{ x,\d'_h y}=0,\ \ \ 
\d'_h \Ger{ x_1,x_2}-\Ger{ \d'_h  x_1,x_2}-(-1)^{|x_1|}\,\Ger{ x_1,\d'_h x_2}=0\]
for all $x,x_1,x_2\in \W^{\bullet,1}(D')$ and 
$y\in \W^{\bullet,0}(D')$. This means that $\d'_h $ acts as a derivation of the Gerstenhaber bracket 
on generators, and hence in general. We have thus shown  (b) $\Leftrightarrow$ (d). The equivalence 
(c) $\Leftrightarrow$ (d) follows by applying a flip, which interchanges the horizontal and vertical structures. 
\end{proof}

\subsection{The core of a double Lie algebroid}
It was pointed out in \cite[Section 4]{mac:not} that for any double Lie algebroid $D$, the core $\CC^*$ acquires the structure of a Lie algebroid. This fact may be seen as a consequence of the fact that the base of any Lie bialgebroid has a natural Poisson structure \cite[Proposition 3.6]{mac:lie}. It may also be obtained using the Weil algebras, as follows. 
Recall that $\W(D')$ has a vertical differential and $\W(D'')$ a horizontal differential, which are derivations of the Gerstenhaber brackets on these algebras. 

\begin{proposition}
The core $\CC^*$ of a double Lie algebroid $D$ has a Lie algebroid structure, with  bracket  
given in terms of the identification $\Gamma(\wedge \CC^*)=\W^{\bullet,0}(D'')$ by 
\[ \Lie{\ggamma_1,\ggamma_2}=\Ger{\ggamma_1,\d_v''\ggamma_2},\ \ \ \ 
\L_{\a(\ggamma)}(f)=\Ger{\ggamma,\d_v'' f},
\]
or in terms of the identification $\Gamma(\wedge \CC^*)=\W^{0,\bullet}(D')$ by 
\[ \Lie{\ggamma_1,\ggamma_2}=-\Ger{\ggamma_1,\d_h'\ggamma_2},\ \ \ \ 
\L_{\a(\ggamma)}(f)=-\Ger{\ggamma,\d_h' f}
.\]
\end{proposition}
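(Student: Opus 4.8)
The strategy is to reduce the claim to the known fact that the base of a Lie bialgebroid carries a canonical Poisson structure \cite[Proposition 3.6]{mac:lie}, and to translate this Poisson structure into the language of the Weil algebra $\W(D'')$ (respectively $\W(D')$). Recall that a double Lie algebroid structure on $D$ gives $D'\Ra\CC$ and $D''\Ra\CC$ the structure of a Lie bialgebroid over the base $\CC$; by Mackenzie's result, the base $\CC$ of this bialgebroid carries a Poisson structure, and the core $\CC^*=\core(D)$ inherits a Lie algebroid structure dual to it. The content of the proposition is the explicit formula for this bracket in terms of the Gerstenhaber bracket $\Ger{\cdot,\cdot}$ and the differential $\d_v''$ on $\W(D'')$.

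\textbf{Key steps.} First I would recall from Theorem \ref{th:doubleliealgebroid} that a double Lie algebroid structure on $D$ is equivalent to having a Gerstenhaber bracket on $\W(D'')$ together with a derivation $\d_v''$ of bidegree $(0,1)$ that differentiates this bracket. Under the identification $\Gamma(\wedge\CC^*)=\W^{\bullet,0}(D'')$ from Proposition \ref{prop:weilident}, the element $\d_v''\ggamma$ lies in $\W^{\bullet,1}(D'')$ for $\ggamma\in\CC^*=\W^{1,0}(D'')$; hence $\Ger{\ggamma_1,\d_v''\ggamma_2}$ lands in $\W^{1,0}(D'')=\Gamma(\CC^*)$ and $\Ger{\ggamma,\d_v'' f}$ in $\W^{0,0}(D'')=C^\infty(M)$, so the proposed formulas at least have the right target spaces. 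Then I would verify the Lie algebroid axioms directly: $C^\infty(M)$-bilinearity in the appropriate slot and the Leibniz rule follow from the fact that $\d_v''$ is a derivation of the commutative product and $\Ger{\ggamma,\cdot}$ is a superderivation; skew-symmetry of $\Lie{\cdot,\cdot}$ and the Jacobi identity follow from the super-Jacobi identity for $\Ger{\cdot,\cdot}$ together with the relations $\d_v''\circ\d_v''=0$ and the derivation property $\d_v''\Ger{x,y}=\Ger{\d_v''x,y}\pm\Ger{x,\d_v''y}$. These are the standard manipulations by which one derives the base Poisson structure of a Lie bialgebroid, now expressed through contractions and the differential; the point is that $\{f,g\}_\CC$, $f,g\in C^\infty(M)$, is precisely $\Ger{\d_v''f,\d_v''g}$ read in $\W^{0,0}$, and unwinding the Lie bialgebroid compatibility \eqref{eq:matchedpair} yields exactly the claimed bracket on $\CC^*$.

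Finally, for the second pair of formulas involving $\W(D')$ and $\d_h'$: applying the flip operation (which interchanges the roles of $D'$ and $D''$ and of horizontal and vertical structures, as used in the proof of Theorem \ref{th:doubleliealgebroid}) transports the first description to the second, at the cost of a sign coming from Proposition \ref{prop:d-} or from the sign conventions relating $\iota_v'$ and $\iota_h''$ with the pairings. Concretely, one checks that $\d_h'\ggamma\in\W^{1,\bullet}(D')$ under $\Gamma(\wedge\CC^*)=\W^{0,\bullet}(D')$, and that the Gerstenhaber bracket on $\W(D')$ paired with $\d_h'$ reproduces the same Lie algebroid structure up to the overall sign indicated. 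Consistency of the two descriptions is then equivalent to the statement, already implicit in the discussion of Section \ref{sec:DLA}, that both arise from the single Lie bialgebroid $(D'\Ra\CC,\,D''\Ra\CC)$.

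\textbf{Main obstacle.} The routine part is checking the derivation and bilinearity properties; the subtle point is getting the signs right, in particular the minus sign in the $\W(D')$-description and the sign in the derivation identity $\d_v''\Ger{x,y}=\Ger{\d_v''x,y}+(-1)^{|x|}\Ger{x,\d_v''y}$ when restricted to the bidegrees occurring here. I would handle this by evaluating everything on generators $\ggamma\in\Gamma(\CC^*)$ and $f\in C^\infty(M)$, where the bidegree bookkeeping is minimal, and then invoking the derivation properties to extend. The other thing to be careful about is that the bracket on $\CC^*$ obtained this way agrees with the one Mackenzie constructs from the base Poisson structure of the bialgebroid — this is where one must appeal to the identification, established in Section \ref{subsec:wedgeweil} and Proposition \ref{prop:weilident}, between the Chevalley--Eilenberg/Schouten data of $D',D''$ over $\CC$ and the differential/bracket data on the Weil algebras.
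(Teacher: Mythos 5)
Your main verification is essentially the paper's proof: skew-symmetry and the Jacobi identity for $\Lie{\cdot,\cdot}$ follow from the super-Jacobi identity for $\Ger{\cdot,\cdot}$ together with the fact that $\d_v''$ is a derivation of the Gerstenhaber bracket (which gives $\Ger{\ggamma_1,\d_v''\ggamma_2}=\Ger{\d_v''\ggamma_1,\ggamma_2}$, since $\Ger{\ggamma_1,\ggamma_2}$ vanishes for bidegree reasons), and the Leibniz rule for the anchor comes from $\d_v''$ being a derivation of the product. Where you diverge is in relating the two descriptions: you propose to transport the $\W(D'')$-formulas to $\W(D')$ by the flip operation, whereas the paper does a one-line direct computation, rewriting $\Ger{\d_v''\ggamma_1,\ggamma_2}$ as a contraction, hence as the pairing $\l\d_v''\ggamma_1,\d_h'\ggamma_2\r_{\CC^*}$, and then reading the same pairing back as a contraction on the $D'$ side (Proposition \ref{prop:differential-gerstenhabera} and \eqref{eq:theotherway}); the sign emerges from the graded antisymmetry of $\Ger{\cdot,\cdot}$ rather than from the flip. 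Your flip argument can be made to work (the flip does interchange $D'$ and $D''$ and reverses the sign of the three pairings), but as written it is vaguer than the paper's computation, and Proposition \ref{prop:d-} concerns $D^-$, not the flip, so it is not the right reference for the sign. Two further small points: the reduction to Mackenzie's base Poisson structure of a Lie bialgebroid is offered in the paper only as a remark, not used in the proof (and to get a Lie algebroid on $\CC^*$ one would additionally need linearity of that Poisson structure); and your parenthetical identity $\{f,g\}_\CC=\Ger{\d_v''f,\d_v''g}$ for $f,g\in C^\infty(M)$ is vacuous rather than the "point" — that bracket lives in bidegree $(-1,1)$ and is identically zero — the actual content is carried by the bracket of sections of $\CC^*$, which your direct verification handles correctly.
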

\begin{proof}
Note that $\Ger{\ggamma_1,\d_v''\ggamma_2}=\Ger{\d_v''\ggamma_1,\ggamma_2}$, which implies skew-symmetry of the bracket $\Lie{\cdot,\cdot}$. The Jacobi identity for $\Lie{\cdot,\cdot}$ follows from that for the Gerstenhaber bracket:
\begin{align*} \Lie{\ggamma_1,\Lie{\ggamma_2,\ggamma_3}}&=
\Ger{\ggamma_1,\d_v''\Ger{\ggamma_2,\d_v''\ggamma_3}}=
\Ger{\ggamma_1,\Ger{\d_v''\ggamma_2,\d_v''\ggamma_3}}\\&=
\Ger{\Ger{\ggamma_1,\d_v''\ggamma_2},\d_v''\ggamma_3}
+\Ger{\d_v''\ggamma_2,\Ger{\ggamma_1,\d_v''\ggamma_3}}=\Lie{\Lie{\ggamma_1,\ggamma_2},\ggamma_3}+\Lie{\ggamma_2,\Lie{\ggamma_1,\ggamma_3}}.
\end{align*}
Furthermore, if $f\in C^\infty(M)$, 
\[ \Ger{\ggamma_1,\d_v''(f\ggamma_2)}=f\Ger{\ggamma_1,\d_v''\ggamma_2}+\Ger{\ggamma_1,\d_v'' f}\ggamma_2,
\]
so that $\L_{\a(\ggamma)}(f)=\Ger{\ggamma,\d_v''f}$ defines an anchor map for this bracket. The expression of the Lie algebroid structure in terms of $D'$ follows from 
\[ \Ger{\d_v''\ggamma_1,\ggamma_2}=\iota_h'(\d_v'' \ggamma_1)\d_h'\ggamma_2 
=\l \d_v'' \ggamma_1,\,\d_h'\ggamma_2 \r_{\CC^*}
=\iota_v''(\d_h'\ggamma_2)\d_v''\ggamma_1
=\Ger{\d_h'\ggamma_2,\ggamma_1}=-\Ger{\ggamma_1,\d_h'\ggamma_2}.\qedhere
\]
\end{proof}
The Lie algebroid bracket $\Lie{\ggamma_1,\ggamma_2}=\Ger{\ggamma_1,\d_v''\ggamma_2}$ 
on $\Gamma(\CC^*)$ extends to a Schouten bracket on $\Gamma(\wedge \CC^*)$, by 
\[ \Lie{\lambda_1,\lambda_2}=\Ger{\lambda_1,\d_v''\lambda_2}.\]

\subsection{Application to Lie algebroids}\label{subsec:TV5} 
Continuing the discussion from Sections \ref{subsec:TV1}, \ref{subsec:TV2}, \ref{subsec:TV3}, \ref{subsec:TV4}, suppose that the vector bundle $V\to M$ carries the structure of a Lie algebroid, $V\Ra M$. Then we obtain $\VB$-algebroid structures $TV\Ra TM$ and $T^*V\Ra V^*$, as well as a 
Poisson structure on $TV^*$, to be described below. Let us discuss the resulting structures on the Weil algebras. 
\subsubsection{Tangent bundle of $V$}
The  $\VB$-algebroid $TV\Ra TM$ is the \emph{tangent prolongation} of the Lie algebroid $V\Ra M$ \cite{cou:lie,mac:lie}: its anchor is 
the tangent map to the anchor of $V$ composed with the canonical involution of $TTM$, and the Lie bracket bracket is such that the tangent lift $\Gamma(V,M)\to \Gamma(TV,TM),\ \sigma\mapsto T\sigma$ is bracket preserving. The resulting  Lie algebroid structure on $\wh{A}=J^1(V)$ is the \emph{jet prolongation} of the Lie algebroid; the bracket is uniquely characterized by $\Lie{j^1(\sigma_1),j^1(\sigma_2)}=j^1(\Lie{\sigma_1,\sigma_2})$, and its representations on  
$B=TM$ and on $\CC^*=V$ are given by 
\[ \nabla_{j^1(\sigma)}^{TM}\mu=\L_{\a(\sigma)}\mu,\ \ \ \ 
\nabla_{j^1(\sigma_1)}^V\sigma_2=\Lie{\sigma_1,\sigma_2}\]
for $\mu\in \Omega^1(M),\ \sigma,\sigma_1,\sigma_2\in \Gamma(V)$. 
See \cite{cra:sec} for a detailed discussion. The invariant 
bilinear pairing $(\cdot,\cdot)\colon B^*\times_M\CC^*\to \R$ is given by the 
anchor $\a\colon V\to TM$. The resulting horizontal differential $\d_h$ on the Weil algebra $\W(TV)$ is uniquely determined by its properties that $\d_h$ agrees with the Lie algebroid differential $\d_{CE}$ on $\W^{\bullet,0}(TV)=\Gamma(\wedge V^*)$ and satisfies $[\d_h,\d_v]=0$, where $\d_v$ was described in \ref{subsec:TV4}.

\subsubsection{Tangent bundle of $V^*$}
The Weil algebra of $(TV)'=\on{flip}((TV^*)^-)$ is $C^\infty(M)$-linearly generated by differentials $\d f\in \Gamma(T^*M)$ (identified with the tangent lifts $f_T$), sections $\sigma\in \Gamma(V)$ 
(identified with vertical lifts $\sigma^v$) and their jets 
$j^1(\sigma)\in \Gamma(J^1(V))$ (identified with $\sigma_T$), subject to the relation $j^1(f\sigma)-f j^1(\sigma)= \d f\ \sigma$. It has a horizontal differential, characterized by $\d_h'(f)=\d f,\ \ \d_h'(\sigma)=j^1(\sigma)$, that is a derivation of the Gerstenhaber bracket. To describe the 
latter, note that the Lie algebroid structure on $V$ defines a linear Poisson structure  \eqref{eq:homogeneity} on $V^*$; its tangent lift is a double-linear Poisson structure on $TV^*$. 
By definition of the tangent lift of Poisson structures, 
\[ \{(\sigma_1)_T,(\sigma_2)_T\}=\Lie{\sigma_1,\sigma_2}_T,\ \ 
\{(\sigma_1)_T,(\sigma_2)_v\}=\Lie{\sigma_1,\sigma_2}_v,\ \ 
\]\[ 
\{\sigma_T,f_v\}=(\L_{\a(\sigma)} f)_v,\ \ 
\{\sigma_T,f_T\}=(\L_{\a(\sigma)} f)_T,\ \ 
\{\sigma_v,f_T\}=(\L_{\a(\sigma)} f)_v.
\]
 We read off the Gerstenhaber brackets as 
\[ \Ger{j^1(\sigma_1),j^1(\sigma_2)}=j^1 (\Lie{\sigma_1,\sigma_2}),\ \ 
 \Ger{j^1(\sigma_1),\sigma_2}=\Lie{\sigma_1,\sigma_2},\]
\[ \Ger{j^1(\sigma),f}=\L_{\a(\sigma)}f,\ \ 
\Ger{j^1(\sigma),\d  f}=\d \L_{\a(\sigma)}f,\ \ 
\Ger{\sigma,\d f}=-\L_{\a(\sigma)}f
.\]
Note that $\d_h'$ is a derivation of the Gerstenhaber bracket, as required.

\subsubsection{Cotangent bundle of $V$} 
The Weil algebra of $(TV)''=\on{flip}(T^*V)^-\cong T^*V^*$ (using the 
Mackenzie-Xu isomorphism \eqref{eq:mackenziexu}) is  $C^\infty(M)$-linearly generated by sections of $V,\ V^*,\ \on{At}(V)$,  subject to the relation that for $\sigma\in \Gamma(V)$, $\tau\in \Gamma(V^*)$
the product 
$\sigma\tau$ equals $i_{\on{At}(V)}(\sigma\otimes \tau)$. From the Poisson bracket relations between the corresponding functions $\phi_{\sigma^\sharp},\ \tau,\ \phi_\delta$ 
on $T^*V$ (see Section \ref{subsec:TV1}), we read off the Gerstenhaber brackets 
\[ \Ger{\delta_1,\delta_2}=\Lie{\delta_1,\delta_2},\ \ \Ger{\delta,\sigma}=\nabla_\delta\sigma,\ \ 
\Ger{\delta,\tau}=\nabla_\delta^*\tau,\ \ 
 \Ger{\delta,f}=\L_{\a(\delta}f,\ \ \ \Ger{\sigma,\tau}=-\l\tau,\sigma\r.\]
On the other hand, the Lie algebroid structure on $V$ determines a $\VB$-algebroid structure 
$T^*V^*\Ra V^*$, and hence vertical differential $\d_v''$. The latter agrees with the Chevalley-Eilenberg differential on $\W^{0,\bullet}(T^*V^*)=\Gamma(\wedge^\bullet V^*)$, while 
\[\d_v''\sigma=-\delta(\sigma)\in \Gamma(\on{At}(V))\]
where $\delta(\sigma)\in \Gamma(\on{At}(V))$ is the infinitesimal automorphism 
given in terms of its representation on $V$ by $\nabla_{\delta(\sigma_1)}(\sigma_2)=\Lie{\sigma_1,\sigma_2}$. 
This follows from  the formulas of Theorem \ref{th:uniquedifferential}:
\[ \iota_v''(j^1(\sigma_1))\d_v''\sigma_2=\nabla^{V}_{j^1(\sigma_1)}\sigma_2=\Lie{\sigma_1,\sigma_2}
=-\nabla_{\delta(\sigma_2)}\sigma_1
=-\iota_v''(j^1(\sigma_1))\delta(\sigma_2).\]
Finally, for $\delta\in\Gamma(\on{At}(V))$ the differential $\d_v''\delta\in \W^{1,2}((TV)'')$ is described by the formula 
\[  \iota_v''(j^1(\sigma_1))\iota_v''(j^1(\sigma_2))\d_v'\delta=
\nabla_\delta \Lie{\sigma_1,\sigma_2}
-\Lie{\sigma_1,\nabla_\delta\sigma_2}+\Lie{\nabla_\delta\sigma_1,\sigma_2}, \]
which may be deduced from Corollary \ref{cor:cor}. One finds that 
the differential on $\W^{1,\bullet}(T^*V^*)\cong \Gamma_\lin(\wedge^\bullet _{TM} (TV^*),\,TM)$
coincides with the restriction of the Chevalley-Eilenberg differential of the tangent prolongation $TV\Ra TM$. (Recall that the dual of the tangent prolongation is the bundle 
$TV^*\to TM$.)

\section{Applications, connections with other work}\label{sec:Applications}
In this section, we indicate connections between the results and constructions presented above and various ideas appearing in the literature.
\subsection{Matched pairs of Lie algebroids}\label{subsec:matched}
Consider a matched pair of Lie algebroids $A,B$, corresponding to a vacant 
double Lie algebroid $D=A\times_M B$, as in Example \ref{ex:doublelie}\eqref{it:matchepair}. 
Thus 
\[ \xymatrix{ D=A\times_M B \ar@{=>}[r] \ar@{=>}[d] & B\ar@{=>}[d]\\
A	\ar@{=>}[r] & M}\ \ \ \ \  
 \xymatrix{ D'=B\times_M A^* \ar@{=>}[r] \ar[d] & M\ar[d]\\
B	\ar@{=>}[r] & M}\ \ \ \ \  
	\xymatrix{ D''=A\times_M B^* \ar[r] \ar@{=>}[d] & 	A\ar@{=>}[d]\\
M	\ar[r] & M }
	\]
with corresponding Weil algebra bundles 
\[  W(D)=\wedge A^*\otimes \wedge B^*,\ \ \ \ \ 
W(D')=\wedge B^*\otimes \vee A,\ \ \ \ \ W(D'')=\wedge A^*\otimes \vee B.\] 
The double Lie algebroid structure defines commuting differentials $\d_h,\ \d_v$ on $\W(D)$.  This double complex was described in the work of Laurent-Gengoux, Stienon and Xu \cite[Section 4.2]{lau:hol}. 
Identifying $W(D)=\wedge (A\oplus B)^*$ (with the total grading), the sum $\d_h+\d_v$ is a degree $1$ differential, in such a way that the bundle maps 
to $\wedge A^*,\wedge B^*$ give cochain maps on sections.
We hence see  that $A\oplus B$ becomes a Lie algebroid, with $A,B$ as Lie subalgebroids.  The Weil algebra $\W(D')$ has a Gerstenhaber bracket $\Ger{\cdot,\cdot}$ and a compatible horizontal 
differential $\d'_h$. The restriction of the differential to $\W^{\bullet,0}(D')=\Gamma(\wedge^\bullet B^*)$ gives the Lie algebroid structure on $B$, and the restriction to $\W^{\bullet,1}(D')=\Gamma(\wedge^{\bullet-1} B^*\otimes A)$ gives the action of this Lie algebroid on $A$. On the other hand, the restriction of the Gerstenhaber bracket to  $\W^{1,1}(D')=\Gamma(A)$ recovers the Lie algebroid bracket of $A$, and the bracket with elements of $\W^{1,0}(D')=\Gamma(B^*)$ recovers 
the $A$-action on $B^*$.  The fact that the differential $\d'_h $ on  $\Gamma(\wedge B^*\otimes A)$
is a derivation of the (Gerstenhaber) bracket on this space is thus an equivalent formulation of the compatibility condition. A similar discussion applies to $D''$. 

\subsection{Multi-derivations}\label{subsec:multiderivation}
In \cite{cra:def}, motivated by the study of deformations of Lie algebroids, Crainic and Moerdijk associate to any vector bundle $V \to M$  a graded vector space $\on{Der}^\bullet(V)$ of  \emph{multi-derivations of $V$}, equipped with a Gerstenhaber bracket. 
Its simplest description  is in terms of the isomorphism  (see \cite[Section 4.9]{cra:def}) 
\[  \on{Der}^{\bullet}(V) \cong \mf X_{\on{lin}}^{1+\bullet} (V^*),\]
with bracket the usual Schouten bracket of multivector fields. A Lie algebroid structure on $V$ defines a compatible degree $1$ differential on this space, given by Schouten bracket $\Lie{\pi,\cdot}$ with the Poisson bivector field $\pi\in\mf{X}^2_{\on{lin}}(V^*)$ 
dual to the Lie algebroid structure. As shown in \cite{cra:def}, the Maurer-Cartan elements of this \emph{deformation complex} describe the deformations of the Lie algebroid structure. See also the work of Esposito-Tortorella-Vitagliano \cite{esp:inf}, where the deformation complex is generalized further to the setting of $\VB$-algebroids.
For any vector bundle $V\to M$, the cotangent bundle $T^*V$ is a Poisson double vector bundle, hence $\W(T^*V)$ inherits a Gerstenhaber bracket. By  Proposition \ref{prop:weilident} and Example \ref{ex:TV}, the isomorphism 
\[ \W^{\bullet,1}(T^*V)\cong \Gamma_\lin(\wedge^\bullet_{V^*} TV^*,V^*)\cong \mf X_{\on{lin}}^{\bullet} (V^*)\]
takes this to the Schouten bracket of multivector fields. Given a Lie algebroid structure on $V$, the resulting horizontal differential on $\W^{\bullet,1}(T^*V)$ is $\d_h=\Ger{\pi,\cdot}$, 
which is the differential on the deformation complex. In conclusion, the deformation complex is identified with $\W^{1+\bullet,1}(T^*V)$. 
Alternatively, this follows from the fact (Remark \ref{rem:cabdru})
that $\W^{\bullet,1}(T^*V)$ for 
$T^*V\Ra V^*$ is the linear Chevalley-Eilenberg complex $\mathsf{CE}_{\VB}^\bullet(T^*V)$ 
of Cabrera-Drummond \cite{cab:hom}; the isomorphism of the latter with the deformation complex was observed in  \cite[page 312]{cab:hom}.

\subsection{Abad-Crainic's Weil algebra of a vector bundle $V$}
 As already mentioned, the Weil algebra  for a vector bundle $V\to M$ was first discussed  in the work of Mehta \cite[Section 4.2.4]{meh:sup} in super-geometric terms. Our Weil algebra $\W(TV)$  is a 
 classical description of this algebra.  On the other hand, Abad and Crainic \cite{aba:wei} defined a  bigraded algebra $\mf{W}(V)$, also called \emph{Weil algebra}, as follows. An element of $\mf{W}^{p,q}(V)$ is given by a sequence of $\R$-multilinear skew-symmetric maps 
 \[ c_i\colon \underbrace{\Gamma(V)\times \cdots \times \Gamma(V)}_{p-i\ \ \on{times}}\to \Omega^{q-i}(M,\vee^i V^*).\]
 Here $c_0$ is considered the `leading term', $c_1$ measures the failure of $c_0$ to be multi-linear, 
 $c_2$ measures the failure of $c_1$ to be multi-linear, and so on. (See \cite{aba:wei} for details.) 
 We claim that every $w\in \W^{p,q}(TV)$ gives rise to such a sequence, thereby identifying 
 $\mf{W}(V)\cong \W(TV)$.
 
 To see this, note that for any double vector bundle $D$ as in \eqref{eq:dvb}, there is a canonical surjective morphism of bigraded algebra bundles $\Pi\colon W(D)\to W(B\times_M \CC^*)=\wedge B^*\otimes \vee \CC$, induced by the $\DVB$ morphism  $B\times_M \CC^*\hra D$. 
Explicitly, the maps $\Pi\colon W^{p,q}(D)\to \wedge^{q-p}B^*\otimes \vee^p \CC$ are given by 
$p$-fold contractions with elements $\ggamma\in \CC^*$.  In the case of $D=TV$, with $A=V,\ B=TM,\ \CC=V^*$, we obtain projection maps 
 \[ \Pi \colon W^{p,q}(TV)\to \wedge^{q-p}T^*M\otimes \vee^p V.\]

 
 On the other hand, for  $\sigma \in \Gamma(V)$, its jet prolongation $j^1(\sigma)\in \Gamma(J^1(V))=\Gamma(\wh{A})$ defines a contraction operator $\iota(j^1(\sigma))$ of bidegree $(-1,0)$. The map $c_i$ corresponding to $w\in \W^{p,q}(TV)$ is given by 
 \[ c_i(\sigma_{i+1},\ldots,\sigma_p)=\Pi\Big(\iota(j^1(\sigma_p))\cdots \iota(j^1(\sigma_i))\, w\Big)
\in \Omega^{q-i}(M,\vee^i V^*).\]
 The relation  $j^1(f\sigma)-f j^1(\sigma)=i_{J^1(V)}(\d f\otimes \sigma)=
 \sigma \d f $ 
  implies that 
 $\iota\big(j^1(f\sigma)\big)-f\ \iota\big(j^1(\sigma)\big)=-\d f\ \iota(\sigma)$, where $\iota(\sigma)$ is the contraction operator of bidegree $(-1,-1)$ defined by $\sigma$ (regarded as a section of $\CC^*$). Consequently, the failure of $C^\infty(M)$-linearity of $c_i$ is expressed in terms of $c_{i+1}$, leading to the conditions in \cite{aba:wei}.

\subsection{IM-forms and IM-multivector fields} 
Let $V\to M$ be a vector bundle. In Section \ref{subsec:TV3}, we discussed the spaces 
$\mf{X}_\lin^\bullet(V)$ of linear multivector fields and $\Omega_\lin^\bullet(V)$ of linear differential forms on $V$. The Schouten bracket of multivector fields and the de Rham differential of forms restrict to these linear subspaces. 

Given a Lie algebroid structure $V\Ra M$, there are notions of \emph{infinitesimally multiplicative}
(IM) multi-vector fields and differential forms, 
\[ \mf{X}_{\IM}^\bullet(V)\subset \mf{X}_\lin^\bullet(V),\ \ \ \ \Omega^\bullet_{\IM}(V)\subset \Omega^\bullet_\lin(V).\]
These are designed to be the infinitesimal versions of multiplicative multivector fields or forms  on Lie groupoids. 

IM-multivector fields were introduced by Iglesias-Ponte, Laurent-Gengoux and Xu \cite{igl:uni} under the name of \emph{$k$-differentials}. To define them, note that for any vector bundle $V$, the graded Lie algebra $\mf{X}^{1+\bullet}(V)$ acts on $\Gamma(\wedge V)$ by derivations. Using the identification 
$\Gamma(\wedge V)\cong \mf{X}_\core^\bullet(V)$, this action is just the Schouten bracket of multi-vector fields. In particular, for $\delta \in \mf{X}_\lin^k(V)$ and 
$\sigma\in \Gamma(\wedge^l V)$ we have that $\delta.\sigma\in \Gamma(\wedge^{k+l-1} V)$. If $V$ is a Lie algebroid, then the bracket $\Lie{\cdot,\cdot}$ on $\Gamma(V)$ extends to the exterior algebra. The element $\delta$ is called an \emph{IM-multivector field} if it is a derivation of this Lie bracket:
\begin{equation}\label{eq:IMmulti}
\delta.\Lie{ \sigma_1,\sigma_2}=\Lie{ \delta.\sigma_1,\sigma_2}+(-1)^{|\delta|(|\sigma_1|+1)}
\Lie{ \sigma_1,\delta.\sigma_2} \end{equation}
for all $\sigma_i\in \Gamma(\wedge^{l_i}V)$, $i=1,2$. Here $|\delta|=k+1$. 
Using the derivation property with respect to wedge product, it is actually enough to have this condition for $l_1=l_2=1$.  The  \emph{universal lifting theorem} \cite{igl:uni}, generalizing earlier results of Mackenzie-Xu \cite{mac:lif,mac:int}, integrates any such $\delta$ to a multiplicative vector field on the corresponding (local) Lie groupoid. 

To describe the IM condition for forms, recall (cf.~ Example \ref{ex:linforms}) that any $\phi\in \Omega^k_\lin(V)$ can be uniquely written as $\phi=\nu+\d_{\on{Rh}}\mu$ where 
$\nu\in \Gamma(V^*\otimes \wedge^k T^*M)$ and $\mu\in \Gamma(V^*\otimes \wedge^{k-1} T^*M)$
(viewed as linear differential forms), and 
$\d_{\on{Rh}}$ denotes the de Rham differential on linear forms.  If $V$ is a Lie algebroid, then 
$\phi=\nu+\d_{\on{Rh}}\mu$ is called an \emph{IM form} if the following three conditions
are satisfied
\begin{align} \iota_{\a(\sigma_1)}\mu(\sigma_2)&=-\iota_{\a(\sigma_2)}\mu(\sigma_1),
\label{eq:IMform1}
\\
\mu(\Lie{ \sigma_1,\sigma_2})&= \ca{L}_{\a(\sigma_1)}\mu(\sigma_2)
-\iota_{\a(\sigma_2)}\d_{\on{\on{Rh}}}\  \mu(\sigma_1)-\iota_{\a(\sigma_2)}\nu(\sigma_1), \label{eq:IMform2}\\ 
\nu(\Lie{ \sigma_1,\sigma_2})&= \ca{L}_{\a(\sigma_1)}\nu(\sigma_2)
- \iota_{\a(\sigma_2)}\d_{\on{\on{Rh}}}\ \nu(\sigma_1), \label{eq:IMform3}\end{align}
for all $\sigma_1,\sigma_2\in \Gamma(V)$. These conditions are due to Bursztyn and Cabrera \cite{bur:mul} (see \cite{bur:lin,bur:int} for the case of 2-forms);  as shown in  \cite{bur:mul}, these are exactly the conditions ensuring that $\phi$ integrates to a multiplicative form  on the associated (local) Lie groupoid. 

We will now give  interpretations of IM multivector fields and IM forms  in terms of the Weil algebras.  Recall from \ref{subsec:TV3} that 
for any vector bundle $V\to M$, 
\[ \mf{X}_\lin^\bullet(V)=\W^{1,\bullet}(T^*V),\ \ \ \Omega_\lin^\bullet(V)=\W^{1,\bullet}(TV).\]
The first isomorphism is compatible with the Gerstenhaber bracket $\Ger{\cdot,\cdot}$ on $\W(T^*V)$ defined by the Poisson structure on $T^*V$, the second isomorphism with the vertical differential $\d_v$ on $\W(TV)$ defined by the $\VB$-algebroid structure $TV\Ra V$.   A Lie algebroid structure $V\Ra M$ gives $\VB$-algebroid structures $T^*V\Ra V^*$ and $TV\Ra TM$, resulting in 
horizontal differentials $\d_h$ on both $\W(T^*V)$ and $\W(TV)$. The second part 
of the following result is  due to Bursztyn and Cabrera \cite{bur:mul}; for a more restrictive notion of 
IM forms it was observed in \cite[Section 6]{aba:wei}.
\begin{theorem}
For any Lie algebroid $V\Ra M$, 
\[ \mf{X}_\IM^\bullet(V)=\W^{1,\bullet}(T^*V)\cap \ker(\d_h),\ \ \ \Omega_\IM^\bullet(V)=\W^{1,\bullet}(TV)\cap \ker(\d_h).\]
\end{theorem}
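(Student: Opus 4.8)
The plan is to unwind both identities to the level of generators, using the interpretations of $\W^{1,\bullet}(T^*V)$ and $\W^{1,\bullet}(TV)$ as linear multivector fields and linear differential forms that were established in Sections \ref{subsec:TV3} and \ref{subsec:TV4}, together with the explicit formulas for the horizontal differentials $\d_h$ given in Section \ref{subsec:TV5}. The key point is that $\d_h$ is determined on generators, and it is a superderivation; combined with the module structure over $\W^{\bullet,0}$, this means that a section of $\W^{1,\bullet}$ lies in $\ker(\d_h)$ if and only if the defining conditions for IM multivector fields, respectively IM forms, hold. So in both cases the statement will reduce to matching a short list of equations against \eqref{eq:IMmulti}, respectively \eqref{eq:IMform1}--\eqref{eq:IMform3}.

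First I would treat the multivector field case. By Proposition \ref{prop:weilident} and Section \ref{subsec:multiderivation}, $\W^{1,\bullet}(T^*V)\cong \mf{X}_\lin^\bullet(V)$, and under this identification $\d_h=\Ger{\pi,\cdot}$ where $\pi$ is the Poisson bivector on $T^*V^*$ dual to the Lie algebroid. The key computational input is Proposition \ref{prop:differential-gerstenhaber}, which gives $[\iota_h(x_1),[\iota_h(x_2),\d_h]]=(-1)^{|x_2|}\iota_h(\Ger{x_1,x_2})$ after the appropriate cyclic relabeling of $D,D',D''$; here the contractions $\iota_h(j^1(\sigma))$ play the role of the action of $\mf{X}^{1+\bullet}(V)$ on $\Gamma(\wedge V)\cong\mf{X}_\core^\bullet(V)$ by Schouten bracket. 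Applying this to $x=\delta\in\mf{X}_\lin^k(V)$ and evaluating against two jet-prolongation contractions $\iota_h(j^1(\sigma_1)),\iota_h(j^1(\sigma_2))$, one reads off that $\d_h\delta=0$ is equivalent to the derivation property \eqref{eq:IMmulti} for $l_1=l_2=1$; since $\d_h$ and the action are both derivations of the wedge product, this extends to all $l_i$, so $\ker(\d_h)\cap\W^{1,\bullet}(T^*V)=\mf{X}_\IM^\bullet(V)$.

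For the differential form case I would argue similarly but using instead the canonical splitting \eqref{eq:sesforms}: write $\phi=\nu+\d\mu$ with $\nu\in\Gamma(V^*\otimes\wedge^kT^*M)$ and $\mu\in\Gamma(V^*\otimes\wedge^{k-1}T^*M)$. The horizontal differential $\d_h$ on $\W(TV)$ comes from the tangent-prolongation $\VB$-algebroid $TV\Ra TM$, whose Chevalley-Eilenberg data on the relevant generators were spelled out in Section \ref{subsec:TV5} (the jet-prolongation bracket on $J^1(V)$, its representations $\nabla^{TM}\mu=\L_{\a(\sigma)}\mu$ and $\nabla^V\sigma_2=\Lie{\sigma_1,\sigma_2}$, and the pairing given by the anchor). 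Contracting $\d_h\phi$ with the vertical-type contractions $\iota_v(j^1(\sigma_i))$ and projecting via the map $\Pi\colon W^{p,q}(TV)\to\wedge^{q-p}T^*M\otimes\vee^pV$ introduced in the Abad--Crainic subsection, one extracts exactly three scalar identities: the $\vee^0$-part against one $\sigma$ gives \eqref{eq:IMform3} (with $\nu$), the mixed part gives \eqref{eq:IMform2} (with $\mu$), and the antisymmetry in the two slots of the contraction of $\mu$ with anchors gives \eqref{eq:IMform1}. Conversely these three identities force $\d_h\phi$ to vanish on generators of the $\W^{\bullet,0}$-module $\W^{1,\bullet}(TV)$, hence everywhere.

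The main obstacle I anticipate is bookkeeping: getting the signs and the cyclic relabeling of $D\leftrightarrow D'\leftrightarrow D''$ exactly right when transporting Propositions \ref{prop:differential-gerstenhaber} and \ref{prop:contraction_differential} (stated for $\W(D')$ and $\W(D'')$) over to the concrete tangent and cotangent bundle pictures, and checking that the splitting \eqref{eq:sesforms} interacts correctly with $\d_h$ so that the three Bursztyn--Cabrera conditions come out in precisely the stated form rather than some sign-twisted variant. A secondary subtlety is verifying that it genuinely suffices to test $\ker(\d_h)$ on the degree-$\le 1$ generators of the module structure, i.e. that no higher relations in $W(TV)$ or $W(T^*V)$ impose extra constraints; this follows from the presentation of the Weil algebra by generators and the single quadratic relation $\alpha\beta=i_{\wh{\CC}}(\alpha\otimes\beta)$, together with the derivation property of $\d_h$, but should be stated explicitly. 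Once these are pinned down, the equivalences are a direct comparison of formulas.
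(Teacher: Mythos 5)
Your proposal is correct and, for the multivector-field identity, follows essentially the same route as the paper: the paper also characterizes $\d_h$-closedness of $\phi\in\W^{1,\bullet}(D)$ by double contractions via Corollary \ref{cor:cor} (the packaged form of Proposition \ref{prop:differential-gerstenhaber}) and matches the resulting identity against \eqref{eq:IMmulti}, with the converse secured by the observation that for $D=T^*V$ the space $\W^{\bullet,1}(D')$ is spanned over $C^\infty(M)$ by $\d_v'\W^{\bullet,0}(D')$, i.e.\ by jet prolongations --- which is the precise form of the ``it suffices to test on degree $\le 1$ generators'' point you flag. The one place you go beyond the paper is the differential-form identity: the paper does not carry out that computation, stating only that it ``can be discussed similarly'' and citing \cite{bur:mul}, whereas you sketch a direct extraction of \eqref{eq:IMform1}--\eqref{eq:IMform3} via the splitting \eqref{eq:sesforms} and the projection $\Pi$; that plan is plausible but its sign bookkeeping (and the fact that the relevant contractions for $TV$ are the horizontal ones by $\wh{A}=J^1(V)$, not $\iota_v$) would need to be checked, exactly as you anticipate.
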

\begin{proof}
Consider a $\VB$-algebroid $D\Ra B$ over $A\Ra M$, so that the Weil algebra $\W(D)$ carries a horizontal differential $\d_h$. Then $\W(D')$ has a Gerstenhaber bracket. By Corollary \ref{cor:cor}, an element $\phi\in \W^{1,\bullet}(D)$ is $\d_h$-closed if and only if 
\begin{equation}\label{eq:iff}
 \iota_h(\Ger{x_1,x_2})\phi=
  \Ger{x_1,\iota_h(x_2)\phi}+(-1)^{|x_1| |x_2|+1}
    \Ger{x_2,\iota_h(x_1)\phi}\end{equation}
for all $x_i\in \W^{p_i,1}(D')$.  (It suffices to verify this on generators.)

Suppose now that $D$ also carries a double-linear Poisson structure; thus $D''$ is a double Lie algebroid.  
In particular $B^*=\core(D'')$ is a Lie algebroid, with the bracket 
\[ \Lie{\sigma_1,\sigma_2}:=
\Ger{\sigma_1,\d_v'\sigma_2}\] for $\sigma_1,\sigma_2\in \Gamma(\wedge B^*)$. 
(We have in mind the case $D=T^*V$; the Lie algebroid structure on $B^*=V$ being the standard one.) 
The space $\W^{1,\bullet}(D)[1]$ is a graded Lie algebra for the Gerstenhaber bracket, with a 
representation on $\Gamma(\wedge B^*)$ given by (cf.~ \eqref{eq:theotherway})
\[ \phi.\sigma:=\Ger{\phi,\sigma}=-(-1)^{(|\phi|+1)\,|\sigma|}\iota_h(\d_v'\sigma)\phi\]
for $\sigma\in \Gamma(\wedge B^*)$. Let $x_i=\d_v'\sigma_i$ with $\sigma_i\in \Gamma(\wedge B^*)$. 
Then $\Ger{x_1,x_2}=\d_v'\Lie{\sigma_1,\sigma_2}$, and therefore
\begin{equation}\label{eq:lhs}
 \phi.\Lie{\sigma_1,\sigma_2}
=-(-1)^{(|\phi|+1)(|\sigma_1|+|\sigma_2|+1)} \iota_h(\Ger{x_1,x_2})\phi.\end{equation}
On the other hand, 
\begin{align*}
\lefteqn{(-1)^{|\phi|(|\sigma_1|+1)} 
          \Lie{\sigma_1,\phi.\sigma_2}+
\Lie{\phi.\sigma_1,\sigma_2}}
          \\ 
          &=-(-1)^{(|\phi|+1)(|\sigma_1|+|\sigma_2|+1)}
               \Big(\Ger{x_1,\iota_h(x_2)\phi}+(-1)^{(|\phi|+1)( |\sigma_2|+1)}
                    \Ger{\iota_h(x_1)\phi,x_2}\Big)\\
                    &=-(-1)^{(|\phi|+1)(|\sigma_1|+|\sigma_2|+1)}
                    \Big(\Ger{x_1,\iota_h(x_2)\phi}-(-1)^{(|\sigma_1|+1)( |\sigma_2|+1)}
                         \Ger{x_2,\iota_h(x_1)\phi}\Big).
\end{align*}
By \eqref{eq:iff}, if $\phi$ is $\d_h$-closed then this expression coincides with \eqref{eq:lhs}, proving that 
$\phi.\sigma=\Lie{\phi.\sigma_1,\sigma_2}+(-1)^{|\phi|(|\sigma_1|+1)} \Lie{\sigma_1,\phi.\sigma_2}$. 
For $D=T^*V$, the converse is true, because in that case the space $\W^{\bullet,1}(D')$ is spanned, 
as a $C^\infty(M)$-module, by $\d_v' \W^{\bullet,0}(D')$. The case of IM-differential forms can be discussed similarly; in terms of the Abad-Crainic approach to the Weil algebra $\W(TV)$ this is done in \cite{bur:mul}. 
\end{proof}

\subsection{Fr\"olicher-Nijenhuis and Nijenhuis-Richardson brackets}
Suppose $V\to M$ is a Lie algebroid, so that $V^*$ has a linear Poisson structure. The 
double-linear Poisson structure on $TV^*$ defines a Gerstenhaber bracket on $\W(TV^*)$,  
compatible with the  vertical differential $\d^v$. Hence, 
\begin{equation}\label{eq:nijr} 
\W^{1,1+\bullet}(TV^*)\cong \Omega^{\bullet+1}_\lin(V^*)
\cong \Omega^{\bullet+1}(M,V)\oplus \Omega^{\bullet}(M,V)
\end{equation}
becomes a differential graded Lie algebra. It comes with a morphism of graded Lie algebras
(also $\Omega(M)$-module morphism)
\begin{equation}\label{eq:nijr2}
\Omega^{\bullet+1}(M,V)\oplus \Omega^{\bullet}(M,V)\to 
\on{Der}^\bullet(\Omega(M))
\end{equation}
given by Gerstenhaber bracket with elements of  $\W^{0,\bullet}(TV^*)\cong \Omega(M)$. 
One verifies that the first summand in \eqref{eq:nijr2} acts as contractions via the anchor 
$V\to TM$, the second as Lie derivatives. 

For the Lie algebroid $V=TM$, 
the map \eqref{eq:nijr2} is an isomorphism, hence we recover the bracket on $\Omega^{\bullet+1}(M,TM)\oplus \Omega^\bullet(M,TM)$ given by the 
Fr\"olicher-Nijenhuis bracket on the first summand, the Nijenhuis-Richardson bracket on the second summand, and a cross term. See  \cite[Chapter II.8]{kol:nat} for a detailed discussion; see also  \cite{gra:tan,nij:vec} for related brackets and generalizations to Lie algebroids. 

\subsection{Representations up to homotopy}\label{subsec:RUTH}

Representations up to homotopy were introduced by Evens-Lu-Weinstein \cite{ev:tra} and  Abad and Crainic \cite{aba:rep} as generalizations of the usual concept of representations of a Lie algebroid.  Among other things,  they give a notion of the adjoint action of a Lie algebroid on itself, which is generally not possible using ordinary representations.  The essential idea is to represent Lie algebroids on complexes of vector bundles rather than just single vector bundles. We will adopt the definition from \cite{aba:rep}.
\begin{definition}\label{def:RUTH} 
Let $A\to M$ be a Lie algebroid.  A \emph{representation up to homotopy} of $A$ is a $\Z$-graded vector bundle $\mathsf{U}_\bullet$ over $M$ along with a degree 1 differential $\delta$ on sections of 
$\wedge A^*\otimes \mathsf{U}$ (using the graded tensor product) satisfying 
\[ \delta(\omega \eta)=d_A(\omega) \eta + (-1)^k \omega \delta(\eta)\] 
for all $\omega \in \Gamma(\wedge^k A^*)$, $\eta \in \Gamma(\wedge A^*\otimes \mathsf{U})$. 
\end{definition}
Given a Lie algebroid $A\to M$, Gracia-Saz and Mehta \cite{gra:vba} showed how 
to construct a 2-step representation up to homotopy of $A$ from a horizontal $\mathcal{VB}$-algebroid 
\[
 \xymatrix{ {D} \ar@{=>}[r]\ar[d] & B\ar[d]\\
A\ar@{=>}[r] & M}
\]
having $A$ as its horizontal side bundle. The construction depends on the choice of a splitting of $D$, 
and the resulting graded vector bundle is 
\[ \mathsf{U}=\CC^*[1]\oplus B;\]
that is, $\mathsf{U}_{-1}=\CC^*$ and $\mathsf{U}_0=B$. We briefly review their construction, making use of some of our observations in Section \ref{sec:PDVB}.   By Lemma \ref{lem:reps}, the double-linear Poisson structure on $D'$ gives the following data:
\begin{itemize}
\item a Lie algebroid structure on $\wh A$,
\item $\wh{A}$-representations $\wh \nabla^{B^*},\ \wh \nabla^{\CC^*}$ on $B^*$ and on $\CC^*$,
\item an invariant pairing $(\cdot,\cdot)\colon B^*\times \CC^* \to \R$.
\end{itemize}
A choice of splitting of the double vector bundle $D$ is equivalent to a choice of splitting $s\colon A \to \wh A$. 
In general, $s$ 
need not preserve Lie brackets, and so we can consider its curvature tensor $\Omega\in \Gamma(\wedge^2 A^* \otimes (B^*\otimes \CC^*))$ defined by 
\[
\Omega(a_1,a_2)=s(\Lie{ a_1,a_2})-[s(a_1),s(a_2)],\ \ \ a_1,a_2\in \Gamma(A).
\]
The $\mathsf{U}_0$--$\mathsf{U}_{-1}$-component of $\delta$ is the linear map $\Gamma(\wedge^\bullet A^* \otimes B) \to \Gamma(\wedge^{\bullet+2} A^*\otimes \CC^*)$ given by  $\Omega$ (with the identification 
$B^*\otimes \CC^*\cong \Hom(B,\CC^*)$). The $\mathsf{U}_{-1}$--$\mathsf{U}_0$-component of $\delta$ is the linear 
map  $\Gamma(\wedge^{\bullet}A^*\otimes \CC^*) \to \Gamma(\wedge^{\bullet}A^* \otimes B)$ 
defined by the pairing $(\cdot,\cdot)$ viewed as a bundle map $\CC^*\to B$. The connection $\wh \nabla^{\CC^*}$ pulls back under $s$ to a non-flat $A$-connection $\nabla^{\CC^*}$ on $\CC^*$; its extension to a map $\nabla^{\CC^*}\colon \Gamma(\wedge^\bullet A^*\otimes \CC^*)\to 
\Gamma(\wedge^{\bullet+1} A^*\otimes \CC^*)$ is the $\mathsf{U}_{-1}$--$\mathsf{U}_{-1}$-component of $\delta$. 
Similarly, the flat $\wh{A}$-connection on $B$ pulls back to a non-flat $A$-connection, and the resulting map on sections gives the $\mathsf{U}_0$--$\mathsf{U}_0$-component. See  \cite[Theorem 4.10]{gra:vba}.  This establishes a one-to-one correspondence:
\begin{theorem}\cite[Theorem 4.11 (2)]{gra:vba}\label{th:ruth}
Let $D$ be a double vector bundle with side bundles $A,\,B$ and core $\CC^*$, such that $A\Ra M$ is a Lie algebroid.  After choosing a splitting $s\colon A \to \hat A$, there is a one-to-one correspondence between horizontal $\mathcal{VB}$-algebroid structures $D\Ra B$ extending $A\Ra M$, 
and representations up to homotopy of $A$ on $\mathsf{U}=\CC^*[1]\oplus B$. 
\end{theorem}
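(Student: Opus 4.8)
The plan is to recognize that Theorem \ref{th:ruth} is essentially a repackaging of structures we have already analyzed, combined with the fact that every double vector bundle admits a splitting (Theorem \ref{th:splittings}). First I would fix a splitting $s\colon A\to\wh{A}$ of $D$, which by Remark \ref{rem:splittings} is equivalent to a full $\DVB$ splitting $D\cong A\times_M B\times_M\CC^*$. By Theorem \ref{th:PVB}, a horizontal $\VB$-algebroid structure $D\Ra B$ extending $A\Ra M$ is the same data as a $\VB$-algebroid structure on $(D)'$ over $\CC$, equivalently (viewing things from the correct cyclic slot) a double-linear Poisson structure on the appropriate partner, which by Lemma \ref{lem:reps} and Theorem \ref{th:dvbdata} amounts to: a Lie algebroid structure on $\wh{A}$, representations $\wh\nabla^{B^*},\wh\nabla^{\CC^*}$ on $B^*$ and $\CC^*$, and an invariant pairing $(\cdot,\cdot)\colon B^*\times_M\CC^*\to\R$, subject to the compatibility conditions (i)--(iii) of Theorem \ref{th:dvbdata} relative to the subbundle $i_{\wh{A}}(B^*\otimes\CC^*)$. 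So one side of the correspondence is completely controlled by Theorem \ref{th:dvbdata}, independently of the choice of splitting.

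Next I would show that, after transporting this data along $s$, it is precisely a representation up to homotopy of $A$ on $\mathsf{U}=\CC^*[1]\oplus B$ in the sense of Definition \ref{def:RUTH}. The point is that $\W^{\bullet,\bullet}(D)\cong\wedge A^*\otimes\wedge B^*\otimes\vee\CC$ via the splitting, and a representation up to homotopy on $\mathsf{U}=\CC^*[1]\oplus B$ is a degree $1$ differential $\delta$ on $\Gamma(\wedge A^*\otimes\wedge\mathsf{U})$. The $1$-jet of the Lie algebroid structure on $\wh{A}$ transported to $A$ gives an $A$-connection $\nabla^{\CC^*}$ on $\CC^*$ (generally non-flat), the flat $\wh{A}$-connection on $B$ pulls back to an $A$-connection on $B$, the invariant pairing gives a bundle map $\CC^*\to B$, and the curvature $\Omega\in\Gamma(\wedge^2 A^*\otimes\Hom(B,\CC^*))$ of $s$ gives the remaining component. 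I would verify that $\delta^2=0$ holds if and only if the four scalar identities of Gracia-Saz--Mehta hold, and that these are equivalent to the Jacobi identity for $\wh{A}$ together with the representation and invariance axioms — i.e.\ exactly the conditions from Theorem \ref{th:dvbdata}. The bookkeeping here is to match: flatness of the $\wh{A}$-connection on $B^*$ up to the curvature correction $\leftrightarrow$ the mixed $\mathsf{U}_{-1}\!-\!\mathsf{U}_0$ and $\mathsf{U}_0\!-\!\mathsf{U}_{-1}$ compatibilities; $\wh{A}$-invariance of the pairing $\leftrightarrow$ covariant constancy of the bundle map $\CC^*\to B$; the Bianchi-type identity $\d_{\nabla}\Omega=0$ $\leftrightarrow$ the top component.

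For the bijectivity of the correspondence I would argue in both directions. Given a horizontal $\VB$-algebroid structure, the above produces a representation up to homotopy; conversely, given a representation up to homotopy of $A$ on $\CC^*[1]\oplus B$, one reconstructs the Lie algebroid bracket on $\wh{A}=s(A)\oplus(B^*\otimes\CC^*)$ by using the $B^*\otimes\CC^*$-part as an abelian ideal (with bracket and representations as in Example \ref{ex:bilinearform}) and the components of $\delta$ to define the bracket of split sections and the mixed brackets; the conditions (i)--(iii) of Theorem \ref{th:dvbdata} hold by construction, so Theorem \ref{th:PVB} returns a $\VB$-algebroid structure $D\Ra B$. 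That these two procedures are mutually inverse is a direct check on generators. Since the splitting $s$ enters both constructions in the same way, the two maps are inverse for each fixed $s$.

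The main obstacle I anticipate is purely organizational rather than conceptual: one must carefully track the cyclic relabelling $(A,B,\CC)$ as one passes between $D$, $D'$, $D''$ so that the ``horizontal $\VB$-algebroid over $A$'' discussed here is fed into Theorem \ref{th:PVB} and Theorem \ref{th:dvbdata} in the correct slot, and one must keep the sign and degree-shift conventions for $\mathsf{U}=\CC^*[1]\oplus B$ consistent with the bidegrees in $\W(D)$. Once the dictionary is set up, each of the four components of $\delta$ and each axiom of a representation up to homotopy corresponds term-by-term to a piece of the data in Theorem \ref{th:dvbdata}, so no genuinely new computation beyond what was done there is required; the work is in exhibiting the correspondence cleanly and invoking Theorem \ref{th:splittings} to guarantee that a splitting $s$ exists in the first place.
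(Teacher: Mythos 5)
Your outline is essentially correct, but it follows the original Gracia-Saz--Mehta component-wise route rather than the argument the paper actually gives. You unpack the horizontal $\VB$-algebroid structure via Theorem \ref{th:PVB} and Theorem \ref{th:dvbdata} into a Lie algebroid structure on $\wh{A}$ with representations on $B^*,\CC^*$ and an invariant pairing, transport this along $s$, and then propose to match the four components of $\delta$ and to verify $\delta^2=0$ against the $\wh{A}$-axioms term by term, with the converse reconstructing the bracket on $\wh{A}=s(A)\oplus (B^*\otimes\CC^*)$. This works, but the matching of the four scalar identities is a genuine (if routine) computation that you defer. The paper instead encodes the entire $\VB$-algebroid structure $D\Ra B$ as a single vertical differential $\d_v$ of bidegree $(0,1)$ on $\W(D'')$; the condition ``extending $A\Ra M$'' pins down $\d_v$ on $\W^{0,\bullet}(D'')=\Gamma(\wedge^\bullet A^*)$ as the Chevalley-Eilenberg differential of $A$, the splitting identifies $\W^{1,\bullet}(D'')\cong \Gamma(\wedge^\bullet A^*\otimes B)\oplus\Gamma(\wedge^{\bullet+1}A^*\otimes\CC^*)=\Gamma(\wedge A^*\otimes \mathsf{U})$ via Proposition \ref{prop:linses}, and the restriction of $\d_v$ to this piece is exactly the operator $\delta$ of Definition \ref{def:RUTH}; bijectivity is then immediate because a derivation of the bigraded algebra $\W(D'')$ is determined by its values on the generators, which live in bidegrees $(0,\bullet)$ and $(1,\bullet)$. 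The Weil-algebra route thus buys you freedom from all the component bookkeeping, at the price of having set up $\W(D'')$ beforehand; your route makes the curvature $\Omega$ and the bundle map $\CC^*\to B$ explicit, which is useful but not needed for the equivalence itself. One small slip in your cyclic bookkeeping: a horizontal $\VB$-algebroid structure $D\Ra B$ corresponds to a $\VB$-algebroid structure $D''\Ra\CC$ and to a double-linear Poisson structure on $D'$ (not to a $\VB$-algebroid structure of $D'$ over $\CC$, which is what the \emph{vertical} structure $D\Ra A$ produces); the data you subsequently list, namely the Lie algebroid on $\wh{A}$ with representations on $B^*$ and $\CC^*$, is nonetheless the correct one.
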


This correspondence  has a simple interpretation in terms of the Weil algebras.  Recall from Section \ref{sec:PDVB} that horizontal $\mathcal{VB}$-algebroid structures $D\Ra B$ are in one-to-one correspondence with vertical differentials $\d_v$ on $\W(D'')$.  This restricts to a 
differential $\d_v$ on $\W^{1,\bullet}(D'')\cong \Gamma_\lin(\wedge_\CC^\bullet  D',\CC)$.  Once we choose a 
splitting of $D$, or equivalently a vector bundle splitting $s\colon A\to \wh{A}$, we obtain a decomposition (see Proposition \ref{prop:linses})
\[
\W^{1,\bullet}(D'')\cong \Gamma(\wedge^\bullet A^* \otimes B)\oplus \Gamma(\wedge^{\bullet+1} A^* \otimes \CC^*).
\]
With $\mathsf{U}_{-1}=\CC^*$ and $\mathsf{U}_0=A^*$, the differential $d$ defines a representation up to homotopy of $A$ on 
$\mathsf{U}$.  To see that this correspondence is bijective, we note that a vertical differential on the bigraded algebra $\W(D'')$ is uniquely determined by its restrictions to $\W^{1,\bullet}(D'')$ and $\W^{0,\bullet}(D'')= \Gamma(\wedge^\bullet A^*)$ thanks to the Leibniz rule.  

\begin{remark}
Horizontal $\mathcal{VB}$-algebroid structures on $D$ are also equivalent to differentials $d_h$ of bidegree $(1,0)$ on $\W(D)$.  After a choice of splitting, this induces a degree 1 operator on $\W^{\bullet,1}(D)\cong \Gamma(\wedge^\bullet A^* \otimes \CC)\oplus \Gamma(\wedge^{\bullet+1} A^* \otimes B^*)$, giving a representation up to homotopy of $A$ on the bundle $B^*[1]\oplus \CC$.  This representation up to homotopy is dual to the one on $\CC^*[1]\oplus B$, as discussed in \cite[Section 4.5]{gra:vba}.  
\end{remark}

Let us turn now to the case where $D$ has both a horizontal and a vertical $\mathcal{VB}$-algebroid structure.  Then both $D'$ and $D''$ are Poisson double vector bundles, which is equivalent to the following structures:
\begin{itemize}
\item Lie algebroid structures on $\wh A$ and $\wh B$,
\item $\wh A$-representations on $B^*$, $\CC^*$ and $\wh B$-representations on $A^*, \CC^*$,
\item an $\wh A$-invariant pairing $B^*\times \CC^* \to \R$ and a $\wh B$-invariant pairing $A^*\times \CC^* \to \R$
\end{itemize}
subject to certain compatibility conditions (Theorem \ref{th:c-liealgebroid}).  It is natural to ask what additional compatibility conditions on these data ensure that $D$ is a double Lie algebroid.  This question was answered in the work of Gracia-Saz, Jotz Lean, Mackenzie, and Mehta 
\cite[Theorem 3.4]{gra:dla} using a splitting and a notion of \emph{matched pair} for representations up to homotopy. 

\subsection{Van Est maps}
Recall that the classical van Est map \cite{vanest:gro} is a morphism from the cochain complex of a Lie group $G$ to the Chevalley-Eilenberg cochain complex of its Lie algebra $\mf g$.  This map was extended by Weinstein and Xu \cite{wei:ext} to the case of Lie groupoids $G$ and their Lie algebroids $A$, thus obtaining a morphism of cochain complexes
\begin{equation}\label{eq:vanest}
\on{VE}\colon \wt{C}^\infty(B_\bullet G) \to \Gamma(\wedge^\bullet A^*).
\end{equation}
Here $B_p G$ is the space of $p$-arrows in $G$, and the tilde signifies the normalized complex for the simplicial manifold $B_\bullet G$.  In \cite[Chapter 6]{meh:sup}, Mehta generalized \eqref{eq:vanest}
to a van Est map from `Q-groupoids' into the double complex of the corresponding `$Q$-algebroid'; 
in particular this gives a version of \eqref{eq:vanest} for differential forms. 
A construction in more classical terms was given by Abad and Crainic \cite{aba:wei}, in terms of the Weil algebra 
$\mf{W}(A)=\W(TA)$. 
The van Est map \eqref{eq:vanest} described in \cite{aba:wei} is a morphism of double complexes
\begin{equation}\label{eq:abadcrainic}
\on{VE}\colon \wt{\Omega}^\bullet(B_\bullet G) \to \W^{\bullet,\bullet}(TA).
\end{equation}
A geometric construction of this map was provided in \cite{lib:ve}. In an upcoming work, we will generalize this construction to the setting of $\LA$-groupoids, 
with Mehta's double complex of the $\LA$-groupoid
\cite{meh:qgr} as the domain and the Weil algebra $\W(D)$ of the corresponding double Lie algebroid $D$ as the codomain.
Here \eqref{eq:abadcrainic} corresponds to the case of the tangent groupoid $TG\rra TM$. 
This is related to work of Cabrera and Drummond in \cite{cab:hom};  as mentioned earlier (Remark \ref{rem:cabdru}), 
if $D$ is a (horizontal) $\VB$-algebroid, then the codomain $\mathsf{CE}_{\VB}^\bullet(D)$
of their 
\emph{van Est map for homogeneous cochains} for $\VB$-groupoids is our  $\W^{1,\bullet}(D)$. Going one step further, we will describe a van Est map for double Lie groupoids $\ca{H}$, a morphism from a certain double complex $\wt{C}^\infty(B_{\bullet,\bullet}\ca{H})$ to the double complex defined by the 
Weil algebra of its associated double Lie algebroid. The recent thesis of Angulo \cite{ang:th} considers similar questions in the context of  Lie 2-algebras and Lie 2-groups. 

\begin{appendix}
	\section{Splitting of double vector bundles}\label{app:technical}
	In this appendix, we give a quick proof of the existence of splittings of double vector bundles  $D$ (as in \eqref{eq:dvb}). Note that if 
	$D$ is regarded as a vector bundle over $A$, then its restriction to $M\subset A$ 
	is 	the vector bundle \[ (\kappa_0^v)^{-1}(M)=B \times_M \CC^*\equiv B\oplus \CC^*\]
	over $M$. 
	\begin{theorem}\label{th:splittings1}
		Every double vector bundle admits a splitting. 
	\end{theorem}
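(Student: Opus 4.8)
The plan is to build a splitting of $D$ as a single $\DVB$ morphism into the split model, obtained by a partition-of-unity argument over the base $M$ after reducing the problem to one piece of data. First, regard each of $A$, $B$ and $\CC^*$ as a double vector bundle over $M$: on $A$ let $(\kappa_t^h,\kappa_t^v)$ act by $(t\,\cdot,\,\mathrm{id})$, on $B$ by $(\mathrm{id},\,t\,\cdot)$, and on $\CC^*$ by $(t\,\cdot,\,t\,\cdot)$. Their fibered product over $M$ is exactly the split double vector bundle $A\times_M B\times_M\CC^*$ of Example \ref{subsec:examples}\eqref{it:ex1}, so a $\DVB$ morphism from $D$ into it is the same datum as a triple consisting of the two side projections $q_A\colon D\to A$, $q_B\colon D\to B$, and a $\DVB$ morphism $f\colon D\to\CC^*$. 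Hence it suffices to produce one such $f$ whose restriction to $\core(D)=\CC^*$ is the identity: given $f$, the morphism $\rho=(q_A,q_B,f)$ is fibrewise injective, since its kernel over any point of $M$ projects to $0$ in $A$ and in $B$, hence lies in the core, where $\rho$ is the identity; and because each fibre $D_m$ is isomorphic to $D_0$, and thus has dimension $\on{rank}(A)+\on{rank}(B)+\on{rank}(\CC)$, injectivity forces $\rho$ to be a $\DVB$ isomorphism.

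Next I would produce $f$. The set $\F$ of $\DVB$ morphisms $D\to\CC^*$ that restrict to the identity on the core is an affine space modelled on the vector space $\Gamma(A^*\otimes B^*\otimes\CC^*)$: a short Taylor-expansion computation on $D_0$ — of the kind already used to identify double-linear functions on $D$ — shows that $\DVB$ morphisms $D\to\CC^*$ vanishing on the core are precisely the sections of $A^*\otimes B^*\otimes\CC^*$, and the normalization "identity on the core" is preserved under every affine combination $\sum_i t_i f_i$ with $\sum_i t_i=1$. Over any open set $U\subseteq M$ over which the frame bundle $P$ is trivial one has a $\DVB$ trivialization $D|_{q^{-1}(U)}\cong D_0\times U$, and composing it with the coordinate projection onto $\CC_0^*$ gives an element of $\F$ over $U$; patching such local choices by a partition of unity subordinate to a trivializing cover of $M$ then yields a global $f\in\F$, and therefore a splitting of $D$.

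There is no deep obstruction here; the real content is the bookkeeping, namely identifying the three one-sided double vector bundle structures whose product is the split model, and verifying the two elementary facts used above — that a $\DVB$ morphism of double vector spaces which is the identity on the sides and on the core is invertible, and that $\DVB$ morphisms $D\to\CC^*$ killing the core are classified by $\Gamma(A^*\otimes B^*\otimes\CC^*)$ — both of which are checked directly on $D_0$; note that this bundle $A^*\otimes B^*\otimes\CC^*$ is exactly the obstruction space seen from the viewpoint of $D\to A$, as the $\omega$-type part of $\Aut(D_0)$. One can also bypass the construction entirely by invoking Proposition \ref{prop:chen}: a splitting of $D$ is equivalent to a splitting of the short exact sequence $0\to A^*\otimes B^*\to\wh{\CC}\to\CC\to 0$ of vector bundles over $M$, and every short exact sequence of vector bundles over a paracompact manifold splits — for instance by choosing a fibre metric on $\wh{\CC}$ and taking the orthogonal complement of $i_{\wh{\CC}}(A^*\otimes B^*)$.
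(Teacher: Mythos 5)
Your strategy---reduce a splitting to a single $\DVB$ morphism $f\colon D\to\CC^*$ restricting to the identity on the core, observe that such $f$ form an affine space modelled on $\Gamma(A^*\otimes B^*\otimes \CC^*)$, and patch local choices by a partition of unity---is genuinely different from the paper's proof. The appendix instead regards $\varphi\colon D\to A\times_M B$ as a surjection of \emph{ordinary} vector bundles over $A$, splits it there (a splitting $\psi_1$ that respects only the vertical scalar multiplication), and then applies the normal bundle functor along $B$, using $\nu(D,B)\cong D$, to linearize $\psi_1$ into a genuine $\DVB$ splitting. The payoff of that route is that it needs no local triviality of $D$ beyond what is standard for ordinary vector bundles. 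This matters for your argument: your local existence step invokes a $\DVB$ trivialization $D|_{q^{-1}(U)}\cong D_0\times U$ over opens where the frame bundle $P$ is trivial, but in this paper local triviality of $P$ (i.e., the existence of smoothly varying frames $D_m\to D_0$) is \emph{deduced} from Theorem \ref{th:splittings} together with local trivializations of $A,B,\CC^*$---it is precisely the local case of the statement you are proving. As written, the first route is therefore circular; you need an independent argument for local splittability (e.g., run your second argument locally, or use the paper's normal-bundle trick). The globalization by affine combinations is then correct.

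Two smaller points. First, on invertibility of $\rho=(q_A,q_B,f)$: knowing $\rho_m^{-1}(0)=\{0\}$ does not by itself give injectivity of $\rho_m$, which is only linear fibrewise over $A_m$ (the kernel could be nontrivial over points $a\neq 0$); the clean statement, which you do flag at the end, is that in a frame of the double vector space $D_m$ any $\DVB$ endomorphism fixing the sides and the core has the form $(a,b,\ggamma)\mapsto(a,b,\ggamma+\omega(a,b))$ and is therefore invertible---so lean on that check rather than the kernel count. Second, your alternative via Proposition \ref{prop:chen} is legitimate and is essentially Remark \ref{rem:splittings}; it is the quickest correct argument, but it outsources the substance to the cited result of Chen--Liu--Sheng (and, within this paper's exposition, the construction of $\wh{\CC}$ via the associated bundle again presupposes local frames), whereas the appendix proof is self-contained.
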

	\begin{proof}
		Regard  $D$ and $A\times_MB$ as vector bundles over $A$; their restrictions to the submanifold $M\subset A$ are canonically $B\oplus \CC^*$ and $B$, respectively. 
		The surjective submersion $\varphi\colon D\to A\times_M B$ from \eqref{eq:varphi}, 
		regarded as a morphism of vector bundles 
		over $A$, restricts along $M$ to the obvious projection $B\oplus \CC^*\to B$. 
		This restriction has a canonical splitting $B\to B \oplus \CC^*, b\mapsto (b,0)$. 
		 Choose any extension to a splitting of vector bundles over $A$,
		\[ \psi_1\colon A\times_M B\to D.\]	Then $\psi_1$ intertwines the vertical scalar multiplications $\kappa_t^v$, but not necessarily the horizontal scalar multiplications. Applying the normal bundle functor, we obtain a $\DVB$ morphism 
		\[ \nu(\psi_1)\colon \nu(A\times_M B,B)\to \nu(D,B).\]
		But recall that for any vector bundle $V\to M$, one has a canonical isomorphism  $TV|_M=V\oplus TM$, 
		giving rise to an isomorphism of vector bundles $\nu(V,M)\cong V$. 
		In a similar fashion, we have  canonical $\DVB$ isomorphisms  $\nu(D,B)\cong D$ and  $\nu(A\times_M B,B)\cong A\times_M B$. Under these identifications, $\nu(\psi_1)=:\psi$ is the desired splitting 
		$A\times_M B\to D$. 
	\end{proof} 
\end{appendix}

\bibliographystyle{amsplain}
\def\cprime{$'$} \def\polhk#1{\setbox0=\hbox{#1}{\ooalign{\hidewidth
			\lower1.5ex\hbox{`}\hidewidth\crcr\unhbox0}}} \def\cprime{$'$}
\def\cprime{$'$} \def\cprime{$'$} \def\cprime{$'$} \def\cprime{$'$}
\def\polhk#1{\setbox0=\hbox{#1}{\ooalign{\hidewidth
			\lower1.5ex\hbox{`}\hidewidth\crcr\unhbox0}}} \def\cprime{$'$}
\def\cprime{$'$} \def\cprime{$'$} \def\cprime{$'$} \def\cprime{$'$}
\providecommand{\bysame}{\leavevmode\hbox to3em{\hrulefill}\thinspace}
\providecommand{\MR}{\relax\ifhmode\unskip\space\fi MR }
\providecommand{\MRhref}[2]{%
	\href{http://www.ams.org/mathscinet-getitem?mr=#1}{#2}
}
\providecommand{\href}[2]{#2}

\end{document}